\newlength{\dhatheight}
\def\restr{\mathbin{\upharpoonright}}
\let\MHIndex\theindex
\def\theindex{\MHIndex\markboth{\sc Index}{\sc Index}}
\renewcommand{\ps@plain}{\pagestyle{MHheadings}}
\renewenvironment{proof}[1][\proofname]{\par
  \pushQED{\qed}%
  \vspace{6\p@\@plus6\p@}%
  \noindent
    \textit{#1\@addpunct{.}}\enskip\ignorespaces
  }{%
    \popQED
    \vspace{6\p@\@plus6\p@}\@endpefalse
  }
\newenvironment{claim}[1][MM]
               {\list{$\bullet$}{%
  \setbox\@tempboxa\hbox{#1}\@tempdima\wd\@tempboxa%
  \setlength{\labelwidth}{\@tempdima}
  \advance\@tempdima by 1em%
  \setlength{\leftmargin}{\@tempdima}
  \setlength{\parsep}{1mm}\setlength{\itemindent}{0mm}%
  \setlength{\labelsep}{2mm}\setlength{\itemsep}{0mm}%
  \setlength{\topsep}{1mm}%
}}{\endlist}
\let\symb\mathbf
\def\eref#1{(\ref{#1})}
\def\WL{W_{\!L}}
\def\WD{W_{\!\Delta}}
\def\HH{\mathring \CH}
\def\E{{\symb E}}
\def\P{{\symb P}}
\def\R{{\mathbf R}}
\def\Z{{\mathbf Z}}
\def\N{{\mathbf N}}
\def\C{{\mathbf C}}
\def\HS{{\mathrm{HS}}}
\def\TV{{\mathrm{TV}}}
\def\Lip{{\mathrm{Lip}}}
\def\cC{{\mathscr C}}
\def\cB{{\mathscr B}}
\def\cM{{\mathscr M}}
\def\cP{{\mathscr P}}
\def\B{{\mathscr B}}
\def\F{{\mathscr F}}
\def\T{\mathbf{T}}
\def\hf{\textstyle{1\over 2}}
\def\Fp{\F_{\mathrm{pr}}}
\def\Lipb{{\mathrm{Lip}_\beta}}
\def\*{|\!|\!|}
\def\Cyl{\mathrm{Cyl}}
\def\X{\mathcal{X}}
\let\d\partial
\def\CO{\mathcal{O}}
\def\CC{\mathcal{C}}
\def\CX{\mathcal{X}}
\def\CB{\mathcal{B}}
\def\CD{\mathcal{D}}
\def\CM{\mathcal{M}}
\def\CH{\mathcal{H}}
\def\CN{\mathcal{N}}
\def\CE{\mathcal{E}}
\def\CL{\mathcal{L}}
\def\CR{\mathcal{R}}
\def\CK{\mathcal{K}}
\def\CP{\mathcal{P}}
\def\CQ{\mathcal{Q}}
\def\CS{\mathcal{S}}
\let\eps\varepsilon
\newcommand\scal[2][ ]{\ifthenelse{\equal{#1}{ }}{\langle#2\rangle}{}
        \ifthenelse{\equal{#1}{b}}{\bigl\langle#2\bigr\rangle}{}
        \ifthenelse{\equal{#1}{B}}{\Bigl\langle#2\Bigr\rangle}{}
        \ifthenelse{\equal{#1}{bb}}{\biggl\langle#2\biggr\rangle}{}
        \ifthenelse{\equal{#1}{BB}}{\Biggl\langle#2\Biggr\rangle}{}}
\DeclareMathOperator{\tr}{tr}
\DeclareMathOperator{\supp}{supp}
\DeclareMathOperator{\diam}{diam}
\DeclareMathOperator{\range}{range}
\DeclareMathOperator{\div}{div}
\newcommand{\eqdef}{\stackrel{\mbox{\tiny def}}{=}}
\newcommand{\one}{\mathbf{1}}
\theoremstyle{plain}
\newtheorem{theorem}{Theorem}[section]
\newtheorem{lemma}[theorem]{Lemma}
\newtheorem{corollary}[theorem]{Corollary}
\newtheorem{proposition}[theorem]{Proposition}
\theoremstyle{definition}
\newtheorem{definition}[theorem]{Definition}
\newtheorem{remark}[theorem]{Remark}
\newtheorem{example}[theorem]{Example}
\newtheorem{exercise}[theorem]{Exercise}
\let\phi\varphi
\let\f\frac
\DeclareSymbolFont{timesoperators}{T1}{ptm}{m}{n}
\renewcommand{\operator@font}{\mathgroup\symtimesoperators}
\begin{document}

\title{An Introduction to Stochastic PDEs}
\author{Martin Hairer}
\institute{EPFL / Imperial College London}
\maketitle

\setcounter{tocdepth}{2}

\tableofcontents

\section{Introduction}

These notes are based on a series of lectures of various lengths given at the University of Warwick, the Courant
Institute, Imperial College London, and EPFL.
It is an attempt to give a reasonably self-contained presentation of the basic theory of stochastic partial differential
equations, taking for granted basic measure theory, functional analysis and probability theory, but nothing else. 
Since the aim was to present most of the material covered in these notes
during a 30-hours series of postgraduate lectures, such an attempt is doomed to failure unless drastic choices are made.
This is why many important facets of the theory of stochastic PDEs are missing from these notes. In particular,
we do \textit{not} treat equations with multiplicative noise, we do \textit{not} treat equations driven L\'evy noise, we do \textit{not} consider equations with ``rough'' (that is not locally Lipschitz, even in a suitable space) nonlinearities, we do \textit{not}
treat measure-valued processes, we do \textit{not} consider hyperbolic or elliptic problems, we do \textit{not} cover Malliavin calculus
and densities of solutions, etc. The reader who is interested in a more detailed exposition of these more technically subtle parts of the theory
might be advised to read the excellent works \cite{DPZ92,DPZ96,PesZab,Roeck,SanzSole}.

Instead, the approach taken in these notes is to focus on semilinear \textit{parabolic} problems driven by \textit{additive} noise.
These can be treated as stochastic evolution equations in some infinite-dimen\-sional Banach or Hilbert 
space that usually have nice regularising properties
and they already form (in my humble opinion) a very rich class of problems with many interesting properties. Furthermore, this
class of problems has the advantage of allowing to completely pass under silence many subtle problems
arising from stochastic integration in infinite-dimensional spaces.

\subsection{Acknowledgements}

These notes would never have been completed, were it not for the enthusiasm of the attendants of the course.
Hundreds of typos and mistakes were spotted and corrected. I am particularly indebted to David Epstein and Jochen Vo\ss\  
who carefully worked their way through these notes when they were still in a state of wilderness.
Special thanks are also due to Pavel Bubak who was running the tutorials for the course given in Warwick.

\section{Some Motivating Examples}

\subsection{A model for a random string (polymer)}

Take $N+1$ particles with positions $u_n$ immersed in a fluid and assume that nearest-neighbours
are connected by harmonic springs. If the particles are furthermore subject to an external forcing $F$,
the equations of motion (in the overdamped regime where the forces acting on the particle are more important than
inertia, which can also formally be seen as the limit where the masses of the particles go to zero) would be given by
\begin{equs}
{du_0 \over dt} &= k(u_1 - u_0) + F(u_0)\;,\\
{du_n \over dt} &= k(u_{n+1} + u_{n-1} - 2u_n) + F(u_n)\;, \quad n = 1,\ldots,N-1\;,\\
{du_N \over dt} &= k(u_{N-1} - u_N) + F(u_N)\;.
\end{equs}
This is a very primitive model for a polymer chain consisting of $N+1$ monomers and without self-interaction.
(In particular, our string is allowed to self-intersect...)
It does however not take into account the effect of the molecules of water that would randomly ``kick'' the 
particles that make up our string. Assuming that these kicks occur randomly and independently
at high rate, this effect can be modelled in first instance by independent white noises acting on 
all degrees of freedom of our model. We thus obtain a system of coupled stochastic differential equations:
\begin{equs}
{du_0 } &= k(u_1 - u_0)\,dt + F(u_0)\,dt + \sigma\,dw_0(t)\;,\\
{du_n} &= k(u_{n+1} + u_{n-1} - 2u_n)\,dt + F(u_n)\,dt + \sigma\,dw_n(t)\;, \quad n = 1,\ldots,N-1\;,\\
{du_N} &= k(u_{N-1} - u_N)\,dt + F(u_N)\,dt + \sigma\,dw_N(t)\;.
\end{equs}
Formally taking the continuum limit (with the scalings $k \approx \nu N^2$ and $\sigma \approx \sqrt N$), we can infer that if $N$ is very large, this system is well-described
by the solution to a stochastic \textit{partial differential equation}
\begin{equ}
du(x,t) = \nu \d_x^2 u(x,t)\, dt + F(u(x,t))\, dt + dW(x,t)\;,
\end{equ}
endowed with the boundary conditions $\d_x u(0,t) = \d_x u(1,t) = 0$. 

The reason for the choice of scaling $k \approx \nu N^2$ should be clear: if we want to obtain a continuum
limit $u$ such that $u_n(t) \approx u(n/N,t)$, then this is the choice such that
$k(u_{n+1} + u_{n-1} - 2u_n) \approx \nu \d_x^2 u$.
It is a bit less obvious \textit{a priori} what the correct scaling for $\sigma$ should be.
Recall at this stage that white noise can be viewed as a random distribution that is ``delta-correlated'',
i.e.\ setting $\xi_n = \sigma\,dw_n/dt$ one has at a formal level
\begin{equ}
\E \xi_n(s)\xi_n(t) = \sigma^2 \delta(t-s)\;,
\end{equ}
where $\delta$ denotes the ``delta function''. This is of course not an actual function but
a distribution, so that the $\xi_n$'s should really be interpreted as random distributions such that,
for any two test functions $\phi$ and $\psi$, one has
\begin{equ}
\E \xi_n(\phi)\xi_m(\psi) =  \sigma^2 \delta_{n,m} \int_{\R} \phi(t)\psi(t)\,dt\;.
\end{equ}
It is now natural to look for a scaling such that the collection of $\xi_n$'s converges to a
random space-time distribution. For this, consider now test functions $\phi$ and $\psi$ depending on
both space and time and note that the natural way of testing the $\xi_n$'s against then is given by
the distribution $\xi^{(N)}$ such that
\begin{equ}
\xi^{(N)}(\phi) = {1\over N} \sum_{n=0}^N \xi_n(\phi(n/N,\cdot))\;.
\end{equ}
In particular, its covariance is then given by
\begin{equs}
\E\xi^{(N)}(\phi)\xi^{(N)}(\psi) &= {\sigma^2\over N^2} \sum_{n=0}^N \E \xi_n(\phi(n/N,\cdot))\xi_n(\psi(n/N,\cdot)) \\
&={\sigma^2\over N^2} \sum_{n=0}^N \int_\R \phi(n/N,t)\psi(n/N,t)\,dt
\approx {\sigma^2\over N} \int_0^1 \int_\R \phi(x,t)\psi(x,t)\,dt\,dx\;.
\end{equs}
If we want this to have a non-trivial limit as $N\to\infty$, we should indeed choose $\sigma$
of order $\sqrt N$. Furthermore, we see that the limiting random distribution $\xi$ formally satisfies
\begin{equ}
\E \xi(y,s)\xi(x,t) = \delta(t-s)\delta(x-y)\;,
\end{equ}
which is called \emph{space-time white noise}.

\subsection{The stochastic Navier--Stokes equations}

The Navier--Stokes\index{stochastic!Navier--Stokes equations} equations describing the evolution of the velocity field $u(x,t)$ of an incompressible viscous fluid
are given by
\begin{equ}[e:NS]
{du \over dt} = \nu \Delta u - \bigl(u\cdot \nabla\bigr) u - \nabla p  + f\;,
\end{equ}
complemented with the (algebraic) incompressibility condition $\div u = 0$. Here, $f$ denotes some external force 
acting on the fluid, whereas the pressure $p$ is given implicitly by the requirement that $\div u = 0$ at all times.

While it is not too difficult in general to show that solutions to \eref{e:NS} exist in some weak sense,
in the case where $x \in \R^d$ with $d \ge 3$, their \textit{uniqueness for all times} is an open problem with a \$1,000,000 
prize. We will of course not attempt to solve this long-standing problem, so we are going to restrict ourselves to the case
$d = 2$. (The case $d = 1$ makes no sense since there the condition $\div u = 0$ would imply that $u$ is constant. However,
one could also consider the Burger's equation  which has similar features to the Navier--Stokes equations.)

For simplicity, we consider solutions that are periodic in space, so that we view $u$ as a function from
$\T^2 \times \R_+$ to $\R^2$. In the absence of external forcing $f$, one can use the incompressibility assumption
to see that
\begin{equ}
{d\over dt} \int_{\T^2} |u(x,t)|^2\, dx = -2\nu \int_{\T^2} \tr D u(x,t)^* D u(x,t) \, dx \le -2\nu \int_{\T^2} |u(x,t)|^2 \, dx\;,
\end{equ}
where we used the Poincar\'e inequality in the last line (assuming that $\int_{\T^2} u(x,t)\, dx = 0$).
Therefore, by Gronwall's inequality, the solutions decay to $0$ exponentially fast. This shows that energy needs to
be pumped into the system continuously if one wishes to maintain an interesting regime.

One way to achieve this from a mathematical point of view is to add a force $f$ that is randomly fluctuating.
We are going to show that if one takes a random force that is Gaussian and such that
\begin{equ}
\E f(x,t) f(y,s) = \delta(t-s) C(x-y)\;,
\end{equ}
for some correlation function $C$ then, provided that $C$ is sufficiently regular, one can show that
\eref{e:NS}  has solutions for all times. 
Furthermore, these solutions do not blow up in the sense that one can find
a constant $K$ such that, for any solution to \eref{e:NS}, one has
\begin{equ}
\limsup_{t \to \infty} \E \|u(t)\|^2 \le K\;,
\end{equ}
for some suitable norm $\|\cdot\|$. This allows to provide a construction of a model for homogeneous
turbulence which is amenable to mathematical analysis.

\subsection{The stochastic heat equation}

\index{stochastic!heat equation}
In this section, we focus on the particular example of the stochastic heat equation. We will perform
a number of calculations that give us a feeling for what the solutions to this equation look like. These
calculations will not be completely rigorous but could be made so with some extra effort. Most tools required
to make them rigorous will be introduced later in the course.

\subsubsection{Setup}

Recall that the \textit{heat equation} is the partial differential equation:
\begin{equ}[e:heat]
\d_t u = \Delta u\;,\quad u\colon \R_+ \times \R^n \to \R\;.
\end{equ}
Given any bounded continuous initial condition $u_0 \colon \R^n \to \R$, there exists
a unique solution $u$ to \eref{e:heat} which is continuous on $\R_+ \times \R^n$ and such 
that $u(0, x) = u_0(x)$ for every $x \in \R^n$.

This solution is given by the formula
\begin{equ}
u(t,x) = {1\over (4\pi t)^{n/2}} \int_{\R^n} e^{-{|x-y|^2 \over 4t}} u_0(y)\, dy\;.
\end{equ}
We will denote this by the shorthand $u(t,\cdot\,) = e^{\Delta t} u_0$ by analogy with the solution
to an $\R^d$-valued linear equation of the type $\d_t u = A u$.

Let us now go one level up in difficulty by considering \eref{e:heat} with an additional
`forcing term' $f$:
\begin{equ}[e:heatf]
\d_t u = \Delta u + f\;,\quad u\colon \R_+ \times \R^n \to \R\;.
\end{equ}
From the variations of constants formula, we obtain that the solution to \eref{e:heatf}
is given by
\begin{equ}[e:solHeat]
u(t, \cdot\,) = e^{\Delta t} u_0 + \int_0^t e^{\Delta (t-s)} f(s, \cdot\,)\, ds\;.
\end{equ}
Since the kernel defining $e^{\Delta t}$ is very smooth, this expression actually makes sense for
a large class of distributions $f$. Suppose now that $f$ is ``space-time white noise''. We do not 
define this rigorously for the moment, but characterise it as a (distribution-valued) 
centred  Gaussian process
$\xi$ such that $\E \xi(s,x) \xi(t,y) = \delta(t-s)\delta(x-y)$.

The stochastic heat equation  is then the stochastic partial differential equation
\begin{equ}[e:stochHeat]
\d_t u = \Delta u + \xi\;,\quad u\colon \R_+ \times \R^n \to \R\;.
\end{equ}
Consider the simplest case $u_0 = 0$, so that its solution is given by
\begin{equ}[e:heatWhite]
u(t,x) =  \int_0^t {1\over (4\pi |t-s|)^{n/2}} \int_{\R^n} e^{-{|x-y|^2 \over 4(t-s)}} \xi(s,y)\, dy\, ds
\end{equ}
This is again a centred Gaussian process, but its covariance function is more complicated.
The aim of this section is to get some idea about the space-time
regularity properties of \eref{e:heatWhite}. While the solutions to ordinary stochastic differential
equations are in general $\alpha$-H\"older continuous (in time) for every $\alpha < 1/2$ but not
for $\alpha = 1/2$, we will see that in dimension $n = 1$, $u$ as given by \eref{e:heatWhite} is only 
`almost' $1/4$-H\"older continuous in time and ``almost'' $1/2$-H\"older continuous in space. In higher dimensions,
it is not even function-valued...
The reason for this lower time-regularity is that the driving space-time white noise is not only very singular as a 
function of time, but also as a function of space. Therefore, some of the regularising effect of the heat equation is required to turn
it into a continuous function in space.  

Heuristically, the appearance of the H\"older exponents $1/2$ for space and $1/4$ for time in dimension $n=1$ can be understood 
by the following argument. If we were to remove the term $\d_t u$ in \eref{e:stochHeat}, then $u$ would have the same time-regularity 
as $\xi$, but two more derivatives of space regularity. If on the other hand we were to remove the term $\Delta u$, then $u$ would have the
sample space regularity as $\xi$, but one more derivative of time regularity. The consequence of keeping both terms is that we can
`trade' space-regularity against time-regularity at a cost of one time derivative for two space derivatives. Now we know that
white noise (that is the centred Gaussian process $\eta$ with $\E \eta(t)\eta(s) = \delta(t-s)$) is the time derivative of Brownian motion,
which itself is ``almost'' $1/2$-H\"older continuous. Therefore, the regularity of $\eta$ requires ``a bit more than half a derivative'' 
of improvement if we wish to obtain a continuous function.

Turning back to $\xi$, we see that it is expected to behave like $\eta$ both in the space direction and in the time direction. 
So, in order to turn it into a continuous function of time, roughly half of a time derivative is required. This leaves over half of a time
derivative, which we trade against one spatial derivative, thus concluding that for fixed time, $u$ will be almost $1/2$-H\"older continuous
in space. Concerning the time regularity, we note that half of a space derivative is required to turn $\xi$ into a continuous function of space,
thus leaving one and a half space derivative. These can be traded against $3/4$ of a time derivative, thus explaining the $1/4$-H\"older
continuity in time.

In Section~\ref{sec:regularity}, we are going to see more precisely  how the space-regularity
and the time-regularity interplay in the solutions to linear SPDEs, thus allowing us to justify rigorously this type
of heuristic arguments. For the moment, let us justify it by a calculation in the particular case of the stochastic heat equation.

\subsubsection{A formal calculation}

Define the covariance for the solution to the stochastic heat equation by
\begin{equ}[e:defCov]
C(s,t,x,y) = \E u(s,x) u(t,y)\;,
%C(s,t,x,y) = \E |u(s,x) - u(t,y)|^2\;,
\end{equ}
where $u$ is given by \eref{e:heatWhite}. 

By (statistical) translation invariance, it is clear that
$C(s,t,x,y) = C(s,t,0,x-y)$. Using \eref{e:heatWhite} and the expression for the covariance of $\xi$, one has
\begin{equs}
C(s,t&,0,x) \\
&= {1\over (4\pi)^n} \E \int_0^t \!\!\int_0^s \!\!\int_{\R^n}\!\!\int_{\R^n} {1\over  |s-r'|^{n/2} |t-r|^{n/2}}  e^{-{|x-y|^2 \over 4(t-r)} - {|y'|^2 \over 4(s-r')}} \xi(r,y)  \xi(r',y')\, dy\, dy'\, dr' \, dr \\
&=  {1\over (4\pi)^n} \int_0^{s\wedge t} \int_{\R^n} {1\over  |s-r|^{n/2} |t-r|^{n/2}}  e^{-{|x-y|^2 \over 4(t-r)} - {|y|^2 \over 4(s-r)}}\,dy \, dr \\
& = {1\over (4\pi)^n} \int_0^{s\wedge t} \int_{\R^n} {1\over  |s-r|^{n/2} |t-r|^{n/2}} \\
&\qquad \times \exp \Bigl(-{|x|^2 \over 4(t-r)} - {\scal{x,y}\over 2(t-r)}- {|y|^2 \over 4(s-r)}- {|y|^2 \over 4(t-r)}\Bigr)\,dy \, dr \;.
\end{equs}
(Here, we used the shorthand notation $a\wedge b \eqdef \min\{a,b\}$.)
The integral over $y$ can be performed explicitly by ``completing the square'' and one obtains
\begin{equs}
C(s,t,0,x) &=  2^{-n}\int_0^{s\wedge t}  {(s+t-2r)^{-n/2}}  \exp \Bigl(-{|x|^2 \over 4(s+t-2r)}\Bigr) \, dr \\
&= 2^{-(n+1)} \int_{|s-t|}^{s+ t}  {\ell^{-n/2}}  \exp \Bigl(-{|x|^2 \over 4\ell}\Bigr) \, d\ell\;. \label{e:singSHE}
\end{equs}
We notice that the singularity at $\ell = 0$ is integrable if and only if $n < 2$, so that
$C(t,t,0,0)$ is finite only in the one-dimensional case. We therefore limit ourselves to this case in the sequel.

\begin{remark}
Even though the random variable $u$ defined by \eref{e:heatWhite} is not function-valued in dimension $2$, it is ``almost'' the case
since the singularity in \eref{e:singSHE} diverges only logarithmically. The stationary solution to \eref{e:stochHeat} is
called the \textit{Gaussian free field} and has been the object of intense studies over the last few years, especially in dimension $2$. 
Its interest stems from the
fact that many of its features are conformally invariant (as a consequence of the conformal invariance of the Laplacian), 
thus linking probability theory to quantum field theory on one hand and
to complex geometry on the other hand. The Gaussian free field also relates directly to the Schramm--Loewner evolutions (SLEs) for the study of which Werner was awarded the Fields medal in 2006, see \cite{IntroSLE,SS06}. For more information on the Gaussian
free field, see for example the review article by Sheffield \cite{GFF}.
\end{remark}

The regularity of $u$ is determined by the behaviour of
$C$ near the ``diagonal'' $s = t$, $x = y$. We first consider the time regularity. We therefore set $x = 0$ and compute
\begin{equ}
C(s,t,0,0) =  {1\over 4}\int_{|s-t|}^{s+ t}  {\ell^{-1/2}} \, d\ell = \hf \bigl(|s+t|^{{1\over 2}}- |s-t|^{{1\over 2}}\bigr)\;.
\end{equ}
This shows that, in the case $n = 1$ and for $s \approx t$, one has the asymptotic behaviour
\begin{equ}
\E |u(s,0) - u(t,0)|^2 \approx |t-s|^{{1\over 2}}\;.
\end{equ}
Comparing this with the standard Brownian motion  for which $\E |B(s) - B(t)|^2 = |t-s|$, we conclude that
the process $t \mapsto u(t,x)$ is, for fixed $x$, almost surely $\alpha$-H\"older continuous for any exponent
$\alpha < 1/4$ (but actually \textit{not} for $\alpha = 1/4$). This argument is a special case of Kolmogorov's celebrated continuity test,
of which we will see a version adapted to Gaussian measures in Section~\ref{sec:boundsGauss}.

If, on the other hand, we fix $s=t$, we obtain (still in the case $n = 1$) via the change of variables $z = |x|^2 / 4\ell$,
the expression
\begin{equ}
C(t,t,0,x) =  { |x| \over 8} \int_{|x|^2 \over 8t}^{\infty}  {z^{-{3\over 2}}}  e^{-z} \, dz\;.
\end{equ} 
Integrating by parts, we get
\begin{equ}
C(t,t,0,x) =  {\sqrt{t}\over 4} e^{- {|x|^2 \over 8t}} + {|x|\over 4} \int_{|x|^2 \over 8t}^{\infty}  {z^{-{1\over 2}}}  e^{-z} \, dz\;,
\end{equ}
So that to leading order we have for small values of $x$: 
\begin{equ}
C(t,t,0,x) \approx {\sqrt{t}\over 4} + {|x|\over 4} \int_0^\infty z^{-{1\over 2}} e^{-z}\, dz = \sqrt t + {\sqrt \pi  |x| \over 4}+ \CO \bigl(|x|^2 / 8\sqrt t\bigr)\;.
\end{equ}
This shows that, at any fixed instant $t$, the solution to \eref{e:stochHeat} looks like a Brownian motion in space
over lengthscales of order $t^{1/2}$. Note that over such a lengthscale the Brownian motion fluctuates by about $t^{1/4}$, which is 
precisely the order of magnitude of $\E |u(t,x)|$.

\subsection{What have we learned?}

\begin{enumerate}
\item At a ``hand-waving'' level, we have forced our equation with a term that has a temporal
evolution resembling white noise, so that one would  naively expect its solutions to have a temporal regularity resembling
Brownian motion.
However, for any fixed location in space, the solution to the stochastic heat equation has a time-regularity which is
only almost ${1\over4}$-H\"older continuous, as opposed to the almost ${1\over 2}$-H\"older 
time-regularity of Brownian motion.
\item Unlike the solutions to an ordinary parabolic PDE, the solutions to a stochastic PDE tend to be spatially ``rough''. It is therefore
not obvious \textit{a priori} how the formal expression that we obtained is to be related to the original equation \eref{e:stochHeat},
since even for positive times, the map $x \mapsto u(t,x)$ is certainly not twice differentiable.
\item Even though the deterministic heat equation has the property that $e^{\Delta t} u \to 0$ as $t \to \infty$ for every $u \in L^2$, the solution
to the stochastic heat equation has the property that $\E |u(x,t)|^2 \to \infty$ for fixed $x$ as $t \to \infty$. This shows that 
in this particular case, the stochastic forcing term pumps energy into the system faster than the deterministic evolution
can dissipate it.
\end{enumerate}

\begin{exercise}
Perform the same calculation, but for the equation
\begin{equ}
\d_t u = \Delta u - a u + \xi\;,\quad u\colon \R_+ \times \R \to \R\;.
\end{equ}
Show that as $t \to \infty$, the law of its solution converges to the law of an Ornstein-Uhlenbeck process (if the space
variable is viewed as ``time''):
\begin{equ}
\lim_{t \to\infty} \E u(t,x)u(t,y) = C e^{-c|x-y|}\;.
\end{equ}
Compute the constants $C$ and $c$ as functions of the parameter $a$.
\textbf{Hint:} Show first that, similarly to \eqref{e:singSHE}, the above limit is given by
\begin{equ}
G(|x-y|) \eqdef \int_0^\infty e^{-\frac{|x-y|^2}{4r} - ar} \frac{dr}{4\sqrt r}\;. 
\end{equ}
Show then that, away from the origin, $G$ satisfies an ODE of the form $G' = \pm c G$. 
For this, a change of variables of the form $s = k/r$ for a suitable choice of $k$ might come in handy.
\end{exercise}

One much more robust way of deriving the exponents $1/2$ and $1/4$ appearing above is given 
by the following exercises.
Here, we take it for granted that the law of $\xi$ is characterised by the fact that,
for any two test functions $\phi$ and $\psi$, one has
\begin{equ}
\E \xi(\phi)\xi(\psi) = \scal{\phi,\psi} \eqdef \int_{\R\times \R^d} \phi(t,x)\psi(t,x)\,dt\,dx\;.
\end{equ}
(This will be justified in Section~\ref{sec:Gaussian}.)

\begin{exercise}\label{ex:scale}
Given an exponent $\alpha \in \R$ a scaling factor $\lambda > 0$, $h = (h_t, h_x) \in \R^{d+1}$, and a function $f \colon \R^{d+1} \to \R$, set
\begin{equ}[e:scaling]
(S_\lambda^\alpha f)(t,x) = \lambda^{-\alpha}f(\lambda^2 t,\lambda x)\;,\qquad
(T_h f)(t,x) = f(t - h_t,x - h_x)\;. 
\end{equ}
This is extended to distributions on $\R^{d+1}$ by
\begin{equ}
\bigl(S_\lambda^\alpha \xi\bigr)(\phi) = \xi\big(S_{1/\lambda}^{-d-2-\alpha} \phi\big)\;,\qquad
\bigl(T_h \xi\bigr)(\phi) = \xi\big(T_{-h} \phi\big)\;.
\end{equ}
Show that this definition is consistent with the canonical embedding of smooth functions into
the space of distributions. 

Let now $\xi$ denote space-time white noise. Show that, for any $\lambda$ and $h$ as above, and for
$\alpha = -\frac{d+2}{2}$, the laws of $S_\lambda^\alpha \xi$ and of $T_h \xi$ equal that of $\xi$.
We say that $\xi$ is \textit{stationary} and \textit{scale invariant} with scaling exponent $\alpha$.
Finally, show that if $u$ is a random function / distribution that is stationary and scale invariant with
exponent $\beta$, then $(\d_t - \Delta)u$ is stationary and scale invariant with exponent $\beta - 2$.
\end{exercise}

\begin{exercise}\label{ex:Holder}
Let $\alpha \in (0,1)$ and, given a continuous 
function $f\colon \R \to \R$, define $S_\lambda^\alpha f$ as above,
but ignoring the $t$ variable. Show that $f$ is uniformly $\alpha$-Hölder continuous\footnote{i.e.\ there exists $K$ such that $|f(x) - f(y)| \le K |x-y|^\alpha$ uniformly over all pairs $x,y$ with $|x-y| \le 1$.} if and only if
the quantity $\scal{S_\lambda^\alpha T_h f,\phi}$ is bounded, uniformly over all $h \in \R$,
all $\lambda \in (0,1]$, and all functions $\phi$ that are bounded by $1$, are supported in the 
centred ball of radius $1$, and are such that $\int \phi(x)\,dx = 0$. 
\textbf{Hint:} Consider successive approximations of $f(z)$ (with $z \in \{x,y\}$)
by testing $f$ against a suitable multiple of the indicator
function of an interval of length $2^{-n}|x-y|$ centred around $z$.

Show that the same statement also holds for functions on $\R^{d+1}$ with $S_\lambda^\alpha$
as in \eqref{e:scaling}, provided that Hölder continuity is measured with respect to the distance function
$d((t,x), (t',x')) = \sqrt{|t-t'|} + |x-x'|$. Note then that $\alpha$-Hölder continuity with 
respect to $d$ implies $\alpha$-Hölder continuity in the $x$ variable and $\alpha/2$-Hölder 
continuity in the $t$ variable.  
\end{exercise}

Exercise~\ref{ex:scale} makes it at least plausible that the solution to the stochastic heat equation
is translation and scale invariant with exponent $2 -  \frac{d+2}{2} = \frac{2-d}{2}$, which is 
$1/2$ in the special case $d=1$. Exercise~\ref{ex:Holder} then suggests that this implies the
claimed Hölder continuity. The reason why one actually looses a little bit is that  
translation and scale invariance \textit{in law} does not actually imply the uniform boundedness
required in Exercise~\ref{ex:Holder}.

\begin{remark}
It is in fact note quite true that $u$ is translation and scale invariant in the above sense,
but it is the case if one quotients it by constant functions. Since the above criterion for
Hölder regularity only requires test functions that annihilate constants, this does not affect
the argument.
\end{remark}

\section{Probability Measures on Polish Spaces}

While most of the probability theory appearing in these notes is constructed from Gaussian measures
(but on infinite-dimensional spaces!), we start with a foundational section on the
general theory of probability measures on Polish (that is complete, separable, metrisable) 
spaces.\index{Polish space}

The typical example of a Polish space one should have in mind is that of a separable Banach (or Hilbert) space,
like for example the Lebesgue spaces $L^p(\R^n)$ (with $n \neq \infty$) or the Sobolev spaces $H^s(\R^n)$.
More interestingly, we will see that, when equipped with the topology of weak convergence, the 
space of probability measures on a Polish space is itself Polish! An example of a ``large'' Polish
space which is very far from linear is the Gromov--Hausdorff space of 
\textit{all} isometry classes of compact metric spaces. 

This theory is of course much too vast to be done any sort of justice in these few short pages. We therefore refer the 
interested reader to the excellent and quite extensive treatise by Bogachev \cite{BogMT} and to the much shorter but maybe more
readily accessible book by Billingsley \cite{Bill68} which still covers a large part of the material required for the basic study of stochastic PDEs. 

\subsection{Convergence of probability measures}

The focus of this section will be mainly on the question of convergence of probability measures on Polish spaces. We will introduce
a number of different topologies on the space of probability measures on an arbitrary Polish space and we will discuss the relations
between these topologies and the metrics that generate them.

Recall that the Borel $\sigma$-algebra $\B(\CX)$ of a Polish space $\CX$ is the smallest collection of sets containing all the
open sets of $\CX$ that is furthermore closed under countable unions  and taking complements. A probability measure
$\mu$ on $\CX$ is then a map $\mu \colon \B(\CX) \to [0,1]$ such that $\mu(\CX) = 1$ and $\mu(\bigcup A_i) = \sum \mu(A_i)$
for any countable collection of mutually disjoint sets $A_i \in \B(\CX)$. We denote by $\cP(\CX)$ the set of all (Borel) probability
measures on $\CX$. In order of increasing strength, here are the two main notions of convergence that will be used in these notes:
\begin{claim}
\item \textbf{Weak convergence:}\index{weak convergence} A sequence of probability measures $\mu_n$ converges weakly to a 
limiting probability measure $\mu$ if  
$\lim_{n \to \infty}\int \phi(x)\,\mu_n(dx) = \int \phi(x)\,\mu(dx)$ for every bounded continuous function $\phi \colon \CX \to \R$.
\item \textbf{Total variation convergence:}\index{total variation distance} This notion of convergence is defined by the total variation metric given by 
\begin{equ}[e:defTV]
\|\mu- \nu\|_\TV = \sup_{\|\phi\|_\infty \le 1} \Bigl|\int \phi(x) \,\mu(dx) - \int \phi(x)\, \nu(dx)\Bigr|\;.
\end{equ}
Here, $\|\phi\|_\infty$ denotes the supremum norm of $\phi$.
\end{claim}

\begin{exercise}
Show that the ``total variation metric'' defined above is indeed a metric.
\end{exercise}

\begin{remark}
Another widespread notion of convergence is that of \textit{strong convergence}: a sequence $\mu_n$ converges strongly to a limiting measure $\mu$ if  
$\lim_{n \to \infty}\mu_n(A) = \mu(A)$ for every  $A \in \B(\CX)$. However, this notion of convergence
does have somewhat more pathological properties. For example, it is possible to find several \textit{non-equivalent} topologies on the space of signed measures on
$\CX$ giving rise to this notion of convergence for countable sequences, see \cite[Section~4.7(v)]{BogMT}.

Furthermore, these topologies are not ``nice''. For example, if we consider the set $\cM(\CX)$ of finite signed measures on $\CX$ endowed with the
total variation norm, then this is a Banach space (call it $\cM_\TV$). 
It is then a very unfortunate fact that one of the topologies on $\cM(\CX)$ giving 
rise to the notion of strong convergence 
for sequences is actually the \textit{weak} (in the usual sense of functional analysis) topology on $\cM_\TV$.
Since, just like $\cM_\TV$ itself, the dual of $\cM_\TV$ is not separable (unless $\CX$ is finite of course), 
it follows from general principles (see for example \cite{FabHab:01,Zizler}) that this topology is not 
metrisable (not even on bounded sets), which greatly limits its use in practice.
\end{remark}

\begin{remark}
If $\CX$ is a compact metric space, then the Riesz--Markov theorem \cite{Rudin} tells us that the dual of $\CC_b(\CX)$, the space of bounded continuous functions,
is precisely given by the space $\cM(\CX)$ of finite signed measures on $\CX$. Furthermore, in this language, the topology of weak convergence 
is nothing but the weak-* topology on $\cM(\CX)$, viewed as the dual of $\CC_b(\CX)$. 
Since the dual of an infinite-dimensional Banach space is never metrisable for the weak-* topology \cite{FabHab:01,Zizler}, one 
may think then that the notion of weak convergence for probability measure suffers from the same problems
as those pointed out in the previous remark for the strong convergence. Fortunately, it turns out that \textit{bounded subsets}
of the dual of a separable Banach space are weak-* metrisable, which is sufficient for our purpose since we are mostly interested
in probability measures. 
\end{remark}

We will sometimes use slightly different notions of convergence, but they will be only minor variations on the general themes given here.
The above notions of convergence give rise to two non-equivalent metrisable topologies on $\cP(\CX)$. 
Metrisability is obvious for the
notion of total variation convergence, but much less so for the notion of weak convergence. Before we turn to the construction of metrics
for weak convergence, let us give a few classical examples illustrating the differences between them.

\begin{example}
Let $\CX = \R$ and let $\mu_n$ be the Dirac measure located at ${1/n}$. Then it converges weakly, but neither strongly nor in total variation 
to the Dirac measure located at the origin.
\end{example}

\begin{example}
Let $\mu_n$ be the measure on $[0,\pi]$ given by $\mu_n(dx) = {2\over \pi} \sin^2(nx)\,dx$. Then, $\mu_n$ converges to the normalised Lebesgue
measure both in the weak and the strong sense. However, the total variation distance between $\mu_n$ and its limit  ${dx\over \pi}$
is  equal to ${2\over \pi}$ for every $n$.
\end{example}

Note that the theory of probability measures on arbitrary metric (or metrisable) spaces
is much more pathological. This can be seen by the following example which shows in particular that
the rule of thumb that ``every map you can define unambiguously is measurable'' can be broken in that case.

\begin{exercise}
Let $H\colon \R\to \{0,1\}$ denote the Heaviside function and write $H_t(x) = H(x-t)$. 
Show that the map $\Theta \colon \R \to L^\infty(\R)$ given by $t \mapsto H_t$ is
not even Borel measurable. (You may take it for granted that there do exist subsets of the real line
that are not Borel.) \textbf{Hint:} Use the fact that an arbitrary union of open sets is open to show
that given \textit{any} subset $A \subset \R$ there exists an open set $A^* \subset L^\infty$ 
with $\Theta^{-1}(A^*) = A$.
\end{exercise}

\begin{exercise}\label{ex:HB}
Show that for subsets $K\subset \CX$ with $\CX$ a Polish space, the following
four characterisations of compactness are equivalent:
\begin{enumerate}
\item Every open cover of $K$ admits a finite subcover. ($K$ is compact.)
\item $K$ is closed and, given any metric $d$ generating the topology of $\CX$,
for every $\eps > 0$, $K$ can be covered with finitely many balls of radius $\eps$. 
($K$ is closed and totally bounded.)
\item Every sequence $(x_n)$ of elements of $K$ admits a subsequence that converges
to a limit $x \in K$. ($K$ is sequentially compact.)
\end{enumerate}
\textbf{Hint:} The hard part is to show that 3 implies 1. For this, show first that if 3 holds and
we have an open cover $\{V_\alpha\}$ of $K$, then we can find a countable subcover. This can be seen
by fixing some dense countable subset $\{y_n\}_{n \in \N}$ of $K$ and, for each $m,n \in \N$,
choose exactly one $V_\alpha$ containing the open ball of radius $1/m$ around $y_n$ (if such a $V_\alpha$ 
exists). Writing $V_n$ the countable cover of $K$ obtained in this way, we define a decreasing family of 
closed sets by $F_n = K \setminus \bigcup_{j \le n} V_j$. Show that either one of the $F_n$'s must be empty
(thus showing that we have a finite cover of $K$), or $\bigcap_{n > 0} F_n \neq \emptyset$,
thus contradicting the fact that the $V_n$'s are a cover of $K$. 
\end{exercise}

\subsection{Total variation convergence}

\begin{wrapfigure}{r}{4.5cm}
\vspace{-7mm}
\begin{center}
\mhpastefig{TV}
\end{center}
\vspace{-8mm}
\end{wrapfigure}
The total variation distance between two probability measures $\mu$ and $\nu$
is relatively straightforward to comprehend: it consists of the total amount of mass that doesn't overlap between $\mu$ and $\nu$.
Since a picture is worth a thousand words, we illustrate this by
 the figure shown on the right: the total variation distance between $\mu$ and $\nu$ is given by the dark gray area.

If $\mu$ and $\nu$ have densities $\CD_\mu$ and $\CD_\nu$ with respect to some
common positive reference measure $\zeta$ (by the Radon--Nikodym theorem,
it is always possible to take $\zeta = {1\over 2}(\mu + \nu)$ for example), then one has
the identity
\begin{equ}[e:defTV2]
\|\mu-\nu\|_{\TV} = \int_\CX |\CD_\mu(x) - \CD_\nu(x)|\, \zeta(dx)\;,
\end{equ}
which is also sometimes taken as the definition of the total variation distance.

\begin{exercise}
Show that the characterisation \eref{e:defTV2} of the total variation distance does not depend on the particular choice of a reference measure $\zeta$
and that it does indeed agree with the definition previously given in \eref{e:defTV}. \textbf{Hint:} Consider the test function $\phi(x)$ such that
$\phi(x) = 1$ if $\CD_\mu(x) > \CD_\nu(x)$ and $\phi(x) = -1$ otherwise. 
\end{exercise}

As a consequence of the characterisation \eref{e:defTV2}, we have the following very important fact:

\begin{corollary}
For $\mu$ and $\nu$ two probability measures, one has $\|\mu - \nu\|_\TV = 2$ if and only if $\mu$ and $\nu$ are mutually singular
and $\|\mu - \nu\|_\TV < 2$ otherwise.
\end{corollary}

\begin{proof}
Let $\mu$ and $\nu$ be mutually singular, denoted by $\mu \perp \nu$.\index{mutually singular measures} Then there exists a set $A$ such that $\mu(A) = 1$ and $\nu(A) = 0$. Setting $\phi(x) = 2\,\one_A(x) - 1$, it follows
from \eref{e:defTV} that $\|\mu-\nu\|_\TV \ge 2$. On the other hand, one has $\|\mu-\nu\|_\TV \le 2$ as a consequence of the
definition, so that the first implication follows. 

To show the converse,
assume that $\mu$ and $\nu$ are not mutually singular and denote by $\CD_\mu$ and $\CD_\nu$ their densities with respect to a common reference measure $\zeta$.
Let $A_\mu = \{x \,:\, \CD_\mu(x) > 0\}$ and similarly for $A_\nu$, and set $A = A_\mu \cap A_\nu$. With this notation, one must have $\zeta(A) > 0$, for otherwise
$\mu \perp \nu$. Since for two positive numbers $a$ and $b$ we have the identity $|a-b| = a+b - 2(a\wedge b)$, it follows from \eref{e:defTV2} that
\begin{equ}
\|\mu-\nu\|_{\TV} = 2 - 2\int_A \bigl(\CD_\mu(x) \wedge \CD_\nu(x)\bigr)\, \zeta(dx)\;.
\end{equ}
Since $\zeta(A) > 0$ and $\CD_\mu(x) \wedge \CD_\nu(x) > 0$ for $x \in A$ by definition, the claim follows.
\end{proof}

There is a third very useful (and more probabilistically appealing) interpretation of the total variation distance between two probability measures. 
Indeed, the total variation
distance between two probability measures $\mu$ and $\nu$ on a Polish space $\CX$ is given by
\begin{equ}[e:TVCoupl]
\|\mu-\nu\|_{\TV} = 2\inf_{\pi \in \cC(\mu,\nu)} \pi(\{x \neq y\})\;,
\end{equ}
where the infimum runs over the set $\cC(\mu,\nu)$ of all probability measures $\pi$ on $\CX \times \CX$ with marginals $\mu$ and $\nu$. 
(This set is also called the set of all \textit{\index{coupling}{couplings}} of $\mu$ and $\nu$.)
In other words, if the total variation distance between $\mu$ and $\nu$ is smaller than $2\eps$, then it is possible to construct
$\CX$-valued random variables $X$ and $Y$ with respective laws $\mu$ and $\nu$ such that $X = Y$ with probability
$1-\eps$. This gives a straightforward probabilistic interpretation of the total variation distance as twice the probability that a random sample drawn from
$\mu$ can be distinguished from a sample drawn at random from $\nu$.

\begin{exercise}
Show that the identity \eref{e:TVCoupl} holds and that the infimum is attained. \textbf{Hint:} The optimal coupling can be constructed explicitly by considering a combination
of the measure $\bigl(\CD_\mu(x) \wedge \CD_\nu(x)\bigr)\,\zeta(dx)$ on the diagonal $(x,x)$ and the measure $\bigl(\CD_\mu(x) - \CD_\nu(x)\bigr)_+\bigl(\CD_\nu(y) - \CD_\mu(y)\bigr)_+\,\zeta(dx)\,\zeta(dy)$ off the diagonal.
\end{exercise}

\begin{remark}
Some authors define the total variation distance between measures 
as the expression \eref{e:TVCoupl}, but without the factor $2$. Being aware of this helps to navigate
a literature that could otherwise cause some confusion.
\end{remark}

\begin{exercise}\label{ex:TVWeight}
Given a function $V \colon \CX \to \R_+$, we can also define a weighted total variation distance by
\begin{equ}
\|\mu-\nu\|_{\TV,V} = \int_\CX \bigl(1+V(x)\bigr)|\CD_\mu(x) - \CD_\nu(x)|\, \zeta(dx)\;.
\end{equ}
By \eref{e:defTV2}, we recover the usual total variation distance as a special case when $V = 0$. This distance is only defined
on the subspace of finite signed measures that integrate $V$ and turns this subspace into a Banach space. Show that one does have, similarly to
\eref{e:TVCoupl}, the characterisation
\begin{equ}[e:TVCouplV]
\|\mu-\nu\|_{\TV,V} = \inf_{\pi \in \cC(\mu,\nu)} \int_{x \neq y} \bigl(2 + V(x) + V(y)\bigr)\,\pi(dx,dy)\;.
\end{equ} 
\end{exercise}

\subsection{Weak convergence}

It is obvious that if $\CX$ is uncountable, then $\cP(\CX)$ endowed with the total variation metric is not 
itself a separable space. Indeed, the collection
$\{\delta_x\}_{x\in \CX}$ yields an uncountable set of elements that are all at distance $2$ of each other. On the other hand, if we endow
$\cP(\CX)$ with the topology of weak convergence, then it turns out to be separable. Even better, it is actually itself a Polish space and one can 
construct a number of natural distance functions that generate its topology. In this section, we collect 
a few important results about
the properties of the weak convergence topology. For a more detailed account, including complete proofs, we refer for example
to \cite{Bill68,BogMT,Vil03}.

If one had to choose one, the single most import result in the theory of weak convergence of probability measures would probably be 
Prohorov's characterisation of those subsets of the set of probability measures that are
precompact for the weak convergence topology. Before we state this theorem, we introduce the 
concept of a \textit{tight} family of probability measures, which will is a fundamental concept in the theory
of weak convergence:

\begin{definition}
Given a collection $M \subset \cP(\CX)$ of probability measures on a Polish space $\CX$,
we say that $M$ is \textit{tight}\index{tightness} if, for every $\eps > 0$, there exists a compact set $K\subset \CX$ such that $\mu(K) \ge 1-\eps$ for every $\mu \in M$. 
\end{definition}

In other words, $M$ is tight if its elements are uniformly concentrated on compact sets. It turns out that sets comprising of a single measure (or finitely many measures) are always tight.
While this is obvious if the space $\CX$ can be covered by a countable collection of compact sets
(like it is the case for $\R^n$ for example), it is not so obvious if $\CX$ is an infinite-dimensional space.
Using the Heine--Borel theorem, it is however not to difficult to prove it, and this is the content of the next lemma:

\begin{lemma}
Let $\mu \in \cP(\CX)$ for a Polish space $\CX$. Then the singleton $\{\mu\}$ is tight.
\end{lemma}

\begin{proof}
Fix $\eps > 0$. Since $\CX$ is separable it can be covered by countably many balls of fixed, but arbitrary, radius.
Therefore, for every $n > 0$, one can find a set $K_n$ consisting of finitely many 
closed balls of radius $1/n$ and such that $\mu(K_n) > 1-2^{-n}\eps$. Setting
$K = \bigcap_{n > 0} K_n$, it follows that $\mu(K) \ge 1-\eps$, which concludes the proof since
$K$ is closed and \index{totally bounded sets}totally bounded, and therefore compact by 
Exercise~\ref{ex:HB}.
\end{proof}

Another interesting fact is that tightness follows from the following property which may appear
weaker at first sight:

\begin{lemma}\label{lem:asymTight}
Let $M \subset \cP(\CX)$ for $\CX$ a Polish space width metric $d$. Assume that, for every $\eps > 0$, there exists
$K \subset \CX$ compact such that, for every $\mu \in M$, $\mu(K^\eps) \ge 1-\eps$, where $K^\eps$ is the $\eps$-fattening
of $K$: $K^\eps = \{x \in \CX\,:\, d(x,K) \le \eps\}$. Then $M$ is tight.
\end{lemma}

\begin{proof}
The proof works as before since, for every $n$, we can find $K_n$ such that $\mu(K_n^{\eps 2^{-n}}) \ge 1- \eps 2^{-n}$
by assumption, and then set $K = \bigcap_{n > 0} K_n^{\eps 2^{-n}}$ similarly to before.
To show that $K$ is totally bounded, given $\delta > 0$ arbitrary, take $n$ large enough so that
 $\eps 2^{-n} < \delta / 2$. Since $K_n$ is compact, it can be covered by finitely many balls of 
radius $\delta/ 2$ and, since $\eps 2^{-n} < \delta / 2$, the balls of radius $\delta$ and same radii
do cover $K_n^{\eps 2^{-n}}$, and therefore $K$. 
\end{proof}

So why is tightness so important? The following theorem due to Prohorov \cite{Prohorov} shows that
tight families of probability measure coincide with precompact subsets of $\cP(\CX)$ in the topology
of weak convergence:

\begin{theorem}[Prohorov]\index{Prohorov's theorem}
A subset $M \subset \cP(\CX)$ is precompact for the topology of weak convergence if and only if it is tight.
\end{theorem}

Before we give the proof of this theorem, let us prove the following particular case:

\begin{lemma}\label{lem:compactMeas}
If $\CX$ is compact, then both $\cP(\CX)$ and the unit ball in $\cM(\CX)$ are compact for the 
topology of weak convergence.
\end{lemma}

\begin{proof}
Take any sequence $\mu_n$ of uniformly bounded measures on $\CX$.
Since $\CX$ is compact, the space $\CC(\CX)$ is separable, so we can find a dense countable subset $\phi_n$
of the unit ball in $\CC(\CX)$.
A simple diagonal extraction argument, combined with the fact that there exists $C$ such that
$\mu_n(\phi_m) \le C$ by assumption, allows to extract a subsequence $\mu_{n_k}$ such that
$\mu_{n_k}(\phi_n) \to c_n$ for every $n$. By density of the $\phi_n$ and boundedness of the $\mu_n$,
it follows that there exists a continuous linear functional $\mu$ on $\CC(\CX)$ such that $\mu_{n_k}(\phi) \to \mu(\phi)$
for every $\phi \in \CC(\CX)$. The conclusion then follows from the Riesz--Markov theorem that identifies
the dual of $\CC(\CX)$ with $\cM(\CX)$.

Note that in particular the set of probability measures is compact since, by testing against the constant function $1$
and positive functions, we conclude that the limit for any converging subsequence is again a probability measure.
\end{proof}

\begin{remark}
The above proof is nothing but a special case of the Banach--Alaoglu theorem \cite[Thm~3.15]{Rudin},
but its proof is sufficiently short and elementary so that we reproduced it here. It works for the unit ball 
of the dual space of any separable Banach space, endowed with the weak-* topology.
\end{remark}

We are now ready to give the proof of Prohorov's theorem, which follows rather closely the exposition
given in \cite[Theorem~8.6.2]{BogMT}.
The original proof can be found in  \cite{Prohorov}, but see also \cite{Bill68} for a clean proof in the
special case $\CX = \R$.

\begin{proof}[Proof of Prohorov's theorem]
We first show that tightness is sufficient by extracting a weakly convergent subsequence from $M$
under the assumption that $M$ is tight. By assumption, we can find an increasing sequence of compact sets $K_n \subset \CX$
such that $\mu(\CX \setminus K_n) \ge  1 - 2^{-n}$ for every $\mu \in M$. Using Lemma~\ref{lem:compactMeas} and 
a diagonal extraction argument, we can find a sequence $\{\mu_n\}$ in $M$ such that, for every $m > 0$,
the restricted sequence $\{\mu_n \restr K_m\}$ converges to some element $\hat \mu_m$, which is 
a positive measure on $K_m$ with $|\hat\mu_m(K_m) - 1| \le 2^{-m}$. 
This is furthermore an increasing and bounded sequence, which therefore has a limit $\mu$ with $\mu|_{K_m} = \hat\mu_m$. 
We conclude that,  for every continuous function $\phi$ bounded by $1$ and every $m \ge 1$, 
we have
\begin{equ}
|\mu_n(\phi) - \mu(\phi)| \le  \bigl|\bigl(\mu_n\restr K_m\bigr)(\phi) - \hat \mu_m(\phi)\bigr| + 
\bigl|\bigl(\mu_n\restr K_m\bigr)(\phi) - \mu_n(\phi)\bigr| + \bigl|\hat \mu_m(\phi) - \mu(\phi)\bigr|\;.
\end{equ}
The first term converges to $0$ and the other two terms are each bounded by $2^{-m}$. Since $m$ was arbitrary,
this shows that $\{\mu_n\} \to \mu$ as required.

We now show the converse statement, namely that if $M$ is not tight, then it cannot be precompact. 
Assuming that $M$ is not tight, we use the contrapositive of Lemma~\ref{lem:asymTight} to conclude that 
there exists a fixed $\eps > 0$ such that, for every compact $K\subset \CX$, there is an element $\mu_K \in M$
such that $\mu_K(\CX \setminus K^\eps) \ge \eps$. We now fix a value $\delta > 0$ (think of $\delta$
as being much smaller than the $\eps$ that we just found),
and we construct a sequence of measures $\mu_n \in M$
and two sequences of compact sets $(A_n,K_n)$ recursively in the following way:
\begin{claim}
\item Choose for $\mu_0$ any element of $M$ (it has to contain infinitely many elements since it is not tight),
choose $K_0$ such that $\mu_0(K_0) \ge 1- \delta$, and set $A_0 = K_0$.
\item Given data up to the $n$th index, choose
$\mu_{n+1}\in M$ such that $\mu_{n+1}(\CX \setminus A_n^\eps) \ge \eps$, which is possible by the lack of tightness of $M$. 
Then, choose a compact set $K \subset \CX$
such that $\mu_{n+1}(K) \ge 1-\delta$ and set $A_{n+1} = A_n \cup K$ and $K_{n+1} = K \setminus A_{n}^\eps$.
\end{claim}
Actually, the only properties of this construction that we are going to use are that  
$\mu_n(K_n) \ge \eps - \delta$, $\mu_n(\bigcup_{k > n}K_k) \le \delta$, and $K_n^{\eps/2} \cap K_m^{\eps / 2} = \emptyset$
for every $n \neq m$.

Our aim is to show that the sequence $\mu_n$ constructed in this way contains no convergent subsequence. 
If $\bar \mu_n = \mu_{k_n}$ is an arbitrary subsequence then, by setting
$\bar K_n =  K_{k_n}$ and $\bar A_n = A_{k_n}$, the sequence $(\bar \mu_n, \bar A_n, \bar K_n)$ 
also has the properties mentioned in the last paragraph,
so that it suffices to show that any sequence $\{\mu_n\}$ with these properties cannot be convergent.

We do this by exhibiting a continuous test function $\phi$ such that $\mu_n(\phi)$ does not converge.
Define first continuous functions $\phi_n$ by $\phi_n(x) = 1$ if $x\in K_n$, $\phi_n(x) = 0$ if $x \not \in K_n^{\eps/2}$,
and $\phi_n(x) = 1 - 2\eps^{-1} d(x,K_n)$ otherwise. Note that since these functions all have disjoint supports and are all Lipschitz continuous
with the same Lipschitz constant, the function 
\begin{equ}
\phi_\lambda(x) \eqdef \sum_{n>0} \lambda_n \phi_n(x)\;,
\end{equ}
is continuous and bounded for every bounded sequence $\lambda$. Given the sequence $\mu_n$, we 
now construct in a recursive way
a sequence $\lambda$ with $|\lambda_n| \le 1$ for every $n$ and such that $|\mu_n(\phi_\lambda) - \mu_{n+1}(\phi_\lambda)| \ge c$
for some fixed $c > 0$ and for every $n$.

Choose first $\lambda_0 = 0$, say. For arbitrary $n \ge 0$,
once $\lambda_0,\ldots,\lambda_n$ are given,
it follows from the property $\mu_n(\bigcup_{k>n}K_k) \le \delta$ that
 $\mu_n(\phi_\lambda)$ is determined to within an error of at most $\delta$ by $\lambda_0,\ldots,\lambda_n$.
On the other hand, we have $\mu_{n+1}(K_{n+1}) \ge \eps - \delta$ so that, by adjusting $\lambda_{n+1} \in [-1,1]$, we can
cover a range of values of width at least $2(\eps-\delta)$ for $\mu_{n+1}(\phi_\lambda)$.
This guarantees that we can find $\lambda_{n+1}$ in such a way that $|\mu_{n+1}(\phi_{\bar\lambda}) - \mu_n(\phi_{\bar\lambda})| \ge \eps - 3\delta$
for \textit{every} sequence $\bar \lambda$ such that $\bar \lambda_k = \lambda_k$ for $k \le n+1$. 
Since $\eps$ was fixed but $\delta$ was arbitrary in this construction, the claim follows by choosing $\delta$ sufficiently small.
\end{proof}

\subsection{Wasserstein distance}

Now that we have some understanding how compact sets look like in $\cP(\CX)$, we turn to the construction of a family of metrics that generate 
this topology.
Given any bounded lower semicontinuous metric $d$ on $\CX$ (note that $d$ does not necessarily need to generate the topology of $\CX$!),
we can ``lift'' it to the space of probability measures on $\CX$ in a natural way by setting:
\begin{equ}[e:defW1]
d(\mu,\nu) = \inf_{\pi \in \cC(\mu,\nu)} \int_{\CX}\int_{\CX}d(x,y)\,\pi(dx,dy)\;.
\end{equ}
This distance is called the \textit{$1$-Wasserstein distance}\footnote{This is really a misnomer since these distances were introduced by Kantorovich and the special case $p=1$ was already studied by Monge. However, the name ``Wasserstein distance'' is now being used in most of the literature on the subject so we'll stick with it.}\index{Wasserstein distance} for $d$ on $\cP(\CX)$. (The $p$-Wasserstein distances
can be defined similarly for every $p \ge 1$ by setting their $p$th power equal to the right hand side of \eref{e:defW1} with $d$ replaced by $d^p$.)
The reason why we assumed that $d$ is lower semicontinuous is the following:

\begin{exercise}
Show that the infimum in \eref{e:defW1} is achieved. \textbf{Hint:} Use the fact that single measures are tight to conclude that the set
$\cC(\mu,\nu)$ is compact for any two probability measures $\mu$ and $\nu$. Then use the lower semicontinuity of $d$ to show that any
accumulation point of approximate minimisers must be a minimiser. 
\end{exercise}

\begin{exercise}
Use disintegration of measures to show that \eqref{e:defW1} does indeed determine a distance function, i.e.\ 
that it satisfies the triangle inequality.
\end{exercise}

\begin{theorem}
If the metric $d$ is bounded and generates the topology of $\CX$, then its $p$-Wasserstein lift to $\cP(\CX)$ generates the topology of weak
convergence.
\end{theorem}

\begin{proof}
We only consider the case $p=1$, the general case is similar. Consider a sequence 
$\mu_n \in \cP(\CX)$ such that $d(\mu,\mu_n) \to 0$ for  some $\mu \in \cP(\CX)$, so that 
one can find $\pi_n \in \cC(\mu,\mu_n)$ such that 
$\int d(x,y)\,\pi_n(dx,dy) \to 0$. Let now $f$ be an arbitrary continuous function on $\CX$
with absolute value bounded by $1$, 
fix $\eps > 0$ arbitrary, and let $K \subset \CX$ compact be such that $\mu(K) > 1-\eps$.
Let furthermore $\delta > 0$ be such that, for every $x \in K$,
and every $y \in \CX$ with $d(x,y) \le \delta$, we have $|f(y) - f(x)| < \eps$.
(Such a $\delta$ exists, otherwise we could find sequences $x_n \in K$ and $y_n \in \CX$ with $d(x_n, y_n) \to 0$
and $|f(x_n) - f(y_n)| \ge \eps$. This would contradicts continuity of $f$ at accumulation points of the
sequence $x_n$, which exist by compactness of $K$.)

Setting $\Delta_\delta = \{(x,y) \in \CX^2\,:\, d(x,y) \le \delta\}$, we can find $N$ large enough 
such that $\pi_n(\Delta_\delta) > 1-\eps$ for every $n > N$. It follows that
\begin{equs}
\Big|\int f\,d\mu_n - \int f\,d\mu\Big| &= \Big|\int (f(x) - f(y))\,\pi_n(dx,dy)\Big|
\le \int_{K \times \CX} |f(x) - f(y)|\,\pi_n(dx,dy) + 2\eps \\
&\le \int_{(K \times \CX) \cap \Delta_\delta} |f(x) - f(y)|\,\pi_n(dx,dy) +  4 \eps
\le 5 \eps\;.
\end{equs}
Since $\eps$ was arbitrary, this shows that $\int f\,d\mu_n \to \int f\,d\mu$ and, since $f$ was
also arbitrary, the desired weak convergence follows.

Conversely, let $\mu_n \to \mu$ weakly, let $\eps > 0$, and let $K$ be as above.
By compactness of $K$, it is straightforward to find a collection of continuous functions
$\psi_k \colon \CX \to [0,1]$ with $k \ge 1$ such that only finitely many of the $\psi_k$'s 
are non-zero, $\sum_{k \ge 1} \psi_k(x) = 1$ for every $x \in K$,
and $\diam\supp\psi_k \le \eps$ for every $k$. For every $k \ge 1$, we also choose 
some $x_k \in \supp\psi_k$ and we write $\psi_0 = 1-\sum_{k\ge 1} \psi_k$.
Given any probability measure $\nu$, we then set
\begin{equ}[e:defFmu]
F(\nu) = \psi_0\nu + \sum_{k\ge 1} \int \psi_k\,d\nu\,\delta_{x_k}\;,\qquad \pi_{\nu} = \Delta^* (\psi_0\nu) + \sum_{k\ge 1} (\psi_k \nu)\otimes \delta_{x_k} \;,
\end{equ}
where $\Delta \colon x \mapsto (x,x)$ is the diagonal map. 
Note that $F(\nu)$ is again a probability measure and that $\pi_\nu \in \cC(F(\nu),\nu)$.

Since $d(x_k,y) \le \eps$ 
for every $k \ge 1$ and every $y \in \supp \psi_k$, and assuming without loss of generality that the distance $d$ is
bounded by $1$, it furthermore follows that 
\begin{equ}[e:boundCoupl]
\int d(x,y)\, \pi_{\nu}(dx,dy) \le \eps + \int \psi_0\,d\nu\;.
\end{equ}
We then estimate
\begin{equ}
d(\mu_n,\mu) \le d(\mu_n,F(\mu_n)) + d(F(\mu_n),F(\mu)) + d(F(\mu),\mu)\;.
\end{equ}
Since $\psi_0$ is supported outside of $K$, it follows from \eqref{e:boundCoupl}
that $d(F(\mu),\mu) \le 2\eps$ and, since $\pi_0$ is bounded and continuous, 
the weak convergence of the $\mu_n$ implies that $d(\mu_n,F(\mu_n)) \le 3\eps$ (say)
for all $n$ sufficiently large. For the remaining term, we write
\begin{equ}
a_n^{(k)} = \int \psi_k\,d\mu_n \wedge \int \psi_k\,d\mu\;,\qquad \delta_n = 1-\sum_{k\ge 1} a_n^{(k)}\;,
\end{equ}
so that, since $\lim_{n \to \infty} a_n^{(k)} = \int \psi_k\,d\mu$, one has
$\lim_{n \to \infty}\delta_n \le \eps$. Furthermore, one has decompositions
\begin{equ}
F(\mu_n) = \sum_{k \ge 1} a_n^{(k)}\delta_{x_k} + \delta_n \eta_n\;,\quad
F(\mu) = \sum_{k \ge 1} a_n^{(k)}\delta_{x_k} + \delta_n \hat \eta_n\;,
\end{equ}
for $\eta_n$, $\hat \eta_n$ some probability measures.
It follows immediately that 
\begin{equ}
\pi_n = \sum_{k \ge 1} a_n^{(k)}\delta_{(x_k,x_k)} + \delta_n \eta_n\otimes \hat \eta_n
\in \cC(F(\mu_n),F(\mu))\;,
\end{equ}
so that $d(F(\mu_n),F(\mu)) \le \delta_n$, and the proof that $d(\mu_n,\mu) \to 0$ is complete.
\end{proof}

A very useful feature of the Wasserstein-$1$ distances is that they can also be viewed as the dual norm to the Lipschitz norm on functions.
This is the content of the celebrated Monge--Kantorovich--Rubinstein duality theorem
(see for example \cite{Vil07}) which we state here without proof.

\begin{theorem}
For $d$ any lower semicontinuous metric on $\CX$, the identity
\begin{equ}[e:dual]
d(\mu,\nu) = \sup_{\Lip_d(\phi) \le 1} \Bigl(\int_\CX \phi(x)\,\mu(dx) - \int_\CX \phi(y)\,\mu(dy)\Bigr)
\end{equ} 
holds for all pairs $(\mu,\nu)$ of probability measures. Here, $\Lip_d(\phi)$ denotes the best Lipschitz constant for $\phi$
with respect to the metric $d$.
\end{theorem}

\begin{remark}
The metric $d$ does not need to be bounded in general, so there might be pairs of probability measures for which $d(\mu,\nu)$ is infinite.
\end{remark}

\begin{remark}
There exists a generalisation of the duality \eref{e:dual} that holds also if $d$ is not a distance function (and therefore also for the $p$-Wasserstein distances for $p>1$),
but it is slightly more complicated to state. See \cite{Vil03,Vil07} for a very nice treatment of many questions related to Wasserstein distances.
\end{remark}

\begin{remark}
It may appear surprising at first sight that an explicit bound on the rate of convergence of integrals of a sequence $\{\mu_n\}$
against Lipschitz continuous functions should yield convergence of the same integrals against any continuous function. 
However,  recall that Prohorov's theorem tells us that any converging sequence of probability measures is essentially concentrated
on compact sets. Since on a compact set, any continuous function can be approximated uniformly by Lipschitz continuous functions,
this should make it much more plausible that \eref{e:dual} does indeed define the topology of weak convergence.
\end{remark}

\begin{remark}
Although $\cP(\CX)$ is complete under $d$, the space of signed measures with finite mass is not complete, if we endow it with the norm
defined in \eref{e:dual}. 
To see this, take for example $\CX = [0,1]$ and let $d$ be the usual distance function. Then, the sequence
\begin{equ}
f_n = \sum_{k=1}^n k^2 \bigl(\delta_{3^{-k}} - \delta_{2\cdot 3^{-k}}\bigr)\;,
\end{equ}
is Cauchy, but it obviously does not converge to a measure with finite mass.
\end{remark}

One special case of the Monge--Kantorovich--Rubinstein duality is of particular interest.
Setting $d_\TV$ to be the trivial
distance function which is equal to $2$ for all pairs $(x,y)$ with $x \neq y$, we see that the
$1$-Wasserstein lift of $d_\TV$ to $\cP(\CX)$ as in \eref{e:defW1} is nothing but the total variation distance as 
characterised in \eref{e:TVCoupl}.

On the other hand, the set of $d_\TV$-Lipschitz continuous functions $\phi$ with
best Lipschitz constant $1$ is, up to translations by constants, equal to the set of bounded functions with $\sup_x |\phi(x)| \le 1$, so that the
dual representation of the $1$-Wasserstein lift of $d_\TV$ as in \eref{e:dual} is nothing but the original definition of the total variation distance
given in \eref{e:defTV}.

\begin{exercise}
Convince yourself that the identity \eref{e:TVCouplV} is also a special case of the Monge--Kantorovich--Rubinstein duality. What is the
corresponding distance function? \index{Monge--Kantorovich--Rubinstein duality}
\end{exercise}

\begin{exercise}
Show that every probability measure $\mu$ on $\CX$ can be approximated by a finite convex combination of Dirac measures
in the topology of weak convergence. \textbf{Hint:} Use a slight modification of the 
construction \eqref{e:defFmu}. 
\end{exercise}

\section{Gaussian Measure Theory}
\label{sec:Gaussian}

This section is devoted to the study of Gaussian measures on general Banach spaces, but we start
by recalling the definition of a Gaussian measure on the reals:

\begin{definition}
A probability measure $\mu$ on $\R$ is sait to be \textit{Gaussian} if there exist 
$\sigma \ge 0$ and $m \in \R$ such that, for every $\ell \in \R$,
\begin{equ}[e:GaussFourier]
\int e^{i\ell x}\mu(dx) = \exp\Big(- \f\sigma2 \ell^2 + i\ell m\Big)\;.
\end{equ}
It is said to be centred if furthermore $m=0$.
\end{definition}

\begin{remark}
The advantage of this definition is that it covers the case $\sigma = 0$, where $\mu$ is a Dirac
measure. It also avoids having factors of $\sqrt{2\pi}$ appearing in the definition. 
When $\sigma > 0$, one can invert the Fourier transform to recover the usual expression, identifying
$\sigma$ as the variance of $\mu$ and $m$ as its mean.
\end{remark}

Throughout this section and throughout most of the remainder of these notes, we will denote by $\CB$
an arbitrary separable Banach space. Recall that a space is separable if it contains
a countable dense subset, see for example the monograph \cite{Yos95}. This separability assumption 
turns out to be crucial for measures on $\CB$ to behave
in a non-pathological way. It  can be circumvented
by trickery in most natural situations where non-separable spaces arise, but we choose not to complicate our lives by considering
overly general cases in these notes. Another convention that will be used throughout these notes is that all of
the measures that we consider are Borel measures, meaning that we consider every open set to be measurable.

One additional assumption that would appear to be natural in the context of Gaussian measure theory is that $\CB$ be
reflexive (that is $\CB^{**} = \CB$). This is for example because the mean of a measure $\mu$ appears at 
first sight to be an element of $\CB^{**}$ rather
than of $\CB$, since the natural\footnote{Without further assumption, we do not know a priori whether $x\mapsto \|x\|$ is integrable, so that the more natural definition $m = \int_\CB x\,\mu(dx)$ is prohibited.} way of defining the mean $m$ of $\mu$ is to set 
$m(\ell) = \int_\CB \ell(x)\,\mu(dx)$ for
any $\ell \in \CB^*$. This turns out not to be a problem, since the mean of a Gaussian measure on a \textit{separable} Banach space $\CB$
 is always an element of $\CB$ itself, see the monograph \cite{Bog98Gauss}. 
 However  this result is not straightforward to prove, so we will take here the more pragmatic approach that whenever we consider
 Gaussian measures with non-zero mean, we simply take the mean $m \in \CB$ as given.

\begin{example}
Before we proceed, let us just mention a few examples of Banach spaces. The spaces 
$L^p(\CM,\nu)$ (with $(\CM,\nu)$ any countably generated measure space like for example any Polish space 
equipped with a
Radon measure $\nu$)
for $p \in (1,\infty)$ are both reflexive and separable. However, reflexivity fails in general for $L^1$ 
spaces 
and both properties fail to hold in general for $L^\infty$ spaces \cite{Yos95}.\footnote{This is actually a very subtle question which depends on the full axiom of choice. 
If one replaces it by the slightly weaker axiom of dependent choice, which is sufficient to develop
all of ``concrete'' mathematics, then the question of whether $L^1 = (L^\infty)^*$ is independent of the axioms.} The space of bounded 
continuous functions on a compact space is separable, but not reflexive. The space of bounded continuous functions from $\R^n$ to $\R$
is neither separable nor reflexive, but the space of  continuous functions from $\R^n$ to $\R$ vanishing at infinity is separable.
(The last two statements are still true if we replace $\R^n$ by any locally compact complete separable metric space.)
Hilbert spaces are obviously reflexive since $\CH^* \simeq \CH$ for every Hilbert space $\CH$ by the Riesz representation theorem
\cite{Yos95}. There exist non-separable Hilbert spaces,
but they have rather pathological properties and do not appear very often in practice. 
One such example is the space of all functions $f \colon \R \to \R$ that vanish at all but countably many values and
such that $\sum_{t \in \R} |f(t)|^2 < \infty$.
\end{example}

We start with the definition of a Gaussian measure on a Banach space. Since there is no equivalent to Lebesgue measure
in infinite dimensions (one could never expect it to be $\sigma$-additive), we cannot
define it by prescribing the form of its density. However, it turns out that Gaussian measures on $\R^n$ can be characterised by
prescribing that the projections of the measure onto any one-dimensional subspace of $\R^n$ are all Gaussian.
This is a property that can readily be generalised to infinite-dimensional spaces:

\begin{definition}
A \index{Gaussian measure}\textit{Gaussian probability measure} $\mu$ on a Banach space $\CB$ is a Borel measure such that
$\ell^*\mu$ is a real Gaussian probability measure on $\R$ for every linear functional $\ell \colon \CB \to \R$.
We call it \textit{centred} if $\ell^*\mu$ is centred for every $\ell$.
\end{definition}

\begin{remark}
We used here the notation $f^*\mu$ for the push-forward of a measure $\mu$ under a map $f$. This is defined by
the relation $\bigl(f^*\mu\bigr)(A) = \mu \bigl(f^{-1}(A)\bigr)$.
\end{remark}

\begin{remark}\label{rem:Rn}
We could also have defined Gaussian measures by imposing that $T^*\mu$ is Gaussian for every bounded linear
map $T\colon \CB \to \R^n$ and every $n$. These two definitions are equivalent because probability measures on $\R^n$ are
characterised by their Fourier transforms and these are determined by one-dimensional marginals, see Proposition~\ref{prop:FT} below.
\end{remark}

\begin{exercise}\label{ex:traceClass}
Let $\{\xi_n\}$ be a sequence of i.i.d.\ $\CN(0,1)$ random variables and let $\{a_n\}$ be a sequence of real numbers.
Show that the law of $(a_0\xi_0, a_1\xi_1,\ldots)$ determines a Gaussian measure on $\ell^2$ if and only if $\sum_{n \ge 0} a_n^2 < \infty$.
\end{exercise}

One first question that one may ask is whether this is indeed a reasonable definition. After all, it only makes a 
statement about the one-dimensional
projections of the measure $\mu$, which itself lives on a huge infinite-dimensional space. 
However, this turns out to be reasonable since, provided
that $\CB$ is separable, the one-dimensional projections of any probability measure carry sufficient
information to characterise it. This statement can be formalised as follows:

\begin{proposition}\label{prop:determinefinitedim}
Let $\CB$ be a separable Banach space and let $\mu$ and $\nu$ be two probability Borel measures on $\CB$. If 
$\ell^*\mu = \ell^*\nu$ for every $\ell \in \CB^*$, then $\mu = \nu$.
\end{proposition}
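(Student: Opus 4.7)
The plan is the classical one: reduce the Borel equality to equality on a $\pi$-system of cylinder sets, which in turn follows from equality of characteristic functionals on all finite-dimensional projections.

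First I would upgrade the one-dimensional hypothesis to a multidimensional statement. For any $n \ge 1$ and any bounded linear $T\colon \CB\to\R^n$ with components $T(x) = (\ell_1(x),\ldots,\ell_n(x))$ and any $t = (t_1,\ldots,t_n) \in \R^n$, the combination $\ell_t := \sum_j t_j \ell_j$ is an element of $\CB^*$, and the hypothesis gives
\begin{equ}
\int_\CB e^{i\ell_t(x)}\,\mu(dx) = \int_\R e^{iu}\,(\ell_t^*\mu)(du) = \int_\R e^{iu}\,(\ell_t^*\nu)(du) = \int_\CB e^{i\ell_t(x)}\,\nu(dx)\;,
\end{equ}
so $T^*\mu$ and $T^*\nu$ have the same Fourier transform on $\R^n$. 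Since the Fourier transform determines finite measures on $\R^n$ uniquely, $T^*\mu = T^*\nu$. Consequently $\mu$ and $\nu$ agree on the algebra $\CC$ of cylinder sets $\{T^{-1}(A) : n \ge 1,\, T \in L(\CB,\R^n),\, A \in \CB(\R^n)\}$, which is easily seen to be a $\pi$-system.

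Next I would show that $\sigma(\CC)$ equals the Borel $\sigma$-algebra on $\CB$, and this is the only step that uses separability in an essential way. One inclusion is immediate because every $\ell \in \CB^*$ is continuous, hence Borel measurable. For the other direction, I would use separability of $\CB$ to produce a countable norming family: picking a countable dense subset $\{y_n\}$ of the unit sphere of $\CB$ and applying Hahn--Banach, one obtains $\ell_n \in \CB^*$ with $\|\ell_n\|=1$ and $\ell_n(y_n) = 1$; a routine density argument then yields $\|x\| = \sup_n \ell_n(x)$ for every $x \in \CB$. Therefore each closed ball $\{x : \|x-x_0\|\le r\}$ equals $\bigcap_n \{x : \ell_n(x-x_0) \le r\}$ and lies in $\sigma(\CC)$. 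Since $\CB$ is separable, hence second countable, every open set is a countable union of open balls, and open balls are countable unions of closed balls of smaller radius, so the Borel $\sigma$-algebra is contained in $\sigma(\CC)$.

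Finally I would invoke the uniqueness part of Dynkin's $\pi$--$\lambda$ theorem: two probability measures that agree on the $\pi$-system $\CC$ must agree on $\sigma(\CC)$, which we have just identified with the Borel $\sigma$-algebra. This yields $\mu = \nu$ as required. The main obstacle is the identification $\sigma(\CC) = \CB(\CB)$; without separability one only gets the cylinder $\sigma$-algebra, which in general is strictly smaller than the Borel one, and the proposition genuinely fails (this is exactly why the separability hypothesis appears in the statement).
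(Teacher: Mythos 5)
Your proof is correct and follows essentially the same route as the paper's: reduce to equality on the cylinder algebra via finite-dimensional marginals, then use separability and Hahn--Banach to produce a countable norming family of functionals showing that closed balls, hence all Borel sets, lie in the cylinder $\sigma$-algebra. The only cosmetic difference is that you make the Fourier-transform step on $\R^n$ explicit and phrase the ball identity via $\|x\| = \sup_n \ell_n(x)$ rather than via the set $K = \bigcap_n\{|\ell_n(x)|\le 1\}$, but the substance is identical.
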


\begin{proof}
Denote by $\Cyl(\CB)$ the algebra of cylindrical sets on $\CB$, that is $A \in \Cyl(\CB)$ if and only
if there exists $n > 0$, a continuous linear map $T \colon \CB \to \R^n$, and a Borel set $\tilde A \subset \R^n$
such that $A = T^{-1} \tilde A$. It follows from the fact that measures on $\R^n$ are determined by their one-dimensional
projections that $\mu(A) = \nu(A)$ for every $A \in \Cyl(\CB)$
and therefore, by a basic uniqueness result in measure theory (see Lemma~II.4.6 in \cite{RogWil1} or Theorem~1.5.6 in \cite{BogMT} for example),
for every $A$ in the $\sigma$-algebra $\CE(\CB)$ generated by $\Cyl(\CB)$. It thus remains to show that 
$\CE(\CB)$ coincides with the Borel $\sigma$-algebra of $\CB$. Actually, since every cylindrical set is a Borel set, it suffices
to show that all open (and therefore all Borel) sets are contained in $\CE(\CB)$. 

Since $\CB$ is separable, every open set $U$ can be written as a countable union of closed balls. (Fix any dense countable subset
$\{x_n\}$ of $\CB$ and check that one has for example $U = \bigcup_{x_n \in U} \bar B(x_n, r_n)$, where $r_n = {1\over 2}\sup \{r>0\,:\, \bar B(x_n, r) \subset U\}$ and $\bar B(x,r)$ denotes the closed ball of radius $r$ centred at $x$.)
Since $\CE(\CB)$ is invariant under translations and dilations, it remains to check that $\bar B(0,1) \in \CE(\CB)$. 
Let $\{x_n\}$ be a countable dense subset of $\{x\in \CB \,:\, \|x\| = 1\}$ and let $\ell_n$ be any sequence in $\CB^*$ such that
$\|\ell_n\| = 1$ and $\ell_n(x_n) = 1$ (such elements exist by the Hahn--Banach extension theorem \cite{Yos95}).
Define now $K = \bigcap_{n \ge 0} \{x \in \CB \,:\, |\ell_n(x)| \le 1\}$. It is clear that $K \in \CE(\CB)$, so that the proof is complete if we can show that $K = \bar B(0,1)$. 

Since obviously $\bar B(0,1) \subset K$, it suffices to show that the reverse inclusion holds.
Let  $y \in \CB$ with $\|y\| > 1$ be arbitrary and set $\hat y = y / \|y\|$.
By the density of the $x_n$'s, there exists a subsequence $x_{k_n}$ such that $\|x_{k_n} - \hat y\| \le {1\over n}$, say, so that
$\ell_{k_n}(\hat y) \ge 1-{1\over n}$. By linearity, this implies that $\ell_{k_n}(y) \ge \|y\| \bigl(1-{1\over n}\bigr)$, so that there exists
a sufficiently large $n$ so that $\ell_{k_n}(y) > 1$. This shows that $y \not \in K$ and we conclude that $K \subset \bar B(0,1)$ as required. 
\end{proof}

From now on, we will mostly consider centred Gaussian measures, since one can always reduce oneself to the general case
by a simple translation.
Given a centred Gaussian measure $\mu$, we define a map $C_\mu \colon \CB^* \times \CB^* \to \R$
by
\begin{equ}[e:defCmu]
C_\mu(\ell, \ell') = \int_\CB \ell(x) \ell'(x)\,\mu(dx)\;.
\end{equ}
\begin{remark}
In the case $\CB = \R^n$, this is just the covariance matrix, provided that we perform the usual identification of $\R^n$ with its dual.
\end{remark}

\begin{remark}\label{rem:Chat}
One can identify in a canonical way $C_\mu$ with an operator $\hat C_\mu \colon \CB^* \to \CB^{**}$ via the identity
$\hat C_\mu(\ell)(\ell') = C_\mu(\ell,\ell')$.
\end{remark}

\index{covariance operator}
The map $C_\mu$ will be called the \textit{Covariance operator} of $\mu$. It follows immediately from the definitions that
the operator $C_\mu$ is bilinear and positive definite, although there might in general exist some $\ell$ such that $C_\mu(\ell, \ell)=0$. 
Furthermore, it immediately follows from \eqref{e:GaussFourier} that a measure $\mu$ is Gaussian
if and only if its Fourier transform $\hat\mu$ is given by\index{Fourier transform}
\begin{equ}[e:FT]
\hat \mu(\ell) \eqdef \int_\CB e^{i\ell(x)}\, \mu(dx) = \exp\bigl(-\hf C_\mu(\ell, \ell)\bigr)\;,
\end{equ}
where $\ell \in \CB^*$. 
It will be convenient to know that a version of Bochner's theorem still holds in this case, namely
that probability measures on $\CB$ are determined by their Fourier transforms:

\begin{proposition}\label{prop:FT}
Let $\mu$ and $\nu$ be any two probability measures on a separable Banach space $\CB$. If $\hat \mu(\ell) = \hat \nu(\ell)$ for
every $\ell \in \CB^*$, then $\mu = \nu$.
\end{proposition}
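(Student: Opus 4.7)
The plan is to reduce the claim to Proposition~\ref{prop:determinefinitedim} by showing that equal Fourier transforms imply all one-dimensional pushforwards agree. Once I have that, Proposition~\ref{prop:determinefinitedim} gives $\mu = \nu$ directly, using separability of $\CB$.

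First I would fix $\ell \in \CB^*$ and study $\ell^*\mu$ and $\ell^*\nu$ as probability measures on $\R$. Their classical characteristic functions at a point $t \in \R$ can be rewritten in terms of the Banach-space Fourier transforms of $\mu$ and $\nu$ via the change of variables formula:
\begin{equ}
\widehat{\ell^*\mu}(t) = \int_\R e^{itx}\,(\ell^*\mu)(dx) = \int_\CB e^{it\ell(x)}\,\mu(dx) = \hat\mu(t\ell)\;,
\end{equ}
and likewise $\widehat{\ell^*\nu}(t) = \hat\nu(t\ell)$. Since $t\ell$ again belongs to $\CB^*$, the hypothesis gives $\hat\mu(t\ell) = \hat\nu(t\ell)$ for all $t \in \R$, so the characteristic functions of $\ell^*\mu$ and $\ell^*\nu$ coincide on $\R$.

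Next I would invoke the classical one-dimensional uniqueness theorem for Fourier transforms of probability measures on $\R$ (which is standard and follows, e.g., from the fact that trigonometric polynomials are dense in $C_0(\R)$ together with Riesz representation, or from an explicit inversion formula). This gives $\ell^*\mu = \ell^*\nu$ for every $\ell \in \CB^*$.

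Finally, I would apply Proposition~\ref{prop:determinefinitedim}, which crucially uses separability of $\CB$ to conclude $\mu = \nu$ from equality of all one-dimensional projections. The only step requiring any care is the first one — making sure the change of variables is valid, which is immediate since $x \mapsto e^{it\ell(x)}$ is bounded and Borel — and I do not expect any real obstacle, since the heavy lifting (passage from one-dimensional data to a global statement) is entirely absorbed into the previously established Proposition~\ref{prop:determinefinitedim}.
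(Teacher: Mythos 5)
Your proposal is correct and follows essentially the same route as the paper: establish the one-dimensional (or finite-dimensional) uniqueness of Fourier transforms, deduce $\ell^*\mu = \ell^*\nu$ for every $\ell \in \CB^*$ from the identity $\widehat{\ell^*\mu}(t) = \hat\mu(t\ell)$, and conclude via Proposition~\ref{prop:determinefinitedim}. The only cosmetic difference is that the paper spells out the finite-dimensional uniqueness step explicitly via the Fourier inversion formula on $\R^n$, whereas you cite it as a classical fact.
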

\begin{proof}
In the particular case $\CB = \R$, if $\phi$ is a smooth function with compact support, it follows from Fubini's theorem
and the invertibility of the Fourier transform that one has the identity
\begin{equ}
\int_{\R} \phi(x)\,\mu(dx) = {1\over 2\pi} \int_{\R} \int_{\R} \hat \phi(k) e^{-ikx}\,dk \,\mu(dx) = {1\over 2\pi}\int_{\R} \hat \phi(k)\,\hat \mu(-k)\,dk \;,
\end{equ}
so that, since bounded continuous functions can be approximated by smooth functions, 
$\mu$ is indeed determined by $\hat \mu$. The general case then follows immediately from Proposition~\ref{prop:determinefinitedim}.
\end{proof}

As a simple consequence, we have the following trivial but useful property:

\begin{proposition}\label{prop:rotation}
Let $\mu$ be a Gaussian measure on $\CB$ and, for every $\phi \in \R$, define the ``rotation'' $R_\phi\colon \CB^2 \to \CB^2$
by
\begin{equ}
R_\phi(x,y) = \bigl(x \sin \phi + y \cos \phi, x \cos \phi - y \sin \phi\bigr)\;.
\end{equ}
Then, one has $R_\phi^* \bigl(\mu \otimes \mu\bigr) = \mu \otimes \mu$.
\end{proposition}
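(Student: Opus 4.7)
The plan is to use the Fourier transform characterization of measures on a separable Banach space (Proposition~\ref{prop:FT}) on the product space $\CB^2$, which is again separable. Both $\mu \otimes \mu$ and $R_\phi^*(\mu \otimes \mu)$ are probability Borel measures on $\CB^2$, so to conclude equality it suffices to verify that they have the same characteristic functional on $(\CB^2)^*$.

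First, I would identify $(\CB^2)^*$ with $\CB^* \times \CB^*$ by letting a pair $(\ell_1, \ell_2)$ act via $(\ell_1, \ell_2)(x,y) = \ell_1(x) + \ell_2(y)$. Then, using Fubini and the formula \eref{e:FT} for the Fourier transform of a centred Gaussian measure, I would compute
\begin{equ}
\widehat{\mu \otimes \mu}(\ell_1, \ell_2) = \hat\mu(\ell_1)\hat\mu(\ell_2) = \exp\bigl(-\hf\bigl(C_\mu(\ell_1,\ell_1) + C_\mu(\ell_2,\ell_2)\bigr)\bigr)\;.
\end{equ}
For the pushforward, the change of variable $(x,y) \mapsto R_\phi(x,y)$ gives
\begin{equ}
\widehat{R_\phi^*(\mu\otimes \mu)}(\ell_1,\ell_2) = \int_{\CB^2} e^{i\ell_1(x\sin\phi + y\cos\phi) + i\ell_2(x\cos\phi - y\sin\phi)}\,\mu(dx)\,\mu(dy)\;,
\end{equ}
which factorises (again by Fubini) as $\hat\mu(\sin\phi\,\ell_1 + \cos\phi\,\ell_2)\,\hat\mu(\cos\phi\,\ell_1 - \sin\phi\,\ell_2)$. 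Using the bilinearity of $C_\mu$ and the identity $\sin^2\phi + \cos^2\phi = 1$, the cross-terms $\pm 2\sin\phi\cos\phi\,C_\mu(\ell_1,\ell_2)$ cancel between the two factors, and I am left with exactly $\exp(-\hf(C_\mu(\ell_1,\ell_1) + C_\mu(\ell_2,\ell_2)))$.

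Since the two Fourier transforms agree on all of $\CB^* \times \CB^*$, Proposition~\ref{prop:FT} applied to $\CB^2$ yields the equality of measures. No step is genuinely hard here; the only minor care needed is to treat $\mu$ as centred (the hypothesis as stated implicitly has the measure centred, or one observes that this rotation is a linear isometry on $\CB^2$ so the argument is unchanged after subtracting means), and to justify the interchange of the $\mu \otimes \mu$ integral with the factorisation, which is immediate by Fubini since $|e^{i\cdot}| = 1$.
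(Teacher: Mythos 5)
Your proof is correct and follows exactly the route the paper takes: the paper reduces the claim to checking $\widehat{\mu\otimes\mu}\circ R_\phi = \widehat{\mu\otimes\mu}$ via Proposition~\ref{prop:FT} and leaves the verification as ``an easy exercise,'' which is precisely the computation you carry out (including the correct observation that the statement is really about centred measures).
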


\begin{proof}
Since we just showed in Proposition~\ref{prop:FT} that a measure is characterised by its Fourier transform, 
it suffices to check that $\widehat {\mu \otimes \mu} \circ R_\phi =
\widehat {\mu \otimes \mu}$, which is an easy exercise. 
\end{proof}

\subsection{A priori bounds on Gaussian measures}
\label{sec:boundsGauss}

We are going to show now that the operator $C_\mu$ has to be bounded, as a 
straightforward consequence of the fact that $x \mapsto \|x\|^2$ is integrable for any Gaussian measure. 
Actually, we are going to show much more, 
namely that there always exists a constant $\alpha > 0$ such that $\exp(\alpha \|x\|^2)$ is integrable! In other words, the norm of
any Banach-space valued Gaussian random variable has Gaussian tails, just like in the finite-dimensional case. 
Amazingly, this result uses the Gaussianity of the measure only indirectly through the rotation
invariance shown in Proposition~\ref{prop:rotation}, and even this property is only used for rotations by the angle $\phi=\pi/4$.
This is the
content of the following fundamental result \cite{Fernique} in the theory of Gaussian measures:

\index{Fernique's theorem}
\begin{theorem}[Fernique, 1970]\label{theo:Fernique}
Let $\mu$ be any probability measure on a separable Banach space $\CB$ such that the conclusion of Proposition~\ref{prop:rotation} 
holds for $\phi = \pi/4$. Then, there exists $\alpha > 0$ such that $\int_\CB \exp(\alpha \|x\|^2)\,\mu(dx) < \infty$.
\end{theorem}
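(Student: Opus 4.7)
My plan is to exploit the $\pi/4$-rotation invariance to compare the tails of $\|X\|$ with those of $\|X\|/\sqrt 2$, then bootstrap this into a super-exponentially decaying bound on a geometric sequence of thresholds.

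\textbf{Step 1 (setting up the rotation).} Let $X, Y$ be independent $\CB$-valued random variables with law $\mu$. By the hypothesis applied at $\phi = \pi/4$, the pair
\begin{equ}
(U, V) \eqdef \bigl((X-Y)/\sqrt 2,\, (X+Y)/\sqrt 2\bigr)
\end{equ}
again has law $\mu \otimes \mu$, and in particular $U$ and $V$ are independent with $U, V \sim \mu$. Note that $X = (V+U)/\sqrt 2$ and $Y = (V-U)/\sqrt 2$, so the reverse triangle inequality gives
\begin{equ}
\min(\|X\|, \|Y\|) \ge (\|V\| - \|U\|)/\sqrt 2\;.
\end{equ}

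\textbf{Step 2 (the key tail inequality).} For any $0 \le s \le t$, the observation above yields
\begin{equ}
\P(\|U\| \le s,\, \|V\| > t) \le \P\bigl(\|X\| > (t-s)/\sqrt 2,\, \|Y\| > (t-s)/\sqrt 2\bigr) = \P\bigl(\|X\| > (t-s)/\sqrt 2\bigr)^2\;,
\end{equ}
using independence of $X$ and $Y$ in the last step. Combined with the distributional identity $(U,V) \sim (X,Y)$ and independence of $X, Y$, this gives the master inequality
\begin{equ}
\P(\|X\| \le s)\,\P(\|X\| > t) \le \P\bigl(\|X\| > (t-s)/\sqrt 2\bigr)^2\;.
\end{equ}

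\textbf{Step 3 (iteration).} Choose $s$ large enough that $p \eqdef \P(\|X\| > s) < 1/2$, which is possible since $\mu$ is a probability measure and $\|x\|$ is finite $\mu$-a.e. Define the sequence $t_0 = s$ and $t_{n+1} = s + \sqrt 2\, t_n$; explicitly $t_n = s\,((\sqrt 2)^{n+1}-1)/(\sqrt 2 - 1)$, so $t_n^2$ grows like $2^n$. Writing $q_n = \P(\|X\| > t_n)/(1-p)$, the master inequality with $t = t_{n+1}$ rearranges to $q_{n+1} \le q_n^2$, whence
\begin{equ}
\P(\|X\| > t_n) \le (1-p)\,\Bigl({p \over 1-p}\Bigr)^{2^n}\;.
\end{equ}
Since $p/(1-p) < 1$ and $t_n^2 \asymp 2^n$, this is a Gaussian-type tail bound along the sequence $t_n$.

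\textbf{Step 4 (conclusion).} For any $r \in [t_n, t_{n+1}]$ monotonicity of the tail gives $\P(\|X\| > r) \le \P(\|X\|>t_n) \le Ce^{-c\, 2^n} \le C' e^{-c' r^2}$ for some constants $c', C' > 0$ depending only on $s$ and $p$. Then for any $\alpha < c'$,
\begin{equ}
\int_\CB e^{\alpha\|x\|^2}\,\mu(dx) = 1 + \int_0^\infty 2\alpha r\, e^{\alpha r^2}\,\P(\|X\|>r)\,dr < \infty\;,
\end{equ}
which is the desired conclusion.

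The only real subtlety is Step 2, where one must arrange the triangle inequality to give a \emph{lower} bound on $\min(\|X\|,\|Y\|)$ in terms of $\|V\|$ and $\|U\|$ (rather than the more obvious upper bound), so that the event on the left can be controlled by independent events for $X$ and $Y$ separately; once that trick is in place, the rest is a routine iteration.
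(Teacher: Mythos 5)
Your proof is correct and follows essentially the same route as the paper's: the $\pi/4$-rotation invariance yields the master inequality $\P(\|X\| \le s)\,\P(\|X\| > t) \le \P(\|X\| > (t-s)/\sqrt 2)^2$ via the reverse triangle inequality, which is then iterated along the geometric sequence $t_{n+1} = s + \sqrt 2\, t_n$ to produce a doubly-exponential tail decay and hence Gaussian integrability. The only (immaterial) difference is your threshold $\P(\|X\|>s)<1/2$ in place of the paper's $\le 1/4$.
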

\begin{proof}
Note first that, from Proposition~\ref{prop:rotation}, one has for any two positive numbers $t$ and $\tau$ the
bound
\begin{equs}
\mu(\|x\| \le \tau)\, \mu(\|x\| > t) &= \int_{\|x\| \le \tau}\int_{\|y\| > t} \mu(dx)\,\mu(dy) = \int_{\|{x - y \over \sqrt 2}\| \le \tau}\int_{\|{x + y \over \sqrt 2}\| > t} \mu(dx)\,\mu(dy) \\
&\le \int_{\|x\| > {t-\tau \over \sqrt 2}}\int_{\|y\| > {t-\tau \over \sqrt 2}}\mu(dx)\,\mu(dy) = \mu\Bigl(\|x\| > \textstyle{t-\tau \over \sqrt 2}\Bigr)^2\;. \label{e:FernBound}
\end{equs}
In order to go from the first to the second line, we have used the fact that the triangle inequality implies
\begin{equ}
\min\{\|x\|, \|y\|\} \ge \hf \bigl(\|x+y\| - \|x-y\|\bigr)\;,
\end{equ}
so that $\|x+y\| > \sqrt 2t$ and $\|x-y\| \le \sqrt 2\tau$ do indeed imply that both $\|x\|$ and $\|y\|$ are greater than ${t-\tau \over \sqrt 2}$. Since $\|x\|$ is $\mu$-almost surely finite, there exists some $\tau > 0$ such that $\mu(\|x\| \le \tau) \ge {3\over 4}$.
Set now $t_0 = \tau$ and define $t_n$ for $n > 0$ recursively by the relation $t_n = {t_{n+1} - \tau \over \sqrt 2}$.
%Set now $\beta = 2\sqrt 2$ and define $t_n = \beta^n \tau$ for $n \ge 0$. Note that, since $t_{n+1} \ge 2\tau$ for every positive $n$, one has the inequality
%\begin{equ}
%t_{n} = {t_{n+1} \over 2\sqrt 2} = {t_{n+1} \over \sqrt 2} - {t_{n+1} \over 2\sqrt 2} \le {t_{n+1} - \tau \over \sqrt 2}\;.
%\end{equ}
It follows from \eref{e:FernBound} that
\begin{equ}
\mu(\|x\| > t_{n+1}) \le  \mu\Bigl(\|x\| > \textstyle{t_{n+1}-\tau \over \sqrt 2}\Bigr)^2 / \mu(\|x\| \le \tau)  \le  {4\over 3}\mu(\|x\| > t_{n})^2\;.
\end{equ}
Setting $y_n = {4\over 3}\mu(\|x\| > t_{n+1})$, this yields the recursion $y_{n+1} \le y_n^2$ with $y_0 \le 1/3$.
Applying this inequality  repeatedly, we obtain
\begin{equ}
\mu(\|x\| > t_{n}) = {3\over 4} y_n \le {3\over 4} y_0^{2^n} \le {1\over 4}3^{-1-2^n} \le 3^{-2^n}\;.
\end{equ}
On the other hand, one can check explicitly that $t_n = {{\sqrt 2}^{n+1} - 1 \over \sqrt 2 - 1} \tau \le 2^{n/2}\cdot (2+\sqrt 2)\tau$, 
so that in particular $t_{n+1} \le 2^{n/2} \cdot 5\tau$. This shows that one has the bound 
\begin{equ}
\mu(\|x\| > t_{n}) \le 3^{- {t_{n+1}^2 \over 25 \tau^2} }\;,
\end{equ}
implying that there exists a \textit{universal} constant $\alpha>0$ such that the bound
$\mu(\|x\| > t) \le \exp(-2\alpha t^2/\tau^2)$ holds for every $t \ge \tau$. Integrating by parts, we finally obtain
\begin{equs}
\int_{\CB} \exp\Bigl({\alpha \|x\|^2 \over \tau^2}\Bigr)\,\mu(dx) &\le e^{\alpha} + {2\alpha\over \tau^2} \int_\tau^\infty t e^{\alpha {t^2\over \tau^2}} \mu(\|x\| > t)\, dt \\
& \le e^{\alpha} + 2\alpha \int_1^\infty t e^{-\alpha t^2}\, dt < \infty\;, \label{e:universal}
\end{equs}
which is the desired result.
\end{proof}

As an immediate corollary of Fernique's theorem, we have
\begin{corollary}
There exists a constant $\|C_\mu\| < \infty$ such that $C_\mu(\ell, \ell') \le \|C_\mu\| \|\ell\|\|\ell'\|$ for any 
$\ell, \ell' \in \CB^*$. Furthermore, the operator $\hat C_\mu$ defined in Remark~\ref{rem:Chat} is a 
continuous operator from $\CB^*$ to $\CB$.
\end{corollary}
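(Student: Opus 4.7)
The plan is to derive both statements directly from the finiteness of $\int_\CB \|x\|^2\,\mu(dx)$, which is an immediate consequence of Fernique's theorem: picking any $\alpha > 0$ for which $\int_\CB \exp(\alpha\|x\|^2)\,\mu(dx) < \infty$, the elementary bound $\|x\|^2 \le \alpha^{-1}(1+\exp(\alpha\|x\|^2))$ shows in particular that $M \eqdef \int_\CB \|x\|^2\,\mu(dx)$ is finite.

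For the first assertion, I would simply apply the Cauchy--Schwarz inequality in $L^2(\CB, \mu)$ to obtain
\begin{equ}
|C_\mu(\ell, \ell')| \le \Bigl(\int_\CB \ell(x)^2\,\mu(dx)\Bigr)^{1/2}\Bigl(\int_\CB \ell'(x)^2\,\mu(dx)\Bigr)^{1/2} \le M\,\|\ell\|\,\|\ell'\|\;,
\end{equ}
using $|\ell(x)| \le \|\ell\|\,\|x\|$ in the last step. This yields the first claim with $\|C_\mu\| \le M$.

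For the second assertion, the natural candidate for $\hat C_\mu(\ell)$, viewed as an element of $\CB$ rather than merely of $\CB^{**}$, is the Bochner integral
\begin{equ}
v_\ell \eqdef \int_\CB \ell(x)\,x\,\mu(dx)\;.
\end{equ}
I would justify that this makes sense as follows: since $\CB$ is separable, the continuous $\CB$-valued map $x \mapsto \ell(x)\,x$ is Borel measurable, and Cauchy--Schwarz together with Fernique's theorem give
\begin{equ}
\int_\CB \|\ell(x)\,x\|\,\mu(dx) \le \|\ell\|\int_\CB \|x\|^2\,\mu(dx) = M\,\|\ell\|\;,
\end{equ}
so the integrand is Bochner integrable and $\|v_\ell\| \le M\,\|\ell\|$. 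The Bochner integral commutes with continuous linear functionals, so for every $\ell' \in \CB^*$ one has $\ell'(v_\ell) = \int_\CB \ell(x)\,\ell'(x)\,\mu(dx) = C_\mu(\ell,\ell') = \hat C_\mu(\ell)(\ell')$, which identifies $\hat C_\mu(\ell)$ with $v_\ell$ under the canonical embedding $\CB \hookrightarrow \CB^{**}$. The bound $\|v_\ell\| \le M\,\|\ell\|$ then gives continuity.

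The only genuine subtlety is the second part: one must check that $\hat C_\mu(\ell)$ actually lies in the image of $\CB$ inside $\CB^{**}$, and this is precisely where separability of $\CB$ enters (it is needed to guarantee strong measurability of $x \mapsto \ell(x)\,x$ and hence to appeal to the standard Bochner integration theory). Once this has been carried out, the statement about the operator norm of $\hat C_\mu$ is immediate from the previous estimate.
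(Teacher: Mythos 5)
Your proposal is correct and follows essentially the same route as the paper: boundedness of $C_\mu$ via square-integrability of the norm (a consequence of Fernique's theorem), and identification of $\hat C_\mu \ell$ with the Bochner integral $\int_\CB x\,\ell(x)\,\mu(dx)$, which lies in $\CB$ by separability. You merely spell out the Cauchy--Schwarz estimate and the measurability argument that the paper leaves implicit.
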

\begin{proof}
The boundedness of $C_\mu$ implies that $\hat C_\mu$ is continuous from $\CB^*$ to $\CB^{**}$. However, $\CB^{**}$
might be strictly larger than $\CB$ in  general. The fact that the range of $\hat C_\mu$ actually belongs to $\CB$ follows from the fact that one has the identity
\begin{equ}[e:defCmuhat]
\hat C_\mu \ell = \int_{\CB} x\, \ell(x)\,\mu(dx)\;.
\end{equ}
Here, the right-hand side is well-defined as a Bochner integral \cite{Bochner,Hildebrandt} because $\CB$ is 
assumed to be separable
and we know from Fernique's theorem that $\|x\|^2$ is integrable with respect to $\mu$.
\end{proof}

\begin{remark}
In Theorem~\ref{theo:Fernique}, one can actually take for $\alpha$ any value smaller than ${1/(2\|C_\mu\|)}$. 
Furthermore, this value happens to be sharp, see \cite[Thm~4.1]{Ledoux}.
\end{remark}

Another consequence of the proof of Fernique's theorem is an even stronger result, namely all moments
(including exponential moments!)
of the norm of a Banach-space valued Gaussian random variable can be estimated in a universal way in terms
of its first moment. More precisely, we have 

\begin{proposition}\label{prop:universalFernique}
There exist universal constants $\alpha, K > 0$ with the following properties.
Let $\mu$ be a centred Gaussian measure on a separable Banach space $\CB$ and let $f \colon \R_+ \to \R_+$ be any measurable
function such that $f(x) \le C_{\!f} \exp(\alpha x^2)$ for every $x \ge 0$. Define furthermore the first moment of $\mu$ by
$M = \int_\CB \|x\|\,\mu(dx)$. Then, one has the bound $\int_\CB f(\|x\|/M)\,\mu(dx) \le K C_{\! f}$.

In particular, the higher moments of $\mu$ are bounded by $\int_\CB \|x\|^{2n}\,\mu(dx) \le n! K \alpha^{-n} M^{2n}$.
\end{proposition}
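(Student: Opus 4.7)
The plan is to re-run the proof of \theo{theo:Fernique} with a choice of $\tau$ that is \emph{universally} calibrated to $M$, rather than to the specific measure $\mu$. Markov's inequality gives $\mu(\|x\| > 4M) \le M/(4M) = 1/4$, so $\mu(\|x\| \le 4M) \ge 3/4$, which is precisely the condition that seeds the Fernique recursion \eref{e:FernBound}. Setting $\tau = 4M$ and following the same inductive estimate verbatim then produces universal constants $\alpha_0, c > 0$ (independent of $\mu$) such that
\begin{equ}
\mu\bigl(\|x\| > t\bigr) \le c\,\exp\bigl(-\alpha_0\, t^2/M^2\bigr)\;,\quad t \ge 0\;,
\end{equ}
with the regime $0 \le t < 4M$ absorbed into the multiplicative constant $c$.

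Given this, fix once and for all $\alpha = \alpha_0/2$. For any measurable $f \ge 0$ with $f(x) \le C_{\!f} e^{\alpha x^2}$, the integration-by-parts computation identical to the one at \eref{e:universal} gives
\begin{equ}
\int_\CB f(\|x\|/M)\,\mu(dx) \le C_{\!f}\int_\CB e^{\alpha \|x\|^2/M^2}\,\mu(dx) \le C_{\!f}\Bigl(1 + \tfrac{2\alpha}{M^2}\int_0^\infty t\, e^{\alpha t^2/M^2}\,\mu(\|x\|>t)\,dt\Bigr)\;,
\end{equ}
and after inserting the tail bound and the change of variable $s = t/M$, the right-hand side becomes $C_{\!f}\,K$ for a constant $K$ depending only on $\alpha_0$ and $c$, and hence universal. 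This gives the first claim.

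For the moment bound, I apply the first claim to $f(x) = e^{\alpha x^2}$ itself and expand the exponential as a power series, interchanging sum and integral by monotone convergence:
\begin{equ}
K \ge \int_\CB \sum_{n \ge 0} \frac{\alpha^n\,\|x\|^{2n}}{n!\,M^{2n}}\,\mu(dx) = \sum_{n \ge 0} \frac{\alpha^n}{n!\,M^{2n}}\int_\CB \|x\|^{2n}\,\mu(dx)\;.
\end{equ}
Since every summand is non-negative, each is individually bounded by $K$, which rearranges to $\int_\CB \|x\|^{2n}\,\mu(dx) \le n!\,K\,\alpha^{-n}\,M^{2n}$, as claimed.

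The only subtlety requiring care in this plan is verifying that the Fernique recursion genuinely produces universal constants under the replacement $\tau \mapsto 4M$. This is a matter of bookkeeping: the rotation-invariance input from Proposition~\ref{prop:rotation} introduces no measure-dependent constants, the seeding probability $3/4$ was obtained through the universal Markov bound, and the closed form $t_n = ({\sqrt 2}^{\,n+1}-1)/(\sqrt 2 -1)\,\tau$ together with the bound $t_{n+1} \le 5\cdot 2^{n/2}\tau$ depends on $\tau$ alone. There is no conceptual obstacle here; the proof is a careful tracking of constants through the argument already given.
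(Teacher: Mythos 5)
Your proposal is correct and follows essentially the same route as the paper: Chebyshev/Markov to justify $\tau = 4M$, the observation that the final bound \eref{e:universal} in Fernique's proof is universal in $\tau$, and the elementary inequality $e^{\alpha x^2} \ge \alpha^n x^{2n}/n!$ (which your term-by-term series argument reproduces) for the moments. No gaps.
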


\begin{proof}
It suffices to note that the bound \eref{e:universal} is independent of $\tau$ and that by Chebychev's inequality, one
can choose for example $\tau = 4M$. The last claim then follows from the fact that $e^{\alpha x^2} \ge {\alpha^{n} x^{2n}\over n!}$.
\end{proof}

%%%%%% Doesn't work because Fernique doesn't work if $\mu$ isn't centred!!
%\begin{exercise}\label{ex:meanB}
%In the same way, show that the mean $m \in \CB^{**}$ of a Gaussian measure on a separable Banach space $\CB$ 
%(defined by $m(\ell) = \int_\CB \ell(x)\,\mu(dx)$ always belongs to $\CB$.
%\end{exercise}

Actually, the covariance operator 
$C_\mu$ is more than just bounded. If $\CB$ happens to be a (separable) Hilbert space, one has indeed
the following result, which allows us to characterise in a very precise way the set of all centred Gaussian measures on
a Hilbert space:
\begin{proposition}\label{prop:trace}
If $\CB = \CH$ is a Hilbert space, then the operator $\hat C_\mu \colon \CH \to \CH$ defined as 
before by the identity $\scal{\hat C_\mu h, k} = C_\mu(h,k)$
is trace class and one has the identity
\begin{equ}[e:expnorm]
\int_\CH \|x\|^2\,\mu(dx) = \tr \hat C_\mu\;.
\end{equ}
(Here, we used Riesz's representation theorem to identify $\CH$ with its dual.)

Conversely, for every positive symmetric trace class  operator $K\colon \CH \to \CH$, there exists a Gaussian measure $\mu$ on $\CH$ such
that $\hat C_\mu = K$.
\end{proposition}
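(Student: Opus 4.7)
The plan is to handle the two directions separately, using spectral decomposition on the Hilbert space side and Fernique's theorem for integrability.

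For the forward direction, I would fix an orthonormal basis $\{e_n\}$ of $\CH$ and use Parseval to write $\|x\|^2 = \sum_n \scal{e_n,x}^2$. Since this is a sum of nonnegative terms, Tonelli gives
\begin{equ}
\int_\CH \|x\|^2\,\mu(dx) = \sum_n \int_\CH \scal{e_n,x}^2\,\mu(dx) = \sum_n C_\mu(e_n,e_n) = \sum_n \scal{\hat C_\mu e_n, e_n}\;.
\end{equ}
The left-hand side is finite by Fernique's theorem (Theorem~\ref{theo:Fernique}), which applies because a Gaussian measure satisfies the hypothesis of Proposition~\ref{prop:rotation}. Hence the sum on the right is finite for any orthonormal basis, which together with positivity and self-adjointness of $\hat C_\mu$ (both immediate from the definition) characterises $\hat C_\mu$ as trace class and identifies the sum with $\tr \hat C_\mu$.

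For the converse, given a positive symmetric trace class $K$, I would diagonalise it: by the spectral theorem for compact self-adjoint operators there is an orthonormal basis $\{e_n\}$ with $K e_n = \lambda_n e_n$, $\lambda_n \ge 0$, and $\sum_n \lambda_n = \tr K < \infty$. On an auxiliary probability space carrying an i.i.d.\ $\CN(0,1)$ sequence $\{\xi_n\}$, define the partial sums $X_N = \sum_{n=1}^N \sqrt{\lambda_n}\,\xi_n e_n$. By orthogonality in $\CH$ and independence of the $\xi_n$,
\begin{equ}
\E \|X_N - X_M\|^2 = \sum_{n=M+1}^N \lambda_n \to 0\;,
\end{equ}
so $(X_N)$ is Cauchy in $L^2(\Omega;\CH)$ and converges to some $X$. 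Almost sure convergence follows from a standard martingale/Lévy inequality argument applied to the $\CH$-valued martingale $X_N$, so $X$ is a genuine $\CH$-valued random variable.

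I would then take $\mu$ to be the law of $X$ and verify the two required properties. For any $h \in \CH$, $\scal{h,X}$ is the $L^2$-limit of $\sum_{n=1}^N \sqrt{\lambda_n}\scal{h,e_n}\xi_n$, a sum of independent centred Gaussians with summable variances (bounded by $\|K\|_{\mathrm{op}}\|h\|^2$), hence itself centred Gaussian; this shows $\mu$ is Gaussian. The covariance is then
\begin{equ}
C_\mu(h,k) = \E \scal{h,X}\scal{k,X} = \sum_n \lambda_n \scal{h,e_n}\scal{k,e_n} = \scal{Kh,k}\;,
\end{equ}
so $\hat C_\mu = K$ via Riesz. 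The main subtlety is the a.s.\ convergence of the series defining $X$ in the norm topology of $\CH$, since without it one only obtains a cylindrical rather than a bona fide Borel Gaussian measure; the trace class hypothesis $\sum\lambda_n<\infty$ is precisely what makes this work.
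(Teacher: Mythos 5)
Your proof is correct and follows essentially the same route as the paper: Parseval plus Fernique and an interchange of sum and integral for the forward direction (you use Tonelli where the paper invokes dominated convergence, which is if anything cleaner for nonnegative terms), and spectral decomposition of $K$ with an i.i.d.\ Gaussian series for the converse. The only cosmetic difference is that the paper settles for an almost surely convergent subsequence of the partial sums rather than full a.s.\ convergence of the series, which already suffices to identify the limit law.
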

\begin{proof}
Fix an arbitrary orthonormal basis $\{e_n\}$ of $\CH$. We know from Theorem~\ref{theo:Fernique}
that the second moment of the norm is finite: $\int_\CH \|h\|^2\,\mu(dh) < \infty$. On the other hand, one has
\begin{equ}
\int_\CH \|h\|^2\,\mu(dh) = \sum_{n=1}^\infty \int_\CH \scal{h, e_n}^2\,\mu(dh) = \sum_{n=1}^\infty \scal{e_n, \hat C_\mu e_n} = \tr \hat C_\mu\;,
\end{equ}
which is precisely \eref{e:expnorm}. To pull the sum out of the integral in the first equality, we used Lebesgue's 
dominated convergence theorem.

In order to prove the converse statement, since $K$ is compact, 
we can find an orthonormal basis $\{e_n\}$ of $\CH$
such that $K e_n = \lambda_n e_n$. Since $K$ is positive and symmetric, one further has 
$\lambda_n \ge 0$, while the trace class property reads $\sum_n \lambda_n < \infty$. Let now $\{\xi_n\}$ be
a collection of i.i.d.\ $\CN(0,1)$ Gaussian random variables (such a family exists by \index{Kolmogorov!extension theorem}
Kolmogorov's extension theorem but can also be realised explicitly over the probability space $[0,1]$). Then,
since $\sum_n \lambda_n \E \xi_n^2 = \tr K < \infty$, the series $\sum_n \sqrt \lambda_n \xi_n e_n$ converges in mean square,
so that it has a subsequence converging almost surely 
in $\CH$. One can check as in Exercise~\ref{ex:traceClass} that the law of the limiting random variable is Gaussian and has the requested covariance.
\end{proof}

No such precise characterisation of the covariance operators of Gaussian measures exists in 
general Banach spaces. One can however
show that $\hat C_\mu$ is at least a little bit better than bounded, namely that it is always a compact operator. We leave this statement
as an exercise for the interested reader, since we will not make any use of it in these notes:

\begin{exercise}
Show that in the case of a Gaussian measure $\mu$ on a general separable Banach space $\CB$, the covariance 
operator $\hat C_\mu \colon \CB^* \to \CB$ is compact in the sense that it maps the unit ball on $\CB^*$ into a compact subset
of $\CB$. \textbf{Hint:} Combine Fernique's theorem with Lebesgue's dominated convergence theorem
and the fact that, since $\CB$ is separable, the unit ball in $\CB^*$ is weak-* compact.
\end{exercise}

In many situations, it is furthermore helpful to find out whether a given covariance structure can be realised as
a Gaussian measure on some space of H\"older continuous functions. This can be achieved through the following version
of Kolmogorov's continuity criterion, which can be found for example in \cite[p.~26]{RevYor}:\index{Kolmogorov!continuity criterion}

\begin{theorem}[Kolmogorov]\label{theo:Kolmogorov}
For $d > 0$, let $C\colon [0,1]^d \times [0,1]^d \to \R$ be a symmetric function such that, for every finite collection $\{x_i\}_{i=1}^m$
of points in $[0,1]^d$, the matrix $C_{ij} = C(x_i, x_j)$ is positive semi-definite. If furthermore there exists $\alpha > 0$ and a constant
$K > 0$ such that $C(x,x) + C(y,y) - 2C(x,y)  \le K|x-y|^{2\alpha}$ for any two points $x,y \in [0,1]^d$ then
there exists a unique centred Gaussian measure $\mu$ on $\CC([0,1]^d, \R)$ 
such that
\begin{equ}[e:defmuKol]
\int_{\CC([0,1]^d, \R)} f(x)f(y)\,\mu(df) = C(x,y)\;,
\end{equ}
for any two points $x,y \in [0,1]^d$. Furthermore,  for every $\beta < \alpha$, one has $\mu\bigl(\CC^\beta([0,1]^d, \R)\bigr) = 1$, where
$\CC^\beta([0,1]^d, \R)$ is the space of $\beta$-H\"older continuous functions.
\end{theorem}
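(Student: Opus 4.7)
The plan is to construct $\mu$ in two stages. First, I build a centred Gaussian stochastic process $\{X_x\}_{x\in [0,1]^d}$ whose law is a measure on the product space $\R^{[0,1]^d}$ and whose covariance is the prescribed function $C$. Second, I exploit Gaussianity together with the assumed modulus of continuity of $C$ to exhibit an almost surely $\beta$-H\"older continuous modification $\tilde X$, whose law then lives on $\CC^\beta([0,1]^d, \R) \subset \CC([0,1]^d, \R)$; pushing the underlying probability measure forward under $\omega\mapsto \tilde X(\omega)$ gives the desired $\mu$.

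For the first stage, fix any finite tuple $(x_1,\ldots,x_m)$ in $[0,1]^d$. The hypothesis that $(C(x_i,x_j))_{i,j}$ is symmetric and positive definite means there is a unique centred Gaussian measure on $\R^m$ with that covariance. The resulting family of finite-dimensional distributions is consistent under projection, so Kolmogorov's extension theorem produces a probability measure $\tilde\mu$ on $\R^{[0,1]^d}$ under which the coordinate process $X_x(\omega)=\omega(x)$ is centred Gaussian with $\E X_x X_y = C(x,y)$. For the second stage, observe that for each pair $x,y$ the increment $X_x-X_y$ is centred Gaussian with variance $\sigma^2 = C(x,x) + C(y,y) - 2C(x,y) \le K|x-y|^{2\alpha}$; the standard Gaussian moment formula $\E Z^{2n} = \frac{(2n)!}{2^n n!}\sigma^{2n}$ therefore yields a constant $K_n$ such that $\E|X_x-X_y|^{2n} \le K_n |x-y|^{2n\alpha}$ for every positive integer $n$. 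Given $\beta<\alpha$, choose $n$ so large that $2n(\alpha-\beta) > d$. The classical Kolmogorov--Chentsov continuity theorem then produces a modification $\tilde X$ of $X$ with $\beta$-H\"older sample paths. The pushforward measure $\mu$ is a Borel probability measure on $\CC([0,1]^d,\R)$ whose finite-dimensional marginals all have covariance $C$; taking a countable intersection of full-measure sets along any sequence $\beta_k\uparrow\alpha$ upgrades this to $\mu(\CC^\beta)=1$ simultaneously for every $\beta<\alpha$. Uniqueness follows from Proposition~\ref{prop:determinefinitedim} applied in the separable Banach space $\CC([0,1]^d,\R)$, since another such $\mu$ would share all finite-dimensional projections.

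The main obstacle is the Kolmogorov--Chentsov step, which I expect to carry out by a chaining argument on the dyadic grid $D_j = \{k 2^{-j}:0\le k\le 2^j\}^d$. Using Chebyshev with the moment bound of order $2n$, the probability that the maximum of $|X_x - X_y|$ over the roughly $C_d \, 2^{jd}$ pairs of nearest neighbours in $D_j$ exceeds $2^{-j\beta}$ decays like $2^{-j(2n\alpha - 2n\beta - d)}$, which is summable in $j$ precisely under our choice of $n$; Borel--Cantelli then allows one to bound $|X_x - X_y|$ for generic $x,y$ by telescoping along dyadic approximations and summing a geometric series in $2^{-j\beta}$. One also has to verify that the event $\{\tilde X \in \CC^\beta\}$ is measurable, which is immediate because the H\"older seminorm can be computed as a countable supremum over rational pairs $(x,y)\in [0,1]^{2d}$, and that the resulting measure assigns mass $1$ to the separable Banach space $\CC([0,1]^d,\R)$ rather than living on a larger non-separable H\"older space, so that Proposition~\ref{prop:determinefinitedim} applies.
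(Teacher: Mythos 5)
Your proposal is correct and follows essentially the same route as the paper: Kolmogorov's extension theorem on $\R^{[0,1]^d}$, the Gaussian moment bound to convert the second-moment hypothesis on $C$ into $p$th-moment bounds on increments, a dyadic chaining argument to control the H\"older seminorm, and careful handling of the measurability of the modification map into $\CC^\beta$, with uniqueness via Proposition~\ref{prop:determinefinitedim}. The only cosmetic difference is that you conclude the chaining step by Chebyshev and Borel--Cantelli, whereas the paper bounds $\E M_\beta(X)$ directly via Jensen's inequality; both are standard variants of the same estimate.
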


\begin{proof}
Set $\CB = \CC([0,1]^d, \R)$ and $\CB^*$ its dual, which consists of the set of Borel measures with finite total variation \cite[p.~119]{Yos95}.
Since convex combinations of Dirac measures are dense (in the topology of \index{weak convergence}{weak convergence}) in the set of probability measures,
it follows that the set of linear combinations of point evaluations is weakly dense in $\CB^*$. Therefore, the claim follows if we
are able to construct a measure $\mu$ on $\CB$ such that \eref{e:defmuKol} holds and such that, if $f$ is distributed
according to $\mu$, then for any finite collection of points $\{x_i\} \subset [0,1]^d$, the joint law of the $f(x_i)$ is Gaussian.

By Kolmogorov's extension theorem, we can construct a measure $\mu_0$ on $\X = \R^{[0,1]^d}$ endowed with the product $\sigma$-algebra
such that the laws of all finite-dimensional marginals are Gaussian and satisfy \eref{e:defmuKol}.\footnote{If the uncountable product appearing here makes you queasy, you can restrict yourself to the countable set of points
with dyadic coordinates, which is all our construction will rely on.} We denote by $X$ an $\CX$-valued
random variable with law $\mu_0$. At this stage, one would think that the proof is complete if we can show that $X$ almost surely
has finite $\beta$-H\"older norm. The problem with this statement is that the $\beta$-H\"older norm is not a measurable function on $\CX$!
The reason for this is that it requires point evaluations of $X$ at uncountably many locations, while functions that are measurable
with respect to the product $\sigma$-algebra on  $\CX$ are allowed to depend on at most countably many function evaluations.

This problem can be circumvented very elegantly in the following way.
Denote by $\CD \subset [0,1]^d$ the subset of dyadic numbers (actually any countable dense subset would do for now, but the dyadic numbers 
will be convenient later on) and define the event $\Omega_\beta$ by 
\begin{equ}
\Omega_\beta = \Bigl\{X\,:\, \hat X(x) \eqdef \lim_{y \to x \atop y \in \CD} X(y) \,\text{exists for every $x \in [0,1]^d$ and $\hat X$ belongs to $\CC^\beta([0,1]^d, \R)$}\Bigr\}\;.
\end{equ}
Since the event $\Omega_\beta$ can be constructed from evaluating $X$ at only countably many points, it is a measurable set.
For the same reason, the map $\iota\colon \CX \to \CC^\beta([0,1]^d, \R)$ given by
\begin{equ}
\iota(X) = \left\{\begin{array}{cl} \hat X & \text{if $X \in \Omega_\beta$,} \\ 0 & \text{otherwise} \end{array}\right.
\end{equ}
is measurable with respect to the product $\sigma$-algebra on $\CX$ (and the Borel $\sigma$-algebra on $\CC^\beta$), 
so that the claim follows if we can show that $\mu_0(\Omega_\beta) = 1$
for every $\beta < \alpha$. (Take $\mu = \iota^*\mu_0$.)
Denoting the $\beta$-H\"older norm of $X$ restricted to the dyadic numbers
by $M_\beta(X) = \sup_{x \neq y \,:\, x,y \in \CD} \{|X(x) - X(y)| / |x-y|^\beta\}$, we see that
$\Omega_\beta$ can alternatively be characterised as $\Omega_\beta = \{X\,:\, M_\beta(X) < \infty\}$, so that the claim
follows if we can show for example that $\E M_\beta(X) <\infty$.

Denote by $\CD_m \subset \CD$ the set of those numbers whose coordinates
are integer multiples of $2^{-m}$ and denote by $\Delta_m$ the set of pairs $x,y \in \CD_m$ such that $|x-y| = 2^{-m}$.
In particular, note that $\Delta_m$ contains at most $2^{(m+2)d}$ such pairs. 

We are now going to make use of our simplifying assumption that we are dealing with Gaussian random variables, so that $p$th
moments can be bounded in terms of second moments.
More precisely, for every $p \ge 1$ there exists a constant $C_p$ such that if $X$ is a Gaussian random variable, then  
one has the bound $\E |X|^p \le C_p \bigl(\E |X|^2\bigr)^{p/2}$.
Setting $K_m(X) = \sup_{x,y \in \Delta_m} |X(x) - X(y)|$ and fixing some arbitrary $\beta' \in (\beta, \alpha)$, 
we see that for $p \ge 1$ large enough, there exists a constant $K_p$ such that
\begin{equs}
\E K_m^p(X) &\le \sum_{x,y \in \Delta_m} \E |X(x) - X(y)|^p \le C_p \sum_{x,y \in \Delta_m} (\E |X(x) - X(y)|^2)^{p/2} \\
& = C_p \sum_{x,y \in \Delta_m} \bigl(C(x,x) + C(y,y) - 2C(x,y)\bigr)^{p/2} \le \hat C_p 2^{(m+2)d - \alpha m p} \\
&\le \hat C_p 2^{-\beta' m p}\;,
\end{equs}
for some constants $\hat C_p$.
(In order to obtain the last inequality, we had to assume that $p \ge {d \over \alpha - \beta'}{m+2 \over m}$ which can always be 
achieved by some value of $p$ independent of $m$
since we assumed that $\beta' < \alpha$.) Using Jensen's inequality, this shows that there exists a constant $K$ such that
the bound
\begin{equ}[e:boundK]
\E K_m(X) \le K 2^{-\beta' m}
\end{equ}
holds uniformly in $m$.
Fix now any two points $x,y \in \CD$ with $x \neq y$ and denote by $m_0$ the 
largest $m$ such that $|x-y| < 2^{-m}$. One can then find sequences $\{x_n\}_{n \ge m_0}$ and $\{y_n\}_{n \ge m_0}$
with the following properties:
\begin{enumerate}
\item One has $\lim_{n \to \infty} x_n = x$ and $\lim_{n \to \infty} y_n = y$.
\item Either $x_{m_0} = y_{m_0}$ or both points belong to the vertices of the same ``dyadic hypercube'' in $\CD_{m_0}$, so that they
 can be connected by at most $d$ ``bonds'' in $\Delta_{m_0}$.
\item For every $n \ge m_0$, $x_n$ and $x_{n+1}$ belong to the vertices of the same ``dyadic hypercube'' in $\CD_{n+1}$, so that they
 can be connected by at most $d$ ``bonds'' in $\Delta_{n+1}$ and similarly for
$(y_n, y_{n+1})$.
\end{enumerate}

\begin{wrapfigure}{r}{3.2cm}
\begin{tikzpicture}
\draw[step=1cm,gray,very thin] (0,0) grid (3,3); 

\fill[blue, opacity=0.2] (0,0) rectangle (2.4,2.2); 
\fill [red] (2.4,2.2) circle (2pt);

\node at (2.4,2.2) [anchor=south west] {$x$};
\node at (2,2) [anchor=north east] {$x_n$};
\foreach \x in {0,1,2,3} {
	\foreach \y in {0,1,2,3} {
		\fill [black] (\x,\y) circle (1.5pt);
		}
	}
\fill [blue] (2,2) circle (2pt);
\end{tikzpicture} \vspace{-1.8cm}
\end{wrapfigure}
One way of constructing this sequence is to order elements in $\CD_m$ by lexicographic order and to choose
$x_n = \max\{\bar x \in \CD_n \,:\, \bar x_j \le x_j \;\forall j\}$, as illustrated in the picture to the right. 
This shows that one has the bound
\begin{equs}
|X(x) - X(y)| &\le |X(x_{m_0}) - Y(x_{m_0})| + \sum_{n = m_0}^\infty |X(x_{n+1})-X(x_{n})|\\
&\qquad + \sum_{n = m_0}^\infty |X(y_{n+1})-X(y_{n})| \\
&\le dK_{m_0}(X) + 2d\sum_{n=m_0}^\infty K_{n+1}(X) \le 2d\sum_{n=m_0}^\infty K_{n}(X)\;.
\end{equs}
Since $m_0$ was chosen in such a way that $|x-y| \ge 2^{-m_0 -1}$, one has the bound
\begin{equs}
M_\beta(X) \le 2d \sup_{m\ge 0} 2^{\beta (m+1)} \sum_{n=m}^\infty K_{n}(X)\le 2^{\beta + 1}d \sum_{n=0}^\infty 2^{\beta n} K_{n}(X)\;.
\end{equs}
It follows from this and from the bound \eref{e:boundK} that 
\begin{equs}
\E |M_\beta(X)| \le 2^{\beta + 1}d \sum_{n=0}^\infty 2^{\beta n} \E K_{n}(X)
\le 2^{\beta+1} dK  \sum_{n=0}^\infty 2^{(\beta - \beta')n} < \infty\;,
\end{equs}
since $\beta'$ was chosen strictly larger than $\beta$.
\end{proof}

Combining Kolmogorov's continuity criterion with Fernique's theorem, we note that we can apply it not only to real-valued
processes, but to any Gaussian Banach-space valued process:

\begin{proposition}\label{prop:KolmogorovGauss}
Let $\CB$ be a separable Banach space and 
let $\{X(x)\}_{x \in [0,1]^d}$ be a collection of $\CB$-valued Gaussian random variables such that
\begin{equ}
\E \|X(x) - X(y)\| \le C |x-y|^\alpha\;,
\end{equ}
for some $C>0$ and some $\alpha \in (0,1]$. Then, there exists a unique Gaussian measure $\mu$ on $\CC([0,1]^d,\CB)$
such that, if $Y$ is a random variable with law $\mu$, then $Y(x)$ is equal in law to $X(x)$ for every $x$. Furthermore,
$\mu\bigl(\CC^\beta([0,1]^d,\CB)\bigr) = 1$ for every $\beta < \alpha$.
\end{proposition}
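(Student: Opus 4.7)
The plan is to reduce to the scalar-valued Theorem~\ref{theo:Kolmogorov} by replacing $|X(x)-X(y)|$ with $\|X(x)-X(y)\|$ throughout its proof, using Fernique's theorem to compensate for only having a first-moment hypothesis on $\|X(x)-X(y)\|$ rather than a pointwise variance estimate.

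First I would promote the given bound to higher moments. Since $X(x)-X(y)$ is a $\CB$-valued centred Gaussian random variable, Proposition~\ref{prop:universalFernique} applied to its law gives, for every $p \ge 1$, a universal constant $K_p$ such that
\begin{equ}
\E \|X(x)-X(y)\|^p \le K_p \bigl(\E \|X(x)-X(y)\|\bigr)^p \le K_p C^p |x-y|^{\alpha p}\;.
\end{equ}
This plays exactly the role that Gaussianity + second-moment control played in the proof of Theorem~\ref{theo:Kolmogorov}.

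Next I would construct the candidate measure. The finite-dimensional marginals of $\{X(x)\}_{x\in[0,1]^d}$ are Gaussian measures on $\CB^n$ (hence on a Polish space), so Kolmogorov's extension theorem yields a probability measure $\mu_0$ on $\CX = \CB^{[0,1]^d}$, equipped with the product $\sigma$-algebra, whose finite-dimensional marginals agree with those of $X$. Let $\CD \subset [0,1]^d$ denote the dyadic points and let
\begin{equ}
\Omega_\beta = \Bigl\{Y \in \CX \,:\, \hat Y(x) \eqdef \lim_{y \to x,\, y \in \CD} Y(y) \text{ exists for all $x \in [0,1]^d$ and } \hat Y \in \CC^\beta([0,1]^d,\CB)\Bigr\}\;,
\end{equ}
which is measurable since it depends only on the countably many evaluations of $Y$ on $\CD$. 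I would then define $\iota\colon \CX \to \CC^\beta([0,1]^d,\CB)$ to be $\hat Y$ on $\Omega_\beta$ and $0$ off it, and set $\mu = \iota^* \mu_0$.

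Then I would repeat the dyadic chaining argument of Theorem~\ref{theo:Kolmogorov} verbatim, replacing the absolute value by the norm. With $\Delta_m$ the pairs of neighbouring dyadics at scale $2^{-m}$ and $K_m(Y) = \sup_{(x,y)\in\Delta_m} \|Y(x)-Y(y)\|$, a union bound combined with the moment inequality above gives, for any $\beta' \in (\beta,\alpha)$ and all $p$ large enough,
\begin{equ}
\E K_m^p(Y) \le \sum_{(x,y)\in\Delta_m} \E \|Y(x)-Y(y)\|^p \le K_p C^p\, 2^{(m+2)d - m\alpha p} \le \hat K_p\, 2^{-m\beta' p}\;,
\end{equ}
so Jensen's inequality yields $\E K_m(Y) \le K\, 2^{-\beta' m}$. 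The chaining over dyadic cubes that bounds the $\beta$-Hölder seminorm $M_\beta(Y) \le 2^{\beta+1} d \sum_{n \ge 0} 2^{\beta n} K_n(Y)$ uses only the triangle inequality and so transfers without change, giving $\E M_\beta(Y) < \infty$ and hence $\mu_0(\Omega_\beta) = 1$ for every $\beta < \alpha$.

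Finally I would check uniqueness and the Gaussian property. Point evaluations $\mathrm{ev}_x \colon \CC([0,1]^d,\CB) \to \CB$ are continuous linear maps, so for each $\ell \in \CB^*$ and $x \in [0,1]^d$ the functional $Y \mapsto \ell(Y(x))$ lies in $\CC([0,1]^d,\CB)^*$. Finite linear combinations of such functionals separate points (by Hahn-Banach and density of convex combinations of Dirac masses in Borel measures), so by Proposition~\ref{prop:determinefinitedim} the measure $\mu$ is determined by the joint laws of the $(\ell_i(Y(x_i)))$, proving uniqueness; and each such joint law is Gaussian by hypothesis on $\{X(x)\}$, which in view of Proposition~\ref{prop:FT} and a density/continuity argument on characteristic functions in $\CC([0,1]^d,\CB)^*$ shows that $\mu$ is Gaussian. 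The main conceptual step is the first one: Fernique's theorem is precisely what lets the scalar proof go through in the Banach-valued setting, and the rest is bookkeeping.
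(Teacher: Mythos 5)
Your proposal is correct and follows exactly the paper's route: the paper's proof consists precisely of the remark that $\E\|X(x)-X(y)\|^p \le C_p|x-y|^{\alpha p}$ follows from the first-moment hypothesis via Proposition~\ref{prop:universalFernique}, after which the dyadic chaining argument of Theorem~\ref{theo:Kolmogorov} is repeated verbatim with $|\cdot|$ replaced by $\|\cdot\|$. You have simply written out in full the steps the paper leaves implicit.
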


\begin{proof}
The proof is identical to that of Theorem~\ref{theo:Kolmogorov}, noting that the bound
$\E \|X(x)-X(y)\|^p \le C_p |x-y|^{\alpha p}$ follows from the assumption and Proposition~\ref{prop:universalFernique}.
\end{proof}

\begin{remark}
The space $\CC^\beta([0,1]^d, \R)$ is not separable. However, the space $\CC^\beta_0([0,1]^d, \R)$ of H\"older continuous
functions that furthermore satisfy  $\lim_{y \to x} {|f(x) - f(y)| \over |x-y|^\beta} = 0$ uniformly in $x$ is separable
(polynomials with rational coefficients are dense in it). This is in complete analogy with the fact that the space of bounded
measurable functions is not separable, while the space of continuous functions is. 

It is furthermore possible to check that $\CC^{\beta'} \subset \CC_0^\beta$ for every $\beta' > \beta$,
so that Exercise~\ref{ex:Kolmo} below shows that $\mu$ can actually be realised as a Gaussian measure
on $\CC^\beta_0([0,1]^d, \R)$.
\end{remark}

\begin{exercise}
Given a \textit{compact} metric space $(\CX,d)$, define its $\eps$-entropy by
\begin{equ}
\CH_\eps(\CX) = \log \inf\Big\{N\,:\, \exists x_1,\ldots, x_N \in \CX \;\text{with}\; \bigcup B(x_n,\eps) = \CX\Big\}\;.
\end{equ}
Show that the statement of Kolmogorov's theorem still holds if we replace $[0,1]^d$ by $\CX$, provided 
that $\limsup_{\eps \to 0} \CH_\eps(\CX) / |{\log \eps}| < \infty$.

Using a slightly different union bound to obtain \eqref{e:boundK}, show that if we wish to obtain 
Kolmogorov's theorem for some \textit{fixed} $\beta < \alpha$, then it is sufficient that
$\limsup_{\eps \to 0} \CH_\eps(\CX) \eps^{2(\alpha-\beta)} < \infty$, which also allows for infinite-dimensional
domains $\CX$. 
\end{exercise}

%\begin{exercise}
%Try to find conditions on $G \subset \R^d$ that are as weak as possible and such
% that Kolmogorov's continuity theorem still holds if the cube $[0,1]^d$ is replaced by $G$.
%\textbf{Hint:} One possible strategy is to embed $G$ into a cube and then to try to extend $C(x,y)$ to that cube.
%\end{exercise}

\begin{exercise}
Using Proposition~\ref{prop:trace}, show that, setting $D = [0,1]^d$, if 
$\CH$ is a separable Hilbert space and $C\colon D \times D \to \CL(\CH,\CH)$ is such that $C(x,y)$ is positive definite,
symmetric, and trace class for any two $x,y \in D$, then Kolmogorov's continuity theorem still holds if its
condition is replaced by $\tr C(x,x) + \tr C(y,y) - 2\tr C(x,y) \le K |x-y|^\alpha$. More precisely, one can construct a measure
$\mu$ on the space $\CC^\beta([0,1]^d, \CH)$ with jointly Gaussian marginals such that
\begin{equ}
\int_{\CC^\beta([0,1]^d, \CH)} \scal{h,f(x)}\scal{f(y),k}\,\mu(df) = \scal{h,C(x,y)k}\;,
\end{equ}
for any $x,y \in D$ and $h, k \in \CH$.
\end{exercise}

A very useful consequence of Kolmogorov's continuity criterion is the following result:

\begin{corollary}\label{cor:Kol}
Let $\{\eta_k\}_{k \ge 0}$ be countably many i.i.d.\
standard Gaussian random variables (real or complex).
Moreover let $\{f_k\}_{k \ge 0}\subset \mathrm{Lip}(G,\C)$
where the domain $G\subset \R^d$  is sufficiently regular for Kolomgorov's continuity theorem to hold.
Suppose there is some $\delta\in(0,2)$ such that
\begin{equ}[e:critKol]
S_1^2=\sum_{k\in I} \|f_k\|^2_{L^\infty}<\infty
\quad\mathrm{and}\quad
S_2^2=\sum_{k\in I} \|f_k\|^{2-\delta}_{L^\infty}\mathrm{Lip}(f_k)^\delta<\infty\;,
\end{equ}
and define $f = \sum_{k\in I}  \eta_k f_k$.
Then $f$ is almost surely bounded and H\"older continuous for every H\"older exponent smaller than $\delta / 2$.
\end{corollary}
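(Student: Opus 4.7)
The plan is to apply Kolmogorov's continuity criterion (Theorem~\ref{theo:Kolmogorov}) to the Gaussian process $\{f(x)\}_{x \in G}$. First I would check that the defining series makes sense pointwise: since the $\eta_k$ are orthonormal in $L^2(\Omega)$, one has $\E|f(x)|^2 = \sum_k |f_k(x)|^2 \le S_1^2 < \infty$, so the partial sums form a Cauchy sequence in $L^2(\Omega)$ and converge to a centred Gaussian random variable $f(x)$. The resulting collection $\{f(x)\}_{x \in G}$ is jointly Gaussian with covariance $C(x,y) = \sum_k f_k(x)\overline{f_k(y)}$.

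The heart of the argument is a bound on the increment variance of the form $\E|f(x)-f(y)|^2 \le K|x-y|^\delta$. For each $k$ one has the two elementary estimates
$$
|f_k(x)-f_k(y)| \le 2\|f_k\|_{L^\infty}\;,\qquad |f_k(x)-f_k(y)| \le \mathrm{Lip}(f_k)\,|x-y|\;,
$$
and the geometric-mean interpolation $\min\{a,b\} \le a^{1-\theta}b^{\theta}$ applied with $\theta = \delta/2$ then gives
$$
|f_k(x)-f_k(y)|^2 \le 2^{2-\delta}\,\|f_k\|_{L^\infty}^{2-\delta}\,\mathrm{Lip}(f_k)^{\delta}\,|x-y|^\delta\;.
$$
Summing over $k$ and using orthonormality of the $\eta_k$ as above yields
$$
\E|f(x)-f(y)|^2 = \sum_k |f_k(x)-f_k(y)|^2 \le 2^{2-\delta}\,S_2^2\,|x-y|^\delta\;,
$$
which is exactly the hypothesis of Theorem~\ref{theo:Kolmogorov} with $2\alpha = \delta$. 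That theorem produces a centred Gaussian measure concentrated on $\CC^\beta(G,\C)$ for every $\beta < \delta/2$ and whose finite-dimensional marginals agree with those of $\{f(x)\}_{x\in G}$; since $G$ is assumed bounded, $\beta$-H\"older continuity entails boundedness as well.

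The only genuinely non-trivial step is the choice $\theta = \delta/2$ in the interpolation: this is forced by the requirement that the resulting exponents of $\|f_k\|_{L^\infty}$ and $\mathrm{Lip}(f_k)$ reproduce exactly the combination appearing in $S_2^2$, which is precisely why the hypothesis is stated in that form. Two minor technical points remain: (i)~the complex-valued setting, which reduces to the real case by applying the argument to $\mathrm{Re}\,f$ and $\mathrm{Im}\,f$ separately, noting that the $L^\infty$ and Lipschitz norms of $\mathrm{Re}\,f_k$ and $\mathrm{Im}\,f_k$ are each dominated by those of $f_k$; and (ii)~identifying the a.s.\ H\"older modification with the original $L^2$-limit of partial sums, which follows from the uniqueness of finite-dimensional marginals (Proposition~\ref{prop:determinefinitedim}) together with the fact that $L^2$-convergent sequences admit subsequences converging almost surely at each fixed point.
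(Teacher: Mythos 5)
Your proof is correct and follows essentially the same route as the paper: compute $\E|f(x)-f(y)|^2 = \sum_k |f_k(x)-f_k(y)|^2$, interpolate between the sup-norm and Lipschitz bounds on each increment with exponent $\delta/2$ to obtain $\E|f(x)-f(y)|^2 \le C S_2^2 |x-y|^\delta$, and invoke Kolmogorov's continuity theorem. The extra care you take with the complex-valued case and with identifying the H\"older modification with the $L^2$-limit is correct but goes beyond what the paper spells out.
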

\begin{proof} From the assumptions we immediately derive that
$f(x)$ and $f(x)-f(y)$ are a centred Gaussian for any $x,y\in G$. Moreover, the
corresponding series converge absolutely. Using that the $\eta_k$ are i.i.d., we
obtain
\begin{equs}
\E |f(x)-f(y)|^2
&= \sum_{k\in I}|f_k(x)-f_k(y)|^2
\le \sum_{k\in I}\min\{2\|f_k\|_{L^\infty}^2, \mathrm{Lip}(f_k)^2|x-y|^2\}\\
&\le 2 \sum_{k\in I} \|f_k\|_{L^\infty}^{2-\delta}
 \mathrm{Lip}(f_k)^\delta  |x-y|^\delta 
 = 2S_2^2  |x-y|^\delta\;,
\end{equs}
where we used that $\min\{a,bx^2\} \le a^{1-\delta/2} b^{\delta/2} |x|^\delta$
for any $a,b\ge0$. The claim now follows from Kolmogorov's continuity theorem.
\end{proof}

\begin{remark}
One should really think of the $f_k$'s in Corollary~\ref{cor:Kol} as being an orthonormal basis
of the Cameron--Martin space of some Gaussian measure. (See Section~\ref{sec:CM} below for the
definition of the Cameron--Martin space associate to a Gaussian measure.) 
The criterion \eref{e:critKol} then provides
an effective way of deciding whether the measure in question can be realised on a space of H\"older
continuous functions. 
\end{remark}

\subsection{The Cameron--Martin space}
\label{sec:CM}

Given a Gaussian measure $\mu$ on a separable Banach space $\CB$, it is possible to associate to
it in a canonical way a Hilbert space $\CH_\mu \subset \CB$, called the \index{Cameron--Martin!space}Cameron--Martin space of $\mu$.
The main importance of the Cameron--Martin space is that it characterises precisely those directions in $\CB$
in which translations leave the measure $\mu$ ``quasi-invariant'' in the sense that the translated measure
has the same null sets as the original measure. In general, the space $\CH_\mu$ will turn out to be strictly
smaller than $\CB$. Actually, this is always the case as soon as $\dim \CH_\mu = \infty$ and, even worse, we will see that
in this case one necessarily has $\mu(\CH_\mu) = 0$! Contrast this to the case
of finite-dimensional Lebesgue measure which is invariant under translations in any direction! This is a striking
illustration of the fact that measures in infinite-dimensional spaces have a strong tendency of being mutually singular.

The definition of the Cameron--Martin space is the following, where we postpone to Remark~\ref{rem:unique} and
Proposition~\ref{prop:CMbig} the verification that $\|h\|_\mu$ is well-defined and that $\|h\|_\mu > 0$ for $h \neq 0$:

\begin{definition}
The Cameron--Martin space $\CH_\mu$ of $\mu$ %(also called the Cameron--Martin space of $\mu$) 
is the completion of the linear subspace $\HH_\mu \subset \CB$ defined by
\begin{equ}
\HH_\mu = \{ h \in \CB \,:\, \exists\, h^* \in \CB^*\;\text{with}\; C_\mu(h^*, \ell) = \ell(h)\;\forall \ell \in \CB^*\}\;,
\end{equ}
under the \index{Cameron--Martin!norm}norm $\|h\|_\mu^2 = \scal{h,h}_\mu = C_\mu(h^*, h^*)$.
It is a Hilbert space when endowed with the scalar product $\scal{h,k}_\mu = C_\mu(h^*, k^*)$.
\end{definition}

\begin{exercise}
Show that the space $\HH_\mu$ is nothing but the range of the operator 
$\hat C_\mu$ defined in Remark~\ref{rem:Chat} and that $\hat C_\mu h^* = h$.
\end{exercise}

\begin{remark}\label{rem:unique}
Even though the map $h \mapsto h^*$ may not be one to one, the norm $\|h\|_\mu$ is well-defined.
To see this, assume that for a given $h \in \HH_\mu$, there are two corresponding elements $h_1^*$ and $h_2^*$ in $\CB^*$.
Then, defining $k = h_1^* + h_2^*$, one has
\begin{equ}
C_\mu(h_1^*, h_1^*) - C_\mu(h_2^*, h_2^*) = C_\mu(h_1^*, k) - C_\mu(h_2^*, k) = k(h) - k(h) = 0\;,
\end{equ}
showing that $\|h\|_\mu$ does indeed not depend on the choice of $h^*$.
\end{remark}

\begin{exercise}\label{ex:CMWiener}
The \index{Wiener!measure}Wiener measure $\mu$ is defined on $\CB = \CC([0,1],\R)$ as the centred Gaussian measure with covariance
operator given by $C_\mu(\delta_s,\delta_t) = s \wedge t$.
Show that the Cameron--Martin space for the Wiener measure on $\CB = \CC([0,1],\R)$
is given by the space $H^{1,2}_0([0,1])$ of all absolutely continuous functions $h$ such that 
$h(0) = 0$ and $\int_0^1 \dot h^2(t)\,dt < \infty$.
\end{exercise}

\begin{exercise}
Show that in the case $\CB= \R^n$, the Cameron--Martin space is given by the range of the covariance matrix.
Write an expression for $\|h\|_\mu$ in this case.
\end{exercise}

\begin{exercise}
Show that the Cameron--Martin space of a Gaussian measure determines it. More precisely, if $\mu$ and $\nu$ are two Gaussian
measures on $\CB$ such that $\CH_\mu = \CH_\nu$ and such that $\|h\|_\mu = \|h\|_\nu$ for every $h \in \CH_\mu$,
then they are identical. 

For this reason, a Gaussian measure on $\CB$ is sometimes given by specifying the Hilbert space structure $(\CH_\mu, \|\cdot\|_\mu)$.
Such a specification is then usually called an \textit{abstract Wiener space}.\index{Wiener!space (abstract)}
\end{exercise}

Let us discuss a few properties of the Cameron--Martin space. First of all, we show that it is a
subspace of $\CB$ despite the completion procedure and that all non-zero elements of $\CH_\mu$ have strictly
positive norm:

\begin{proposition}\label{prop:CMbig}
One has $\CH_\mu \subset \CB$. Furthermore, one has the bound
\begin{equ}[e:embedding]
\scal{h,h}_\mu \ge \|C_\mu\|^{-1} \|h\|^2\;,
\end{equ}
where the norms on the right hand side are understood to be taken in $\CB$. 
\end{proposition}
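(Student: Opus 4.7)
The plan is to exploit the fact that $C_\mu$ is a positive semi-definite symmetric bilinear form on $\CB^*$ and therefore satisfies Cauchy--Schwarz, together with the Hahn--Banach theorem, which lets one pick an $\ell \in \CB^*$ that realises $\|h\|$ as $\ell(h)$.

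First I would establish the bound \eref{e:embedding} on the dense subspace $\HH_\mu$. Fix $h \in \HH_\mu$ and pick a representative $h^* \in \CB^*$ with $C_\mu(h^*,\ell) = \ell(h)$ for all $\ell \in \CB^*$, so that by definition $\scal{h,h}_\mu = C_\mu(h^*,h^*)$. By Hahn--Banach there exists $\ell_0 \in \CB^*$ with $\|\ell_0\| = 1$ and $\ell_0(h) = \|h\|$. Applying the Cauchy--Schwarz inequality to the positive semi-definite form $C_\mu$,
\begin{equ}
\|h\|^2 = \ell_0(h)^2 = C_\mu(h^*, \ell_0)^2 \le C_\mu(h^*, h^*)\, C_\mu(\ell_0, \ell_0) \le \scal{h,h}_\mu \,\|C_\mu\|\,\|\ell_0\|^2,
\end{equ}
where the last inequality uses the previous corollary that $C_\mu(\ell,\ell') \le \|C_\mu\|\|\ell\|\|\ell'\|$. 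Since $\|\ell_0\| = 1$, this is precisely \eref{e:embedding} on $\HH_\mu$. In particular, $\|h\|_\mu = 0$ forces $\|h\| = 0$, which incidentally also reconfirms that $\|\cdot\|_\mu$ is a genuine norm on $\HH_\mu$ (cf.\ Remark~\ref{rem:unique}).

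Next I would argue that $\CH_\mu \subset \CB$. The inequality just proved says that the inclusion map $\iota\colon \HH_\mu \hookrightarrow \CB$ is continuous when $\HH_\mu$ carries the norm $\|\cdot\|_\mu$ and $\CB$ its native norm, with operator norm at most $\|C_\mu\|^{1/2}$. Since $\CB$ is complete, $\iota$ extends uniquely to a continuous linear map $\bar\iota\colon \CH_\mu \to \CB$, and the extended map still satisfies $\|\bar\iota(h)\| \le \|C_\mu\|^{1/2} \|h\|_\mu$. The map $\bar\iota$ is injective: if $(h_n) \subset \HH_\mu$ is $\|\cdot\|_\mu$-Cauchy with $\|h_n\| \to 0$, then passing to the $\|\cdot\|_\mu$-limit $h \in \CH_\mu$ and using continuity of $\bar\iota$ gives $\bar\iota(h) = \lim h_n = 0$, and if moreover the $\|\cdot\|_\mu$-limit $h$ were non-zero the bound \eref{e:embedding} (extended by continuity to $\CH_\mu$) would force $\|\bar\iota(h)\| > 0$, a contradiction. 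Hence $\CH_\mu$ is canonically identified with a linear subspace of $\CB$, and the bound \eref{e:embedding} persists on all of $\CH_\mu$ by density.

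No serious obstacle arises here; the only subtle point is the identification of the abstract completion with an honest subspace of $\CB$, which is handled uniformly by the continuous-extension argument once the key inequality is in hand on $\HH_\mu$. The two essential inputs are Hahn--Banach (to produce a norming functional for $h$) and Cauchy--Schwarz applied to $C_\mu$, combined with the boundedness of $C_\mu$ furnished by Fernique's theorem.
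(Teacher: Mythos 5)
Your derivation of the bound \eref{e:embedding} is correct and is essentially the paper's own argument: the norming functional $\ell_0$ produced by Hahn--Banach is just the paper's formula $\|h\| = \sup_{\ell\neq 0}\ell(h)/\|\ell\|$ in disguise, and the remaining steps (Cauchy--Schwarz for the positive semi-definite form $C_\mu$, followed by the boundedness of $C_\mu$ coming from Fernique's theorem) are identical to those in the text.

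The embedding part contains a genuine error. You rightly flag that one must check injectivity of the continuous extension $\bar\iota\colon\CH_\mu\to\CB$ — a point the paper passes over in silence — but the argument you give for it does not work. The bound \eref{e:embedding}, extended to $\CH_\mu$, reads $\|\bar\iota(h)\|\le\|C_\mu\|^{1/2}\|h\|_\mu$: it controls the $\CB$-norm from \emph{above} by the Cameron--Martin norm. It therefore cannot ``force $\|\bar\iota(h)\|>0$'' when $\|h\|_\mu>0$; that would require a reverse inequality $\|\bar\iota(h)\|\ge c\|h\|_\mu$, which is false in general — in infinite dimensions the Cameron--Martin norm is strictly stronger than the $\CB$-norm, which is precisely why $\CH_\mu$ is a proper subspace of measure zero. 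Injectivity is nevertheless true, but it needs a different input, namely the isometry $h\mapsto h^*$ into $L^2(\CB,\mu)$ (the content of Proposition~\ref{prop:defRmu}). Concretely: if $h_n\in\HH_\mu$ is $\|\cdot\|_\mu$-Cauchy with $\|h_n\|\to0$ in $\CB$, then since $\|h_n-h_m\|_\mu^2=\int(h_n^*-h_m^*)^2\,d\mu$ the sequence $h_n^*$ converges in $L^2(\CB,\mu)$ to some $g$ lying in the $L^2$-closure of $\CB^*$; on the other hand, for every $\ell\in\CB^*$ one has $\int g\,\ell\,d\mu=\lim C_\mu(h_n^*,\ell)=\lim\ell(h_n)=0$, so $g$ is orthogonal to $\CB^*$ and hence $g=0$, giving $\|h_n\|_\mu\to0$ as required.
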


\begin{proof}
One has the chain of inequalities
\begin{equ}
\|h\|^2 = \sup_{\ell \in \CB^* \setminus \{0\}} {\ell(h)^2 \over \|\ell\|^2} = \sup_{\ell \in \CB^* \setminus \{0\}} {C_\mu(h^*, \ell)^2 \over \|\ell\|^2}
\le \sup_{\ell \in \CB^* \setminus \{0\}} {C_\mu(h^*, h^*)\, C_\mu(\ell, \ell) \over \|\ell\|^2}
\le \|C_\mu\| \scal{h,h}_\mu\;,
\end{equ}
which yields the bound on the norms. It follows immediately that the canonical inclusion 
$\iota\colon \HH_\mu \hookrightarrow \CB$ extends to a bounded linear map $\iota \colon \CH_\mu \to \CB$.
To show that $\CH_\mu$ is a subset of $\CB$ (or rather that it can be interpreted as such),
it then remains to show that $\iota$ is injective on $\CH_\mu$. 

Assume that it is not, so that there exists a sequence
$f_n \in \HH_\mu$ with $\|f_n\| \to 0$ but $f_n \to f \neq 0$ in $\CH_\mu$. 
For any $h \in \HH_\mu$, we then have
\begin{equ}
|\scal{h,f_n}_\mu| = |h^*(f_n)| \le \|h^*\|\, \|f_n\| \to 0\;,
\end{equ}
where we used the fact that $h^* \in \CB^*$ (and in particular its norm $\|h^*\|$ is finite)
by the definition of $\HH_\mu$. It follows that one necessarily has 
$\scal{h,f}_\mu = 0$ for every $h \in \HH_\mu$ and, since
$\HH_\mu$ is dense in $\CH_\mu$, it follows that $f = 0$.
\end{proof}

\begin{remark}
The reader familiar with functional analysis will surely have remarked the analogy
between the proof of injectivity of $\iota$ and the proof of the fact that symmetric
operators on a Hilbert space are always closable. In general, given a linear subspace 
$\CE \subset \CB$ and a norm $|\cdot|$ on $\CE$ with $|h| \ge \|h\|$
it may be the case that the completion of $\CE$ under $|\cdot|$ cannot be interpreted naturally
as a subspace of $\CB$. Think for example of the case $\CB = L^2([-1,1])$, $\CE
= \CC([-1,1])$, and $|h| = \|h\| + |h(0)|$. In this case, the completion of $\CE$
is canonically isomorphic to $\CB \oplus \R$ but has no natural embedding into $\CB$.
\end{remark}

A simple example showing that the correspondence $h \mapsto h^*$ in the definition of $\HH_\mu$ is not necessarily
unique is the case $\mu = \delta_0$, so that $C_\mu = 0$. If one chooses $h = 0$, then any $h^* \in \CB$ has
the required property that $C_\mu(h^*, \ell) = \ell(h)$, so that this is an extreme case of non-uniqueness. 
However, if we view $\CB^*$ as a subset of $L^2(\CB,\mu)$ (by identifying
linear functionals that agree $\mu$-almost surely), then the correspondence $h \mapsto h^*$ is always an isomorphism. 
One has indeed
$\int_\CB h^*(x)^2\,\mu(dx) = C_\mu(h^*,h^*) = \|h\|_\mu^2$. In particular, if $h_1^*$ and $h_2^*$ are two distinct elements of
$\CB^*$ associated to the same element $h \in \CB$, then $h_1^* - h_2^*$ is associated to the element $0$ and therefore
$\int_\CB \bigl(h_1^*-h_2^*\bigr)^2(x)\,\mu(dx) = 0$, showing that $h_1^* = h_2^*$ as elements of $L^2(\CB,\mu)$. 
We have:
\begin{proposition}\label{prop:defRmu}
There is a canonical isomorphism $\iota\colon h \mapsto h^*$ between $\CH_\mu$ and the closure $\CR_\mu$ of $\CB^*$ in $L^2(\CB,\mu)$. 
In particular, $\CH_\mu$ is separable.
\end{proposition}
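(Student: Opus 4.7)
The plan is to show that the assignment $h \mapsto h^*$, initially defined on $\HH_\mu$, factors through a well-defined isometry into $L^2(\CB,\mu)$ whose image is exactly $\CB^*$, and then extend by completion to obtain the desired unitary identification with $\CR_\mu$.

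First I would check that $\iota$ is well-defined and isometric as a map $\HH_\mu \to L^2(\CB,\mu)$. The discussion immediately preceding the proposition already shows that if $h_1^*$ and $h_2^*$ are both associated to the same $h\in\HH_\mu$, then $h_1^*-h_2^*$ is associated to $0$, so $\int_\CB(h_1^*-h_2^*)^2\,d\mu = C_\mu(h_1^*-h_2^*,h_1^*-h_2^*)=0$. Thus $\iota$ gives a well-defined element of $L^2(\CB,\mu)$, and moreover the identity $\int_\CB h^*(x)^2\,\mu(dx)=C_\mu(h^*,h^*)=\|h\|_\mu^2$ shows that $\iota$ is a linear isometry from $(\HH_\mu,\|\cdot\|_\mu)$ into $L^2(\CB,\mu)$.

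Next I would identify the image of $\iota$ with $\CB^*$. Tautologically, $\iota(\HH_\mu)\subset\CB^*$ in $L^2(\CB,\mu)$. For the reverse inclusion, I use the corollary to Fernique's theorem: for every $\ell\in\CB^*$ the Bochner integral $h\eqdef \hat C_\mu\ell = \int_\CB x\,\ell(x)\,\mu(dx)$ lies in $\CB$, and for any $\ell'\in\CB^*$ one computes $\ell'(h)=\int_\CB \ell'(x)\ell(x)\,\mu(dx)=C_\mu(\ell,\ell')$. Hence $h$ belongs to $\HH_\mu$ with $h^*=\ell$, giving $\iota(h)=\ell$ in $L^2$. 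So $\iota(\HH_\mu)=\CB^*$ as subsets of $L^2(\CB,\mu)$.

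Since $\iota$ is an isometry from $\HH_\mu$ onto a dense subspace (namely $\CB^*$) of $\CR_\mu$, it extends uniquely to an isometric isomorphism $\iota\colon \CH_\mu\to\CR_\mu$ between the completions. Finally, separability of $\CH_\mu$ follows from separability of $\CR_\mu\subset L^2(\CB,\mu)$: because $\CB$ is a separable metric space its Borel $\sigma$-algebra is countably generated, and $\mu$ is a finite measure, so $L^2(\CB,\mu)$ is separable, hence so is its closed subspace $\CR_\mu$. The only genuinely substantive step is the surjectivity onto $\CB^*$, and its only non-formal ingredient is the already-established fact that $\hat C_\mu$ takes values in $\CB$ rather than merely in $\CB^{**}$.
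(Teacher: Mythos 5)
Your proposal is correct and follows essentially the same route as the paper: the well-definedness and isometry of $\iota$ on $\HH_\mu$ come from the discussion preceding the proposition, surjectivity onto $\CB^*$ is obtained via the Bochner integral $\hat C_\mu\ell = \int_\CB x\,\ell(x)\,\mu(dx)$ whose membership in $\CB$ is guaranteed by Fernique's theorem, and separability follows from that of $L^2(\CB,\mu)$. No gaps.
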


\begin{proof}
We have already shown that $\iota \colon \CH_\mu \to L^2(\CB,\mu)$ is an isomorphism onto its image, so it remains to show that
all of $\CB^*$ belongs to the image of $\iota$. For $h \in \CB^*$, define $h_* \in \CB$ as in \eref{e:defCmuhat} by
\begin{equ}
h_* = \int_\CB x\, h(x)\,\mu(dx)\;.
\end{equ}
This integral converges since $\|x\|^2$ is integrable by Fernique's theorem.
Since one has the identity $\ell(h_*) = C_\mu(\ell,h)$, it follows that $h_* \in \HH_\mu$ and
$h = \iota(h_*)$, as required to conclude the proof.

The separability of $\CH_\mu$ then follows immediately from the fact that $L^2(\CB,\mu)$ is separable whenever $\CB$
is separable, since its Borel $\sigma$-algebra is countably generated.
\end{proof}

\begin{remark}
The space $\CR_\mu$ is called the \index{reproducing kernel}\textit{reproducing kernel Hilbert space} for $\mu$ (or just \textit{reproducing kernel} for short). However, since it is isomorphic
to the Cameron--Martin space in a natural way, there is considerable confusion between the two in the literature.
We retain in these notes the terminology from \cite{Bog98Gauss}, but we urge the reader to
keep in mind that there are authors who use a slightly different terminology.
\end{remark}

\begin{remark}
In general, there do exist Gaussian measures with non-separable Cameron--Martin space, but they are measures on more general
vector spaces. One example would be the measure on $\R^\R$ (yes, the space of \textit{all} functions 
from $\R$ to $\R$ endowed with the product $\sigma$-algebra) 
given by the uncountable product of one-dimensional
Gaussian measures. The Cameron--Martin space for this somewhat pathological measure is given by those functions $f$ that are non-zero
on at most countably points and such that $\sum_{t \in \R} |f(t)|^2 < \infty$. This is a prime example of a non-separable Hilbert space.
\end{remark}

\begin{exercise}\label{ex:GaussK}
Let $\mu$ be a Gaussian measure on a Hilbert space $\CH$ with covariance $K$ and consider the spectral
decomposition of $K$: $K e_n = \lambda_n e_n$ with $\sum_{n \ge 1} \lambda_n < \infty$ and $\{e_n\}$ an 
orthonormal basis of eigenvectors. Such a decomposition exists since we already know that $K$ must be
 trace class from Proposition~\ref{prop:trace}.

Assume now that $\lambda_n > 0$ for
every $n$. Show that $\HH_\mu$
is given by the range of $K$ and that the correspondence
$h \mapsto h^*$ is given by $h^* = K^{-1} h$. Show furthermore that the Cameron--Martin
space $\CH_\mu$ consists of those elements $h$ of $\CH$ such that $\sum_{n \ge 1} \lambda_n^{-1} \scal{h, e_n}^2 < \infty$
and that $\scal{h,k}_\mu = \scal{K^{-1/2} h, K^{-1/2} k}$.
\end{exercise}

\begin{exercise}\label{ex:charnorm}
Show that one has the alternative characterisation
\begin{equ}[e:CMnorm]
\|h\|_\mu = \sup \{\ell(h)\,:\, C_\mu(\ell,\ell) \le 1\}\;,
\end{equ}
and $\CH_\mu = \{h \in \CB\,:\, \|h\|_\mu < \infty\}$.
\textbf{Hint:} For the second statement, show first that if $\|h\|_\mu < \infty$, then 
$h$ determines a bounded linear functional on $\CR_\mu$. Then use Riesz's representation theorem,
the isometry of Proposition~\ref{prop:defRmu}, and the fact that $\CB^*$ is dense in $\CR_\mu$.
\end{exercise}

Since elements in $\CR_\mu$ are built from the space of all bounded linear functionals
on $\CB$, it should come as little surprise that its elements are ``almost'' linear functionals on $\CB$ in the following sense:

\begin{proposition}\label{prop:linearfunctional}
For every $\ell \in \CR_\mu$ there exists a measurable linear subspace $V_\ell$ of $\CB$ such that $\mu(V_\ell) = 1$ and a linear map
$\hat \ell\colon V_\ell \to \R$ such that $\ell = \hat \ell$ $\mu$-almost surely.
\end{proposition}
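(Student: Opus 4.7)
The idea is to build $\hat\ell$ as a pointwise almost-sure limit of genuinely linear functionals drawn from $\CB^*$, and then identify $V_\ell$ with the set on which that limit exists.

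Since $\ell \in \CR_\mu$ is by definition in the $L^2(\CB,\mu)$-closure of $\CB^*$, we can choose a sequence $\{\ell_n\} \subset \CB^*$ with $\ell_n \to \ell$ in $L^2(\CB,\mu)$. By passing to a subsequence if necessary, we may furthermore assume that $\|\ell_{n+1}-\ell_n\|_{L^2(\mu)} \le 2^{-n}$ for every $n \ge 1$. Chebyshev's inequality then gives
\begin{equ}
\mu\bigl(\{x \in \CB \,:\, |\ell_{n+1}(x) - \ell_n(x)| > 2^{-n/3}\}\bigr) \le 2^{2n/3} \cdot 2^{-2n} = 2^{-4n/3}\;,
\end{equ}
so that the Borel--Cantelli lemma ensures that for $\mu$-a.e.\ $x \in \CB$ the estimate $|\ell_{n+1}(x) - \ell_n(x)| \le 2^{-n/3}$ holds for all but finitely many $n$. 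For such $x$ the series $\sum_n (\ell_{n+1}(x) - \ell_n(x))$ converges absolutely, so $\ell_n(x)$ is Cauchy and converges in $\R$.

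Now define
\begin{equ}
V_\ell = \Bigl\{x \in \CB \,:\, \lim_{n \to \infty} \ell_n(x) \text{ exists in } \R\Bigr\}\;.
\end{equ}
The set $V_\ell$ is Borel measurable because it can be written as
\begin{equ}
V_\ell = \bigcap_{k \ge 1} \bigcup_{N \ge 1} \bigcap_{m,n \ge N} \bigl\{x \in \CB \,:\, |\ell_m(x) - \ell_n(x)| \le 1/k\bigr\}\;,
\end{equ}
and each $\ell_n$ is continuous, hence Borel measurable. Moreover, $V_\ell$ is a linear subspace: if $x,y \in V_\ell$ and $\alpha,\beta \in \R$, then by linearity of each $\ell_n$ we have $\ell_n(\alpha x + \beta y) = \alpha\ell_n(x) + \beta \ell_n(y)$, whose right-hand side converges. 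Finally, by the Borel--Cantelli step above, $\mu(V_\ell) = 1$.

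Set $\hat\ell(x) = \lim_{n \to \infty} \ell_n(x)$ for $x \in V_\ell$. Linearity of $\hat\ell$ follows by passing to the limit in $\ell_n(\alpha x + \beta y) = \alpha\ell_n(x) + \beta\ell_n(y)$. It remains to show $\hat\ell = \ell$ $\mu$-almost surely. But $\ell_n \to \ell$ in $L^2(\CB,\mu)$, hence in measure, so some subsequence converges to $\ell$ $\mu$-a.s.; since the whole sequence converges $\mu$-a.s.\ to $\hat\ell$, the two limits must coincide almost everywhere. This completes the proof. The only potential subtlety is the choice of the subsequence ensuring almost-sure convergence of the entire sequence $\{\ell_n\}$, but this is exactly what the Chebyshev/Borel--Cantelli argument above delivers.
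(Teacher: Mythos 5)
Your argument is correct and follows essentially the same route as the paper: approximate $\ell$ in $L^2(\CB,\mu)$ by a rapidly converging sequence from $\CB^*$, use Chebyshev and Borel--Cantelli to upgrade to $\mu$-almost sure convergence, and take $V_\ell$ to be the (measurable, linear) set where the limit exists. The paper merely states this more tersely, with the choice $\|\ell_n - \ell\|_\mu^2 \le n^{-4}$ in place of your telescoping subsequence, so there is nothing substantive to add.
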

\begin{proof}
Fix $\ell \in \CR_\mu$. By the definition of $\CR_\mu$ and Borel-Cantelli, we can find a sequence $\ell_n \in \CB^*$ such that 
$\lim_{n\to \infty} \ell_n(x) = \ell(x)$ for $\mu$-almost every $x \in\CB$. (Take for example $\ell_n$ such that $\|\ell_n - \ell\|_\mu^2 \le n^{-4}$.)
 It then suffices to define 
\begin{equ}
V_\ell = \Bigl\{x\,:\, \lim_{n \to \infty} \ell_n(x) \;\hbox{exists}\Bigr\}\;,
\end{equ}
and to set $\hat \ell(x) = \lim_{n \to \infty} \ell_n(x)$ on $V_\ell$.
\end{proof}

Another very useful fact about the reproducing kernel space is given by:

\begin{proposition}\label{prop:GaussRK}
The law of any element $h^* = \iota(h) \in \CR_\mu$ is a centred Gaussian with variance $\|h\|_\mu^2$. Furthermore,
any two elements $h^*, k^*$ have covariance $\scal{h,k}_\mu$.
\end{proposition}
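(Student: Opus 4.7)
The plan is to prove the statement first on the dense subspace $\HH_\mu$, where it is essentially tautological, and then to pass to the limit in $L^2(\CB,\mu)$ for a general element of $\CH_\mu$.

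First, I would observe that if $h \in \HH_\mu$, then by definition $h^*$ itself lies in $\CB^*$, so that by the very definition of a Gaussian measure on $\CB$ the law of $h^*$ under $\mu$ is a centred real Gaussian. Its variance is
\begin{equ}
\int_\CB h^*(x)^2\,\mu(dx) = C_\mu(h^*, h^*) = \|h\|_\mu^2\;,
\end{equ}
by the definition of the Cameron--Martin norm, and the same computation with two elements yields $\int_\CB h^*(x)k^*(x)\,\mu(dx) = C_\mu(h^*, k^*) = \scal{h,k}_\mu$. So on $\HH_\mu$, both claims are immediate.

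Next, for an arbitrary $h \in \CH_\mu$, I would pick an approximating sequence $h_n \in \HH_\mu$ with $\|h_n - h\|_\mu \to 0$; since $\iota$ is (the extension of) an isometry from $(\HH_\mu, \|\cdot\|_\mu)$ into $L^2(\CB,\mu)$, this gives $h_n^* \to h^*$ in $L^2(\CB,\mu)$. The key point is then that Gaussianity is preserved in the $L^2$ limit: for fixed $t \in \R$, Cauchy--Schwarz yields
\begin{equ}
\bigl|\E e^{it h_n^*} - \E e^{it h^*}\bigr| \le |t|\, \|h_n^* - h^*\|_{L^2(\CB,\mu)} \to 0\;,
\end{equ}
while from Step 1 one has $\E e^{it h_n^*} = \exp(-t^2 \|h_n\|_\mu^2/2) \to \exp(-t^2 \|h\|_\mu^2/2)$. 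Hence $\E e^{it h^*} = \exp(-t^2 \|h\|_\mu^2/2)$, and by Proposition~\ref{prop:FT} applied on $\R$ this characterises $h^*$ as a centred Gaussian of variance $\|h\|_\mu^2$. The covariance statement then follows either by polarisation from the variance identity just established, or directly by observing that $h_n^* k_n^* \to h^* k^*$ in $L^1(\CB,\mu)$ (using $L^2$-convergence of each factor and Fernique-type uniform bounds), so that $\E[h^* k^*] = \lim_n C_\mu(h_n^*, k_n^*) = \lim_n \scal{h_n, k_n}_\mu = \scal{h,k}_\mu$.

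There is no genuine obstacle; the only mildly subtle point is that one must use $L^2$-continuity of the characteristic function rather than any almost sure approximation, since the sequence $h_n^*$ a priori only converges in $L^2(\CB,\mu)$ (and, after extraction, $\mu$-almost surely, but this is not needed).
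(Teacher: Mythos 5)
Your proposal is correct and follows essentially the same route as the paper: prove the claim for elements of $\CB^*$ (where it is immediate from the definition of a Gaussian measure), pass to the $L^2(\CB,\mu)$ limit using the fact that $L^2$-convergence implies convergence in law (which you merely spell out via characteristic functions), and obtain the covariance by polarisation. The only cosmetic difference is that the paper normalises the approximating sequence so that $\|h_n\|_\mu = \|h\|_\mu$ exactly, whereas you let the variances converge; this changes nothing.
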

\begin{proof}
We already know from the definition of a Gaussian measure 
that the law of any element of $\CB^*$ is a centred Gaussian. Let now $h^*$ be any element of $\CR_\mu$ and 
let $h_n$ be a sequence in $\CR_\mu \cap \CB^*$ such that $h_n \to h^*$ in $\CR_\mu$.
We can furthermore choose this approximating sequence such that $\|h_n\|_{\CR_\mu} = \|h^*\|_{\CR_\mu} = \|h\|_{\mu}$,
so that the law of each of the $h_n$ is equal to $\CN(0,\|h\|_\mu^2)$.

Since $L^2$-convergence implies convergence in law, we conclude that the law of $h^*$ is also given by $\CN(0,\|h\|_\mu^2)$.
The statement about the covariance then follows by polarisation\index{polarisation}, since
\begin{equ}
\E h^* k^* = {1\over 2} \bigl(\E (h^* + k^*)^2 - \E (h^*)^2 - \E (k^*)^2\bigr) = {1\over 2} \bigl(\|h+k\|_\mu^2 - \|h\|_\mu^2 - \|k\|_\mu^2\bigr) = \scal{h,k}_\mu\;,
\end{equ}
by the previous statement.
\end{proof}

\begin{remark}
Actually, the converse of Proposition~\ref{prop:linearfunctional} is also true: if $\ell\colon \CB \to \R$ is measurable and linear on
a measurable linear subspace $V$ of full measure, then $\ell$ belongs to 
$\CR_\mu$. This is not an obvious statement. It can be viewed 
for example as a consequence of the highly non-trivial
fact that every Borel measurable linear map between two sufficiently ``nice'' topological vector spaces (separable Banach will do) is continuous, 
see for example \cite{SchwaMeas,KatsMeas}. (The point here is that the map must be linear on the whole space
and not just on some ``large'' subspace as is usually the case with unbounded operators.)
This implies by Proposition~\ref{prop:carH} that $\ell$ is a measurable linear extension of some bounded
linear functional on $\CH_\mu$. Since such
extensions are unique (up to null sets) by Theorem~\ref{theo:uniqueext} below, the claim follows from 
Proposition~\ref{prop:defRmu}. 
%It can also be obtained as a consequence of a general isoperimetric inequality for Gaussian measures. This proof will be given
%in Section~\ref{sec:uniqueext} below, although we will not give the complete proof of the isoperimetric inequality.
\end{remark}

\begin{exercise}\label{ex:Kolmo}
Show that if $\tilde \CB \subset \CB$ is a continuously embedded Banach space with $\mu(\tilde \CB) = 1$, then 
the embedding $\CB^* \hookrightarrow \CR_\mu$ extends to an embedding $\tilde \CB^* \hookrightarrow \CR_\mu$. Deduce
from this that the restriction of $\mu$ to $\tilde \CB$ is again a Gaussian measure. In particular, Kolmogorov's continuity
criterion yields a Gaussian measure on $\CC_0^\beta([0,1]^d, \R)$.
\end{exercise}

The properties of the reproducing kernel space of a Gaussian measure allow us to give another illustration of the fact that
measures on infinite-dimensional spaces behave in a rather different way from measures on $\R^n$:

\begin{proposition}\label{prop:singDilat}
Let $\mu$ be a centred Gaussian measure on a separable Banach space $\CB$ such that $\dim \CH_\mu = \infty$. Denote by
$D_c$ the dilatation by a real number $c$ on $\CB$, that is $D_c(x) = cx$. Then, $\mu$  and $D_c^* \mu$ are 
mutually singular for every $c \neq \pm 1$. 
\end{proposition}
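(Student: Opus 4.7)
The strategy is to find a single measurable event that has full measure under $\mu$ and zero measure under $D_c^*\mu$, using the strong law of large numbers applied to an infinite orthonormal sequence in the reproducing kernel.

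Since $\dim \CH_\mu = \infty$, the canonically isomorphic space $\CR_\mu$ is also infinite dimensional, so we can pick an orthonormal sequence $\{\ell_n\}_{n\ge 1} \subset \CR_\mu$. By Proposition~\ref{prop:GaussRK}, viewed as random variables on $(\CB,\mu)$ these are i.i.d.\ $\CN(0,1)$. The elements $\ell_n$ are a priori only defined $\mu$-almost surely as elements of $L^2(\CB,\mu)$, but Proposition~\ref{prop:linearfunctional} supplies, for each $n$, a genuine measurable linear map $\hat\ell_n \colon V_n \to \R$ on a measurable linear subspace $V_n \subset \CB$ with $\mu(V_n) = 1$, which agrees with $\ell_n$ $\mu$-a.s. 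Setting $V = \bigcap_n V_n$, which is still measurable and linear, we have $\mu(V)=1$ and, crucially, $D_c^{-1}V = V$ since $V$ is a linear subspace and $c \neq 0$; hence also $D_c^*\mu(V) = \mu(V) = 1$, so the $\hat\ell_n$ are well defined under both measures simultaneously.

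Next I would exploit the two distributional identities on $V$. Under $\mu$, the $\hat\ell_n$ are i.i.d.\ $\CN(0,1)$, while under $D_c^*\mu$ one has $\hat\ell_n(x) = \hat\ell_n(c \cdot x/c)$ and, since $D_c^*\mu$ is the law of $cX$ for $X\sim\mu$ and $\hat\ell_n$ is linear on $V$, the sequence $\{\hat\ell_n\}$ under $D_c^*\mu$ is i.i.d.\ $\CN(0,c^2)$. Kolmogorov's strong law of large numbers then gives
\begin{equ}
\lim_{N\to\infty} {1\over N} \sum_{n=1}^N \hat\ell_n(x)^2 = 1 \quad \mu\text{-a.s.},\qquad \lim_{N\to\infty} {1\over N} \sum_{n=1}^N \hat\ell_n(x)^2 = c^2\quad D_c^*\mu\text{-a.s.}
\end{equ}
Define the measurable set
\begin{equ}
A = \Bigl\{ x \in V \,:\, \lim_{N\to\infty} {1\over N} \sum_{n=1}^N \hat\ell_n(x)^2 = 1 \Bigr\}\;.
\end{equ}
Since $c \neq \pm 1$ means $c^2 \neq 1$, we have $\mu(A) = 1$ and $D_c^*\mu(A) = 0$, proving mutual singularity.

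The only technical subtlety, and the one step I would be careful about, is ensuring that the same pointwise version of $\hat\ell_n$ may be used under both $\mu$ and $D_c^*\mu$; this is precisely why one must invoke Proposition~\ref{prop:linearfunctional} to upgrade the $L^2$-classes $\ell_n$ to honest linear maps defined on a linear subspace $V$ of full $\mu$-measure, and then use that linear subspaces are $D_c$-invariant to conclude $V$ also has full $D_c^*\mu$-measure. Everything else reduces to the orthogonality structure of $\CR_\mu$ and the classical SLLN.
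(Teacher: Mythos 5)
Your proof is correct and follows essentially the same route as the paper: an orthonormal sequence in $\CR_\mu$, the law of large numbers giving limit $1$ under $\mu$ and $c^2$ under $D_c^*\mu$, and the resulting disjoint full-measure sets. The only difference is that you spell out, via Proposition~\ref{prop:linearfunctional} and the $D_c$-invariance of linear subspaces, why a single pointwise version of each $\ell_n$ works under both measures — a point the paper passes over with the phrase ``it follows from the linearity of the $e_n$''.
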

\begin{proof}
Since the reproducing
Kernel space $\CR_\mu$ is a separable Hilbert space, we can find an orthonormal basis $\{e_n\}_{n\ge 0}$.
Consider the sequence of random variables $X_N(x) = {1\over N}\sum_{n=1}^N |e_n(x)|^2$ over $\CB$.
If $\CB$ is equipped with the measure $\mu$ then, since the $e_n$ are independent under $\mu$, we can apply
 the law of large numbers and deduce that 
\begin{equ}[e:conv1]
\lim_{N \to \infty} X_N(x) = 1\;,
\end{equ}
for $\mu$-almost every $x$. On the other hand, it follows from the linearity of the $e_n$ that when we equip $\CB$ with the measure
$D_c^*\mu$, the $e_n$ are still independent, but have variance $c^2$, so that
\begin{equ}
\lim_{N \to \infty} X_N(x) = c^2\;,
\end{equ}
for $D_c^*\mu$-almost every $x$. This shows that if $c \neq \pm 1$,  the set on which the convergence \eref{e:conv1}
takes place must be of $D_c^*\mu$-measure $0$, which implies that $\mu$ and $D_c^*\mu$ are mutually singular.
\end{proof}

As already mentioned earlier, the importance of the Cameron--Martin space is that it represents precisely those directions in which
one can translate the measure $\mu$ without changing its null sets:

\index{Cameron--Martin!theorem}
\begin{theorem}[Cameron--Martin]\label{theo:CM}
For $h \in \CB$, define the map $T_h \colon \CB \to \CB$ by $T_h(x) = x+h$. Then, the measure $T_h^*\mu$ is absolutely continuous
with respect to $\mu$ if and only if $h \in \CH_\mu$.
\end{theorem}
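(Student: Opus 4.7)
The plan is to prove the two implications separately, each via Fourier or SLLN arguments tied closely to the structure of the reproducing kernel $\CR_\mu$. For the ``if'' direction, assume $h \in \CH_\mu$ and propose as a Radon--Nikodym derivative the lognormal density $Z_h(x) = \exp\bigl(\iota(h)(x) - \tfrac12 \|h\|_\mu^2\bigr)$, where $\iota(h) \in \CR_\mu$ is the measurable linear functional associated to $h$ in Proposition~\ref{prop:defRmu}. Since $\iota(h) \sim \CN(0, \|h\|_\mu^2)$ under $\mu$ by Proposition~\ref{prop:GaussRK}, $Z_h$ is non-negative with $\int Z_h\,d\mu = 1$, so it defines a probability measure. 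To identify $Z_h\,\mu$ with $T_h^*\mu$, I will match their Fourier transforms using Proposition~\ref{prop:FT}. For any $\ell \in \CB^*$, both $\ell$ and $\iota(h)$ live in $\CR_\mu$, so $(\ell, \iota(h))$ is a jointly centred Gaussian under $\mu$ with diagonal covariances $\|\ell\|_\mu^2$, $\|h\|_\mu^2$ and off-diagonal covariance equal to $\ell(h)$ (this last identity is built into the definition for $h \in \HH_\mu$ and passes to all of $\CH_\mu$ by continuity of $\ell \in \CB^*$ combined with the embedding \eref{e:embedding}). The two-dimensional Gaussian characteristic function then yields $\int e^{i\ell(x)} Z_h(x)\,\mu(dx) = e^{i\ell(h)} \hat\mu(\ell) = \widehat{T_h^*\mu}(\ell)$, as required.

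For the ``only if'' direction, suppose $h \notin \CH_\mu$. By Exercise~\ref{ex:charnorm}, the supremum $\sup\{\ell(h) : \ell \in \CB^*,\, C_\mu(\ell,\ell) \le 1\}$ is infinite, so there exist $\ell_n \in \CB^*$ with $\|\ell_n\|_\mu = 1$ and $\ell_n(h) \to \infty$. I will Gram--Schmidt this sequence inside $\CR_\mu$ while preserving the divergence of the $h$-values. Inductively, assuming an orthonormal set $\tilde\ell_1, \ldots, \tilde\ell_{n-1} \in \CB^*$ has been built with $\tilde\ell_k(h) \ge k$, set $S_n = \sum_{k<n} |\tilde\ell_k(h)|$ and pick from the original sequence some $\ell_m$ with $\ell_m(h) \ge 2(n + S_n)$. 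Decompose $\ell_m = v + w$ orthogonally in $\CR_\mu$ with $v$ in the span of $\tilde\ell_1, \ldots, \tilde\ell_{n-1}$; Cauchy--Schwarz gives $|v(h)| \le S_n$ since each coefficient of $v$ in the orthonormal basis is bounded by $\|\ell_m\|_\mu = 1$, so the orthogonal residual $w = \ell_m - v \in \CB^*$ satisfies $w(h) \ge 2n + S_n \ge n$ while $\|w\|_\mu \le 1$. Normalising gives $\tilde\ell_n$.

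With $\{\tilde\ell_n\}$ in hand, I mimic the strong law argument of Proposition~\ref{prop:singDilat}. By Proposition~\ref{prop:GaussRK}, the $\tilde\ell_n$ are i.i.d.\ $\CN(0,1)$ under $\mu$, so $f_N := \tfrac1N \sum_{n \le N} \tilde\ell_n \to 0$ $\mu$-almost surely by the SLLN. Under $T_h^*\mu$, the translated variables $\tilde\ell_n(\cdot) - \tilde\ell_n(h)$ are again i.i.d.\ $\CN(0,1)$ (since $h \in \CB$ and $\tilde\ell_n \in \CB^*$ means the translation acts pointwise), so $f_N - \tfrac1N \sum_{n \le N} \tilde\ell_n(h) \to 0$ $T_h^*\mu$-almost surely; the Cesàro mean of a sequence tending to $+\infty$ itself tends to $+\infty$, so $f_N \to +\infty$ $T_h^*\mu$-a.s. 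The event $A = \{x : \lim_N f_N(x) = 0\}$ then has $\mu(A) = 1$ but $T_h^*\mu(A) = 0$, ruling out absolute continuity. The step I expect to be most delicate is the orthogonalisation in the second paragraph: the quantitative choice of how large $\ell_m(h)$ must be at each stage is essential to ensure that the Hilbert-space projection does not cancel the divergence, and without this careful bookkeeping the separating functional $f_N$ fails to diverge under $T_h^*\mu$.
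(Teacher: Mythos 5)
Your proposal is correct. The ``if'' direction is essentially identical to the paper's proof: the same density $\exp\bigl(h^*(x)-\hf\|h\|_\mu^2\bigr)$, the same identification of $\mu_h$ with $T_h^*\mu$ by matching Fourier transforms, and the same use of Proposition~\ref{prop:GaussRK} to pass from $h^*\in\CB^*$ to general $h\in\CH_\mu$ via the joint Gaussianity of $(\ell,\iota(h))$ with covariance $\ell(h)$.

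The ``only if'' direction is where you genuinely diverge. The paper stays one-dimensional: from Exercise~\ref{ex:charnorm} it extracts, for each $n$, a single $\ell$ with $C_\mu(\ell,\ell)=1$ and $\ell(h)\ge n$, pushes both measures forward to $\R$, and uses the explicit bound $\|\CN(0,1)-\CN(c,1)\|_\TV\ge 2-2e^{-c^2/8}$ to conclude $\|\mu-T_h^*\mu\|_\TV=2$. You instead orthonormalise a whole sequence of such functionals inside $\CR_\mu$ (your quantitative bookkeeping in the Gram--Schmidt step is sound: the projection onto the span of the previous $\tilde\ell_k$ moves $\ell_m(h)$ by at most $S_n$, and dividing by $\|w\|_\mu\le 1$ only increases the value at $h$) and then run the strong-law argument of Proposition~\ref{prop:singDilat} to exhibit an explicit separating event. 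Both arguments deliver the stronger conclusion of mutual singularity. Yours has the merit of reusing machinery already in the paper and of producing a concrete set of full $\mu$-measure and zero $T_h^*\mu$-measure; the paper's is shorter and needs no orthogonalisation, but requires the small computation of the total variation distance between shifted one-dimensional Gaussians. One corner case is worth a sentence in either approach: if some $\ell\in\CB^*$ has $C_\mu(\ell,\ell)=0$ but $\ell(h)\neq 0$ (so that neither your normalisation $\|w\|_\mu=1$ nor the paper's choice $C_\mu(\ell,\ell)=1$ is available), then $\ell$ vanishes $\mu$-a.s.\ while equalling the constant $\ell(h)\neq 0$ $T_h^*\mu$-a.s., and singularity is immediate.
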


\begin{proof}
Fix $h \in \CH_\mu$ and let $h^* \in L^2(\CB,\mu)$ be the corresponding element of the reproducing kernel. 
Since the law of $h^*$ is Gaussian by Proposition~\ref{prop:GaussRK}, the map $x \mapsto \exp(h^*(x))$ is integrable.
Since furthermore the variance of $h^*$ is given by $\|h\|_\mu^2$, the function
\begin{equ}[e:CMformula]
\CD_h(x) = \exp \bigl(h^*(x) - \hf  \|h\|_\mu^2\bigr)
\end{equ}
is strictly positive, belongs to $L^1(\CB,\mu)$, and integrates to $1$. It is therefore the Radon-Nikodym derivative of a measure
$\mu_h$ that is absolutely continuous with respect to $\mu$. To check that one has indeed $\mu_h = T_h^*\mu$, it suffices
to show that their Fourier transforms coincide. Assuming that $h^* \in \CB^*$, one has
\begin{equs}
\hat \mu_h(\ell) &= \int_\CB \exp \bigl(i\ell(x) + h^*(x) - \hf  \|h\|_\mu^2\bigr)\,\mu(dx) 
 = \exp \bigl(\hf C_\mu(i\ell + h^*, i\ell + h^*) - \hf  \|h\|_\mu^2\bigr) \\
& = \exp \bigl(- \hf C_\mu(\ell, \ell) - i C_\mu(\ell, h^*)\bigr) = \exp \bigl(- \hf C_\mu(\ell, \ell) + i \ell(h)\bigr)\;.
\end{equs}
Using Proposition~\ref{prop:GaussRK} for the joint law of $\ell$ and $h^*$, it is an easy exercise to check that this 
equality still holds for arbitrary $h \in \CH_\mu$.

On the other hand, we have
\begin{equs}
\widehat {T_h^*\mu}(\ell) &=  \int_\CB \exp \bigl(i\ell(x)\bigr)\, T_h^*\mu(dx) =  \int_\CB \exp \bigl(i\ell(x + h)\bigr)\,\mu(dx) = e^{i\ell(h)} \int_\CB \exp \bigl(i\ell(x)\bigr)\,\mu(dx) \\
& =  \exp \bigl(- \hf C_\mu(\ell, \ell) + i \ell(h)\bigr)\;,
\end{equs}
showing that $\mu_h = T_h^*\mu$.

To show the converse, note first that one can check by an explicit calculation that
 $\|\CN(0,1) - \CN(h,1)\|_\TV \ge 2 - 2\exp(-{h^2 \over 8})$. Fix now some arbitrary $n > 0$.
 If $h \not \in \CH_\mu$ then, by Exercise~\ref{ex:charnorm}, there exists $\ell \in \CB^*$ with $C_\mu(\ell, \ell) = 1$ such that $\ell(h) \ge n$.
Since the image $\ell^*\mu$ of $\mu$ under $\ell$ is $\CN(0,1)$ and the image of $T_h^*\mu$ under $\ell$ is $\CN(-\ell(h),1)$,
this shows that
\begin{equ}
\|\mu - T_h^*\mu\|_\TV \ge \|\ell^*\mu - \ell^*T_h^*\mu\|_\TV = \|\CN(0,1) - \CN(-\ell(h),1)\|_\TV  \ge 2 - 2\exp(-{n^2 \over 8})\;.
\end{equ} 
Since this is true for every $n$, we conclude that $\|\mu - T_h^*\mu\|_\TV = 2$, thus showing that they are mutually singular.
\end{proof}

As a consequence, we have the following characterisation of the Cameron--Martin space

\begin{proposition}\label{prop:carH}
The space $\CH_\mu \subset \CB$ is the intersection of all (measurable) linear subspaces  of full measure. However, if $\CH_\mu$
is infinite-dimensional, then one has $\mu(\CH_\mu) = 0$.
\end{proposition}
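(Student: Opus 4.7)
The two parts of the proposition rest on very different ingredients, so I would handle them separately, in both cases exploiting the reproducing-kernel picture developed in Propositions~\ref{prop:defRmu}--\ref{prop:GaussRK}.

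For the characterisation of $\CH_\mu$ as an intersection, the inclusion $(\subseteq)$ is immediate from the Cameron-Martin theorem. Fix $h\in\CH_\mu$ and let $V$ be any measurable linear subspace of full measure. Since the density $\CD_h$ in \eref{e:CMformula} is strictly positive, $T_h^*\mu$ and $\mu$ are equivalent, so $\mu(V-h)=T_h^*\mu(V)=1$. Consequently $\mu(V\cap(V-h))=1>0$, so some $x$ lies in both sets, and linearity of $V$ gives $h=(x+h)-x\in V$. For the reverse inclusion $(\supseteq)$, fix $h\notin\CH_\mu$. By Exercise~\ref{ex:charnorm} there exist $\ell_k\in\CB^*$ with $C_\mu(\ell_k,\ell_k)\le1$ and $\ell_k(h)\to\infty$; after extraction I may assume $\ell_k(h)\ge k$. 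Under $\mu$ each $\ell_k$ is centred Gaussian with variance at most $1$, so setting $a_k=\sqrt{3\log k}$ one has $\mu(|\ell_k|>a_k)\le 2k^{-3/2}$, summable, and Borel--Cantelli yields that
\begin{equation*}
V\;:=\;\bigl\{x\in\CB\,:\,\sup_{k\ge2}|\ell_k(x)|/a_k<\infty\bigr\}
\end{equation*}
has $\mu(V)=1$. This $V$ is a measurable linear subspace (being $\bigcup_M\bigcap_k\{|\ell_k|\le Ma_k\}$ with each inner set closed), while $\ell_k(h)/a_k\ge k/\sqrt{3\log k}\to\infty$ forces $h\notin V$.

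For the second assertion, that $\mu(\CH_\mu)=0$ when $\dim\CH_\mu=\infty$, the idea is to find a single measurable null set that contains all of $\CH_\mu$. Since $\HH_\mu$ is dense in $\CH_\mu$, I pick an orthonormal basis $\{h_n\}_{n\ge1}$ of $\CH_\mu$ with $h_n\in\HH_\mu$, so that the associated elements $\phi_n:=h_n^*\in\CB^*$ are genuine continuous linear functionals on $\CB$. By Proposition~\ref{prop:GaussRK} the $\phi_n$ are i.i.d.\ $\CN(0,1)$ under $\mu$, so the strong law of large numbers applied to $\phi_n(x)^2$ gives $\sum_n\phi_n(x)^2=\infty$ for $\mu$-a.e.\ $x$. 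On the other hand, for $h\in\HH_\mu$ the identity $\phi_n(h)=C_\mu(h_n^*,h^*)=\scal{h_n,h}_\mu$ holds by definition of the Cameron-Martin inner product, and this identity extends to all $h\in\CH_\mu$ by approximating $h$ in $\CH_\mu$-norm (hence, by \eref{e:embedding}, also in $\CB$-norm) using elements of $\HH_\mu$ and invoking continuity of $\phi_n$ on $\CB$. Parseval then yields $\sum_n\phi_n(h)^2=\|h\|_\mu^2<\infty$ for every $h\in\CH_\mu$, so
\begin{equation*}
\CH_\mu\;\subset\;\bigl\{x\in\CB\,:\,\textstyle\sum_n\phi_n(x)^2<\infty\bigr\}\;,
\end{equation*}
and the right-hand side is a Borel set of $\mu$-measure zero, giving $\mu^*(\CH_\mu)=0$.

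The main obstacle in the above plan is the conceptual mismatch between two interpretations of the reproducing-kernel elements $\phi_n$: as $\mu$-almost-surely defined random variables on $\CB$ (where SLLN applies) and as pointwise linear functionals acting on deterministic points of $\CH_\mu$ (where Parseval applies). Propositions~\ref{prop:linearfunctional} and~\ref{prop:defRmu} do give measurable linear extensions of elements of $\CR_\mu$, but those extensions are only specified up to null sets, which is exactly the wrong level of precision for evaluating them on the null set $\CH_\mu$. The device of choosing the basis $\{h_n\}$ inside $\HH_\mu$ rather than merely inside $\CH_\mu$ sidesteps this by ensuring that each $\phi_n$ is defined everywhere as an honest element of $\CB^*$, after which \eref{e:embedding} lets continuity do the remaining work.
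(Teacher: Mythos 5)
Your proof is correct and follows essentially the same route as the paper: the Cameron--Martin theorem for the forward inclusion, a full-measure measurable linear subspace built from the functionals of Exercise~\ref{ex:charnorm} for the reverse inclusion, and an i.i.d.\ standard Gaussian sequence from the reproducing kernel for the null-measure claim. The only differences are cosmetic --- you use Borel--Cantelli with $\sup_k|\ell_k(x)|/a_k$ where the paper integrates the weighted norm $|y|^2=\sum_n n^{-2}\ell_n(y)^2$, and the strong law plus Parseval where the paper uses the second Borel--Cantelli lemma together with the extended norm \eref{e:CMnorm} --- with your choice of the basis inside $\HH_\mu$ being a slightly more careful way of handling the pointwise evaluation of reproducing-kernel elements on the null set $\CH_\mu$.
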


\begin{proof}
Take an arbitrary linear subspace $V \subset \CB$ of full measure and take an arbitrary $h \in \CH_\mu$. It follows from
Theorem~\ref{theo:CM} that the affine space $V-h$ also has full measure. Since $(V-h) \cap V = \emptyset$ unless $h\in V$,
one must have $h \in V$, so that $\CH_\mu \subset \bigcap\{V\subset \CB\,:\, \mu(V) = 1\}$. 

Conversely, take an arbitrary $x \not \in \CH_\mu$ and let us construct a linear space $V \subset \CB$ of full measure,
but not containing $x$. 
Since $x \not \in \CH_\mu$, one has $\|x\|_\mu = \infty$ with $\|\cdot\|_\mu$ extended to $\CB$ as in
\eref{e:CMnorm}. Therefore, we can find a sequence $\ell_n \in \CB^*$ such that $C_\mu(\ell_n, \ell_n) \le 1$ and
$\ell_n(x) \ge n$. Defining the norm $|y|^2 = \sum_n n^{-2} \bigl(\ell_n(y)\bigr)^2$, we see that
\begin{equ}
\int_\CB |y|^2\,\mu(dy) =  \sum_{n=1}^\infty {1\over n^2} \int_\CB \bigl(\ell_n(y)\bigr)^2\,\mu(dy) \le {\pi^2 \over 6}\;,
\end{equ}
so that the linear space $V = \{y\,:\, |y| < \infty\}$ has full measure. However, $|x| = \infty$ by construction, so that $x \not \in V$.

To show that $\mu(\CH_\mu) = 0$ if $\dim \CH_\mu = \infty$, consider an orthonormal sequence $e_n \in \CR_\mu$ so that
the random variables $\{e_n(x)\}$ are i.i.d.\ $\CN(0,1)$ distributed. By Exercise~\ref{ex:charnorm} and the second Borel-Cantelli lemma, it follows that
$\|x\|_\mu \ge \sup_{n} |e_n(x)| = \infty$ for $\mu$-almost every $x$, so that 
the claim follows.
\end{proof}

\begin{exercise}
Recall that the (topological) support $\supp \mu$ of a Borel measure on a complete separable metric space 
consists of those points $x$ such that 
$\mu(U) > 0$ for every neighbourhood $U$ of $x$.
Show that, if $\mu$ is a Gaussian measure, then its support is the closure $\bar \CH_\mu$ of $\CH_\mu$ in $\CB$.
\end{exercise}

\subsection{Images of Gaussian measures}

It follows immediately from the definition of a Gaussian measure and the
expression for its Fourier transform that if $\mu$ is a Gaussian measure on some Banach space $\CB$
and $A \colon \CB \to \hat\CB$ is a bounded linear map for $\hat\CB$ some other Banach space,
then $\nu = A^*\mu$ is a Gaussian measure on $\hat\CB$ with covariance
\begin{equ}
C_\nu(\ell, \ell') = C_\mu(A^* \ell, A^*\ell')\;,
\end{equ}
where $A^* \colon \hat\CB^* \to \CB^*$ is the adjoint\index{adjoint!operator} to $A$, that is the operator such that $ \bigl(A^*\ell\bigr)(x) = \ell(Ax)$ for
every $x \in \CB$ and every $\ell \in \hat\CB^*$.

Recall now that $\CH_\mu$ is the intersection over all linear subspaces of $\CB$ that have full measure under $\mu$.
This suggests that in order to determine the image of $\mu$ under a linear map, it is sufficient to know how that map acts
on elements of $\CH_\mu$. This intuition is made precise by the following theorem:

\begin{theorem}\label{theo:imageGaussian}
Let $\mu$ be a centred Gaussian probability measure on a separable Banach space $\CB$. Let furthermore $\CH$ be
a separable Hilbert space and let $A \colon \CH_\mu \to \CH$ be a Hilbert-Schmidt operator. (That is
$AA^*\colon \CH \to \CH$ is trace class.) Then, there exists a measurable map $\hat A \colon \CB \to \CH$ such that
$\nu = \hat A^*\mu$ is Gaussian with covariance $C_\nu(h,k) = \scal{A^*h, A^*k}_\mu$. 
Furthermore, there exists a measurable linear subspace $V \subset \CB$ of full $\mu$-measure such that $\hat A$ restricted to $V$ is linear
and $\hat A$ restricted to $\CH_\mu \subset V$ agrees with $A$. 
\end{theorem}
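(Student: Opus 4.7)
The plan is to construct $\hat A$ as the $\mu$-almost sure limit of the partial sums
\begin{equ}
\hat A_N(x) = \sum_{n=1}^N e_n^*(x)\, A e_n\,,
\end{equ}
where the $\{e_n\}$ form an orthonormal basis of $\CH_\mu$ chosen with some care. Since $\HH_\mu$ is dense in the separable Hilbert space $\CH_\mu$, a Gram--Schmidt procedure yields such a basis lying entirely inside $\HH_\mu$. By the very definition of $\HH_\mu$, this produces \emph{honest} bounded linear functionals $e_n^* \in \CB^*$ (and not merely $L^2$-equivalence classes) satisfying $\ell(e_n) = C_\mu(e_n^*, \ell)$ for every $\ell \in \CB^*$, and Proposition~\ref{prop:GaussRK} identifies the $\{e_n^*\}$ with an i.i.d.\ family of standard Gaussians under $\mu$. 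In particular each $\hat A_N \colon \CB \to \CH$ is continuous, linear, and defined on all of $\CB$.

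I would then use the Hilbert--Schmidt hypothesis in the form $\sum_{n \ge 1} \|Ae_n\|_\CH^2 = \tr(A^*A) = \tr(AA^*) < \infty$ together with the orthogonality of the $e_n^*$ in $L^2(\mu)$ to conclude that $(\hat A_N)$ is Cauchy in $L^2(\CB,\mu;\CH)$. Since $\hat A_N$ is also an $L^2$-bounded $\CH$-valued martingale with respect to the filtration generated by the $e_n^*$, Doob's convergence theorem upgrades this to $\mu$-almost sure convergence in $\CH$. Setting
\begin{equ}
V = \bigl\{x \in \CB \,:\, \lim_{N \to \infty}\hat A_N(x) \text{ exists in } \CH\bigr\}\,,
\end{equ}
together with $\hat A(x) = \lim_N \hat A_N(x)$ on $V$ and $\hat A(x) = 0$ elsewhere, makes $V$ a measurable linear subspace of full $\mu$-measure, $\hat A$ measurable, and $\hat A|_V$ linear because each $\hat A_N$ is. Passing to the $L^2$-limit in the identity $\E \scal{\hat A_N, h}_\CH \scal{\hat A_N, k}_\CH = \sum_{n=1}^N \scal{A e_n, h}_\CH \scal{A e_n, k}_\CH$ and applying Parseval on $\CH_\mu$ then yields the desired covariance $\scal{A^* h, A^* k}_\mu$ for any $h, k \in \CH$, while the Gaussianity of $\hat A^*\mu$ is inherited from that of each $\hat A_N^*\mu$ since $L^2$-convergence implies convergence in distribution.

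The step I expect to be the main obstacle is checking the inclusion $\CH_\mu \subset V$ together with $\hat A|_{\CH_\mu} = A$, since $\CH_\mu$ is $\mu$-null whenever it is infinite-dimensional and so almost sure convergence is silent about it. This is precisely where the insistence that $e_n \in \HH_\mu$ pays off. For $h \in \HH_\mu$ with associated $h^* \in \CB^*$, the defining property of $\HH_\mu$ specialised to $\ell = e_n^*$ gives $e_n^*(h) = C_\mu(h^*, e_n^*) = \scal{h, e_n}_\mu$. The continuous embedding $\CH_\mu \hookrightarrow \CB$ from \eref{e:embedding}, continuity of $e_n^* \in \CB^*$, and continuity of the inner product show that both sides are continuous on $(\CH_\mu, \|\cdot\|_\mu)$; since $\HH_\mu$ is $\|\cdot\|_\mu$-dense in $\CH_\mu$ by definition, this identity extends to every $h \in \CH_\mu$. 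Boundedness of $A\colon \CH_\mu \to \CH$ then gives $\hat A_N(h) = \sum_{n=1}^N \scal{h, e_n}_\mu A e_n \to A h$ in $\CH$ for every $h \in \CH_\mu$, which shows $\CH_\mu \subset V$ and $\hat A|_{\CH_\mu} = A$ as required.
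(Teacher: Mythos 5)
Your construction is essentially the paper's own proof: the same partial sums $\sum_{n\le N} e_n^*(x)\,Ae_n$, the same $L^2$-bounded martingale argument via $\sum_n \|Ae_n\|^2 = \tr AA^* < \infty$ and Doob's convergence theorem, and the same passage to the limit for the covariance and Gaussianity. The one refinement is your choice of the orthonormal basis inside $\HH_\mu$, which makes each $e_n^*$ an honest element of $\CB^*$ and so lets you dispense with Proposition~\ref{prop:linearfunctional} and the subspaces $V_{e_n}$; it also underpins your explicit verification that $\hat A$ agrees with $A$ on all of $\CH_\mu$, a point the paper's proof leaves implicit.
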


\begin{proof}
Let $\{e_n\}_{n \ge 1}$ be an orthonormal basis for $\CH_\mu$ and denote by $e_n^*$ the corresponding elements in
$\CR_\mu \subset L^2(\CB, \mu)$ and define $S_N(x) = \sum_{n = 0}^N e_n^*(x) Ae_n$. 
Recall from Proposition~\ref{prop:linearfunctional} that we can find subspaces
$V_{e_n}$ of full measure such that $e_n^*$ is linear on $V_{e_n}$. Define now a linear subspace $V \subset \CB$ by
\begin{equ}
V = \Bigl\{x \in \bigcap_{n \ge 0} V_{e_n}\,:\, \text{the sequence $\{S_N(x)\}$ converges in $\CH$}\Bigr\}\;,
\end{equ}
(the fact that $V$ is linear follows from the linearity of each of the $e_n^*$) and set
\begin{equ}
\hat A(x) = \left\{\begin{array}{cl}  \lim_{N\to\infty} S_N(x) & \text{for $x\in V$,} \\ 0 & \text{otherwise.} \end{array}\right.
\end{equ}
Since the random variables $\{e_n^*\}$ are i.i.d.\ $\CN(0,1)$-distributed under $\mu$, the sequence $\{S_N\}$ forms 
an $\CH$-valued martingale and one has
\begin{equ}
\sup_N \E_\mu \|S_N(x)\|^2 = \sum_{n = 0}^\infty \|A e_n\|^2 \le \tr A^* A < \infty\;,
\end{equ}
where the last inequality is a consequence of $A$ being Hilbert-Schmidt. It follows that $\mu(V) = 1$ by Doob's martingale convergence theorem.

To see that $\nu = \hat A^*\mu$ has the stated property,
fix an arbitrary $h\in \CH$ and note that the series $\sum_{n \ge 1} e_n^*\scal{Ae_n, h}$ converges in $\CR_\mu$ 
to an element with covariance $\|A^* h\|^2$. The statement then follows from  Proposition~\ref{prop:GaussRK} and
the fact that $C_\nu(h,h)$ determines $C_\nu$ by polarisation\index{polarisation}. To check that $\nu$ is Gaussian, we can compute its
Fourier transform in a similar way.
\end{proof}

\begin{remark}
In fact, we will show in Theorem~\ref{theo:uniqueext} below that the extension
$\hat A$ in Theorem~\ref{theo:imageGaussian} is unique up to sets of $\mu$-measure $0$.
\end{remark}

The proof of Theorem~\ref{theo:imageGaussian} can easily be extended to the case where the image space is a Banach space
rather than a Hilbert space. However, in this case we cannot give a straightforward characterisation of those maps $A$ that are
`admissible', since we have no good complete characterisation of covariance operators for Gaussian measures on Banach spaces.
However, we can take the pragmatic approach and simply assume that the new covariance determines a Gaussian measure on the
target Banach space. With this approach, we can formulate the following version for Banach spaces:

\begin{proposition}\label{prop:imageGaussian}
Let $\CB_1$ and $\CB_2$ be two separable Banach spaces and let $\mu$ be a centred Gaussian probability measure on $\CB_1$.
Let $A \colon \CH_\mu \to \CB_2$ be a bounded linear operator such that there exists a centred
Gaussian measure $\nu$ on $\CB_2$ with covariance $C_\nu(h,k) = \scal{A^*h, A^*k}_\mu$.
Then, there exists a measurable map $\hat A \colon \CB_1 \to \CB_2$ such that
$\nu = \hat A^*\mu$ and such that  there exists a measurable linear subspace 
$V \subset \CB$ of full $\mu$-measure such that $\hat A$ restricted to $V$ is linear
and $\hat A$ restricted to $\CH_\mu \subset V$ agrees with $A$. 
\end{proposition}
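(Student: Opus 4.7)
The plan is to imitate the Hilbert-valued construction of \theo{theo:imageGaussian}; the essential new difficulty is that we no longer have the Hilbert--Schmidt estimate that there gave $L^2$-convergence of the partial sums. First I would pick an orthonormal basis $\{e_n\}_{n \ge 1}$ of $\CH_\mu$ lying inside the dense subspace $\HH_\mu \subset \CH_\mu$ (such a basis exists by \prop{prop:defRmu} combined with Gram--Schmidt). The point of this choice is that each $e_n^* \in \CB^*$ is then a genuine continuous linear functional on $\CB$, so that
\begin{equ}
S_N(x) \eqdef \sum_{n=0}^N e_n^*(x)\, A e_n
\end{equ}
is a well-defined bounded linear map from $\CB$ to $\CB_2$, and $\{S_N\}$ is a sequence of $\CB_2$-valued random variables under $\mu$, realised as partial sums of independent symmetric random variables $X_n(x) = e_n^*(x)\, A e_n$.

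The core step, and the only genuine obstacle, is to promote the convergence of the covariances to $\mu$-almost sure convergence of $S_N$ in the norm of $\CB_2$. For every $\ell \in \CB_2^*$ the scalar series $\ell(S_N)(x) = \sum_{n=0}^N e_n^*(x)\scal{A^*\ell, e_n}_\mu$ is a sum of independent centred real Gaussians with total variance $\|A^*\ell\|_\mu^2 = C_\nu(\ell,\ell) < \infty$, hence converges in $L^2(\mu)$ and almost surely. Because the $\CB_2$-valued partial sums $S_N$ are sums of independent symmetric random variables in a separable Banach space, one can now invoke the It\^o--Nisio theorem to conclude that this coordinatewise convergence upgrades to almost sure convergence of $S_N$ in the norm of $\CB_2$. (The existence of $\nu$ ensures in particular that the laws of $S_N$ form a tight sequence of centred Gaussians converging weakly to $\nu$, which is another standard entry point into It\^o--Nisio.)

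With the almost sure limit in hand, I would set $V = \{x \in \CB : (S_N(x))_{N \ge 0}\text{ converges in }\CB_2\}$ and define $\hat A(x) = \lim_{N\to\infty} S_N(x)$ for $x \in V$, and $\hat A(x) = 0$ otherwise. Then $V$ is a measurable linear subspace with $\mu(V) = 1$ and $\hat A$ is linear on $V$ by linearity of each $S_N$. The Fourier transform of $\hat A^*\mu$ at $\ell \in \CB_2^*$ is, by dominated convergence, the pointwise limit of those of $S_N^*\mu$, which equals $\exp(-\hf C_\nu(\ell,\ell))$, so \prop{prop:FT} identifies $\hat A^*\mu$ with $\nu$. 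Finally, to check that $\hat A|_{\CH_\mu} = A$, fix $h \in \CH_\mu$ and approximate it by $h_k \in \HH_\mu$ in Cameron--Martin norm. The continuous embedding $\CH_\mu \hookrightarrow \CB$ from \prop{prop:CMbig}, together with continuity of each $e_n^* \in \CB^*$, forces $e_n^*(h) = \scal{h,e_n}_\mu$, so that $S_N(h) = \sum_{n=0}^N \scal{h,e_n}_\mu A e_n$ and the boundedness of $A\colon \CH_\mu \to \CB_2$ gives $S_N(h) \to A h$ in $\CB_2$, which simultaneously shows $h \in V$ and $\hat A(h) = Ah$.
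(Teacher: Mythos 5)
Your proposal is correct, but it takes a genuinely different route from the paper. The paper's proof does not redo the basis expansion in $\CB_2$ at all: it constructs an auxiliary Hilbert space $\CH_2 \supset \CB_2$ by choosing an orthonormal basis $\{e_n\}$ of the Cameron--Martin space $\CH_\nu$ with $e_n^* \in \CB_2^*$ and setting $\|x\|_2^2 = \sum_n e_n^*(x)^2/(n^2\|e_n^*\|^2)$, so that $\iota \circ A$ becomes an admissible (Hilbert--Schmidt-type) map into $\CH_2$; it then applies Theorem~\ref{theo:imageGaussian} verbatim to get $\hat A \colon \CB_1 \to \CH_2$, and finally uses $\nu'(\CB_2)=1$ to conclude that $\hat A x \in \CB_2$ almost surely. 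You instead work directly in $\CB_2$, replacing the Hilbert--Schmidt/Doob martingale convergence step by the It\^o--Nisio theorem for sums of independent symmetric $\CB_2$-valued summands. Both work; the trade-off is that the paper's detour through $\CH_2$ keeps everything inside the Hilbert-space machinery already developed in the notes, whereas your argument is more direct but imports It\^o--Nisio as an external tool. Two small remarks on your version. First, your choice of basis inside $\HH_\mu$ (so that each $e_n^*$ is a genuine element of $\CB_1^*$) is a nice simplification over the proof of Theorem~\ref{theo:imageGaussian}, since it makes each $S_N$ linear on all of $\CB_1$ and removes the need for the subspaces $V_{e_n}$ of Proposition~\ref{prop:linearfunctional}. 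Second, your parenthetical about tightness is slightly backwards: tightness of the laws of $S_N$ is not an a priori given but rather part of what It\^o--Nisio delivers; the correct entry point is the version of the theorem stating that, for independent symmetric summands, pointwise convergence of the characteristic functionals $\E_\mu e^{i\ell(S_N)} \to \exp\bigl(-\hf C_\nu(\ell,\ell)\bigr)$ to the characteristic functional of a \emph{bona fide} Borel probability measure on $\CB_2$ already implies almost sure convergence of $S_N$. Since the hypothesis of the proposition hands you exactly that measure $\nu$, this form of It\^o--Nisio applies and the rest of your argument (measurability and linearity of $V$, identification of $\hat A^*\mu$ via Proposition~\ref{prop:FT}, and $\hat A|_{\CH_\mu}=A$ by continuity of $A$ on $\CH_\mu$) goes through as written.
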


\begin{proof}
As a first step, we construct a Hilbert space $\CH_2$ such that $\CB_2 \subset \CH_2$ as a Borel subset. Denote by
$\CH_\nu \subset \CB_2$ the Cameron--Martin space of $\nu$ and let $\{e_n\} \subset \CH_\nu$ be an orthonormal basis of elements
such that $e_n^* \in \CB_2^*$ for every $n$. (Such an orthonormal basis can always be found by using the Grahm-Schmidt procedure.)
We then define a norm on $\CB_2$ by
\begin{equ}
\|x\|_2^2 = \sum_{n \ge 1} {e_n^*(x)^2 \over n^2 \|e_n^*\|^2}\;,
\end{equ}
where $\|e_n^*\|$ is the norm of $e_n^*$ in $\CB_2^*$. It is immediate that $\|x\|_2 <\infty$ for every $x \in \CB_2$, so that this turns $\CB_2$ into
a pre-Hilbert space.
We finally define $\CH_2$ as the completion of $\CB_2$ under $\|\cdot\|_2$.

Denote by $\nu'$ the image of the measure $\nu$ under the inclusion map $\iota\colon \CB_2 \hookrightarrow \CH_2$.
It follows that the map $A' = \iota \circ A$ satisfies the assumptions of Theorem~\ref{theo:imageGaussian},
so that there exists a map $\hat A \colon \CB_1 \to \CH_2$ which is linear on a subset of full $\mu$-measure and such that $\hat A^* \mu = \nu'$.
On the other hand, we know by construction that $\nu'(\CB_2) = 1$, so that the set $\{x\,:\, \hat A x \in \CB_2\}$ is of full measure. Modifying
$\hat A$ outside of this set by for example setting it to $0$ and using Exercise~\ref{ex:Kolmo} then yields the required statement.
\end{proof}

\begin{exercise}
In the context of Proposition~\ref{prop:imageGaussian}, show that the 
Cameron--Martin space $\CH_\nu$ for $\nu$ is given by $\CH_\nu = \range \CH_\mu$ 
with $\|h\|_\nu = \inf\{\|u\|_\mu\,:\, Au = h\}$.
\end{exercise}

\subsubsection{Uniqueness of measurable extensions and the isoperimetric inequality}
\label{sec:uniqueext}

This section is devoted to a proof of the converse of Theorem~\ref{theo:imageGaussian} and Proposition~\ref{prop:imageGaussian},
namely

\begin{theorem}\label{theo:uniqueext}
Let $\mu$ be a Gaussian measure on a separable Banach space $\CB_1$ with Cameron--Martin space $\CH_\mu$ and let
$A\colon \CH_\mu \to \CB_2$ be a linear map
satisfying the assumptions of Proposition~\ref{prop:imageGaussian}. Then the linear measurable extension $\hat A$ of $A$ 
is unique, up to sets of  measure $0$.
\end{theorem}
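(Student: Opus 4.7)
The plan is to let $D := \hat A_1 - \hat A_2$ for two linear measurable extensions $\hat A_1, \hat A_2$ of $A$ and to show that $D = 0$ $\mu$-a.s. By hypothesis, $D$ is measurable and linear on the full-$\mu$-measure linear subspace $V := V_1 \cap V_2$, which contains $\CH_\mu$, and $D$ vanishes identically on $\CH_\mu$. Since $\CB_2$ is separable, one can pick a countable family $\{\ell_k\} \subset \CB_2^*$ that separates points, so it suffices to check that for each $\ell \in \CB_2^*$ the scalar measurable linear functional $\varphi := \ell \circ D$ is $\mu$-a.s.\ zero.

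First I would verify translation invariance: for any $h \in \CH_\mu \subset V$ and $x \in V$ one has $x+h \in V$ and $\varphi(x+h) = \varphi(x) + \ell(Dh) = \varphi(x)$, so $\varphi \circ T_h \equiv \varphi$ on $V$. Combined with the Cameron--Martin theorem (Theorem~\ref{theo:CM}), which makes $T_h^*\mu$ equivalent to $\mu$ whenever $h \in \CH_\mu$, this shows that every level set $\varphi^{-1}(B)$ is $T_h$-invariant modulo $\mu$-null sets, for every $h \in \CH_\mu$.

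The decisive step, and the main obstacle, is a Gaussian zero-one law: any Borel set $E \subset \CB_1$ which is invariant modulo null sets under all translations by elements of $\CH_\mu$ must satisfy $\mu(E) \in \{0,1\}$. The cleanest route, aligned with the section title, is via the Borell--Sudakov--Tsirelson isoperimetric inequality: its concentration estimate $\mu(E + tB_{\CH_\mu}) \ge \Phi(\Phi^{-1}(\mu(E))+t)$, applied to a set with $E + B_{\CH_\mu} \subset E$ modulo null sets, forces $\mu(E) \in \{0,1\}$. A self-contained alternative is to integrate the Cameron--Martin density \eref{e:CMformula} over $E$ to obtain
\begin{equ}
\int_E \exp\bigl(tk^*(x) - \hf t^2 \|k\|_\mu^2\bigr)\,\mu(dx) = \mu(E)\;,\qquad k \in \CH_\mu,\ t \in \R\;,
\end{equ}
by inverting this Laplace transform one finds that every $k^* \in \CR_\mu$ retains its original $\CN(0,\|k\|_\mu^2)$ law under the conditional measure $\mu(\,\cdot\mid E)$, so $E$ is independent of the $\sigma$-algebra generated by $\CR_\mu$. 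By Proposition~\ref{prop:determinefinitedim} that $\sigma$-algebra coincides (modulo null sets) with the full Borel $\sigma$-algebra, yielding $\mu(E) = \mu(E)^2$.

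Granted the zero-one law, applying it to $\varphi^{-1}((t,\infty))$ for every $t \in \R$ shows $\varphi$ is $\mu$-a.s.\ equal to a constant $c \in \R$. Finally, centredness of $\mu$ makes the reflection $x \mapsto -x$ measure-preserving (directly from \eref{e:FT}, since $C_\mu(-\ell,-\ell) = C_\mu(\ell,\ell)$), while linearity of $\varphi$ on the symmetric set $V = -V$ gives $\varphi(-x) = -\varphi(x)$; hence $c = -c$ and $c = 0$. Applying this to every $\ell_k$ in the separating family yields $D = 0$ $\mu$-a.s., which is the asserted uniqueness.
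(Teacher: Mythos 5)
Your proposal is correct and follows essentially the same route as the paper's proof: form the difference of the two extensions, use translation-invariance of the scalar level sets $\{x : \ell(Dx)\le c\}$ under $\CH_\mu$ together with the Borell--Sudakov--Cirel'son zero-one law to conclude each $\ell\circ D$ is $\mu$-a.s.\ constant, kill the constant via the symmetry $x\mapsto -x$, and finish with separating functionals on $\CB_2$. The only cosmetic differences are that the paper invokes Proposition~\ref{prop:determinefinitedim} in place of your explicit countable separating family, and your alternative derivation of the zero-one law from the Cameron--Martin density is an extra (unneeded) route.
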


\begin{remark}
As a consequence of this result, the precise Banach spaces $\CB_1$ and $\CB_2$ are completely irrelevant when one considers the
image of a Gaussian measure under a linear transformation. The only thing that matters is the Cameron--Martin space for the starting measure
and the way in which the linear transformation acts on this space. This fact will be used repeatedly in the sequel.
\end{remark}

This is probably one of the most remarkable results in Gaussian measure theory. At first sight, it appears completely
counterintuitive: the Cameron--Martin space $\CH_\mu$ has measure $0$, so how can the specification of a 
measurable map on a set of measure $0$ be sufficient to determine it on a set of measure $1$? Part of the answer 
lies of course in the requirement that the extension $\hat A$ should be linear on a set of full measure. However,
even this requirement would not be sufficient by itself to determine $\hat A$ since the Hahn--Banach theorem 
provides a huge number of different extension of $A$ that do not coincide anywhere except on $\CH_\mu$. 
The missing ingredient that solves this mystery is the requirement that $\hat A$ be
not just any linear map, but a \textit{measurable} linear map. This additional constraint rules out all of the 
non-constructive extensions of $A$ provided by the Hahn--Banach theorem and leaves only one (constructive)
extension of $A$.

The main ingredient in the proof of Theorem~\ref{theo:uniqueext} is the Borell--Sudakov--Cirel'son inequality \index{Borell--Sudakov--Cirel'son inequality|see{isoperimetric inequality}}
\cite{SudTsi,Borell}, 
a general form of isoperimetric inequality\index{isoperimetric inequality} for Gaussian measures which is very interesting and useful in its own right.
In order to state this result, we first introduce the notation $B_\eps$ for the $\CH_\mu$-ball of radius $\eps$ centred at the origin.
We also denote by $A+B$ the sum of two sets defined by
\begin{equ}
A+B = \{x+y\,:\, x \in A\;,\quad  y \in B\}\;,
\end{equ}
and we denote by $\Phi$ the cumulative distribution function of the normal Gaussian: $\Phi(t) = {1\over \sqrt{2\pi}} \int_{-\infty}^t e^{-s^2/2}\,ds$.
With these notations at hand, we have the following:

\begin{theorem}[Borell--Sudakov--Cirel'son]\label{theo:Borell}
Let $\mu$ be a Gaussian measure on a separable Banach space $\CB$ with Cameron--Martin space $\CH_\mu$ and let $A\subset \CB$
be a measurable subset with measure $\mu(A) = \Phi(\alpha)$ for some $\alpha \in \R$. Then, for every $\eps > 0$,
one has the bound $\mu(A + B_\eps) \ge \Phi(\alpha + \eps)$.
\end{theorem}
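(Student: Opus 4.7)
The strategy is to reduce to the finite-dimensional Gaussian isoperimetric inequality, which I take as a black box: if $\gamma_N$ is the standard Gaussian measure on $\R^N$ and $\tilde A \subset \R^N$ is Borel with $\gamma_N(\tilde A) = \Phi(\alpha)$, then $\gamma_N(\tilde A + \bar B_\eps^{\R^N}) \ge \Phi(\alpha+\eps)$. The classical proof, due independently to Sudakov-Tsirelson and Borell, proceeds via Poincaré's observation that the projection onto any $N$-dimensional subspace of the uniform measure on the sphere $\sqrt n\, S^{n-1} \subset \R^n$ converges to $\gamma_N$ as $n \to \infty$, combined with Lévy's spherical isoperimetric inequality (for which the extremal sets are spherical caps). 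It is substantial enough to warrant citation rather than inclusion here.

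To lift this to infinite dimensions, pick an orthonormal basis $\{e_n\}_{n \ge 1}$ of $\CH_\mu$ and let $e_n^* \in \CR_\mu$ be the corresponding elements of the reproducing kernel. By Proposition~\ref{prop:linearfunctional}, each $e_n^*$ admits a measurable linear extension on a full-measure subspace of $\CB$; choosing all of these extensions to coincide on a common measurable linear subspace $V$ containing $\CH_\mu$ and to agree with $\scal{e_n, \cdot}_\mu$ on $\CH_\mu$, we may define $\pi_N\colon V \to \R^N$ by $\pi_N(x) = (e_1^*(x), \ldots, e_N^*(x))$. Proposition~\ref{prop:GaussRK} gives $\pi_N^*\mu = \gamma_N$, and for $h \in B_\eps$ Bessel's inequality yields
\begin{equ}
\sum_{n=1}^N |e_n^*(h)|^2 = \sum_{n=1}^N |\scal{e_n, h}_\mu|^2 \le \|h\|_\mu^2 < \eps^2\;,
\end{equ}
so $\pi_N(B_\eps) \subset \bar B_\eps^{\R^N}$.

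The cylindrical case is now direct. If $A = \pi_N^{-1}(\tilde A)$ with $\gamma_N(\tilde A) = \Phi(\alpha)$ and $y$ satisfies $\pi_N(y) = \tilde a + \xi$ with $\tilde a \in \tilde A$ and $|\xi| \le \eps$, then $\tilde h \eqdef \sum_{n=1}^N \xi_n e_n \in B_\eps$ satisfies $\pi_N(y - \tilde h) = \tilde a$, so $y - \tilde h \in A$ and $y \in A + B_\eps$. Hence $A + B_\eps \supset \pi_N^{-1}(\tilde A + \bar B_\eps^{\R^N})$, and the finite-dimensional inequality gives $\mu(A + B_\eps) \ge \Phi(\alpha + \eps)$. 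For a general Borel $A$, the cylindrical $\sigma$-algebra coincides with the Borel one (by the argument in the proof of Proposition~\ref{prop:determinefinitedim}), so martingale convergence for $f_N = \E_\mu[\mathbf 1_A \mid \sigma(\pi_N)]$ yields cylindrical sets $A_N$ with $\mu(A_N \triangle A) \to 0$; applying the cylindrical bound to $A_N$ and carefully accounting for the symmetric difference, while absorbing a small loss in $\eps$ via continuity of $\Phi$, recovers the inequality for $A$ after passing to the limit.

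The main obstacles are twofold. Conceptually, the finite-dimensional Gaussian isoperimetric inequality is the deep input and is treated as a citation. Technically, the passage from cylindrical to general Borel sets is delicate: one cannot in general find cylindrical $A_N \subset A$ with $\mu(A_N) \uparrow \mu(A)$, so the approximation is handled via symmetric difference combined with a small sacrifice in $\eps$ that is recovered through continuity of $\Phi$. A parallel nuisance is that $A + B_\eps$ is in general only analytic rather than Borel, but since $\mu$ is Radon on the Polish space $\CB$ it is $\mu$-measurable, so the statement makes sense as written.
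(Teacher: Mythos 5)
The notes themselves do not prove this theorem: they explicitly defer to \cite{SudTsi,Borell,LedTal} and only record the two finite-dimensional ingredients (Poincar\'e's limit of spherical projections and L\'evy's spherical isoperimetric inequality), followed by the remark that ``a clever approximation argument'' handles the infinite-dimensional case. Your proposal takes the same finite-dimensional statement as a black box, and your reduction to cylindrical sets is correct and cleanly executed: $\pi_N^*\mu = \gamma_N$ by Proposition~\ref{prop:GaussRK}, $\pi_N(B_\eps)\subset \bar B_\eps^{\R^N}$ by Bessel, and $A+B_\eps \supset \pi_N^{-1}(\tilde A + \bar B_\eps^{\R^N})$ all hold. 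The remark that $A+B_\eps$ is analytic but universally measurable is also correct and worth making.

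The gap is in the passage from cylindrical to general Borel sets, which is exactly the ``clever'' part. Minkowski enlargement is not continuous with respect to symmetric difference: from $\mu(A_N\triangle A)\to 0$ you get $\mu(A+B_\eps)\ge \mu(A_N+B_\eps) - \mu\bigl((A_N\setminus A)+B_\eps\bigr)$, and the subtracted term is not controlled by $\mu(A_N\setminus A)$. Indeed a $\mu$-null set can have an enlargement of full outer measure: take $E=\{x\in V_\ell : \ell(x)\in\Q\}$ for some $\ell\in\CR_\mu$ with unit variance; then $\mu(E)=0$ while $E+B_\eps$ exhausts a full-measure subspace. Shaving $\eps$ does not help, since the obstruction has nothing to do with the radius of the ball; and, as you yourself note, one cannot reverse the direction and approximate $A$ from inside by cylinders (a cylinder of positive measure contains entire fibres of $\pi_N$). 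The standard repair uses three further ingredients: inner regularity of the Radon measure $\mu$ to reduce to $A$ compact (approximation from \emph{inside} by arbitrary compact sets is harmless, since it only shrinks $A+B_\eps$ and costs a continuity argument in $\alpha$ rather than in $\eps$); the compactness of the closed Cameron--Martin ball $\bar B_\eps$ in $\CB$, so that $A+\bar B_\eps$ is compact and equals $\bigcap_{\delta>0}\{x: d_\CB(x,A+\bar B_\eps)\le\delta\}$; and the $\mu$-almost sure convergence $\hat\pi_N x\to x$ in $\CB$, where $\hat\pi_N x = \sum_{n\le N}e_n^*(x)e_n$ (the martingale argument of Theorem~\ref{theo:imageGaussian} applied to the identity). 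One then applies the finite-dimensional inequality inside $E_N$ to the $\delta$-fattening $\{z\in E_N : d_\CB(z,A)\le\delta\}$, transfers the estimate to $\{x: d_\CB(x,A+\bar B_\eps)\le\delta\}$ via Fatou on both sides, and finally lets $\delta\to 0$. Your skeleton is the right one, but as written the bridge from cylindrical sets to general Borel sets does not stand.
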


\begin{remark}
Theorem~\ref{theo:Borell} is remarkable since it implies that even though $\CH_\mu$ itself has measure $0$, whenever $A$ is a set of positive 
measure, no matter how small, the set $A + \CH_\mu$ has full measure!
\end{remark}

\begin{remark}
The bound given in Theorem~\ref{theo:Borell} is sharp whenever $A$ is a half space, in the sense that 
$A = \{x \in \CB \,:\, \ell(x) \ge c\}$ for some $\ell \in \CR_\mu$ and $c \in \R$. In the case where $\eps$ is small, $(A + B_\eps) \setminus A$ is a fattened boundary for the set $A$, so that
$\mu(A + B_\eps) - \mu(A)$ can be interpreted as a kind of ``perimeter'' for $A$.
The statement can then be interpreted as stating that in the context of Gaussian measures,
half-spaces are the sets of given perimeter that have the largest measure. This 
justifies the statement that Theorem~\ref{theo:Borell} is an isoperimetric inequality.
\end{remark}

We are not going to give a proof of Theorem~\ref{theo:Borell} in these notes because this would lead us too far astray
from our main object of study. The interested reader may want to look into the monograph \cite{LedTal} for a more exhaustive treatment
of probability theory in Banach spaces in general and isoperimetric inequalities in particular.
Let us nevertheless remark shortly on how the argument of the proof goes, as it can be found in the original papers \cite{SudTsi,Borell}. 
In a nutshell, it is a consequence of the two following remarks:
\begin{claim}
\item Let $\nu_M$ be the uniform measure on a sphere of radius $\sqrt M$ in $\R^M$ and let $\Pi_{M,n}$ be the orthogonal
projection from $\R^M$ to $\R^n$. Then, the sequence of measures $\Pi_{M,n}\nu_M$ converges as $M \to \infty$ to the standard 
Gaussian measure on $\R^n$. This remark is originally due to Poincar\'e.
\item A claim similar similar to that of Theorem~\ref{theo:Borell} holds for the uniform measure on the sphere, in the sense that 
the volume of a fattened set $A + B_\eps$ on the sphere is bounded from below by the volume of a fattened ``cap'' of volume identical to that of $A$. 
Originally, this fact
was discovered by L\'evy, and it was then later generalised by Schmidt, see  \cite{Schmidt48} or the review
article \cite{Gardner}.
\end{claim}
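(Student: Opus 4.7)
The plan is to handle the two items separately, as they rely on quite different techniques.

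For Poincaré's observation, I would use the standard Gaussian representation of the uniform measure on a sphere. Let $Z = (Z_1, \ldots, Z_M)$ be i.i.d.\ $\CN(0,1)$ in $\R^M$. By rotational invariance of the joint law, $X_M := \sqrt{M}\, Z / \|Z\|$ is distributed according to $\nu_M$. Then
\begin{equ}
\Pi_{M,n} X_M = {\sqrt{M} \over \|Z\|}\, (Z_1, \ldots, Z_n)\;.
\end{equ}
The strong law of large numbers applied to $\|Z\|^2 / M = \frac{1}{M}\sum_{i=1}^M Z_i^2$ gives $\|Z\|^2/M \to 1$ almost surely, so the scalar factor $\sqrt{M}/\|Z\|$ tends to $1$ a.s. Slutsky's theorem (or a direct dominated convergence argument after truncating the rare event that $\|Z\|$ is very small) then yields convergence in distribution of $\Pi_{M,n} X_M$ to $(Z_1, \ldots, Z_n)$, whose law is exactly the standard Gaussian measure on $\R^n$. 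An optional sanity check is to derive the explicit density $c_{M,n}(1 - |x|^2/M)_+^{(M-n-2)/2}$ of $\Pi_{M,n}\nu_M$ and verify pointwise that it tends to $(2\pi)^{-n/2} e^{-|x|^2/2}$.

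For L\'evy's spherical isoperimetric inequality, I would argue by two-point symmetrization (Steiner-type symmetrization on the sphere). Fix a hyperplane $H$ through the origin, with associated reflection $\sigma_H$ and a chosen pole $p$ on one side of $H$. Given a measurable $A \subset S^{M-1}$, define $S_H(A)$ by looking at each orbit $\{x, \sigma_H x\}$: if $A$ contains exactly one of the two, keep the one lying on the pole's side of $H$; if $A$ contains both or neither, leave the orbit unchanged. The key technical lemma is that $S_H$ preserves the spherical measure of $A$, and that for any $\eps>0$ one has $S_H(A) + B_\eps \subset S_H(A + B_\eps)$ in a suitable sense, so that the surface measure of the $\eps$-neighbourhood cannot increase. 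Iterating $S_H$ over a well-chosen countable family of hyperplanes, one shows that the symmetrized sets converge (in $L^1$ of the spherical measure) to a spherical cap $C$ with $\nu_M(C) = \nu_M(A)$. Then $\nu_M(A + B_\eps) \ge \nu_M(C + B_\eps)$, which is the desired bound since the right-hand side can be computed in closed form.

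The main obstacle is almost entirely in the second part: making rigorous the claim that iterated two-point symmetrizations converge to a geodesic cap, and controlling the behaviour of $\eps$-neighbourhoods under symmetrization for general measurable (not merely smooth) $A$. This is exactly the technical step where L\'evy's original geometric argument needed to be cleaned up by Schmidt; a fully modern and self-contained proof would probably be easier via Bobkov's functional isoperimetric inequality on the sphere or a heat-semigroup comparison argument, but either route is substantially longer than the sketch above and genuinely does lead away from the main thread of the notes, which is why the author elects to cite \cite{Schmidt48,Gardner} instead.
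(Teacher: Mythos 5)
The first thing to note is that the paper does not prove either of these claims: they appear inside a remark that explicitly declines to prove Theorem~\ref{theo:Borell} (``this would lead us too far astray'') and simply sketches the architecture of the argument, referring to the original papers of Sudakov--Cirel'son and Borell and to \cite{Schmidt48,Gardner} for the two ingredients. So there is no in-paper proof to compare against, and your proposal should be judged on its own merits.

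Your treatment of the first item is correct and essentially complete. Realising $\nu_M$ as the law of $\sqrt{M}\,Z/\|Z\|$ for $Z$ standard Gaussian in $\R^M$, extracting the first $n$ coordinates, and invoking the strong law of large numbers for $\|Z\|^2/M$ together with Slutsky is exactly the classical route (one should take a single i.i.d.\ sequence $Z_1,Z_2,\dots$ and set $Z=(Z_1,\dots,Z_M)$ so that the almost sure convergence of the scalar factor makes sense across varying $M$, which is implicit in what you wrote). The alternative check via the explicit density $c_{M,n}\bigl(1-|x|^2/M\bigr)_+^{(M-n-2)/2}$ is also a perfectly valid self-contained proof via Scheff\'e's lemma.

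For the second item you have correctly identified the standard two-point symmetrisation strategy, and the key monotonicity lemma you state (that symmetrisation does not increase the measure of $\eps$-neighbourhoods) is the right one. But, as you yourself concede, the convergence of iterated symmetrisations to a geodesic cap is the entire content of the theorem, and it is not established by your sketch; so the second claim remains unproved in your proposal. That is a genuine gap rather than a stylistic one, although it is the same gap the paper deliberately leaves by citing Schmidt. If you wanted to close it without a long digression, the cleanest modern routes are indeed the functional (Bobkov-type) inequality on the sphere or the semigroup comparison argument you mention; either would be several pages and is reasonably delegated to the references.
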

These two facts can then be combined in order to show that half-spaces are optimal for finite-dimensional Gaussian measures. Finally,
a clever approximation argument is used in order to generalise this statement to infinite-dimensional measures as well.

An immediate corollary is given by the following type of zero-one law\index{zero-one law} for Gaussian measures:

\begin{corollary}
Let $V \subset \CB$ be a measurable linear subspace. Then, one has either $\mu(V) = 0$ or $\mu(V) = 1$.
\end{corollary}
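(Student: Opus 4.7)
The plan is to show that if $\mu(V) > 0$, then necessarily $\CH_\mu \subset V$, and then to deduce from Theorem~\ref{theo:Borell} that $\mu(V) = 1$. The second step is a one-line consequence of the isoperimetric estimate; the first step is where the real work lies.

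To prove $\CH_\mu \subset V$ when $\mu(V) > 0$, I would argue by contradiction: fix $h \in \CH_\mu \setminus V$ and integrate over the line through the origin in the forbidden direction $h$. Since $V$ is a linear subspace not containing $h$, for every $x \in \CB$ the set $\{t \in \R : x + th \in V\}$ contains at most one point (if $x+t_1 h, x+t_2 h \in V$ then $(t_1-t_2)h \in V$, forcing $t_1=t_2$). Tonelli's theorem applied to the jointly measurable function $(x,t)\mapsto \One_V(x+th)$ on $\CB \times \R$ then gives
\begin{equ}
\int_\R \mu(V - th)\,dt \;=\; \int_\CB \lambda\bigl(\{t\in\R: x+th\in V\}\bigr)\,\mu(dx) \;=\; 0,
\end{equ}
where $\lambda$ denotes Lebesgue measure on $\R$. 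On the other hand, for every $t\in\R$ one has $th \in \CH_\mu$, so by the Cameron-Martin theorem (Theorem~\ref{theo:CM}) the measures $T_{th}^*\mu$ and $\mu$ are mutually absolutely continuous (the density $\CD_{th}$ is strictly positive), giving $\mu(V-th) = T_{th}^*\mu(V) > 0$. A strictly positive measurable function on $\R$ has strictly positive Lebesgue integral, contradicting the display above; hence $\CH_\mu \subset V$.

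For the concluding step, since $B_\eps \subset \CH_\mu \subset V$ and $V$ is closed under addition, $V + B_\eps = V$ for every $\eps > 0$. Writing $\mu(V) = \Phi(\alpha)$ and applying Theorem~\ref{theo:Borell}, one gets $\mu(V) = \mu(V + B_\eps) \ge \Phi(\alpha+\eps)$ for every $\eps>0$; letting $\eps\to\infty$ forces $\mu(V)=1$.

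The hard part is the first step: Theorem~\ref{theo:Borell} applied directly to $V$ only yields $\mu(V+B_\eps)\to 1$ as $\eps\to\infty$, which is useless unless we can identify $V+B_\eps$ with $V$ up to null sets. The Fubini/Tonelli trick is the natural way to exploit the linearity of $V$ together with the Cameron-Martin theorem in order to establish this stability. A minor pitfall is the joint measurability of the relevant functions, but this is automatic since $(x,t) \mapsto x+th$ is continuous from $\CB\times\R$ to $\CB$ and $V$ is Borel.
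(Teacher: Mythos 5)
Your proposal is correct and takes essentially the same route as the paper: the concluding step ($V+B_\eps = V$ plus Theorem~\ref{theo:Borell}) is identical, and the key step rests on the same two ingredients, namely the Cameron--Martin equivalence of $\mu$ and $T_{th}^*\mu$ for $th\in\CH_\mu$ together with the pairwise disjointness of the translates $V-th$. The only (cosmetic) difference is how the contradiction is extracted: the paper simply notes that a probability measure cannot charge uncountably many pairwise disjoint sets, which sidesteps the measurability bookkeeping that your Tonelli argument requires.
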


\begin{proof}
Let us first consider the case where $\CH_\mu \not \subset V$. In this case, just as in the proof of Proposition~\ref{prop:carH},
we conclude that $\mu(V) = 0$, for otherwise we could construct an uncountable collection of disjoint sets with positive measure.

If $\CH_\mu \subset V$, then we have $V + B_\eps = V$ for every $\eps > 0$, so that if $\mu(V) > 0$, one must have $\mu(V) = 1$ by Theorem~\ref{theo:Borell}.
\end{proof}

We have now all the necessary ingredients in place to be able to give a proof of Theorem~\ref{theo:uniqueext}:

\begin{proof}[Proof of Theorem~\ref{theo:uniqueext}]
Assume by contradiction that there exist two measurable extensions $\hat A_1$ and $\hat A_2$ of $A$. In other words, 
we have $\hat A_i x = Ax$ for $x \in \CH_\mu$ and there exist measurable subspaces $V_i$ with $\mu(V_i) = 1$ such that 
the restriction of $\hat A_i$ to $V_i$ is linear. Denote $V = V_1 \cap V_2$ and $\Delta = \hat A_2 - \hat A_1$, so that 
$\Delta$ is linear on $V$ and $\Delta \restr \CH_\mu = 0$.

Let $\ell \in \CB_2^*$ be arbitrary and consider the events $V_\ell^c = \{x\,:\, \ell(\Delta x) \le c\}$. By the linearity of $\Delta$, each of these events is
invariant under translations in $\CH_\mu$, so that by Theorem~\ref{theo:Borell} we have $\mu(V_\ell^c) \in \{0,1\}$ for every choice of 
$\ell$ and $c$. Furthermore, for fixed $\ell$, the map $c \mapsto \mu(V_\ell^c)$ is increasing and it follows from the
$\sigma$-additivity of $\mu$ that we have $\lim_{c \to -\infty} \mu(V_\ell^c) = 0$ and $\lim_{c \to \infty} \mu(V_\ell^c) = 1$.
Therefore, there exists a unique $c_\ell \in \R$ such that $\mu(V_\ell^c)$ jumps from $0$ to $1$ at $c = c_\ell$. In particular, this
implies that $\ell(\Delta x) = c_\ell$ $\mu$-almost surely. However, the measure $\mu$ is invariant under the map 
$x \mapsto -x$, so that we must have $c_\ell = -c_\ell$, implying that  $c_\ell = 0$. Since this is true for every $\ell \in \CB_2^*$,
we conclude from Proposition~\ref{prop:determinefinitedim} that the law of $\Delta x$ is given by the Dirac measure at $0$, so that $\Delta x = 0$ $\mu$-almost surely, which is precisely what we wanted.
\end{proof}

\begin{exercise}
Let $(b_{ij})_{i,j \in \N}$ be such that $\sum_{i,j} b_{ij}^2 < \infty$ and let $(\xi_i)_{i \in \N}$ be a sequence of i.i.d.\ $\CN(0,1)$ random variables. Show first that the
sum $\sum_{i,j} b_{ij} (\xi_i\xi_j - \delta_{i,j})$ converges in $L^2$ to some
random variable $F$. 
Use Theorem~\ref{theo:Borell} to show that $F$ has exponential tails, i.e.\ there 
exists a constant $c$ such that $\P(|F| > K) \le c^{-1} e^{-cK}$.
\textbf{Hint:} Writing $B \colon \ell^2 \to \ell^2$ for the map such that 
$Be_i = b_{ij}e_j$ (the $e_i$'s are the canonical basis vectors), one can take
the event $A$ to be the realisations such that $|F(\xi)| + \|B\xi\| + \|B^*\xi\| \le C$
for some sufficiently large $C$.
\end{exercise}

\begin{exercise}
Show that any unitary operator on $\CH_\mu$ uniquely extends to a measure-preserving
transformation of $(\CB,\mu)$. In particular, show that if $W$ is an $n$-dimensional
Wiener process (i.e.\ a sample from the Gaussian measure on $\CC([0,1],\R^n)$ with
covariance $C_{ij}(s,t) = \delta_{i,j} (s\wedge t)$) and if $\CO \colon [0,1] \to O(n)$ is
a measurable map with values in the space of orthogonal matrices, then the process
$\hat W(t) = \int_0^t \CO(s)\,dW(s)$ is well-defined and is again a Wiener process.
\end{exercise}

In the next section, we will see how we can take advantage of this fact to construct a theory of stochastic integration with respect to
a ``cylindrical Wiener process'', which is the infinite-dimensional analogue of a standard $n$-dimensional Wiener process.

\subsection{Cylindrical Wiener processes and stochastic integration}
\label{sec:Ito}

Central to the theory of stochastic PDEs is the notion of a \index{Wiener!process (cylindrical)}\textit{cylindrical Wiener process}. 
Recall that in general a stochastic process $X$ taking values in a separable Banach space $\CB$ is nothing but a collection $\{X(t)\}$
of $\CB$-valued random variables indexed by time $t \in \R$ (or taking values in some subset of $\R$). 
A notable special case which will be of interest here is
the case where the probability space is taken to be for example $\Omega = \CC([0,T], \CB)$ (or some other space of $\CB$-valued continuous
functions) endowed with some Gaussian measure $\P$ and where the process $X$ is given by
\begin{equ}
X(t)(\omega) = \omega(t)\;,\qquad \omega \in \Omega\;.
\end{equ}
In this case, $X$ is called the \textit{canonical process}\index{canonical process} on $\Omega$.

Recall that the usual (one-dimensional) Wiener process is a real-valued centred Gaussian process $B(t)$ such that
$B(0) = 0$ and $\E |B(t) - B(s)|^2 = |t-s|$ for any pair of times $s,t$. From our point of view,
the Wiener process on any finite time interval $I$ can always be realised as the canonical process for the Gaussian measure on $\CC(I,\R)$ with
covariance function $C(s,t) = s \wedge t = \min\{s,t\}$. (Note that such a measure exists by
 Kolmogorov's continuity criterion.) 

Since the space $\CC(\R,\R)$ is not a Banach space and we have not extended our study of Gaussian measures to
Fr\'echet spaces, we refrain from defining a measure on it. However,  
one can define Wiener measure on a separable Banach space of the type
\begin{equ}
\CC_\rho(\R_+,\R) = \Bigl\{f\in \CC(\R_+,\R)\,:\, \lim_{t \to \infty} f(t)/\rho(t) \;\text{exists}\Bigr\}\;,\qquad
\|f\|_\rho = \sup_{t \in \R} {|f(t)|\over \rho(t)}\;,
\end{equ}
for a suitable weight function $\rho\colon \R \to [1,\infty)$. For example, we will see that $\rho(t) = 1+t^2$ is suitable, and we will therefore
define $\CC_W = \CC_\rho$ for this particular choice.

\begin{proposition}
There exists a Gaussian measure $\mu$ on $\CC_W$ with covariance function $C(s,t) = s\wedge t$.
\end{proposition}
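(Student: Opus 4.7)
The plan is to build $B$ as a continuous process on $\R_+$ via Kolmogorov's extension theorem and Kolmogorov's continuity criterion (Theorem~\ref{theo:Kolmogorov}), to control its growth at infinity so that sample paths lie in $\CC_W$, and to identify the pushforward of $\P$ under the path map as the desired Gaussian measure. For the first step, Kolmogorov's extension theorem provides a centred Gaussian process $\{B(t)\}_{t \ge 0}$ on some $(\Omega,\F,\P)$ with $\E B(s)B(t) = s \wedge t$. Since $\E|B(t)-B(s)|^2 = |t-s|$ and increments are Gaussian, one obtains $\E|B(t)-B(s)|^{2p} \le C_p |t-s|^p$ for every $p \ge 1$; applying Theorem~\ref{theo:Kolmogorov} on each $[0,N]$ yields a modification with $\beta$-H\"older sample paths on $[0,N]$ for every $\beta < 1/2$, and intersecting the resulting full-measure events over $N \in \{1,2,\ldots\}$ gives a single version (still denoted $B$) with sample paths in $\CC(\R_+,\R)$.

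Next, I would control the behaviour at infinity by time inversion. Set $\tilde B(t) = t B(1/t)$ for $t > 0$ and $\tilde B(0) = 0$; a direct computation gives $\E \tilde B(s) \tilde B(t) = st (1/s \wedge 1/t) = s \wedge t$, so $\tilde B$ has the same finite-dimensional marginals as $B$ on all of $\R_+$. Continuity of a trajectory at $t = 0$ with value $0$ is an event expressible in terms of the values at a countable dense set of times, and $B$ lies in this event by the previous step, so $\tilde B$ must too. This means $\lim_{t \to 0^+} t B(1/t) = 0$ almost surely, equivalently $B(s)/s \to 0$ as $s \to \infty$. In particular $B(t)/(1+t^2) \to 0$, so $B(\cdot,\omega) \in \CC_W$ on an event of full measure.

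Define $\Psi(\omega) = B(\cdot,\omega) \in \CC_W$ on this event and $\Psi(\omega) = 0$ otherwise, and let $\mu = \Psi^*\P$. Point evaluations $\delta_t$ belong to $\CC_W^*$ since $|f(t)| \le (1+t^2)\|f\|_\rho$, so the same argument as in the proof of Theorem~\ref{theo:Kolmogorov} shows that $\Psi$ is Borel measurable and that $\mu$ is determined by its finite-dimensional marginals, which are by construction centred Gaussian with covariance matrix $(t_i \wedge t_j)$. Combining this with the $L^2$-closedness of the linear span of $\{B(t):t\ge0\}$ within the centred Gaussians (which allows one to conclude that $\ell(B)$ is Gaussian for every $\ell \in \CC_W^*$ by a Riemann-sum approximation, using that the limit-at-infinity component of $\ell$ is killed by $\phi(B) = 0$), $\mu$ is Gaussian and satisfies $C_\mu(\delta_s,\delta_t) = s \wedge t$ as required.

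The main obstacle is the middle step: Kolmogorov's continuity theorem gives no information about growth at infinity, yet membership in $\CC_W$ demands both a decay bound on $|B(t)|/(1+t^2)$ and the \emph{existence} of the limit as $t \to \infty$. The time-inversion trick is what I expect to be the cleanest resolution, reducing both statements to continuity of a related process at a single point, which the first step already covers; a Borel--Cantelli argument bounding $\sup_{t \in [n,n+1]}|B(t)|$ would be a routine alternative.
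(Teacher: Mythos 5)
Your proof is correct, but it takes a genuinely different route from the one in the notes. The notes compactify time first: they observe that $f \in \CC_W$ if and only if $f(t) = (1+t^2)g(\arctan t)$ for some $g$ continuous on the compact interval, and then construct the measure in one stroke by applying Theorem~\ref{theo:Kolmogorov} to the transformed covariance $C_0(x,y) = (\tan x \wedge \tan y)/\bigl((1+\tan^2 x)(1+\tan^2 y)\bigr)$; all the work is concentrated in the (delegated) H\"older estimate on $C_0$ near the right endpoint, which is exactly what encodes the decay of $B(t)/(1+t^2)$. You instead build Brownian motion directly on $\R_+$ by patching Kolmogorov modifications over the intervals $[0,N]$, and then handle the behaviour at infinity by the time-inversion symmetry $\tilde B(t) = tB(1/t)$, transferring the already-established continuity at the origin to the statement $B(s)/s \to 0$ through equality of finite-dimensional distributions on a countable dense set. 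The two arguments are really dual to each other: your time inversion is a soft, computation-free substitute for the explicit H\"older bound on $C_0$ at $\pi/2$, and it additionally identifies the limit at infinity as $0$, which the notes' version leaves implicit. The price you pay is the extra bookkeeping (consistency of modifications across the intervals $[0,N]$, passing from limits along rationals to full limits, and the identification of $\CC_W^*$ with measures on the compactified half-line including a possible atom at infinity); each of these steps is handled correctly, if somewhat tersely, in your write-up. Both approaches ultimately rest on the same two pillars, Kolmogorov extension plus Kolmogorov continuity, so neither is more elementary than the other; the notes' version is shorter once one accepts the covariance estimate as an exercise, while yours makes the mechanism controlling the growth at infinity completely transparent.
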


\begin{proof}
We use the fact that $f \in \CC([0,\pi],\R)$ if and only if
the function $T(f)$ given by $T(f)(t) = (1+t^2) f(\arctan t)$ belongs to $\CC_W$. Our aim is then to construct a Gaussian measure
$\mu_0$ on $\CC([0,\pi],\R)$ which is such that $T^*\mu_0$ has the required covariance structure.

The covariance $C_0$ for $\mu_0$ is then given by
\begin{equ}
C_0(x,y) =  {\tan x \wedge \tan y \over (1+\tan^2x)(1+\tan^2 y)}\;.
\end{equ}
It is now a straightforward exercise to check that this covariance function does indeed satisfy the assumption
of Kolmogorov's continuity theorem.
\end{proof}

Let us now fix a (separable) Hilbert space $\CH$, as well as a larger Hilbert space $\CH'$ containing $\CH$
as a dense subset and such that 
the inclusion map $\iota \colon \CH \to \CH'$ is Hilbert-Schmidt. Given $\CH$, it is always possible to construct a space
$\CH'$ with this property: choose an orthonormal basis $\{e_n\}$ of $\CH$ and take $\CH'$ to be the closure of $\CH$
under the norm
\begin{equ}
\|x\|_{\CH'}^2 = \sum_{n=1}^\infty {1\over n^2} \scal{x, e_n}^2\;.
\end{equ}
One can check that the map $\iota \iota^*$ is then given by $\iota \iota^* e_n = {1\over n^2} e_n$, so that it is indeed 
trace class.

\begin{definition}
Let $\CH$ and $\CH'$ be as above.
We then call a \textit{cylindrical Wiener process on $\CH$}\index{Wiener!process (cylindrical)} any $\CH'$-valued Gaussian process $W$ such that
\begin{equ}[e:cylWien]
\E \scal{h,W(s)}_{\CH'} \scal{W(t),k}_{\CH'} = (s\wedge t) \scal{\iota^* h,\iota^* k}= (s\wedge t) \scal{\iota \iota^* h, k}_{\CH'}\;,
\end{equ}
for any two times $s$ and $t$ and any two elements $h,k \in \CH'$.
By Kolmogorov's continuity theorem, this can be realised as the canonical process for some Gaussian measure
on $\CC_W(\R, \CH')$.
\end{definition}

\begin{proposition}\label{prop:identity}
In the same setting as above, the Gaussian measure $\mu$ on $\CH'$ with covariance $\iota \iota^*$ 
has $\CH$ as its Cameron--Martin space. Furthermore, $\|h\|_\mu^2 = \|h\|^2$ for every $h \in \CH$.
\end{proposition}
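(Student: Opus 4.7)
My plan is to unpack the definition of the Cameron-Martin space and carefully track the Riesz identifications between $\CH'$, $(\CH')^*$, $\CH$ and $\CH^*$.

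First I would use the Riesz representation theorem to identify $(\CH')^*$ with $\CH'$, so that the covariance operator $\hat C_\mu \colon (\CH')^* \to \CH'$ may be read as an operator $\CH' \to \CH'$. Since $C_\mu(\ell, \ell') = (s \wedge t)^{-1} \E \scal{\ell,W(s)}_{\CH'} \scal{W(t), \ell'}_{\CH'} = \scal{\iota \iota^* \ell, \ell'}_{\CH'}$ by \eref{e:cylWien}, we have $\hat C_\mu = \iota \iota^*$ under this identification. The subspace $\HH_\mu$ of \prop{prop:CMbig} then consists exactly of the range of $\iota \iota^* \colon \CH' \to \CH'$.

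Next I would show that $\HH_\mu \subset \iota(\CH)$ and compute the Cameron-Martin norm. If $h = \iota \iota^* h^*$ with $h^* \in \CH'$, then setting $z = \iota^* h^* \in \CH$ we have $h = \iota(z)$, so $h$ lies in the image of $\iota$. Moreover,
\begin{equ}
\|h\|_\mu^2 = C_\mu(h^*, h^*) = \scal{\iota \iota^* h^*, h^*}_{\CH'} = \scal{\iota^* h^*, \iota^* h^*}_{\CH} = \|z\|_\CH^2\;,
\end{equ}
so that under the identification of $\iota(\CH)$ with $\CH$ the Cameron-Martin norm on $\HH_\mu$ agrees with $\|\cdot\|_\CH$.

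Finally I would take the completion. Since $\iota$ is injective with dense range (it is a dense inclusion), its adjoint $\iota^*$ also has dense range in $\CH$: indeed $\overline{\mathrm{range}\,\iota^*} = (\ker \iota)^\perp = \CH$. Thus $\HH_\mu$, identified via $\iota$ with $\mathrm{range}\,\iota^* \subset \CH$, is dense in $\CH$ in the $\CH$-norm, which by the preceding step is exactly the Cameron-Martin norm. Taking the completion therefore yields $\CH_\mu = \CH$ isometrically, and the identity $\|h\|_\mu = \|h\|_\CH$ extends from $\HH_\mu$ to all of $\CH$ by continuity.

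The only mildly delicate point is the bookkeeping of the three distinct uses of Riesz duality (for $\CH$, $\CH'$, and the implicit one in writing $\hat C_\mu$), together with the fact that $\HH_\mu$ itself is only the range of the trace-class operator $\iota \iota^*$, which is a proper subset of $\CH$ — it is only after completion under $\|\cdot\|_\mu$ that one recovers $\CH$ in full. No deeper obstacle arises, since density of $\mathrm{range}\,\iota^*$ in $\CH$ follows directly from density of $\iota(\CH)$ in $\CH'$.
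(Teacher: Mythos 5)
Your proof is correct and follows essentially the same route as the paper: identify $\hat C_\mu$ with $\iota\iota^*$, observe that $\HH_\mu$ is its range and sits inside $\iota(\CH)$, compute $\scal{h,k}_\mu = \scal{\hat h,\hat k}_{\CH}$, and pass to the completion — you even make explicit the density of $\mathrm{range}\,\iota^*$ in $\CH$, which the paper's ``from which the claim follows'' leaves implicit. One small slip in your closing remark: that density follows from the \emph{injectivity} of $\iota$ (exactly as your own identity $\overline{\mathrm{range}\,\iota^*} = (\ker\iota)^\perp$ shows), not from the density of $\iota(\CH)$ in $\CH'$, which instead yields injectivity of $\iota^*$.
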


\begin{proof}
It follows from the definition of $\HH_\mu$ that this is precisely the range of $\iota \iota^*$ and that
the map $h \mapsto h^*$ is given by $h^* = (\iota \iota^*)^{-1}h$. In particular, $\HH_\mu$ is contained in 
the range of $\iota$. Therefore, for any $h,k \in \HH_\mu$, there exist $\hat h, \hat g \in \CH$
such that $h = \iota \hat h$ and $k = \iota \hat k$. Using this, we have
\begin{equ}
\scal{h,k}_\mu = \scal{(\iota \iota^*) h^*,k^*}_{\CH'} = \scal{h, (\iota \iota^*)^{-1}k}_{\CH'} = \scal{\iota \hat h, (\iota \iota^*)^{-1} \iota \hat k}_{\CH'} 
= \scal{\hat h, \iota^* (\iota \iota^*)^{-1} \iota \hat k} = \scal{\hat h, \hat k}\;,
\end{equ}
from which the claim follows. 
\end{proof}

\begin{exercise}
As in Exercise~\ref{ex:CMWiener}, show that the Cameron--Martin space of 
a cylindrical Wiener process on $\CH$ equals $H_0^{1,2}([0,T],\CH)$.
\end{exercise}

In view of this exercise, we could also have defined the cylindrical Wiener process 
on $\CH$ as the canonical process
for any Gaussian measure with Cameron--Martin space $H_0^{1,2}([0,T],\CH)$.
The name ``cylindrical Wiener process on $\CH$'' may sound confusing at first, since it is actually \textit{not}
an $\CH$-valued process. Note however that if $h$ is an element
in $\CH$ that is in the range of $\iota^*$ (so that $\iota h$ belongs to the range of $\iota \iota^*$ and 
$\iota^* (\iota \iota^*)^{-1} \iota h = h$), then 
\begin{equ}
\scal{h,k} = \scal{\iota^* (\iota \iota^*)^{-1} \iota h, k} = \scal{(\iota \iota^*)^{-1} \iota h, \iota k}_{\CH'} \;.
\end{equ}
In particular, if we just \textit{pretend} for a moment that $W(t)$ belongs to $\CH$ for every $t$ (which is of course not true!), then we get
\begin{equs}
\E \scal{h,W(s)} \scal{W(t),k} &= \E \scal{(\iota \iota^*)^{-1} \iota h, \iota W(s)}_{\CH'} \scal{(\iota \iota^*)^{-1} \iota k, \iota W(t)}_{\CH'} \\
&= (s\wedge t) \scal{\iota \iota^* (\iota \iota^*)^{-1} \iota h, (\iota \iota^*)^{-1} \iota k}_{\CH'} \\
&= (s\wedge t) \scal{\iota h, (\iota \iota^*)^{-1} \iota k}_{\CH'} = (s\wedge t) \scal{h, \iota^*(\iota \iota^*)^{-1} \iota k}_{\CH'} \\
&= (s\wedge t) \scal{h,k}\;.
\end{equs}
Here we used \eref{e:cylWien} to go from the first to the second line. This shows that $W(t)$ should be thought
of as an $\CH$-valued random variable with covariance $t$ times the identity operator (which is of course not trace class
if $\CH$ is infinite-dimensional, so that such an object cannot exist if $\dim \CH = \infty$). Combining Proposition~\ref{prop:identity}
with Theorem~\ref{theo:imageGaussian}, we see however that if $\CK$ is some Hilbert space
and $A\colon \CH \to \CK$ is a Hilbert-Schmidt operator, then the $\CK$-valued random
variable $A W(t)$ is well-defined. (Here we made an abuse of notation and also used the symbol $A$
for the measurable extension of $A$ to $\CH'$.)
Furthermore, its law does not depend on the choice of the larger space $\CH'$.

\begin{example}[White noise]\index{white noise}
Recall that we informally defined ``white noise'' as a Gaussian process $\xi$ with covariance
$\E \xi(s)\xi(t) = \delta(t-s)$. In particular, if we denote by $\scal{\cdot,\cdot}$ the scalar product in $L^2(\R)$,
this suggests that 
\begin{equ}[e:covgh]
\E \scal{g,\xi}\scal{h,\xi} = \E \int \!\!\! \int g(s)h(t)\xi(s) \xi(t) \,ds\,dt = \int \!\!\! \int g(s)h(t) \delta(t-s) \,ds\,dt = \scal{g,h}\;.
\end{equ}
This calculation shows that white noise can be constructed as a Gaussian random variable on any Hilbert space $\CH$
of distributions containing $L^2(\R)$ and such the embedding $L^2(\R) \hookrightarrow \CH$ is Hilbert--Schmidt. Furthermore, by Theorem~\ref{theo:imageGaussian} (and in fact already by
definition of the reproducing kernel space),
integrals of the form $\int g(s)\xi(s)\,ds$ are well-defined random variables, provided that $g \in L^2(\R)$. 
\end{example}

\begin{exercise}
Taking for $A\colon L^2 \to H_0^{1,2}$ the integration map, 
namely $(A h)(t) = \int_0^t h(s)\,ds$, 
show that if $\mu$ is the white noise measure, then $\hat A^*\mu$ is Wiener measure,
thus justifying the statement that ``white noise is the derivative of Brownian motion''.
\end{exercise}

The interesting fact about this construction is that we can use it to define space-time white noise in exactly the same way, simply replacing
$L^2(\R)$ by $L^2(\R^2)$.

\begin{exercise}
Let $\T^d$ be the $d$-dimensional torus and let $\xi$ be white noise on $\R^d$. Show
that one can realise $\xi$ as a sample of a Gaussian measure on the negative Sobolev space
$H^{-s}$ if and only if $s > d/2$.
\end{exercise}

This will allow us to define a Hilbert space-valued stochastic integral against a cylindrical Wiener process in pretty much the
same way as what is usually done in finite dimensions. In the sequel, we fix a cylindrical Wiener process $W$ on
some Hilbert space $\CH \subset \CH'$, which we realise as the canonical coordinate process on $\Omega = \CC_W(\R_+, \CH')$
equipped with the measure constructed above.
We also denote by $\F_{\!s}$ the $\sigma$-field on $\Omega$ generated by $\{W_r\,:\, r \le s\}$.

Consider now a finite collection of \textit{disjoint} intervals $(s_n, t_n] \subset \R_+$ with $n=1,\ldots,N$ and a corresponding finite collection of
$\F_{\!s_n}$-measurable random variables $\Phi_n$ taking values in the space $\CL_2(\CH,\CK)$ of Hilbert-Schmidt
operators from $\CH$ into some other fixed Hilbert space $\CK$. Let furthermore $\Phi$ be the $L^2(\R_+ \times \Omega, \CL_2(\CH,\CK))$-valued
function defined by
\begin{equ}
\Phi(t,\omega) = \sum_{n=1}^N \Phi_n(\omega)\, \one_{(s_n, t_n]}(t) \;,
\end{equ}
where we denoted by $\one_{A}$ the indicator function of a set $A$. We call such a $\Phi$ an \textit{elementary process on $\CH$}.

\begin{definition}
Given an elementary process $\Phi$ and a cylindrical Wiener process $W$ on $\CH$, we define the 
$\CK$-valued \index{stochastic integral}stochastic integral
\begin{equ}
\int_0^\infty \Phi(t)\,dW(t) \eqdef \sum_{n=1}^N \Phi_n(W)\, \bigl(W(t_n) - W(s_n)\bigr)\;.
\end{equ}
Note that since $\Phi_n$  is $\F_{s_n}$-measurable, $\Phi_n(W)$ is independent of $W(t_n) - W(s_n)$, therefore each 
term on the right hand side can be interpreted in the sense of the construction of Theorems~\ref{theo:imageGaussian}
and \ref{theo:uniqueext}. 
\end{definition}

\begin{remark}
Thanks to Theorem~\ref{theo:uniqueext}, this construction is well-posed without requiring
 to specify the larger Hilbert space $\CH'$ on which $W$ can be realised as an $\CH'$-valued process.
 This justifies the terminology of $W$ being ``the cylindrical Wiener process on $\CH$'' without
 any mentioning of $\CH'$, since the value of stochastic integrals against $W$ is independent of the
 choice of $\CH'$. 
\end{remark}

It follows from Theorem~\ref{theo:imageGaussian} and \eref{e:expnorm} that one has the identity
\begin{equ}[e:Ito]
\E \Bigl\| \int_0^\infty \Phi(t)\,dW(t)\Bigr\|_{\CK}^2 = \sum_{n=1}^N \E \tr \bigl(\Phi_n(W)\Phi_n^*(W) \bigr)(t_n - s_n) 
= \E \int_0^\infty \tr \Phi(t) \Phi^*(t)\,dt\;,
\end{equ}
which is an extension of the usual It\^o isometry to the Hilbert space setting. It follows that the stochastic
integral is an isometry from the subset of elementary processes in $L^2(\R_+ \times \Omega, \CL_2(\CH,\CK))$
to $L^2(\Omega, \CK)$.

Let now $\Fp$ be the ``predictable'' $\sigma$-field, that is the $\sigma$-field over $\R_+ \times \Omega$ generated
by all subsets of the form $(s,t] \times A$ with $t > s$ and $A \in \F_{\!s}$. 
This is the smallest $\sigma$-algebra with respect to which all elementary processes are $\Fp$-measurable.
One furthermore has:

\begin{proposition}
The set of elementary processes is dense in the space $L^2_{\mathrm{pr}}(\R_+ \times \Omega, \CL_2(\CH,\CK))$ of
all predictable $\CL_2(\CH,\CK)$-valued processes.
\end{proposition}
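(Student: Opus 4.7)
The plan is to reduce the statement to a scalar $L^2$-density result on the predictable $\sigma$-algebra and then close the argument with a monotone class theorem. Since $\CH$ and $\CK$ are separable, so is the Hilbert space $\CL_2(\CH,\CK)$, and hence $L^2_{\mathrm{pr}}(\R_+ \times \Omega, \CL_2(\CH,\CK))$ is itself a separable Hilbert space. By the standard Bochner construction, its simple functions, that is finite sums $\sum_{i=1}^N L_i\,\one_{C_i}$ with $L_i \in \CL_2(\CH,\CK)$ and $C_i \in \Fp$ of finite $\lambda \otimes \P$-measure, are dense. It therefore suffices to approximate a single $L\,\one_C$ in $L^2$ by elementary processes.

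For fixed $L$, this in turn boils down to approximating the scalar indicator $\one_C$ in $L^2(\R_+ \times \Omega, \Fp, \lambda \otimes \P; \R)$ by finite linear combinations of indicators $\one_{(s,t] \times A}$ with $A \in \F_s$: multiplying such a scalar combination by $L$ produces an $L^2$-close approximation of $L\,\one_C$, and after a routine disjointification of the finitely many intervals involved one recovers exactly the form $\sum_n \Phi_n \one_{(s_n,t_n]}$ appearing in the definition of an elementary process, with each $\Phi_n$ equal to $L$ times an $\F_{s_n}$-measurable scalar. To prove this scalar density, let $\CR$ be the $\pi$-system of all sets $(s,t] \times A$ with $0 \le s < t$ and $A \in \F_s$; by the very definition of $\Fp$, the $\sigma$-algebra generated by $\CR$ is $\Fp$.

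A functional monotone class argument now concludes. Let $\CV$ be the vector space of bounded $\Fp$-measurable functions that are $L^2$-approximable by linear combinations of indicators of sets in $\CR$. Then $\CV$ contains $\one_R$ for every $R \in \CR$, is stable under uniform convergence, and is closed under bounded monotone limits by the dominated convergence theorem (since the pointwise limit and each approximant are controlled by a common $L^2$ bound). By the monotone class theorem, $\CV$ contains every bounded $\Fp$-measurable function, and in particular $\one_C$ for every $C \in \Fp$ of finite measure.

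The only genuinely nontrivial point is the closure of $\CV$ under bounded monotone limits, since one needs the $L^2$-approximation property to survive the limit; but this is immediate once the approximating combinations are chosen close enough to the truncated limits and dominated convergence is applied in $L^2(\lambda \otimes \P)$. Everything else is bookkeeping: separability of $\CL_2(\CH,\CK)$, Bochner density of simple functions, and disjointification of a finite collection of intervals to match the definition of an elementary process.
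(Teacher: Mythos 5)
Your proof is correct and follows essentially the same route as the paper's: both reduce the statement to the density of indicators of predictable rectangles via a monotone class argument over the $\pi$-system $\{(s,t]\times A\,:\, A \in \F_s\}$ that generates $\Fp$, and then conclude by the density of simple functions in $L^2$. The only cosmetic differences are that you use the functional form of the monotone class theorem and spell out the Bochner-space reduction where the paper uses the set version and appeals to "the definition of the Lebesgue integral"; in either variant one should first restrict to sets of finite $\lambda\otimes\P$-measure (e.g.\ by working on $[0,T]\times\Omega$), since the constant function is not square-integrable on $\R_+\times\Omega$ and so cannot itself belong to the approximable class.
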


\begin{proof}
We first show that the set of real-valued elementary processes is dense in 
$L^2_{\mathrm{pr}}(\R_+ \times \Omega, \R)$.
Denote by $\hat{\Fp}$ the set of all sets of the form $(s,t] \times A$ with $A \in \F_s$. Denote furthermore by $\hat L^2_{\mathrm{pr}}$
the closure of the set of elementary processes in $L^2$.
One can check that $\hat{\Fp}$ is closed under finite intersections (i.e.\ it is a $\pi$-system), so that $\one_G \in \hat L^2_{\mathrm{pr}}$ for every set $G$ in the algebra
generated by $\hat{\Fp}$. It follows from the monotone class theorem that $\one_G \in \hat L^2_{\mathrm{pr}}$ for every set 
$G \in \Fp$. The claim then follows from the definition of the Lebesgue integral, just as for the corresponding statement in $\R$.

Note now that $\CL_2(\CH,\CK)$ is a separable Hilbert space, fix some orthonormal basis for it, and 
denote by $\Pi_n$ the orthonormal projection onto the span $\CQ_n$ of the first $n$ such basis vectors.
Since $\CQ_n$ is finite-dimensional, we just showed that elementary processes are dense in  
$L^2_{\mathrm{pr}}(\R_+ \times \Omega, \CQ_n)$ for every $n$. Finally, $\bigcup_n L^2_{\mathrm{pr}}(\R_+ \times \Omega, \CQ_n)$ is dense in $L^2_{\mathrm{pr}}(\R_+ \times \Omega, \CL_2(\CH,\CK))$ by Lebesgue's
dominated convergence theorem since $\|\Pi_n x - x\| \to 0$ for every $x \in \CL_2(\CH,\CK)$.
\end{proof}

By using the It\^o isometry \eref{e:Ito} and the completeness of $L^2(\Omega, \CK)$, it follows that:

\begin{corollary}
The stochastic integral $\int_0^\infty \Phi(t)\,dW(t)$ can be uniquely 
defined for every process $\Phi \in L^2_{\mathrm{pr}}(\R_+ \times \Omega, \CL_2(\CH,\CK))$.
\end{corollary}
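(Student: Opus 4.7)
The plan is to apply the bounded linear transformation (BLT) extension theorem: we have a linear map defined on a dense subspace of a Banach space, taking values in a complete normed space, and this map is an isometry (hence bounded); such a map extends uniquely to the full space.

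First I would verify carefully that the map $I\colon \Phi \mapsto \int_0^\infty \Phi(t)\,dW(t)$ is well-defined and linear on the space $\CE$ of elementary processes. Well-definedness requires checking independence of the representation of $\Phi$ as a finite sum $\sum_n \Phi_n \one_{(s_n,t_n]}$; this is routine since any two representations can be refined to a common partition without changing the value of the sum, using the linearity of the Bochner/measurable extension from Theorem~\ref{theo:imageGaussian}. Linearity of $I$ on $\CE$ then follows from the same partition-refinement argument.

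Next I invoke the It\^o isometry \eref{e:Ito}, established above for elementary processes, which states that $\|I(\Phi)\|_{L^2(\Omega,\CK)} = \|\Phi\|_{L^2_{\mathrm{pr}}}$. Combined with the fact that $\CE$ is dense in $L^2_{\mathrm{pr}}(\R_+ \times \Omega, \CL_2(\CH,\CK))$ (the preceding proposition) and that $L^2(\Omega,\CK)$ is complete, the BLT theorem gives a unique isometric linear extension of $I$ to all of $L^2_{\mathrm{pr}}$. Concretely, for arbitrary $\Phi \in L^2_{\mathrm{pr}}$ one chooses any sequence $\Phi^{(n)} \in \CE$ with $\Phi^{(n)} \to \Phi$ in $L^2_{\mathrm{pr}}$; by the It\^o isometry $\{I(\Phi^{(n)})\}$ is Cauchy in $L^2(\Omega,\CK)$, hence converges, and its limit is independent of the approximating sequence (again by the isometry applied to the difference of two approximating sequences).

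There is no real obstacle here; the only point requiring a tiny bit of care is making sure that the extension inherits the expected properties one would use it for (it is a linear isometry $L^2_{\mathrm{pr}} \to L^2(\Omega,\CK)$, and \eref{e:Ito} passes to the limit). Uniqueness of the extension is automatic from density: any other bounded linear extension would have to agree with $I$ on $\CE$ and hence, by continuity, on its closure, which is all of $L^2_{\mathrm{pr}}$.
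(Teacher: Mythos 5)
Your proposal is correct and follows essentially the same route as the paper: the text immediately preceding the corollary invokes exactly the It\^o isometry \eref{e:Ito}, the density of elementary processes, and the completeness of $L^2(\Omega,\CK)$ to obtain the unique isometric extension. Your write-up merely makes explicit the routine verifications (well-definedness on elementary processes, Cauchy-sequence argument, independence of the approximating sequence) that the paper leaves to the reader.
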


This concludes our presentation of the basic properties of Gaussian measures on infinite-dimensional
spaces. The next section deals with the other main ingredient to solving stochastic PDEs, which is the
behaviour of deterministic linear PDEs.

\section{A Primer on Semigroup Theory}

\index{semigroup!strongly continuous ($\CC_0$)}
This section is strongly based on Davies's excellent monograph \cite{Davies} for the first part on strongly continuous semigroups
and very loosely follows \cite{Yos95} and  \cite{Lunardi} for the second part on analytic semigroups. 
Another good reference on some of the material covered here is the monograph \cite{Pazy}.
Its aim is to give a rigorous meaning to solutions to linear equations of the type
\begin{equ}[e:linear]
\d_t x = L x\;,\quad x(0) = x_0 \in \CB\;,
\end{equ}
where $x$ takes values in some Banach space $\CB$ and $L$ is a possibly unbounded operator on $\CB$.
From a formal point of view, if such a solution exists, one expects the existence of a linear operator $S(t)$ that 
maps the initial condition $x_0$ onto the solution $x(t)$ of \eref{e:linear} at time $t$. If such a solution is unique, then
the family of operators $S(t)$ should satisfy $S(0) = 1$ and $S(t)\circ S(s) = S(t+s)$. 
This is called the \textit{semigroup} property.

Furthermore, such a family of solution operators
$S(t)$ should have some regularity as $t \to 0$ in order to give a meaning to the notion of an initial condition.
(The family given by $S(t) = 0$ for $t>0$ and $S(0) = 1$ does satisfy the semigroup property but clearly
doesn't define a family of solution operators to an equation of the type \eref{e:linear}.)

This motivates the following definition:

\begin{definition}
A semigroup $S(t)$ on a Banach space $\CB$ is a family of bounded linear operators $\{S(t)\}_{t \ge 0}$
with the properties that $S(t)\circ S(s) = S(t+s)$ for any $s,t \ge 0$ and that $S(0) = \mathrm{Id}$. A semigroup is furthermore called
\begin{claim}
\item \textit{strongly continuous} if the map $(x,t) \mapsto S(t)x$ is continuous from $\CB \times [0,\infty)$ into $\CB$.
\item \textit{analytic} if there exists $\theta > 0$ such that the operator-valued map 
$t \mapsto S(t)$ has an analytic extension to $\{\lambda \in \C \,:\, |{\arg \lambda}| < \theta\}$, satisfies the semigroup property there,
and is such that $t \mapsto S(e^{i\phi} t)$ is a strongly continuous semigroup for every angle $\phi$ with $|\phi| < \theta$.
\end{claim}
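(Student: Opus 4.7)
The final statement is a definition rather than a theorem, so there is strictly speaking no proof obligation. What I would instead write, in the spirit of a proposal, is a short set of consistency checks that any reader should perform before taking the definition for granted, and I would organise these as follows.

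First, I would verify that the phrase ``analytic extension'' is unambiguous. The original map $t \mapsto S(t)$ is defined on $\R_+$, and the sector $\Sigma_\theta = \{\lambda \in \C : |\arg \lambda| < \theta\}$ is a connected open subset of $\C$ whose interior meets $\R_+$ in a non-trivial interval. Hence, by the identity theorem for holomorphic operator-valued functions (applied coordinate-wise to $\ell(S(\cdot)x)$ for arbitrary $x \in \CB$ and $\ell \in \CB^*$), any two analytic extensions must coincide on $\Sigma_\theta$. This makes the extension, if it exists, canonical.

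Second, I would record that the requirement ``satisfies the semigroup property on the sector'' is in fact redundant given analyticity and the semigroup property on $\R_+$. The plan is: fix $\mu > 0$ and observe that the map $\lambda \mapsto S(\lambda)S(\mu) - S(\lambda+\mu)$ is holomorphic on $\Sigma_\theta$ and vanishes on $\R_+$, hence vanishes identically; then fix $\lambda \in \Sigma_\theta$ and rerun the argument in $\mu$. The only reason this is stated as part of the definition is convenience: in practice one checks it directly when constructing $S$.

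Third, and this is the subtle point, the last clause of the analyticity requirement — that $t \mapsto S(e^{i\phi}t)$ is a \emph{strongly continuous} semigroup for every $|\phi|<\theta$ — is where the genuine content sits. Holomorphy of $t \mapsto S(t)$ on the open sector does not automatically give strong continuity up to $t = 0$ along rays. The reduction I would propose is standard: show that on each strictly smaller sub-sector $|\phi| \le \theta_0 < \theta$ one has a uniform bound $\sup_{|\phi|\le\theta_0,\ 0<t\le 1}\|S(e^{i\phi}t)\| \le M$, combine this with strong continuity on the dense subspace $\bigcup_{s>0} S(s)\CB$ (where strong continuity is automatic from analyticity on the open sector), and conclude by a $3\eps$-argument. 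This boundedness on sub-sectors is the real obstacle in any concrete verification and is what makes analytic semigroups distinctly better behaved than merely $\CC_0$ ones; I would flag it as the step that the rest of Section~3 is presumably built around.
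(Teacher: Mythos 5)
You are right that the statement is a definition, so the paper offers no proof to compare against; your decision to supply consistency checks instead is the appropriate response. The checks themselves are sound: uniqueness of the analytic extension follows from the identity theorem since $\R_+$ accumulates inside the sector, the semigroup property on the sector is indeed redundant given holomorphy plus the semigroup property on $\R_+$ (your two-step identity-theorem argument works), and you correctly isolate strong continuity at $t=0$ along rays --- equivalently, uniform boundedness of $\|S(e^{i\phi}t)\|$ on sub-sectors near the origin --- as the one clause carrying genuine content, which is exactly what the subsequent development of analytic semigroups in the paper (Theorem~\ref{theo:charanal} and the resolvent bounds) leans on.
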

\end{definition}
A strongly continuous semigroup is also sometimes called a $\CC_0$-semigroup.

\begin{exercise}\label{ex:SC}
Show that, for a semigroup of bounded operators, being strongly continuous is equivalent to $t \mapsto S(t)x$ being continuous at $t=0$ for every $x\in\CB$ 
and the operator norm of $S(t)$ being bounded by $Me^{at}$ for some constants $M$ and $a$.
Show then that the first condition can be relaxed to $t \mapsto S(t)x$ being continuous for all $x$ in some dense subset
of $\CB$. Finally, use the Banach--Steinhaus theorem (also called uniform boundedness principle) to show that the second condition can be dropped. 
(However, both relaxing the first condition and dropping the second condition is not possible. 
See Exercise~\ref{ex:unboundedSg} on how to construct
a semigroup of bounded operators such that $t \mapsto S(t)x$ is continuous for a dense set of $x$'s, but
such that $\|S(t)\|$ is nevertheless unbounded as $t\to0$.)
\end{exercise}

\begin{remark}
Some authors, like \cite{Lunardi}, do not impose strong continuity in the definition of an analytic semigroup. This can result in 
additional technical complications due to the fact that the generator may then not have dense domain.
The approach followed here has the slight drawback that with our definitions the heat semigroup is not 
analytic on $L^\infty(\R)$. (It lacks strong continuity as can be seen by applying it to a step function.)
It is however analytic for example on $\CC_0(\R)$, the space of continuous functions vanishing at infinity.
\end{remark}

This section is going to assume some familiarity with functional analysis. All the necessary results can be
found for example in the classical monograph by Yosida \cite{Yos95}. Recall that an unbounded operator
$L$ on a Banach space $\CB$ consists of a linear subspace $\CD(L) \subset \CB$ called the \textit{domain} of $L$
and a linear map $L \colon \CD(L) \to \CB$. The \textit{graph} of an operator is the subset of $\CB \times \CB$ consisting of
all elements of the form $(x,Lx)$ with $x \in \CD(L)$.
An operator is \textit{closed}\index{closed!operator} if its graph is a closed subspace of $\CB\times \CB$. It is \textit{closable} if the closure of its
graph is again the graph of a linear operator and that operator is called the \textit{closure} of $L$.

The domain $\CD(L^*)$ of the \textit{adjoint}\index{adjoint!operator} $L^*$ of an unbounded operator $L\colon \CD(L) \to \CB$ is defined as the set of all elements
$\ell \in \CB^*$ such that there exists an element $L^*\ell \in \CB^*$ with the property that $ \bigl(L^*\ell\bigr)(x) = \ell(Lx)$ for every
$x \in \CD(L)$. It is clear that in order for the adjoint to be well-defined, we have to require that the domain of $L$ is dense in $\CB$.
Fortunately, this will be the case for all the operators that will be considered in these notes.

\begin{exercise}\label{ex:closed}
Show that $L$ being closed is equivalent to the fact that if $\{x_n\} \subset \CD(L)$ is Cauchy in $\CB$ and
$\{Lx_n\}$ is also Cauchy, then $x = \lim_{n \to \infty} x_n$ belongs to $\CD(L)$ and
$Lx = \lim_{n \to \infty} Lx_n$.
\end{exercise}

\begin{exercise}
Show that the adjoint of an operator with dense domain is always closed.
\end{exercise}

The \textit{resolvent set}\index{resolvent!set} $\rho(L)$ of an operator $L$ is defined by
\begin{equ}
\rho(L) = \{\lambda \in \C\,:\, \text{$(\lambda - L) \colon \CD(L) \to \CB$ is bijective and admits a continuous inverse}\}\;,
\end{equ}
and the \textit{resolvent}\index{resolvent} $R_\lambda$ is given for $\lambda \in \rho(L)$ by $R_\lambda = (\lambda - L)^{-1}$.
(Here and in the sequel we view $\CB$ as a complex Banach space. If an operator is defined on a real Banach space,
it can always be extended to its complexification in a canonical way and we will identify the two without further notice in the sequel.)
The spectrum $\sigma(L)$ of $L$ is the complement of the resolvent set.

The most important results regarding the resolvent of an operator that we are going to use are that any closed operator $L$ 
with non-empty resolvent set is defined in a unique way by its resolvent. Furthermore, the resolvent set is open and the resolvent 
is an analytic function from $\rho(L)$ to the space
$\CL(\CB)$ of bounded linear operators on $\CB$. Finally, the resolvent operators for different values of $\lambda$
all commute and satisfy the resolvent identity\index{resolvent!identity}
\begin{equ}
R_\lambda - R_\mu = (\mu - \lambda)R_\mu R_\lambda\;,
\end{equ}
for any two $\lambda, \mu \in \rho(L)$. 

The fact that the resolvent is operator-valued should not be 
a conceptual obstacle to the use of notions from complex analysis. Indeed, for $D \subset \C$ an open domain, a function $f\colon D \to \CB$  
where $\CB$ is any complex
Banach space (typically the complexification of a real Banach space which we identify with the original space without further ado)
is said to be analytic  in exactly the same way as usual by imposing that its Taylor series
at any point $a \in D$ converges to $f$ uniformly in $\CB$ on a neighbourhood of $a$. The same definition applies if $D \subset \R$ and
analytic continuation then works in exactly the same way as for complex-valued functions.
In particular, Cauchy's residue theorem, which is the main result from complex analysis that we are going to use later on,
works for Banach-space valued functions in exactly the same way as for complex-valued functions.

\subsection{Strongly continuous semigroups}

We start our investigation of semigroup theory with a discussion of the main results that can
be obtained for strongly continuous semigroups.
Given a $\CC_0$-semigroup, one can associate to it a ``generator'', which is essentially the derivative
of $S(t)$ at $t = 0$:

\begin{definition}
The \textit{generator} $L$ of a $\CC_0$-semigroup is given by
\begin{equ}[e:defGen]
Lx = \lim_{t \to 0} t^{-1} \bigl(S(t)x - x\bigr)\;,
\end{equ}
on the set $\CD(L)$ of all elements $x \in \CB$ such that this limit exists (in the sense of strong convergence in $\CB$).
\end{definition}

The following result shows that if $L$ is the generator of a $\CC_0$-semigroup $S(t)$, then 
$x(t) = S(t)x_0$ is indeed the solution to \eref{e:linear} in a weak sense. 
%This justifies the notation $S(t) = e^{Lt}$ which is going to be used in the sequel.
\begin{proposition}\label{prop:diffSt}
The domain $\CD(L)$ of $L$ is dense in $\CB$, invariant under $S$, and the identities
$\d_t S(t)x = LS(t)x = S(t)Lx$ hold for every $x \in \CD(L)$ and every $t\ge0$. 
Furthermore, for every $\ell \in \CD(L^*)$ and every $x \in \CB$, the map
$t \mapsto \scal{\ell,S(t)x}$ is differentiable and one has $\d_t \scal{\ell,S(t)x} = \scal{L^*\ell,S(t)x}$.
\end{proposition}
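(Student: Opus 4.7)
My plan is to handle the four assertions roughly in the order given, using a common smoothing trick (integration against $S(s)$) to reduce to $\CD(L)$, and a density argument for the weak-derivative statement.

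First I would establish density of $\CD(L)$ by the standard averaging trick: for any $x\in\CB$ and $\tau>0$, define the Bochner integral $x_\tau = \int_0^\tau S(s)x\,ds$, which makes sense because $s\mapsto S(s)x$ is continuous and $\|S(s)\|$ is bounded on compacts (Exercise~\ref{ex:SC}). A direct calculation using the semigroup property and the substitution $s\mapsto s+h$ gives
\begin{equ}
h^{-1}\bigl(S(h)x_\tau - x_\tau\bigr) = h^{-1}\int_\tau^{\tau+h} S(s)x\,ds - h^{-1}\int_0^h S(s)x\,ds \to S(\tau)x - x
\end{equ}
as $h\to 0$, so $x_\tau \in \CD(L)$ with $Lx_\tau = S(\tau)x-x$. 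Since $\tau^{-1}x_\tau \to x$ by strong continuity, $\CD(L)$ is dense.

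Next, for $x \in \CD(L)$, invariance and the identity $LS(t)x = S(t)Lx$ come from pulling $S(t)$ outside: since $S(t)$ is bounded,
\begin{equ}
h^{-1}\bigl(S(h)S(t)x - S(t)x\bigr) = S(t)\,h^{-1}\bigl(S(h)x-x\bigr) \to S(t)Lx\;,
\end{equ}
which shows $S(t)x\in \CD(L)$ and $LS(t)x = S(t)Lx$. This gives the right-derivative statement $\partial_t^+ S(t)x = LS(t)x = S(t)Lx$ immediately. For the left derivative, write $h^{-1}(S(t)x - S(t-h)x) = S(t-h)\,h^{-1}(S(h)x-x)$ for $0<h<t$, and split as
\begin{equ}
S(t-h)\bigl[h^{-1}(S(h)x-x) - Lx\bigr] + \bigl[S(t-h)Lx - S(t)Lx\bigr] + S(t)Lx\;.
\end{equ}
The first bracket tends to $0$ because $\|S(t-h)\|$ is bounded for $h\in[0,t]$ and the expression in square brackets goes to $0$; the second tends to $0$ by strong continuity of $S$. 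Hence $\partial_t S(t)x = S(t)Lx$ for all $t\ge0$.

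For the final weak-differentiability claim, the main obstacle is that for general $x\in\CB$ we have no pointwise control on $h^{-1}(S(h)x-x)$, so we cannot differentiate directly. The idea is to first integrate on $\CD(L)$ and then pass to the limit via density. For $x \in \CD(L)$, the identity $\partial_s S(s)x = S(s)Lx$ just established, together with continuity of $s\mapsto S(s)Lx$, yields the Bochner integral identity $S(t)x - x = \int_0^t S(s)Lx\,ds$. Applying $\ell \in \CD(L^*)$ and using that $S(s)x\in\CD(L)$ together with the definition of $L^*$,
\begin{equ}
\scal{\ell, S(t)x} - \scal{\ell,x} = \int_0^t \scal{\ell, LS(s)x}\,ds = \int_0^t \scal{L^*\ell, S(s)x}\,ds\;.
\end{equ}
Now approximate an arbitrary $x\in\CB$ by a sequence $x_n\in\CD(L)$: both sides of the identity above for $x_n$ converge to the corresponding expressions for $x$ (the left side by continuity of $\ell$ and $S(t)$; the right side by dominated convergence, using the uniform bound $\|S(s)x_n - S(s)x\| \le Me^{as}\|x_n-x\|$ on compact $s$-intervals together with boundedness of $L^*\ell$). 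Hence
\begin{equ}
\scal{\ell, S(t)x} - \scal{\ell,x} = \int_0^t \scal{L^*\ell, S(s)x}\,ds
\end{equ}
for every $x\in\CB$ and $\ell\in\CD(L^*)$. Since the integrand is continuous in $s$ by strong continuity of $S$, the fundamental theorem of calculus gives $\partial_t \scal{\ell,S(t)x} = \scal{L^*\ell, S(t)x}$, completing the proof.
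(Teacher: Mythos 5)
Your proposal is correct and follows essentially the same route as the paper: the averaging trick $x_\tau=\int_0^\tau S(s)x\,ds$ for density, pulling $S(t)$ through the difference quotient for invariance and $LS(t)x=S(t)Lx$, and reduction of the weak-differentiability claim to $x\in\CD(L)$ by density. You additionally write out the two steps the paper leaves implicit (the left-derivative computation, which it leaves as an exercise, and the passage to general $x$ via the integrated identity plus dominated convergence), and both are carried out correctly.
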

\begin{proof}
Fix some arbitrary $x \in \CB$ and set $x_t = \int_0^t S(s)x\, ds$. One then has
\begin{equs}
 \lim_{h \to 0} h^{-1} \bigl(S(h)x_t - x_t\bigr) &=  \lim_{h \to 0} h^{-1} \Bigl(\int_h^{t+h} S(s)x\, ds - \int_0^{t} S(s)x\, ds\Bigr) \\
 &=  \lim_{h \to 0} h^{-1} \Bigl(\int_t^{t+h} S(s)x\, ds - \int_0^{h} S(s)x\, ds\Bigr) = S(t)x - x\;,
\end{equs}
where the last equality follows from the strong continuity of $S$. This shows that $x_t \in \CD(L)$.
Since $t^{-1}x_t \to x$  as $t\to 0$ and since $x$ was arbitrary, it follows that $\CD(L)$ is dense in $\CB$. 
To show that it is invariant under $S$, note that for $x \in \CD(L)$ one has
\begin{equ}
 \lim_{h \to 0} h^{-1} \bigl(S(h)S(t)x- S(t)x\bigr)  = S(t) \lim_{h \to 0} h^{-1} \bigl(S(h)x- x\bigr)   = S(t) Lx\;,
\end{equ}
so that $S(t)x \in \CD(L)$ and $LS(t)x = S(t)Lx$. To show that it this is equal to $\d_t S(t)x$, it suffices to check that 
the left derivative of this expression exists and is equal to the right derivative. This is left as an exercise.

To show that the second claim holds, note that for $x \in \CD(L)$ it follows from the first claim that
\begin{equ}
\scal{\ell, S(t)x} = \int_0^t \scal{\ell, LS(s)x}\,ds = \int_0^t \scal{L^*\ell, S(s)x}\,ds\;.
\end{equ}
Since $\CD(L)$ is dense in $\CB$ and since both expressions are continuous on $\CB$ (as
functions of $x$ for $t$ fixed), this identity extends to all $x \in \CB$ and the claim follows from
the fundamental theorem of calculus.
\end{proof}

It follows as a corollary that no two semigroups can have the same generator (unless the semigroups coincide of course), which justifies the notation
$S(t) = e^{Lt}$ that we are occasionally going to use in the sequel.
\begin{corollary}\label{cor:LdetS}
If a function $x\colon [0,1] \to \CD(L)$ satisfies $\d_t x_t = L x_t$ for every $t \in [0,1]$, then $x_t = S(t) x_0$. In particular,
no two distinct $\CC_0$-semigroups can have the same generator. 
\end{corollary}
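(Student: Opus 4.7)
The plan is to fix $t \in [0,1]$ and consider the auxiliary $\CB$-valued function $f\colon [0,t] \to \CB$ defined by
\begin{equ}
f(s) = S(t-s)\,x_s\;.
\end{equ}
If I can show that $f$ is differentiable on $(0,t)$ with $f'(s) = 0$ and continuous on $[0,t]$, then (using for instance the scalar mean value theorem applied to $\ell \circ f$ for an arbitrary $\ell \in \CB^*$) $f$ is constant, and the identities $f(0) = S(t)x_0$ and $f(t) = x_t$ immediately give the result.

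The central step is therefore to compute $f'(s)$ via a product-rule argument. Writing
\begin{equ}
\tfrac1h\bigl(f(s+h)-f(s)\bigr) = S(t-s-h)\,\tfrac{x_{s+h}-x_s}{h} + \tfrac{S(t-s-h) - S(t-s)}{h}\,x_s\;,
\end{equ}
I would handle the two terms separately. For the first term I split off $S(t-s-h)\bigl(\tfrac{x_{s+h}-x_s}{h} - Lx_s\bigr) + S(t-s-h)\,Lx_s$; the first piece tends to $0$ because $\|S(t-s-h)\|$ is locally bounded (Exercise~\ref{ex:SC}) and $\tfrac{x_{s+h}-x_s}{h} \to Lx_s$ by hypothesis, while the second tends to $S(t-s)\,Lx_s$ by strong continuity. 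For the second term I use $S(t-s) = S(t-s-h)S(h)$ (valid for $h>0$) so that it equals $-S(t-s-h)\tfrac{S(h)-I}{h}\,x_s$, which converges to $-S(t-s)\,Lx_s$ since $x_s \in \CD(L)$. A symmetric argument covers $h<0$, using $S(t-s-h) = S(-h)S(t-s)$ for $h<0$ small, together with strong continuity of $S$ on $\CD(L)$ (and the fact that $S(t-s)x_s \in \CD(L)$ by the invariance part of Proposition~\ref{prop:diffSt}). Adding the two contributions gives $f'(s) = S(t-s)Lx_s - S(t-s)Lx_s = 0$. Continuity of $f$ on $[0,t]$ follows from strong continuity of $S$ together with continuity of $s \mapsto x_s$ (which is implied by its differentiability).

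For the second assertion, suppose $S_1$ and $S_2$ are two $\CC_0$-semigroups with the same generator $L$. Given any $x_0 \in \CD(L)$, the curve $x_t = S_1(t)x_0$ takes values in $\CD(L)$ and satisfies $\d_t x_t = L x_t$ by Proposition~\ref{prop:diffSt}; applying the first part of the corollary with $S = S_2$ yields $S_1(t)x_0 = S_2(t)x_0$ for every $t$. Since $\CD(L)$ is dense in $\CB$ (Proposition~\ref{prop:diffSt}) and each $S_i(t)$ is a bounded operator, this identity extends to all of $\CB$, so $S_1 = S_2$.

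The main technical obstacle is the clean justification of the product-rule step, since both factors in $f(s) = S(t-s)x_s$ depend on $s$ and strong convergence has to be combined with uniform operator bounds; once that is handled carefully (as sketched above), everything else is essentially formal.
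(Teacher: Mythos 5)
Your proof is correct and follows essentially the same route as the paper: the paper's auxiliary map $t \mapsto S(t)x_{T-t}$ is exactly your $f(s) = S(t-s)x_s$ up to the substitution $s \mapsto t-s$, and both arguments conclude by showing the derivative vanishes so the function is constant. You merely spell out the product-rule and one-sided limit details that the paper delegates to "an argument almost identical to that of Proposition~\ref{prop:diffSt}", and you make explicit the density argument for the uniqueness claim.
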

\begin{proof}
It follows from an argument almost identical to that given in the proof of Proposition~\ref{prop:diffSt} that the map $t \mapsto S(t) x_{T-t}$
is continuous on $[0,T]$ and differentiable on $(0,T)$. Computing its derivative, we obtain $\d_t S(t) x_{T-t} = LS(t) x_{T-t} - S(t) L x_{T-t} = 0$, so that
$x_{T} = S(T)x_{0}$.
\end{proof}

\begin{exercise}\label{ex:transl}
Show that the semigroup $S(t)$ on $L^2(\R)$ given by
\begin{equ}
\bigl(S(t) f\bigr)(\xi) = f(\xi+t)\;,
\end{equ}
is strongly continuous and that its generator is  given by $L = \d_\xi$ with $\CD(L) = H^1$. Similarly, show that the
heat semigroup on $L^2(\R)$ given by
\begin{equ}
\bigl(S(t) f\bigr)(\xi) = {1\over \sqrt{4\pi t}}\int \exp \Bigl(-{|\xi-\eta|^2\over 4t}\Bigr)f(\eta)\,d\eta\;,
\end{equ}
is strongly continuous and that its generator is  given by $L = \d_\xi^2$ with $\CD(L) = H^2$. 
\textbf{Hint:} Use Exercise~\ref{ex:SC} to show strong continuity. 
\end{exercise}

\begin{remark}
We did not make any assumption on the structure of the Banach space $\CB$. \textit{However}, it is a general
rule of thumb (although this is \textit{not} a theorem) that semigroups on non-separable Banach spaces tend not to be strongly
continuous. For example, neither the heat semigroup nor the translation semigroup from the previous exercise
are strongly continuous
on $L^\infty(\R)$ or even on $\CC_b(\R)$, the space of all bounded continuous functions. 
\end{remark}

Recall now that the resolvent set for an operator $L$ consists of those $\lambda \in \CC$ such that 
the operator $\lambda - L$ is one to one. For $\lambda$ in the resolvent set, we denote by $R_\lambda = (\lambda - L)^{-1}$
the resolvent of $L$. It turns out that the resolvent of the generator of a $\CC_0$-semigroup
can easily be computed:

\begin{proposition}\label{prop:resolvEqu}
Let $S(t)$ be a $\CC_0$-semigroup such that $\|S(t)\| \le Me^{a t}$ for some constants $M$ and $a$.
If $\Re \lambda > a$, then $\lambda$ belongs to the resolvent set of $L$ and one has the identity
$R_\lambda x = \int_0^\infty e^{-\lambda t} S(t)x\, dt$.
\end{proposition}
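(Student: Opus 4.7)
The plan is to show directly that the operator $T_\lambda$ defined by $T_\lambda x = \int_0^\infty e^{-\lambda t}S(t)x\,dt$ is a well-defined bounded two-sided inverse of $\lambda - L$. Convergence of the Bochner integral is immediate from the exponential bound: since $\|e^{-\lambda t}S(t)x\| \le Me^{(a-\Re\lambda)t}\|x\|$ and $\Re\lambda > a$, we obtain both absolute integrability and the norm bound $\|T_\lambda\| \le M/(\Re\lambda - a)$. Strong continuity of $S$ guarantees measurability of the integrand.

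Next I would show that $T_\lambda x \in \CD(L)$ for every $x \in \CB$ and that $(\lambda - L)T_\lambda x = x$. For this, I compute the difference quotient
\begin{equ}
h^{-1}(S(h) - I)T_\lambda x = h^{-1}\int_0^\infty e^{-\lambda t}\bigl(S(t+h) - S(t)\bigr)x\,dt\;.
\end{equ}
A change of variables in the first integral splits this as
\begin{equ}
\frac{e^{\lambda h} - 1}{h}\int_h^\infty e^{-\lambda t}S(t)x\,dt - \frac{1}{h}\int_0^h e^{-\lambda t}S(t)x\,dt\;.
\end{equ}
As $h \downarrow 0$, the first term tends to $\lambda T_\lambda x$ (using dominated convergence to handle the tail) and the second tends to $x$ (by strong continuity of $S$ and $S(0) = I$). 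This proves $T_\lambda x \in \CD(L)$ and $LT_\lambda x = \lambda T_\lambda x - x$, i.e.\ $(\lambda - L)T_\lambda x = x$.

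For the reverse identity $T_\lambda(\lambda - L)x = x$ on $\CD(L)$, I use Proposition~\ref{prop:diffSt} which gives $S(t)Lx = LS(t)x = \d_t S(t)x$. Since $L$ is closed (a fact which I would verify in passing, noting that it follows from Proposition~\ref{prop:diffSt} and Exercise~\ref{ex:closed}, or alternatively by observing that $L$ commutes with the bounded operator $T_\lambda$ on $\CD(L)$ via the Bochner integral property), one has $T_\lambda Lx = \int_0^\infty e^{-\lambda t}\d_t S(t)x\,dt$; an integration by parts, justified by the exponential decay of $e^{-\lambda t}S(t)x$ at infinity, yields $T_\lambda Lx = -x + \lambda T_\lambda x$, hence $T_\lambda(\lambda - L)x = x$.

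Combining these identities shows that $\lambda - L$ is a bijection from $\CD(L)$ onto $\CB$ with bounded inverse $T_\lambda$; in particular $\lambda \in \rho(L)$ and $R_\lambda = T_\lambda$, as desired. The main technical point to handle carefully is the passage to the limit in the difference quotient above, especially justifying that the tail integral $\int_h^\infty e^{-\lambda t}S(t)x\,dt$ converges to $T_\lambda x$ as $h \to 0$, which requires the uniform exponential domination provided by the hypothesis $\|S(t)\| \le Me^{at}$ together with $\Re\lambda > a$.
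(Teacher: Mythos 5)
Your proposal is correct, and its first two steps coincide with the paper's proof: the same exponential bound establishes convergence of the Bochner integral, and the same difference-quotient computation (splitting into $\frac{e^{\lambda h}-1}{h}\int_h^\infty$ and $\frac{1}{h}\int_0^h$) shows that $T_\lambda x \in \CD(L)$ with $(\lambda - L)T_\lambda x = x$. Where you diverge is in completing the argument that $\lambda - L$ is a bijection. You prove directly that $T_\lambda$ is also a \emph{left} inverse on $\CD(L)$, via $S(t)Lx = \d_t S(t)x$ (Proposition~\ref{prop:diffSt}) and an integration by parts whose boundary term at infinity vanishes by the exponential domination. The paper instead only needs to rule out failure of injectivity: if $Lx = \lambda x$ for some $x \neq 0$, then Corollary~\ref{cor:LdetS} forces $S(t)x = e^{\lambda t}x$, contradicting $\|S(t)\| \le Me^{at}$ when $\Re\lambda > a$. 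Your route is slightly longer but entirely self-contained given Proposition~\ref{prop:diffSt}; the paper's is shorter but leans on the uniqueness statement of Corollary~\ref{cor:LdetS} and has the side benefit of showing that no spectrum (not even approximate point spectrum of eigenvector type) can sit in the half-plane $\Re\lambda > a$. One small remark: your parenthetical appeal to closedness of $L$ is unnecessary for the left-inverse computation, since $T_\lambda Lx$ is by definition $\int_0^\infty e^{-\lambda t}S(t)Lx\,dt$ and Proposition~\ref{prop:diffSt} already identifies the integrand as $e^{-\lambda t}\d_t S(t)x$; invoking closedness here would in any case be delicate, as the paper derives closedness of $L$ \emph{from} this proposition rather than the other way around.
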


\begin{proof}
By the assumption on the bound on $S$, the expression $Z_\lambda = \int_0^\infty e^{-\lambda t} S(t)x\, dt$ is well-defined
for every $\lambda$ with $\Re \lambda > a$. In order to show that $Z_\lambda = R_\lambda$, we first show that
$Z_\lambda x \in \CD(L)$ for every $x \in \CB$ and that $(\lambda - L) Z_\lambda x = x$. We have
\begin{equs}
LZ_\lambda x &= \lim_{h \to 0} h^{-1} \bigl(S(h)Z_\lambda x- Z_\lambda x\bigr) =  \lim_{h \to 0} h^{-1} \int_0^\infty e^{-\lambda t} \bigl(S(t+h) x- S(t) x\bigr)\, dt \\
 &= \lim_{h \to 0} \Bigl({e^{\lambda h} - 1 \over h} \int_0^\infty e^{-\lambda t} S(t) x\, dt - {e^{\lambda h}\over h}\int_0^h e^{-\lambda t}S(t)x\, dt\Bigr) \\
 & = \lambda Z_\lambda x - x\;,
\end{equs}
which is the required identity. To conclude, it remains to show that $\lambda - L$ is an injection on $\CD(L)$ (otherwise one might imagine
a situation where $Z_\lambda$ maps $\CB$ into a strict subspace of $\CD(L)$). If it was not,
we could find $x \in \CD(L) \setminus \{0\}$ such that $Lx = \lambda x$. Setting $x_t = e^{\lambda t} x$ and applying Corollary~\ref{cor:LdetS},
this yields $S(t)x = e^{\lambda t} x$, thus contradicting the bound $\|S(t)\| \le Me^{a t}$ if $\Re \lambda > a$.
\end{proof}

We can deduce from this that:
\begin{proposition}\label{prop:closed}
The generator $L$ of a $\CC_0$-semigroup is a closed operator.
\end{proposition}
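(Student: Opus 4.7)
The plan is to verify the closedness criterion from Exercise~\ref{ex:closed}: if $\{x_n\} \subset \CD(L)$ satisfies $x_n \to x$ in $\CB$ and $Lx_n \to y$ in $\CB$, then $x \in \CD(L)$ and $Lx = y$. The key tool is the resolvent formula from Proposition~\ref{prop:resolvEqu}, which gives us a bounded operator $R_\lambda$ that acts as a two-sided inverse to $\lambda - L$ and therefore converts pointwise statements about $L$ into pointwise statements about a continuous operator.

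First I would fix $\lambda \in \C$ with $\Re \lambda > a$, where $a$ is the exponential growth rate of $\|S(t)\|$. By Proposition~\ref{prop:resolvEqu}, such a $\lambda$ belongs to the resolvent set of $L$, and moreover the proof there shows that $R_\lambda$ maps $\CB$ into $\CD(L)$ with $(\lambda - L) R_\lambda = \mathrm{Id}_\CB$ and $R_\lambda(\lambda - L) = \mathrm{Id}_{\CD(L)}$. The operator $R_\lambda$ is bounded on $\CB$ (for instance directly from the integral representation, since $\|R_\lambda x\| \le \int_0^\infty e^{-(\Re\lambda)t} M e^{at} \|x\|\,dt = M/(\Re\lambda - a) \|x\|$).

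Now given the convergent sequence $x_n \to x$ with $L x_n \to y$, form $z_n = (\lambda - L) x_n = \lambda x_n - L x_n$. By assumption $z_n \to \lambda x - y$ in $\CB$. Applying the bounded operator $R_\lambda$ and using $R_\lambda (\lambda - L) = \mathrm{Id}$ on $\CD(L)$ yields $x_n = R_\lambda z_n \to R_\lambda(\lambda x - y)$. Comparing limits we obtain $x = R_\lambda(\lambda x - y)$, which shows in particular that $x \in \CD(L)$ (the range of $R_\lambda$), and applying $\lambda - L$ to both sides gives $\lambda x - L x = \lambda x - y$, i.e.\ $L x = y$, as required.

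There is no real obstacle here; the argument is essentially the standard one-line deduction that any operator admitting a bounded everywhere-defined inverse is automatically closed. The only subtlety worth flagging is that we must pick $\lambda$ with sufficiently large real part so that the resolvent is guaranteed to exist, which is exactly what Proposition~\ref{prop:resolvEqu} provides via the quantitative bound $\|S(t)\| \le M e^{at}$ that comes with strong continuity (see Exercise~\ref{ex:SC}).
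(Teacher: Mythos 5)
Your argument is correct and is essentially the same as the paper's proof: both verify the criterion of Exercise~\ref{ex:closed} by applying the bounded resolvent $R_\lambda$ from Proposition~\ref{prop:resolvEqu} to $z_n = (\lambda - L)x_n$ and passing to the limit. The only cosmetic difference is that the paper first shifts $L$ so that $a=0$ and takes $\lambda = 1$, whereas you work with a general $\lambda$ satisfying $\Re\lambda > a$.
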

\begin{proof}
We are going to use the characterisation of closed operators given in Exercise~\ref{ex:closed}. Shifting $L$
by a constant if necessary (which does not affect it being closed or not), we can assume that $a=0$.
Take now a sequence $x_n \in \CD(L)$ such that $\{x_n\}$ and $\{Lx_n\}$ are both Cauchy in $\CB$
and set $x = \lim_{n\to\infty}x_n$ and $y = \lim_{n\to\infty} Lx_n$.
Setting $z_n = (1 - L)x_n$, we have $\lim_{n \to \infty} z_n = x-y$.

On the other hand, we know that $1$ belongs to the resolvent set, so that 
\begin{equ}
x = \lim_{n\to\infty}x_n = \lim_{n\to\infty} R_1 z_n = R_1(x-y)\;.
\end{equ}
By the definition of the resolvent, this implies that $x\in \CD(L)$ and that $x - Lx = x-y$, so that $Lx=y$ as required.
\end{proof}

We are now ready to give a full characterisation of the generators of $\CC_0$-semigroups. This is the content
of the following theorem:\index{Hille--Yosida theorem}

\begin{theorem}[Hille-Yosida]
A closed densely defined operator $L$ on the Banach space $\CB$ is the generator of a $\CC_0$-semigroup
$S(t)$ with $\|S(t)\| \le Me^{at}$ if and only if all $\lambda$ with $\Re \lambda > a$ lie in its resolvent set and
the bound $\|R_\lambda^n\| \le M \,\bigl(\Re\lambda - a\bigr)^{-n}$ holds there for every $n \ge 1$.
\end{theorem}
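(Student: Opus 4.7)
The proof has two directions: necessity, which is a direct computation from the integral representation of the resolvent, and sufficiency, which is the substance of the theorem and will be carried out via the Yosida approximation.

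For necessity, I would start from Proposition~\ref{prop:resolvEqu}, which already gives $R_\lambda x = \int_0^\infty e^{-\lambda t} S(t) x\, dt$ for $\Re\lambda > a$. Differentiating $n-1$ times in $\lambda$ (which is permitted by the exponential decay of the integrand once $\Re\lambda > a$, together with the bound $\|S(t)\| \le Me^{at}$), and using the identity $\frac{d^{n-1}}{d\lambda^{n-1}} R_\lambda = (-1)^{n-1} (n-1)! R_\lambda^n$, I would obtain
\begin{equ}
R_\lambda^n x = \frac{1}{(n-1)!}\int_0^\infty t^{n-1} e^{-\lambda t} S(t)x\, dt\;.
\end{equ}
The bound $\|R_\lambda^n\| \le M(\Re\lambda - a)^{-n}$ then follows by estimating the integrand in norm and using $\int_0^\infty \frac{t^{n-1}}{(n-1)!} e^{-(\Re\lambda-a) t}\, dt = (\Re\lambda - a)^{-n}$.

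For sufficiency, I would introduce the Yosida approximation. Writing $J_\lambda = \lambda R_\lambda$ for $\lambda > a$, the resolvent bound with $n=1$ gives $\|J_\lambda\| \le M\lambda/(\lambda-a)$, hence $\{J_\lambda\}$ is bounded as $\lambda \to \infty$. The identity $L R_\lambda = \lambda R_\lambda - I$ on $\CD(L)$ shows that for $x \in \CD(L)$ one has $J_\lambda x - x = R_\lambda L x \to 0$ as $\lambda \to \infty$ (since $\|R_\lambda\| \le M/(\lambda - a)$), and then density of $\CD(L)$ plus uniform boundedness yields $J_\lambda x \to x$ for every $x \in \CB$. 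Next set $L_\lambda = L J_\lambda = \lambda(J_\lambda - I)$; this is a \emph{bounded} operator, so $S_\lambda(t) := e^{L_\lambda t}$ is well-defined by its norm-convergent power series. Since $L_\lambda x \to Lx$ for $x \in \CD(L)$ (by the same argument applied to $J_\lambda L x$), these will be our approximating semigroups.

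The heart of the proof is the uniform norm bound and the Cauchy property. Writing $S_\lambda(t) = e^{-\lambda t} e^{\lambda J_\lambda t}$ and expanding in power series, the $n$-th term is bounded using $\|J_\lambda^n\| = \lambda^n \|R_\lambda^n\| \le M\lambda^n/(\lambda - a)^n$, yielding
\begin{equ}
\|S_\lambda(t)\| \le M e^{-\lambda t} e^{\lambda^2 t/(\lambda-a)} = M e^{a\lambda t/(\lambda-a)}\;,
\end{equ}
which stays controlled and tends to $Me^{at}$ as $\lambda \to \infty$. Because $J_\lambda$ and $J_\mu$ commute (they are functions of the same resolvent family via the resolvent identity), $S_\lambda$ and $S_\mu$ commute with each other and with $L_\lambda$, $L_\mu$. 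For $x \in \CD(L)$, I would then write
\begin{equ}
S_\lambda(t)x - S_\mu(t)x = \int_0^t \frac{d}{ds}\bigl(S_\lambda(t-s)S_\mu(s)x\bigr)\, ds = \int_0^t S_\lambda(t-s)S_\mu(s)(L_\mu - L_\lambda)x\, ds\;,
\end{equ}
and combine the uniform norm bound on $S_\lambda,S_\mu$ with $\|(L_\mu-L_\lambda)x\| \to 0$ to conclude that $\{S_\lambda(t)x\}$ is Cauchy, uniformly for $t$ in compact intervals. Hence $S(t)x := \lim_{\lambda\to\infty} S_\lambda(t)x$ exists on $\CD(L)$, and extends to all of $\CB$ by density and the uniform bound $\|S_\lambda(t)\| \le M'e^{a't}$, giving the required bound $\|S(t)\| \le Me^{at}$ in the limit. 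The semigroup property and strong continuity pass to the limit because they hold for each $S_\lambda$ and the convergence is uniform on compact time intervals.

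Finally I would identify the generator. Let $L'$ be the generator of $S(t)$. Using $S_\lambda(t)x - x = \int_0^t S_\lambda(s) L_\lambda x\, ds$ for $x \in \CD(L)$ and passing to the limit (justified by uniform convergence and $L_\lambda x \to Lx$), I get $S(t)x - x = \int_0^t S(s) Lx\, ds$, which shows $x \in \CD(L')$ and $L'x = Lx$, i.e.\ $L \subseteq L'$. To see that $L = L'$, fix $\lambda > a$; then $\lambda \in \rho(L)$ by hypothesis, while $\lambda \in \rho(L')$ by Proposition~\ref{prop:resolvEqu} applied to $S(t)$. So $\lambda - L$ maps $\CD(L)$ bijectively onto $\CB$ and $\lambda - L'$ maps $\CD(L')$ bijectively onto $\CB$; since the two operators agree on $\CD(L)$, their inverses agree on $\CB$, forcing $\CD(L) = \CD(L')$. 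The main obstacle in the whole argument is the Cauchy estimate for $S_\lambda(t)x$: it hinges on the delicate fact that commutativity of the $J_\lambda$'s allows the difference $S_\lambda - S_\mu$ to be expressed via $L_\mu - L_\lambda$, which in turn requires the full family of resolvent-power bounds $\|R_\lambda^n\| \le M(\lambda-a)^{-n}$ — not merely the $n=1$ bound — to control $\|S_\lambda(t)\|$ uniformly.
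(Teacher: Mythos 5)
Your proposal is correct and follows essentially the same route as the paper: the necessity direction via the integral representation of $R_\lambda^n$ (you obtain it by differentiating the resolvent formula, the paper by an iterated integral, which amounts to the same computation), and the sufficiency direction via the Yosida approximations $L_\lambda = \lambda L R_\lambda$, the uniform bound $\|S_\lambda(t)\| \le M e^{\lambda a t/(\lambda-a)}$, the Cauchy estimate obtained by differentiating $s \mapsto S_\lambda(t-s)S_\mu(s)x$, and the identification of the generator by showing it extends $L$ and then comparing the bijections $\lambda - L$ and $\lambda - \hat L$. No gaps.
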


%\begin{remark}
%Such operators are also called $m$-dissipative, where ``$m$'' stands for ``maximal''. The reason is that on a Hilbert space, they are precisely those operators
%such that $\scal{h, Lh} \le a \|h\|^2$, and such that there exists no extension of $L$ with the same property. There also exists a version of this statement
%that holds for Banach spaces. This is the content of the Lumer-Phillips theorem \cite[Section~IX.8]{Yos95}.
%\end{remark}

\begin{proof}
The generator $L$ of a $\CC_0$-semigroup is closed by Proposition~\ref{prop:closed}. The fact that its resolvent satisfies the stated bound
follows immediately from the fact that 
\begin{equ}
R_\lambda^n x = \int_0^\infty \cdots  \int_0^\infty  e^{-\lambda(t_1+\ldots +t_n)}S(t_1+\ldots+t_n)x\,dt_1\cdots dt_n
\end{equ}
by Proposition~\ref{prop:resolvEqu}.

To show that the converse also holds, we are going to construct the semigroup $S(t)$ by using the so-called
`Yosida approximations' $L_\lambda = \lambda L R_\lambda$ for $L$. Note first that $\lim_{\lambda \to \infty} L R_\lambda x = 0$
for every $x \in \CB$: it obviously holds for $x \in \CD(L)$ since then $\|L R_\lambda x\| = \|R_\lambda L x\| \le \|R_\lambda\| \|Lx\| \le M \,\bigl(\Re\lambda - a\bigr)^{-1} \|Lx\|$. Furthermore, $\|L R_\lambda x\| = \|\lambda R_\lambda x - x\| \le \bigl(M \lambda (\lambda-a)^{-1} + 1\bigr)\|x\| \le (M+2)\|x\|$
for $\lambda$ large enough, so that $\lim_{\lambda \to \infty} L R_\lambda x = 0$  for every $x$ by a standard density argument.

Using this fact, we can show that the Yosida approximation of $L$ does indeed approximate $L$ in the sense
that $\lim_{\lambda \to \infty} L_\lambda x = Lx$ for every $x \in \CD(L)$. Fixing an arbitrary $x\in \CD(L)$, we have
\begin{equ}[e:convL]
\lim_{\lambda \to \infty} \bigl\|L_\lambda x - Lx\bigr\| = \lim_{\lambda \to \infty} \bigl\|\bigl(\lambda R_\lambda - 1\bigr) Lx\bigr\| = 
\lim_{\lambda \to \infty} \bigl\|LR_\lambda Lx\bigr\| = 0\;.
\end{equ}
Define now a family of bounded operators $S_\lambda(t)$ by $S_\lambda(t) = e^{L_\lambda t} =  \sum_{n \ge 0} {t^n L_\lambda^n\over n!}$.
This series converges in the operator norm since $L_\lambda$ is bounded and one can easily check that $S_\lambda$ is indeed
a $\CC_0$-semigroup (actually a group) with generator $L_\lambda$. Since $L_\lambda = -\lambda + \lambda^2 R_\lambda$,
one has for $\lambda > a$ the bound
\begin{equ}[e:boundSlambda]
\|S_\lambda(t)\| = e^{-\lambda t} \sum_{n \ge 0} {t^n \lambda^{2n} \|R_\lambda^n\|\over n!} = M\exp\Bigl(- \lambda t + {\lambda^2\over \lambda - a} t \Bigr) = M\exp\Bigl({\lambda a t\over \lambda - a} \Bigr)\;,
\end{equ}
so that $\limsup_{\lambda \to \infty} \|S_\lambda(t)\| \le M e^{at}$. Let us show next that the limit $\lim_{\lambda \to \infty} S_\lambda(t)x$
exists for every $t \ge 0$ and every $x\in \CB$. Fixing $\lambda$ and $\mu$ large enough so that 
$\max\{\|S_\lambda(t)\|, \|S_\mu(t)\|\} \le Me^{2at}$, and fixing some arbitrary $t > 0$, we have for $s \in [0,t]$
\begin{equs}
\|\d_s S_\lambda({t-s})S_\mu(s)x\| &= \|S_\lambda({t-s}) \bigl(L_\mu - L_\lambda\bigr)S_\mu(s)x\| 
= \|S_\lambda({t-s}) S_\mu(s) \bigl(L_\mu - L_\lambda\bigr)x\| \\
&\le M^2 e^{2at} \|\bigl(L_\mu - L_\lambda\bigr)x\| \;.
\end{equs}
Integrating this bound between $0$ and $t$, we obtain
\begin{equ}[e:CauchySlambda]
\|S_\lambda(t)x - S_\mu(t)x\| \le M^2 t e^{2at} \|L_\mu x - L_\lambda x\|\;,
\end{equ}
which converges to $0$ for every $x \in \CD(L)$ as $\lambda, \mu \to \infty$ since one then has $L_\lambda x \to Lx$.
We can therefore \textit{define} a family of linear operators $S(t)$ by $S(t)x = \lim_{\lambda \to \infty} S_\lambda(t) x$.

It is clear from \eref{e:boundSlambda} that $\|S(t)\| \le M e^{a t}$ and it follows from the semigroup property of $S_\lambda$ that
$S(s) S(t) = S(s+t)$.  Furthermore, it follows from \eref{e:CauchySlambda} and \eref{e:convL} that for every fixed $x\in\CD(L)$,
 the convergence $S_\lambda(t)x \to S(t)x$ is uniform
in bounded intervals of $t$, so that the map $t \mapsto S(t)x$ is continuous. Combining this with our \textit{a priori} bounds
on the operator norm of $S(t)$, it follows from Exercise~\ref{ex:SC}  that $S$ is indeed a $\CC_0$-semigroup. 
It remains to show that the generator $\hat L$ of $S$ coincides with $L$. Taking first the limit $\lambda \to \infty$ and then the limit $t \to 0$ 
in the identity
\begin{equ}
t^{-1} \bigl(S_\lambda(t) x - x\bigr) = t^{-1}\int_0^t S_\lambda(s)L_\lambda x\, ds\;,
\end{equ}
we see that $x \in \CD(L)$ implies $x \in \CD(\hat L)$ and $\hat L x = Lx$, so that $\hat L$ is an extension of $L$. However,
for $\lambda > a$, both $\lambda - L$ and $\lambda - \hat L$ are one-to-one between their domain and $\CB$, so that they must coincide.
\end{proof}

One might think that the resolvent bound in the Hille--Yosida theorem is a consequence of the fact that the spectrum of $L$ is
assumed to be contained in the half plane $\{\lambda\,:\, \Re \lambda \le a\}$. This however isn't the case, as can be seen by the following example:

\begin{example}\label{ex:counterSpec}
We take $\CB = \bigoplus_{n \ge 1}\C^2$ (equipped with the usual Euclidean norms) and we define $L = \bigoplus_{n \ge 1} L_n$, where
$L_n \colon \C^2 \to \C^2$ is given by the matrix
\begin{equ}
L_n = 
\begin{pmatrix}
in & n\\
0 & in
\end{pmatrix}\;.
\end{equ}
In particular, the resolvent $R^{(n)}_\lambda$ of $L_n$ is given by
\begin{equ}
R^{(n)}_\lambda = {1\over (\lambda - in)^2}
\begin{pmatrix}
\lambda - in & n\\
0 & \lambda - in
\end{pmatrix}\;,
\end{equ}
so that one has the upper and lower bounds
\begin{equ}
{n \over |\lambda - in|^2} \le \|R^{(n)}_\lambda\| \le {n \over |\lambda - in|^2} + {\sqrt 2 \over |\lambda - in|}\;.
\end{equ}
Note now that the resolvent $R_\lambda$ of $L$ satisfies $\|R_\lambda\| = \sup_{n \ge 1} \|R_\lambda^{(n)}\|$. On one hand, this shows that the
spectrum of $L$ is given by the set $\{in\,:\, n \ge 1\}$, so that it does indeed lie in a half plane. On the other hand, for every fixed value $a > 0$,
we have $\|R_{a + in}\| \ge {n\over a^2}$, so that the resolvent bound of the Hille--Yosida theorem is certainly not satisfied.

It is therefore not surprising that $L$ does not generate a $\CC_0$-semigroup on $\CB$. Even worse, trying to define $S(t) = \oplus_{n \ge 1} S_n(t)$
with $S_n(t) = e^{L_n t}$ results in $\|S_n(t)\| \ge nt$, so that $S(t)$ is an unbounded operator for every $t > 0$!
\end{example}

\subsubsection{Adjoint semigroups}
\index{adjoint!semigroup}

It will be very useful in the sequel to have a good understanding of the behaviour of the adjoints of strongly continuous semigroups.
The reason why this is not a completely trivial topic is that, in general, it is simply not true
that the adjoint semigroup $S^*(t)\colon \CB^*\to \CB^*$ of a strongly continuous semigroup is again strongly continuous. This is probably best 
illustrated by an example.

Take $\CB = \CC([0,1],\R)$
and let $S(t)$ be the heat semigroup (with Neumann boundary conditions, say). Then $S^*(t)$ acts on finite signed measures
by convolving them with the heat kernel. While it is true that $S^*(t)\mu \to \mu$ weakly as $t\to 0$, it is not true in general
that this convergence is strong. For example, $S^*(t)\delta_x$ does \textit{not} converge to $\delta_x$ in the total variation norm
(which is the dual to the supremum norm on $\CC([0,1],\R)$). However, this difficulty can always be overcome by restricting $S^*(t)$ to a slightly
smaller space than $\CB^*$. This is the content of the following result:

\begin{proposition}\label{prop:SCDual}
If $S(t)$ is a $\CC_0$-semigroup on $\CB$, then $S^*(t)$ is a $\CC_0$-semigroup on the closure $\CB^\dagger$ of
$\CD(L^*)$ in $\CB^*$ and its generator $L^\dagger$ is given by the restriction of $L^*$ to the set $\CD(L^\dagger) = \{x \in \CD(L^*)\,:\, L^* x \in \CB^\dagger\}$.
\end{proposition}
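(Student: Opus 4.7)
The plan is to verify in turn: (i) that $S^*(t)$ restricts to a bounded semigroup on $\CB^\dagger$, (ii) that this restriction is strongly continuous, and (iii) that its generator is the operator described.

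For (i), the key observation is that for $\ell \in \CD(L^*)$ and $x \in \CD(L)$, the commutation $S(t)Lx = LS(t)x$ from Proposition~\ref{prop:diffSt} yields
\begin{equ}
(S^*(t)\ell)(Lx) = \ell(LS(t)x) = (L^*\ell)(S(t)x) = (S^*(t) L^*\ell)(x)\;,
\end{equ}
so $S^*(t)\ell \in \CD(L^*)$ with $L^* S^*(t) \ell = S^*(t) L^*\ell$. Since $S^*(t)$ is bounded on $\CB^*$ with $\|S^*(t)\| = \|S(t)\| \le Me^{at}$, it maps $\CD(L^*)$ into itself and hence, by continuity, maps the closure $\CB^\dagger$ into $\CB^\dagger$. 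The semigroup property is inherited from $S(s)S(t) = S(s+t)$ by taking adjoints.

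For (ii), I would first establish norm continuity at $\ell \in \CD(L^*)$ through the scalar identity, valid for $x \in \CD(L)$,
\begin{equ}
(S^*(t)\ell - \ell)(x) = \ell(S(t)x - x) = \int_0^t \ell(LS(s)x)\,ds = \int_0^t (S^*(s)L^*\ell)(x)\,ds\;,
\end{equ}
obtained by differentiating $s \mapsto \ell(S(s)x)$ and using the definition of $L^*$. Taking the supremum over $x \in \CD(L)$ with $\|x\| \le 1$ (dense in $\CB$), this yields the explicit norm bound
\begin{equ}
\|S^*(t)\ell - \ell\|_{\CB^*} \le tMe^{at}\|L^*\ell\|_{\CB^*}\;,
\end{equ}
which vanishes as $t \to 0$. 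A standard three-epsilon argument using the uniform bound $\|S^*(t)\| \le Me^{at}$ and the density of $\CD(L^*)$ in $\CB^\dagger$ then extends strong continuity to all of $\CB^\dagger$.

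For (iii), if $\ell \in \CD(L^\dagger)$ then testing $t^{-1}(S^*(t)\ell - \ell) \to L^\dagger \ell$ against an arbitrary $x \in \CD(L)$ gives $\ell(Lx) = (L^\dagger \ell)(x)$, so $\ell \in \CD(L^*)$ and $L^*\ell = L^\dagger\ell \in \CB^\dagger$. Conversely, given $\ell \in \CD(L^*)$ with $L^*\ell \in \CB^\dagger$, the integral identity above can now be promoted to a genuine Bochner-integral identity in $\CB^\dagger$,
\begin{equ}
S^*(t)\ell - \ell = \int_0^t S^*(s) L^*\ell\, ds\;,
\end{equ}
because by (ii) the integrand is norm-continuous in $s$. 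The usual averaging estimate $\|t^{-1}\int_0^t S^*(s)L^*\ell\,ds - L^*\ell\| \le \sup_{s\in[0,t]}\|S^*(s)L^*\ell - L^*\ell\|$ then shows $\ell \in \CD(L^\dagger)$ with $L^\dagger\ell = L^*\ell$.

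The only mildly delicate point is avoiding circularity: one must first prove the norm bound on $\CD(L^*)$ purely by scalar duality with $\CD(L) \subset \CB$, before invoking strong continuity to interpret the integral as Bochner-valued in step (iii). Everything else reduces to routine manipulations of adjoints and the general $\CC_0$-semigroup properties already established.
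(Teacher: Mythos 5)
Your argument is correct, and parts (i) and (ii) follow the same route as the paper: $S^*(t)$ preserves $\CD(L^*)$ by Proposition~\ref{prop:diffSt}, and strong continuity on $\CB^\dagger$ comes from the identity $S^*(t)\ell - \ell = \int_0^t S^*(s)L^*\ell\,ds$ plus density. Where you genuinely diverge is in the identification of the generator. The paper dispatches this in one line by observing that, by Proposition~\ref{prop:resolvEqu}, the resolvent of the restricted semigroup is the restriction of $R_\lambda^* = (\lambda - L^*)^{-1}$ to $\CB^\dagger$, so that $\CD(L^\dagger) = R_\lambda^*\CB^\dagger = \{\ell \in \CD(L^*):\, L^*\ell \in \CB^\dagger\}$. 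You instead argue directly from the definition of the generator: testing the difference quotient against $x \in \CD(L)$ for one inclusion, and promoting the scalar integral identity to a Bochner integral (legitimate once strong continuity of $s \mapsto S^*(s)L^*\ell$ is known) together with the averaging estimate for the other. Your version is more elementary and self-contained, and it is also more scrupulous than the paper about the weak-versus-Bochner interpretation of the integral, which the paper glosses over; the paper's version is shorter but leans on the resolvent machinery already set up. Both are complete proofs.
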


\begin{proof}
We first show that $S^*(t)$ is strongly continuous on $\CB^\dagger$ and we will then identify its generator. Note first that 
it follows from Proposition~\ref{prop:diffSt} that $S^*(t)$ maps $\CD(L^*)$ into itself, so that it does indeed define a family of bounded operators
on $\CB^\dagger$.
Since the norm of $S^*(t)$ is $\CO(1)$ as $t \to 0$ and since $\CD(L^*)$ is dense in $\CB^\dagger$ by definition, it is sufficient to show that
$\lim_{t \to 0} S^*(t)x = x$ for every $x \in \CD(L^*)$. It follows immediately from Proposition~\ref{prop:diffSt} that for $x \in \CD(L^*)$
one has the identity
\begin{equ}
S^*(t)x - x = \int_0^t S^*(s) L^*x\,ds\;,
\end{equ}
from which we conclude that $S^*(t)x \to x$. 

It follows from Proposition~\ref{prop:resolvEqu} that the resolvent $R_\lambda^\dagger$ for $S^*(t)$ on $\CB^\dagger$ is nothing
but the restriction of $R_\lambda^*$ to $\CB^\dagger$. This immediately implies that $\CD(L^\dagger)$ is given by the 
stated expression.
\end{proof}

\begin{remark}
As we saw in the example of the heat semigroup, $\CB^\dagger$ is in general strictly smaller than $\CB^*$.
This fact was first pointed out by Phillips in \cite{Phillips}. In our example, $\CB^*$ consists of all finite signed Borel
measures on $[0,1]$, whereas $\CB^\dagger$ only consists of those measures that have a density with respect to
Lebesgue  measure.
\end{remark}

Even though $\CB^\dagger$ is in general a proper closed subspace of $\CB^*$, it is large enough to be dense in $\CB^*$, when
equipped with the (much weaker) weak-* topology. This the content of our last result in the theory of strongly continuous semigroups:

\begin{proposition}\label{prop:weakstardense}
For every $\ell \in \CB^*$ there exists a sequence $\ell_n \in \CB^\dagger$ such that $\ell_n(x) \to \ell(x)$ for every $x \in \CB$.
\end{proposition}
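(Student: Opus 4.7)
The plan is to use a resolvent-based regularisation of $\ell$. Specifically, I would show that for every $\ell\in\CB^*$ and every $\lambda>a$ (with $a$ the growth rate of $S$), the functional $R_\lambda^*\ell$ lies in $\CD(L^*)$, and then take $\ell_n = nR_n^*\ell$ (with $n$ an integer tending to $\infty$). Since $\CD(L^*)\subset\CB^\dagger$ by definition, each $\ell_n$ will be in $\CB^\dagger$ automatically, and the whole task reduces to the pointwise convergence $\ell_n(x)\to\ell(x)$.

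First I would verify that $R_\lambda^*\ell\in\CD(L^*)$. Since $(\lambda-L)R_\lambda = I$ on $\CB$ and $R_\lambda(\lambda-L)=I$ on $\CD(L)$, taking adjoints formally gives $(\lambda-L^*)R_\lambda^*=I$ on $\CB^*$. To make this rigorous, I observe that for any $x\in\CD(L)$,
\begin{equ}
\bigl(R_\lambda^*\ell\bigr)(Lx) = \ell(R_\lambda Lx) = \ell(LR_\lambda x) = \ell(\lambda R_\lambda x - x) = \lambda\bigl(R_\lambda^*\ell\bigr)(x) - \ell(x)\;,
\end{equ}
using that $R_\lambda$ commutes with $L$ on $\CD(L)$ and $LR_\lambda x = \lambda R_\lambda x - x$. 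The right-hand side is bounded linearly in $x\in\CB$, so $R_\lambda^*\ell$ belongs to $\CD(L^*)$ with $L^*R_\lambda^*\ell = \lambda R_\lambda^*\ell - \ell$. In particular $\ell_n := nR_n^*\ell \in\CD(L^*)\subset \CB^\dagger$ for each $n>a$.

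Next I would prove that $nR_n x\to x$ in $\CB$ for every $x\in\CB$, which immediately yields $\ell_n(x) = \ell(nR_n x)\to\ell(x)$. On the dense subspace $\CD(L)$ this is easy: $nR_n x - x = R_n Lx$, and the Hille–Yosida bound $\|R_n\|\le M/(n-a)$ forces $\|R_n Lx\|\to 0$. To extend to all of $\CB$, I note that $\|nR_n\| \le Mn/(n-a)$ is uniformly bounded in $n$, so a standard $3\varepsilon$ argument (approximate $x$ by $y\in\CD(L)$, use the uniform bound on $\|nR_n\|$ on the remainder) finishes the job.

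I expect the only subtle point to be the verification that $R_\lambda^*\ell\in\CD(L^*)$ for \emph{every} $\ell\in\CB^*$ (not just $\ell\in\CD(L^*)$ itself), since $\CB^\dagger$ may be strictly smaller than $\CB^*$. However, this is purely algebraic once one writes out the definition of $\CD(L^*)$ and exploits the explicit identity $LR_\lambda x = \lambda R_\lambda x - x$; no further semigroup theory is needed. The remainder of the argument is density plus a uniform bound, both provided by the Hille–Yosida theorem established earlier.
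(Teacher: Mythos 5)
Your proposal is correct and follows essentially the same route as the paper: the paper also sets $\ell_n = nR_n^*\ell$, notes $\ell_n \in \CD(L^*) \subset \CB^\dagger$, and invokes the fact (established in the proof of the Hille--Yosida theorem) that $nR_n x \to x$ for every $x \in \CB$. You have merely filled in the details the paper leaves implicit, namely the verification that $R_\lambda^*\ell \in \CD(L^*)$ and the density argument for the convergence $nR_n x \to x$, both of which are carried out correctly.
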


\begin{proof}
It suffices to choose $\ell_n = n R_n^*\ell$. Since we have $\ell_n \in \CD(L^*)$, it is clear that $\ell_n\in \CB^\dagger$. On the other hand,
we know from the proof of the Hille--Yosida theorem that $\lim_{n \to \infty} \|nR_n x - x\| = 0$ for every $x \in \CB$, from which the claim follows at once.
\end{proof}

\subsection{Semigroups with selfadjoint generators}

\index{semigroup!selfadjoint}
In this section, we consider the particular case of strongly continuous semigroups consisting of self-adjoint operators 
on a Hilbert space $\CH$. The reason why this is an interesting case is that it immediately implies very strong smoothing properties
of the operators $S(t)$ in the sense that for every $t>0$, they map $\CH$ into the domain of arbitrarily high powers of $L$.
Furthermore, it is very easy to obtain explicit bounds on the norm of $S(t)$ as an operator from $\CH$ into one of these domains.
We will then see later in Section~\ref{sec:analSG} on analytic semigroups that most of these properties still hold true for a much larger class
of semigroups.

Let $L$ be
a selfadjoint operator on $\CH$ which is bounded from above. Without loss of generality, we are going to assume that
it is actually negative definite, so that $\scal{x,Lx} \le 0$ for any $x \in \CH$. In this case, we can use functional calculus
(see for example \cite{ReedSimon}, in particular chapter VIII in volume I) to define selfadjoint operators $f(L)$ for any measurable map $f \colon \R \to \R$. This is because the spectral decomposition theorem can be formulated as:

\index{spectral decomposition theorem}
\begin{theorem}[Spectral decomposition]\label{theo:SD}
Let $L$ be a selfadjoint operator on a separable Hilbert space $\CH$. Then, there exists a measure space
$(\CM,\mu)$, an isomorphism $K \colon \CH \to L^2(\CM,\mu)$, and a function $f_L\colon \CM \to \R$ such that
via $K$, $L$ is equivalent to the multiplication operator by $f_L$ on $L^2(\CM,\mu)$. In other words, one has 
$L = K^{-1} f_L K$ and $K\CD(L) = \{g \,:\, f_L g \in L^2(\CM,\mu)\}$.
\end{theorem}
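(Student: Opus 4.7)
The strategy is to reduce to a bounded operator for which a continuous functional calculus is available, and then to realise the ambient C*-algebra as bounded multiplication operators by means of a cyclic subspace decomposition. I would proceed in four steps.

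First, I would pass to the Cayley transform $U = (L-i)(L+i)^{-1}$, which is a unitary operator on $\CH$ by the standard computation exploiting selfadjointness. If the conclusion of the theorem is established for $U$ in the form of a measure space $(\CM,\mu)$, an isomorphism $K\colon \CH \to L^2(\CM,\mu)$, and a measurable $g\colon \CM \to S^1$ with $KUK^{-1}$ equal to multiplication by $g$, then the representation of $L$ follows by setting $f_L = i(1+g)/(1-g)$: this function is defined $\mu$-a.e.\ because selfadjointness of $L$ forces $1$ not to be an eigenvalue of $U$.

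Second, I would construct the continuous functional calculus for $U$. The C*-algebra generated by $U$, $U^*$ and the identity inside $\CL(\CH)$ is commutative (since $U$ is normal), so the Gelfand--Naimark theorem produces an isometric $*$-isomorphism with $C(\sigma(U))$ sending $U$ to the coordinate function. This yields an assignment $f \mapsto f(U)$ extending polynomial expressions in $U$ and $U^*$.

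Third, I would construct spectral measures and decompose $\CH$ into cyclic subspaces. For each $x \in \CH$, the functional $f \mapsto \scal{x, f(U)x}$ on $C(\sigma(U))$ is positive, so Riesz representation produces a finite Borel measure $\mu_x$ on $\sigma(U)$ with $\scal{x, f(U)x} = \int f\,d\mu_x$; this extends the functional calculus to bounded Borel $f$. A standard argument (Zorn's lemma together with the separability of $\CH$) then yields a countable orthogonal decomposition $\CH = \bigoplus_n \CH_n$ with each $\CH_n$ cyclic for $U$, generated by a unit vector $x_n$. On each piece the prescription $f(U)x_n \mapsto f$ extends by density to an isometric isomorphism $U_n \colon \CH_n \to L^2(\sigma(U),\mu_{x_n})$ that intertwines $U$ with multiplication by the coordinate function.

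Fourth, I would assemble: take $\CM$ to be the disjoint union of countably many copies of $\sigma(U)$, indexed by $n$, with $\mu$ the sum of the measures $\mu_{x_n}$ on the respective pieces, and set $K = \bigoplus_n U_n$. The main obstacle lies in this final Cayley-inversion step: one must verify that $\mu(\{g=1\}) = 0$ and that $K$ identifies $\CD(L)$ with $\{\psi \in L^2(\CM,\mu)\,:\, f_L \psi \in L^2(\CM,\mu)\}$. Both facts follow by translating the graph of $L$ through the Cayley correspondence and using that $U-I$ has dense range and trivial kernel, these being precisely equivalent to the selfadjointness of $L$.
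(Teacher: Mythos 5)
The paper does not actually prove this theorem: it is stated as a black box and the reader is referred to Reed--Simon (chapter~VIII of volume~I) for the argument, so there is no in-text proof to compare against. Your outline is the standard textbook proof and it is correct: the Cayley transform $U=(L-i)(L+i)^{-1}$ is unitary precisely because $L$ is selfadjoint, the cyclic-subspace decomposition gives the multiplication-operator form for $U$, and the inversion $f_L = i(1+g)/(1-g)$ is real-valued since $|g|=1$. You correctly isolate the two points that need care at the end. For the record, $\mu(\{g=1\})=0$ is exactly the statement that $1$ is not an eigenvalue of $U$, which follows from $\ker(1-U) = (L+i)\ker(\mathrm{id}) = \{0\}$; and for the domain identification the only nontrivial direction is that $f_L\phi\in L^2$ forces $\phi\in(1-g)L^2$, which one gets from the identity $f_L\phi - i\phi = 2ig\phi/(1-g)$ together with $|g|=1$, so that $\phi/(1-g)\in L^2(\CM,\mu)$ and $\phi$ lies in the image of $K(1-U)K^{-1} = K\CD(L)$. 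With those two details written out, your sketch is a complete proof.
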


In particular, this allows one to define $f(L) = K^{-1} (f\circ f_L) K$, which has all the nice properties that one would expect
from functional calculus, like for example $(fg)(L) = f(L) g(L)$, $\|f(L)\| \le \sup_{\lambda\in \R} |f(\lambda)|$, etc.
Defining $S(t) = e^{L t}$, it is an exercise to check that $S$ is indeed a $\CC_0$-semigroup with generator $L$
(either use the Hille--Yosida  theorem and make sure that the semigroup constructed there coincides with $S$
 or check ``by hand'' that $S(t)$ is indeed $\CC_0$ with generator $L$).

The important property of semigroups generated by selfadjoint operators is that they do not only leave $\CD(L)$ invariant,
but they have a regularising effect in that they map $\CH$ into the domain of any arbitrarily high power of $L$. More
precisely, one has:

\begin{proposition}\label{prop:boundSelfAdj}
Let $L$ be self-adjoint and negative definite and let $S(t)$ be the semigroup on $\CH$ generated by $L$. Then, 
$S(t)$ maps $\CH$ into the domain of $(1-L)^\alpha$ for any $\alpha, t > 0$ and there exist constants $C_\alpha$
such that
$\|(1-L)^\alpha S(t)\| \le C_\alpha \bigl(1 + t^{-\alpha}\bigr)$.
\end{proposition}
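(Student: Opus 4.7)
The plan is to use the spectral decomposition theorem (Theorem~\ref{theo:SD}) to reduce the claim to a pointwise bound on a scalar function. Since $L$ is selfadjoint and negative definite, there is an isomorphism $K\colon \CH \to L^2(\CM,\mu)$ conjugating $L$ to multiplication by some measurable function $f_L\colon \CM \to (-\infty,0]$. Under this identification, functional calculus gives
\begin{equ}
(1-L)^\alpha S(t) \;\cong\; M_{g_{\alpha,t}}\;,\qquad g_{\alpha,t}(\xi) \eqdef (1 - f_L(\xi))^\alpha \, e^{t f_L(\xi)}\;,
\end{equ}
where $M_g$ denotes multiplication by $g$. Consequently $\|(1-L)^\alpha S(t)\| = \|g_{\alpha,t}\|_{L^\infty(\CM,\mu)}$, and the range of $S(t)$ lies in $\CD((1-L)^\alpha)$ as soon as $g_{\alpha,t}$ is bounded.

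Next, I would estimate $g_{\alpha,t}$ by reducing to a one-variable calculus problem. Writing $\lambda = -f_L(\xi) \ge 0$, it suffices to bound
\begin{equ}
\varphi_{\alpha,t}(\lambda) \eqdef (1+\lambda)^\alpha e^{-t\lambda}\;,\qquad \lambda \ge 0\;.
\end{equ}
Using $(1+\lambda)^\alpha \le 2^\alpha(1 + \lambda^\alpha)$ together with the elementary identity
\begin{equ}
\sup_{\lambda \ge 0} \lambda^\alpha e^{-t\lambda} = \Bigl(\frac{\alpha}{et}\Bigr)^{\!\alpha}\;,
\end{equ}
obtained by differentiating in $\lambda$, yields $\varphi_{\alpha,t}(\lambda) \le 2^\alpha + 2^\alpha (\alpha/e)^\alpha t^{-\alpha}$, which is of the required form $C_\alpha(1 + t^{-\alpha})$ after absorbing constants.

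None of the steps is really an obstacle; the only point requiring a touch of care is ensuring that $S(t)$ as defined via functional calculus actually coincides with the $\CC_0$-semigroup generated by $L$ in the sense of the earlier sections (so that the operator $(1-L)^\alpha S(t)$ we are bounding is the one of interest). This follows by verifying that $t \mapsto e^{t f_L}$ is strongly continuous on $L^2(\CM,\mu)$ with generator equal to multiplication by $f_L$ (dominated convergence, using $|e^{tf_L} - 1|^2 \le 4$ and $f_L \le 0$), and then invoking Corollary~\ref{cor:LdetS} to conclude that this semigroup must agree with the one given in the statement.
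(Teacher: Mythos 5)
Your proof is correct and follows essentially the same route as the paper: both reduce the claim via functional calculus to the scalar bound $\sup_{\lambda \ge 0}(1+\lambda)^\alpha e^{-t\lambda} \le C_\alpha(1+t^{-\alpha})$ and evaluate $\sup_{\lambda\ge 0}\lambda^\alpha e^{-t\lambda} = \alpha^\alpha e^{-\alpha}t^{-\alpha}$ by elementary calculus. Your extra care in checking that the functional-calculus semigroup $e^{tf_L}$ agrees with the $\CC_0$-semigroup generated by $L$ is a point the paper relegates to an exercise just before the statement, so it is welcome but not a divergence in method.
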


\begin{proof}
By functional calculus, it suffices to show that $\sup_{\lambda \ge 0} (1+\lambda)^\alpha e^{-\lambda t} \le C_\alpha (1+t^{-\alpha})$.
One has
\begin{equ}
\sup_{\lambda \ge 0} \lambda^\alpha e^{-\lambda t} = t^{-\alpha} \sup_{\lambda \ge 0} \; (\lambda t)^\alpha e^{-\lambda t}
=  t^{-\alpha} \sup_{\lambda \ge 0} \lambda^\alpha e^{-\lambda} = \alpha^\alpha e^{-\alpha} t^{-\alpha}\;.
\end{equ}
The claim now follows from the fact that there exists a constant $C_\alpha'$ such that $(1-\lambda)^\alpha \le C_\alpha' \bigl(1 + (-\lambda)^\alpha\bigr)$ for every $\lambda \le 0$.
\end{proof}

\begin{exercise}
Show that if $L$ is self-adjoint and negative definite, then the semigroup $e^{Lt}$
defined using functional calculus (i.e.\ $e^{Lt} = K^{-1} e^{f_L} K$) is indeed a strongly
continuous semigroup with generator $L$. 
\end{exercise}

\begin{exercise}
Show that if the generator of a $\CC_0$ semigroup $S$ is self-adjoint, then $S(t)$ is analytic
on all of $\{\lambda \in \C\,:\, \Re \lambda > 0\}$. 
\end{exercise}

\subsection{Analytic semigroups}
\label{sec:analSG}

\index{semigroup!analytic}
Obviously, the conclusion of Proposition~\ref{prop:boundSelfAdj} does not hold for arbitrary
$\CC_0$-semigroups since the group of translations from Example~\ref{ex:transl} does not have any smoothing properties.
It does however hold for a very large class of semigroups, the so-called
analytic semigroups. The study of these semigroups is the object of the remainder of this section, 
and the equivalent of Proposition~\ref{prop:boundSelfAdj} is going to be one of our two main results.
The other result is a characterisation of generators for analytic semigroups that is analogous to the Hille--Yosida theorem
for $\CC_0$-semigroups. The difference will be that the role of the half-plane $\Re \lambda > a$ will be played by the complement
of a sector of the complex plane with an opening angle strictly smaller than $\pi$.

Recall that a semigroup $S$ on a Banach space $\CB$ is \textit{analytic} if there exists $\theta > 0$ such that the map 
$t \mapsto S(t)$ (taking values in $\CL(\CB)$) admits an analytic extension to the sector
$S_\theta = \{\lambda \in \C \,:\, |{\arg \lambda}| < \theta\}$, satisfies the semigroup property there,
and is such that $t \mapsto S_\phi(t) = S(e^{i\phi} t)$ is a strongly continuous semigroup for every $|\phi| < \theta$.
If $\theta$ is the supremum of all the angles such that the above property holds, we call $S$ \textit{analytic with angle $\theta$}.
The strong continuity of $t \mapsto S(e^{i\phi} t)$ implies that there exist constants $M(\phi)$ and $a(\phi)$ such that
\begin{equ}
\|S_\phi(t)\| \le M(\phi) e^{a(\phi) t}\;.
\end{equ}
Using the semigroup property, it is not difficult to show that $M$ and $a$ can be chosen bounded over compact intervals:

\begin{proposition}
Let $S$ be an analytic semigroup with angle $\theta$. Then, for every $\theta' < \theta$, there exist $M$ and $a$ such that
$\|S_\phi(t)\| \le Me^{a t}$ for every $t > 0$ and every $|\phi| \le \theta'$.
\end{proposition}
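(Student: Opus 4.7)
The plan is to reduce uniform bounds on the sub-sector $\{|\arg\lambda|\le\theta'\}$ to the bounds already provided by strong continuity along two fixed rays at angles $\pm\theta''$, where $\theta''$ is any intermediate angle with $\theta'<\theta''<\theta$. For simplicity I carry out the argument under the additional assumption $\theta''<\pi/2$ and indicate at the end how to reduce the general case to this one. Since by assumption the map $t\mapsto S(te^{\pm i\theta''})$ is a strongly continuous semigroup on $\CB$, Exercise~\ref{ex:SC} supplies constants $M_\pm\ge 1$ and $a_\pm\in\R$ such that $\|S(te^{\pm i\theta''})\|\le M_\pm e^{a_\pm t}$ for every $t\ge 0$.

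The core of the argument is the elementary geometric identity
\begin{equation*}
se^{i\phi}=a\,e^{i\theta''}+b\,e^{-i\theta''},\qquad a=\frac{s\sin(\theta''+\phi)}{\sin 2\theta''},\quad b=\frac{s\sin(\theta''-\phi)}{\sin 2\theta''},
\end{equation*}
valid for all $s\ge 0$ and all $|\phi|\le\theta'$. Since $|\phi|\le\theta'<\theta''<\pi/2$, both $a$ and $b$ are non-negative and bounded by $s/\sin 2\theta''$; moreover, for $s>0$ the two summands $a\,e^{i\theta''}$ and $b\,e^{-i\theta''}$ lie strictly inside the open sector $\{|\arg\lambda|<\theta\}$ on which $S$ is assumed to satisfy the semigroup property. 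Hence
\begin{equation*}
S(se^{i\phi})=S(a\,e^{i\theta''})\,S(b\,e^{-i\theta''}),
\end{equation*}
and combining this with the one-dimensional bounds gives
\begin{equation*}
\|S(se^{i\phi})\|\le M_+M_-\exp\Bigl(\frac{|a_+|+|a_-|}{\sin 2\theta''}\,s\Bigr).
\end{equation*}
Setting $M=M_+M_-$ and $a=(|a_+|+|a_-|)/\sin 2\theta''$, and using that $s=|te^{i\phi}|=t$, yields $\|S_\phi(t)\|=\|S(te^{i\phi})\|\le Me^{at}$ uniformly in $|\phi|\le\theta'$ and $t\ge 0$, which is the claim.

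The only point deserving real care is the geometric restriction $\theta''<\pi/2$ built into the decomposition: if $\theta'\ge\pi/2$ the denominator $\sin 2\theta''$ loses its sign and the argument as written breaks down. One handles this by picking a finite partition $-\theta'=\phi_0<\phi_1<\cdots<\phi_N=\theta'$ with $\phi_{k+1}-\phi_k$ small enough (depending only on $\theta,\theta'$) that the preceding argument applies inside each sub-sector $\{|\phi-\hf(\phi_k+\phi_{k+1})|\le\hf(\phi_{k+1}-\phi_k)\}$; a general point $se^{i\phi}$ in the big sector is then written as a sum of $N$ points, one from each sub-sector, and the semigroup identity applied $N$ times produces the same kind of exponential bound with $M$ and $a$ enlarged by an $N$-dependent factor. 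The main obstacle is therefore conceptual rather than technical: finding the right decomposition, after which the semigroup property and the strong-continuity bounds on two rays immediately deliver the uniform estimate.
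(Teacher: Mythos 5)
Your argument is correct and is essentially the paper's own proof: both decompose $t e^{i\phi}$ as a non-negative combination of two fixed rays in the sector and then multiply the strong-continuity bounds along those rays using the semigroup property. You are somewhat more careful than the paper, making the coefficients of the decomposition explicit and noting that the two-ray trick requires the half-angle to be below $\pi/2$ (a restriction the paper asserts without justification), but the underlying idea is identical.
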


\begin{proof}
Fix $\theta' \in (0,\theta)$, so that in particular $\theta' < \pi/2$. Then there exists a constant $C$ such that, 
for every $t > 0$ and every $\phi$ with $|\phi| \le \theta'$, there exist numbers $t_+, t_- \in [0,Ct]$ such that
$t e^{i\phi} = t_+ e^{i\theta'} + t_- e^{-i\theta'}$. It follows that one has the bound
$\|S_\phi(t)\| \le M(\theta') M(-\theta') e^{a(\theta') Ct + a(-\theta') Ct}$, thus proving the claim.
\end{proof}

\begin{exercise}
Give an example of a semigroup $S$ that is strongly continuous, but not analytic.
\end{exercise}

We next compute the generators of the semigroups $S_\phi$ obtained by evaluating $S$ along a ``ray'' extending out of the origin
into the complex plane:

\begin{proposition}\label{prop:genSphi}
Let $S$ be an analytic semigroup with angle $\theta$. Then, for 
$|\phi| < \theta$, the generator $L_\phi$ of $S_\phi$ is given by $L_\phi = e^{i\phi} L$, where $L$ is the generator of $S$.
\end{proposition}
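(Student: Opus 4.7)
The plan is to prove the two inclusions $\CD(L) \subseteq \CD(L_\phi)$ with $L_\phi x = e^{i\phi} L x$ there, and $\CD(L_\phi) \subseteq \CD(L)$, via a symmetry argument for the second one.

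First I would fix $x \in \CD(L)$ and consider the $\CB$-valued map $z \mapsto S(z)x$ on the open sector $\{z \in \C : |\arg z| < \theta\}$, which is analytic by hypothesis. Its derivative $z \mapsto \d_z S(z)x$ is therefore analytic on the same sector. By Proposition~\ref{prop:diffSt}, for real $t > 0$ this derivative equals $S(t) Lx$, and $z \mapsto S(z) Lx$ is itself analytic on the sector. The identity theorem for Banach-valued analytic functions (proved as in the scalar case by Taylor expansion) then yields
\begin{equ}
\d_z S(z) x = S(z) Lx\;,\qquad |\arg z| < \theta\;.
\end{equ}

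Restricting to the ray $z = e^{i\phi} t$ with $t > 0$, the chain rule gives $\d_t S_\phi(t) x = e^{i\phi} S_\phi(t) Lx$. Integrating from $0$ to $t$ and using strong continuity of $S_\phi$ to make sense of the endpoint, one obtains
\begin{equ}
S_\phi(t) x - x = e^{i\phi} \int_0^t S_\phi(s) Lx \, ds\;.
\end{equ}
Dividing by $t$ and letting $t \to 0^+$, strong continuity of $S_\phi$ forces the integrand to converge to $Lx$ as $s \to 0^+$, so the right-hand side tends to $e^{i\phi} Lx$. This shows $x \in \CD(L_\phi)$ with $L_\phi x = e^{i\phi} Lx$, establishing the first inclusion.

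For the converse, $S_\phi$ is itself analytic: it extends to $z \mapsto S(e^{i\phi} z)$ on the rotated sector $\{z \in \C : |\arg z + \phi| < \theta\}$, which has opening angle $\theta$ and contains $-\phi$ strictly in its interior because $|\phi| < \theta$. Applying the step just proved to $S_\phi$ in place of $S$ and to the angle $-\phi$ in place of $\phi$, the resulting rotated semigroup is $t \mapsto S_\phi(e^{-i\phi}t) = S(t)$, whose generator is $L$. Therefore $\CD(L_\phi) \subseteq \CD(L)$ with $Lx = e^{-i\phi} L_\phi x$ on $\CD(L_\phi)$. Combining the two inclusions yields $\CD(L_\phi) = \CD(L)$ and $L_\phi = e^{i\phi} L$.

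The main obstacle is the analytic-continuation step identifying $\d_z S(z)x$ with $S(z) Lx$ on the whole sector; once this global identity is in hand, the rest reduces to the chain rule, the fundamental theorem of calculus, and strong continuity at $0$. A more hands-on route would try to differentiate difference quotients of $S_\phi$ directly, but this requires first checking that $S(z) \CD(L) \subseteq \CD(L)$ for complex $z$ in the sector, which itself is most naturally proved by the same analytic-continuation argument.
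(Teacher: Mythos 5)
Your proof is correct, but it takes a genuinely different route from the one in the notes. The notes work at the level of resolvents: starting from the Laplace-transform formula $R_\lambda x = \int_0^\infty e^{-\lambda t}S(t)x\,dt$ of Proposition~\ref{prop:resolvEqu}, one deforms the contour of integration from $\R_+$ to the ray $e^{i\phi}\R_+$ (using the uniform exponential bounds on $\|S_\phi(t)\|$ established just before to justify the deformation), which yields the identity $R_\lambda = e^{i\phi}R^\phi_{\lambda e^{i\phi}}$, i.e.\ $(\lambda-L)^{-1} = (\lambda - e^{-i\phi}L_\phi)^{-1}$; since a closed operator is determined by its resolvent, this gives $L_\phi = e^{i\phi}L$, domains included, in one stroke. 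You instead work directly with the generators: you propagate the identity $\d_t S(t)x = S(t)Lx$ from $\R_+$ to the whole sector by the identity theorem for Banach-valued analytic functions, restrict to the ray, integrate, and read off the difference quotient at $t=0$; the reverse inclusion then follows by applying the same argument to $S_\phi$ rotated by $-\phi$. Your route is more elementary (no contour deformation, and no need for the uniform growth bounds along intermediate rays), at the price of having to argue the two inclusions separately; it also requires the small observation, which you make, that the analytic extension of $S_\phi$ lives on the \emph{rotated} (non-symmetric) sector $\{z: |\arg z + \phi| < \theta\}$, which still contains both $\R_+$ and the ray at angle $-\phi$ precisely because $|\phi| < \theta$ --- this is what makes the symmetry step legitimate even though $S_\phi$, viewed as an analytic semigroup in the sense of the definition, only has angle $\theta - |\phi|$. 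The resolvent identity obtained in the notes' proof is reused in the proof of Theorem~\ref{theo:charanal}, which is presumably why that route was chosen there.
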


\begin{proof}
Recall Proposition~\ref{prop:resolvEqu} showing that for $\Re \lambda$ large enough the resolvent $R_\lambda$ for $L$
is given by
\begin{equ}
R_\lambda x = \int_0^\infty e^{-\lambda t} S(t)x\, dt\;.
\end{equ}
Since the map $t \mapsto e^{-\lambda t} S(t)$ is analytic in $S_\theta$ by assumption and since, provided again that 
$\Re \lambda$ is large enough, it decays exponentially to $0$ as $|t| \to \infty$, we can deform the contour of integration
to obtain
\begin{equ}
R_\lambda x = e^{i\phi} \int_0^\infty e^{-\lambda e^{i\phi} t} S(e^{i\phi} t)x\, dt\;.
\end{equ}
Denoting by $R_\lambda^\phi$ the resolvent for the generator $L_\phi$ of $S_\phi$, we thus have the identity
$R_\lambda = e^{i\phi} R_{\lambda e^{i\phi}}^\phi$, which is equivalent to $(\lambda - L)^{-1} = (\lambda - e^{-i\phi} L_\phi)^{-1}$,
thus showing that $L_\phi = e^{i\phi} L$ as stated.
\end{proof}

We now use this to show that if $S$ is an analytic semigroup, then the resolvent set of its generator $L$
not only contains the right half plane, but it contains
a larger sector of the complex plane. Furthermore, this characterises the generators of analytic semigroups,
providing a statement similar to the Hille--Yosida theorem:

\begin{theorem}\label{theo:charanal}
A closed densely defined operator $L$ on a Banach space $\CB$ is the generator of an analytic semigroup if and only
if there exists $\theta \in (0, {\pi \over 2})$ and $a \ge 0$ such that the spectrum of $L$ is contained in the sector
\begin{equ}
\CS_{\theta,a} = \{\lambda \in \C\,:\, \arg (a-\lambda) \in [- \textstyle{\pi \over 2}+\theta, \textstyle{\pi \over 2} - \theta]\}\;,
\end{equ}
and there exists $M>0$ such that the resolvent $R_\lambda$ satisfies the bound $\|R_\lambda\| \le M d(\lambda, \CS_{\theta,a})^{-1}$
for every $\lambda \not\in \CS_{\theta,a}$.
\end{theorem}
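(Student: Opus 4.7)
The plan is to prove both directions of the characterisation separately. The forward direction is a fairly direct consequence of Proposition~\ref{prop:genSphi} combined with Hille-Yosida, while the reverse direction requires constructing the semigroup from scratch via a contour integral of the resolvent.

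For the \emph{forward direction}, suppose $S$ is analytic with angle $\theta$. For every $|\phi| < \theta$, Proposition~\ref{prop:genSphi} tells us that $e^{i\phi}L$ generates the strongly continuous semigroup $S_\phi$, which by the previous result satisfies a bound of the form $\|S_\phi(t)\| \le M e^{at}$ uniformly in $\phi$ on any compact sub-interval of $(-\theta,\theta)$. By Hille-Yosida applied to $e^{i\phi}L$, the half-plane $\{\mu : \Re\mu > a\}$ lies in the resolvent set of $e^{i\phi}L$, with $\|(\mu - e^{i\phi}L)^{-1}\| \le M(\Re\mu - a)^{-1}$. Writing $\lambda = e^{-i\phi}\mu$, this translates into: the rotated half-plane $\{\lambda : \Re(e^{i\phi}\lambda) > a\}$ lies in $\rho(L)$, with a corresponding bound on $R_\lambda$. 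Taking the union over $|\phi| < \theta'$ for any $\theta' < \theta$ gives precisely a sector complement of the required form, and the resolvent bound $\|R_\lambda\| \le M d(\lambda, S_{\theta',a})^{-1}$ follows by picking the optimal $\phi$ for each $\lambda$.

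For the \emph{reverse direction}, assume the spectral and resolvent hypotheses. After shifting $L$ by $a$ (which only changes the semigroup by an exponential factor), I will assume $a = 0$, so the spectrum sits inside a proper sector opening to the left. I define the semigroup directly by the Dunford-type contour integral
\begin{equation}
S(t) = \frac{1}{2\pi i}\int_\Gamma e^{\lambda t}\, R_\lambda\, d\lambda\;,
\end{equation}
where $\Gamma$ is a piecewise linear contour consisting of two rays $\{re^{\pm i\psi} : r \ge r_0\}$ with $\psi \in (\pi/2, \pi/2+\theta)$ joined by a small arc, traversed so that $S_{\theta,0}$ lies to its left. The resolvent bound $\|R_\lambda\| \le M|\lambda|^{-1}$ on $\Gamma$ ensures absolute convergence of the integral in operator norm whenever $\Re(e^{\pm i\psi}t) < 0$, i.e.\ for $t$ in a complex sector of opening angle $2(\psi - \pi/2)$, and analyticity in $t$ inside that sector is immediate by differentiating under the integral. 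Deforming $\Gamma$ shows the construction is independent of the choice of contour.

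The remaining and most delicate steps concern verifying the three defining properties. Semigroup property: compute $S(s)S(t)$ as a double integral over two contours $\Gamma_1, \Gamma_2$ with $\Gamma_1$ strictly to the left of $\Gamma_2$, apply the resolvent identity $R_\lambda R_\mu = (R_\mu - R_\lambda)/(\lambda - \mu)$, and evaluate the inner integrals by residues (closing to the right for $\Gamma_1$ and to the left for $\Gamma_2$) to recover $S(s+t)$. Strong continuity and $S(0) = \mathrm{Id}$: for $x \in \CD(L)$, write $R_\lambda x = \lambda^{-1}x + \lambda^{-1} R_\lambda Lx$, substitute into the integral, use that the integral of $e^{\lambda t}/\lambda$ over $\Gamma$ equals $1$ (by closing the contour and picking up the pole at $0$), and exploit the improved decay $\|R_\lambda Lx\| = O(|\lambda|^{-1})$ to pass $t \to 0$ by dominated convergence; density of $\CD(L)$ together with a uniform operator norm bound (from scaling $\Gamma \mapsto t^{-1}\Gamma$, which gives $\|S(t)\| \le M'$ uniformly for small real $t > 0$) extends this to all of $\CB$. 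Identification of the generator: for $x \in \CD(L)$ differentiate $S(t)x$ under the integral sign to get $\int_\Gamma e^{\lambda t}\lambda R_\lambda x\, d\lambda/(2\pi i)$, rewrite $\lambda R_\lambda x = x + R_\lambda Lx$, and let $t \to 0$ to obtain $Lx$; conversely show that the generator $\hat L$ so defined satisfies $\lambda - \hat L$ coinciding with $\lambda - L$ on a dense set for large real $\lambda$, hence $\hat L = L$.

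The main obstacle is the verification of strong continuity at $t = 0$ together with the uniform operator-norm bound that justifies extending continuity from $\CD(L)$ to all of $\CB$; the scaling $\lambda \mapsto \lambda/t$ in the contour is the crucial trick that converts the resolvent bound into a $t$-independent bound on $\|S(t)\|$ and simultaneously powers all the subsequent smoothing estimates analogous to Proposition~\ref{prop:boundSelfAdj}.
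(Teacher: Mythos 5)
Your proposal is correct and follows essentially the same route as the paper: the forward direction via Proposition~\ref{prop:genSphi} together with the Hille--Yosida theorem, and the converse via the Dunford contour integral of the resolvent around the boundary of the sector, with the semigroup property obtained from the resolvent identity and a residue computation. Your treatment of strong continuity at $t=0$ (rescaling the contour to get a $t$-uniform operator bound, then using the improved decay of $R_\lambda Lx$ on the dense set $\CD(L)$) is in fact more explicit than the paper's, which passes over this point rather quickly; the only minor slip is the remark about ``closing to the right'' in the semigroup-property computation --- both inner contours must be closed to the left where $e^{\lambda t}$ decays, and the dichotomy comes from whether the pole is enclosed, exactly as in the paper.
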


\begin{proof}
The fact that generators of analytic semigroups are of the prescribed form is a consequence of Proposition~\ref{prop:genSphi}
and the Hille--Yosida theorem. 

To show the converse statement, let $L$ be such an operator, let $\phi \in (0,\theta)$, let $b > a$, and let $\gamma_{\phi,b}$ be 
the curve in the complex plane obtained by going in a counterclockwise way around the boundary of $\CS_{\phi,b}$ (see the figure below).
For $t$ with $|{\arg t}| < \phi$, define $S(t)$ by
\begin{equs}
S(t) &= {1\over 2\pi i} \int_{\gamma_{\phi,b}} e^{t z} R_z\, dz \label{e:defSt} 
= {1\over 2\pi i} \int_{\gamma_{\phi,b}} e^{t z} (z - L)^{-1}\, dz\;.
\end{equs}
\begin{wrapfigure}{r}{6.5cm}
\begin{center}
\mhpastefig[3/2]{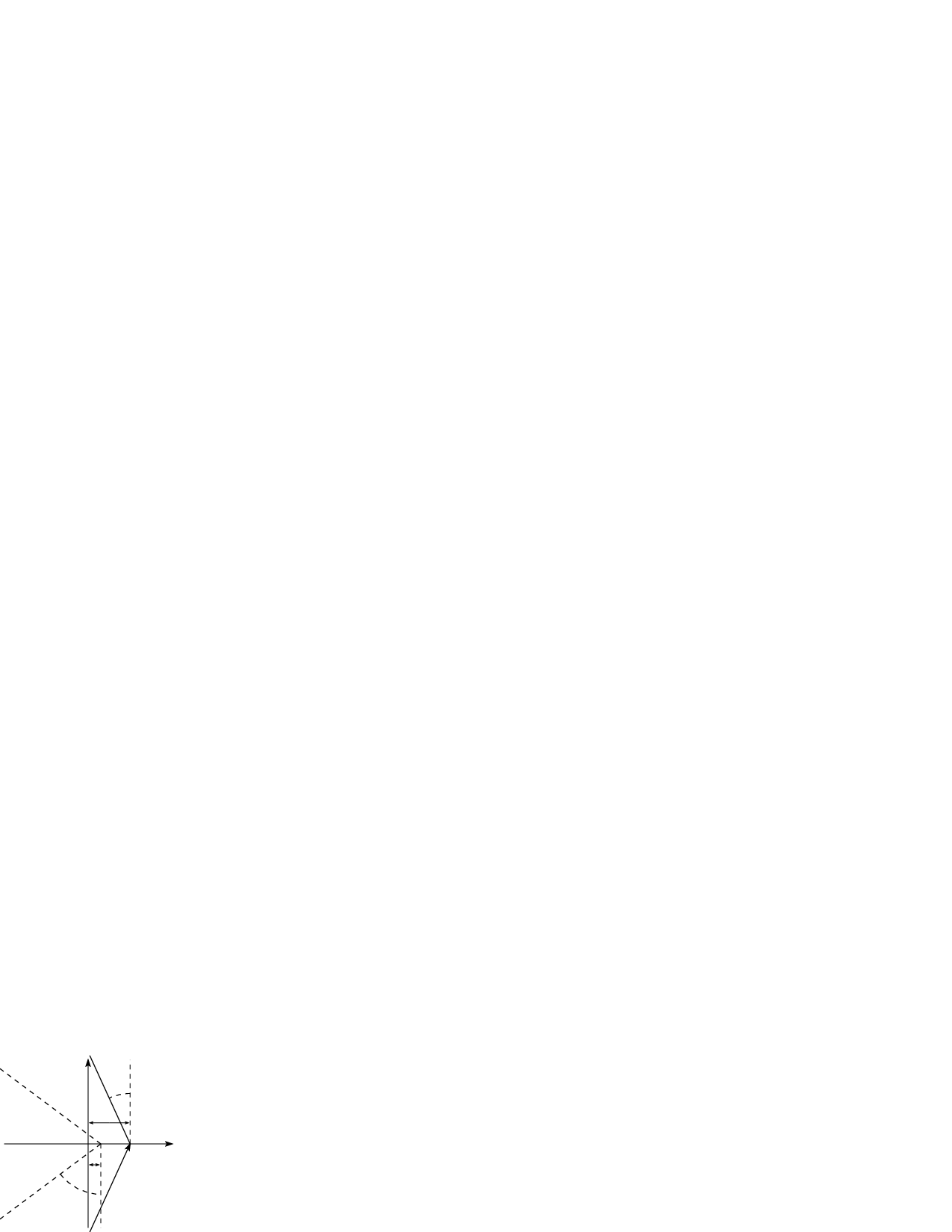}
\end{center}
\vspace{-1cm}
\end{wrapfigure}
It follows from the resolvent bound that $\|R_z\|$ is uniformly bounded for $z \in \gamma_{\phi,b}$. Furthermore, 
since $|{\arg t}| < \phi$, it follows that $e^{tz}$ decays exponentially as $|z| \to \infty$ along $\gamma_{\phi,b}$, so that this
expression is well-defined, does not depend on the choice of $b$ and $\phi$, and
(by choosing $\phi$ arbitrarily close to $\theta$) determines an analytic function $t \mapsto S(t)$ on the sector 
$\{t\,:\, |{\arg t}| < \theta\}$. As in the proof of the Hille--Yosida theorem, the function $(x,t) \mapsto S(t)x$ is jointly continuous because
the convergence of the integral defining $S$ is uniform over bounded subsets of $\{t\,:\, |{\arg t}| < \phi\}$ for any $|\phi| < \theta$.

It therefore remains to show that $S$ satisfies the semigroup property on the sector $\{t\,:\, |{\arg t}| < \theta\}$ and that its generator
is indeed given by $L$. Choosing $s$ and $t$ such that  $|{\arg s}|, |{\arg t}| < \theta$ and using the resolvent
identity $R_z - R_{z'} = (z'-z) R_z R_{z'}$, we have
\begin{equs}
S(s) S(t) &= -{1\over 4\pi^2} \int_{\gamma_{\phi,b'}} \int_{\gamma_{\phi,b}} e^{t z + s z'} R_z R_{z'}\, dz\, dz'
 = -{1\over 4\pi^2} \int_{\gamma_{\phi,b'}} \int_{\gamma_{\phi,b}} e^{t z + s z'} {R_z - R_{z'} \over z'-z}\, dz\, dz' \\
 & = -{1\over 4\pi^2} \int_{\gamma_{\phi,b}} e^{t z} R_z \int_{\gamma_{\phi,b'}} {e^{s z'} \over z'-z}\, dz'\, dz
-{1\over 4\pi^2} \int_{\gamma_{\phi,b'}} e^{s z} R_z \int_{\gamma_{\phi,b}} {e^{t z'} \over z'-z}\, dz'\, dz\;.
\end{equs}
Here, the choice of $b$ and $b'$ is arbitrary, as long as $b \neq b'$ so that the inner integrals are well-defined, say $b' > b$ for definiteness.
In this case, since the contour $\gamma_{\phi,b}$ can be ``closed up'' to the left but not to the right, the 
integral $\int_{\gamma_{\phi,b'}} {e^{sz'} \over z'-z}\, dz'$ is equal to $2i\pi e^{sz}$ for every $z \in \gamma_{\phi,b}$, whereas the integral with 
$b$ and $b'$ inverted vanishes, so that
\begin{equ}
S(s) S(t) = {1\over 2i\pi} \int_{\gamma_{\phi,b}} e^{(t+s) z} R_z  = S(s+t)\;,
\end{equ}
as required. The continuity of the map $t \mapsto S(t)x$ is a straightforward consequence of the resolvent bound, noting that
it arises as a uniform limit of continuous functions.
Therefore $S$ is a strongly continuous semigroup; let us call its generator $\hat L$ and $\hat R_\lambda$
the corresponding resolvent.

To show that $L= \hat L$, it suffices to show that $\hat R_\lambda = R_\lambda$, so 
we make use again of Proposition~\ref{prop:resolvEqu}. Choosing $\Re \lambda > b$ so that $\Re (z-\lambda) < 0$ for every
$z \in \gamma_{\phi,b}$, we have
\begin{equs}
\hat R_\lambda &= \int_0^\infty e^{-\lambda t} S(t)\, dt = {1\over 2\pi i} \int_0^\infty \int_{\gamma_{\phi,b}} e^{t (z-\lambda)} R_z \, dz\, dt \\
&= {1\over 2\pi i} \int_{\gamma_{\phi,b}} \int_0^\infty e^{t (z-\lambda)} \, dt \, R_z \, dz
= {1\over 2\pi i}  \int_{\gamma_{\phi,b}} {R_z \over z-\lambda} \, dz = R_\lambda\;.
\end{equs}
The last equality was obtained by using the fact that $\|R_z\|$ decays like $1/|z|$ for large enough $z$ with $|{\arg z}| \le {\pi \over 2} + \phi$,
so that the contour can be ``closed'' to enclose the pole at $z = \lambda$.
\end{proof}

%\begin{remark}
%The generators of analytic semigroups 
%are also called $m$-sectorial by analogy with the $m$-dissipative operators generating $\CC_0$-semigroups.
%On a Hilbert space, they can also be characterised as those operators with numerical range contained in a sector
%$\CS_{\theta,a}$ and having no extension with the same property.
%\end{remark}

As a consequence of this characterisation theorem, we can study perturbations of generators of analytic semigroups.
The idea is to give a constructive criterion which allows to make sure that an operator of the type $L = L_0 + B$ is the
generator of an analytic semigroup, provided that $L_0$ is such a generator and $B$ satisfies a type of ``relative total boundedness''
condition. The precise statement of this result is:
\begin{theorem}\label{theo:pertanalytic}
Let $L_0$ be the generator of an analytic semigroup and let $B\colon \CD(B) \to \CB$ be an operator such that
\begin{itemize}
\item The domain $\CD(B)$ contains $\CD(L_0)$.
\item For every $\eps>0$ there exists $C > 0$ such that $\|Bx\| \le \eps \|L_0 x\| + C\|x\|$ for every
$x \in \CD(L_0)$.
\end{itemize}
Then the operator $L = L_0 + B$ with domain $\CD(L) = \CD(L_0)$ is also the generator of an analytic semigroup.
\end{theorem}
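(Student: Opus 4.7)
The plan is to verify the sector condition and resolvent bound of Theorem~\ref{theo:charanal} directly for $L = L_0 + B$. Since $L_0$ generates an analytic semigroup, that theorem provides $\theta_0 \in (0, \pi/2)$, $a_0 \ge 0$, and $M_0 > 0$ such that the resolvent $R_\lambda^{(0)} = (\lambda - L_0)^{-1}$ exists for $\lambda \notin S_{\theta_0, a_0}$ and satisfies $\|R_\lambda^{(0)}\| \le M_0/d(\lambda, S_{\theta_0, a_0})$ there. I will construct $(\lambda - L)^{-1}$ on the complement of a strictly \emph{widened but shifted} sector $S_{\theta_L, a_L}$ with $\theta_L < \theta_0$ and $a_L > a_0$. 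A direct check (using $\text{Re}\lambda < a_0 \Rightarrow \text{Re}\lambda < a_L$ and $\cot\theta_0 \le \cot\theta_L$) shows that these choices guarantee $S_{\theta_0, a_0} \subset S_{\theta_L, a_L}$, so $R_\lambda^{(0)}$ is defined everywhere in the complement of the new sector.

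The construction uses the Neumann series
\begin{equ}
(\lambda - L_0 - B)^{-1} = R_\lambda^{(0)} \sum_{n \ge 0} (BR_\lambda^{(0)})^n\;,
\end{equ}
which converges in operator norm as soon as $\|BR_\lambda^{(0)}\| \le 1/2$. Applying the relative-boundedness assumption to $x = R_\lambda^{(0)} y \in \CD(L_0)$ and using $L_0 R_\lambda^{(0)} = \lambda R_\lambda^{(0)} - I$ gives
\begin{equ}
\|BR_\lambda^{(0)}\| \le \eps\bigl(|\lambda|\,\|R_\lambda^{(0)}\| + 1\bigr) + C\,\|R_\lambda^{(0)}\|\;.
\end{equ}
The crucial geometric estimate is that, for $\lambda$ outside $S_{\theta_L, a_L}$, the angular separation $\theta_0 - \theta_L > 0$ from the boundary of $S_{\theta_0, a_0}$ forces
\begin{equ}
d(\lambda, S_{\theta_0, a_0}) \ge c\,|\lambda - a_L|
\end{equ}
for a constant $c = c(\theta_0 - \theta_L) > 0$, so that $|\lambda|\,\|R_\lambda^{(0)}\| \le K$ holds uniformly in $\lambda$; moreover $\|R_\lambda^{(0)}\| \le M_0/(a_L - a_0)$ since $\lambda$ is at distance at least $a_L - a_0$ from $S_{\theta_0, a_0}$. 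One now first picks $\eps$ small enough that $\eps(K+1) \le 1/4$ and \emph{then} chooses $a_L$ large enough that $CM_0/(a_L - a_0) \le 1/4$, yielding $\|BR_\lambda^{(0)}\| \le 1/2$ on the whole complement of $S_{\theta_L, a_L}$.

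Once the Neumann series converges, a short calculation using $(\lambda - L_0 - B) R_\lambda^{(0)} = I - BR_\lambda^{(0)}$ confirms that the resulting operator is the genuine inverse of $\lambda - L$ on $\CD(L) = \CD(L_0)$, and
\begin{equ}
\|R_\lambda\| \le 2\,\|R_\lambda^{(0)}\| \le \frac{2M_0}{d(\lambda, S_{\theta_0, a_0})} \le \frac{2M_0}{d(\lambda, S_{\theta_L, a_L})}\;,
\end{equ}
the last inequality following from $S_{\theta_0, a_0} \subset S_{\theta_L, a_L}$ (distances to the larger set are smaller). Density of $\CD(L)$ is inherited from $L_0$, and closedness of $L$ follows from closedness of $L_0$ together with the fact that the relative bound of $B$ with respect to $L_0$ can be chosen arbitrarily small. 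Theorem~\ref{theo:charanal} then applies to $L$ with constants $(\theta_L, a_L, 2M_0)$, completing the proof. The main obstacle is the geometric estimate $|\lambda|\,\|R_\lambda^{(0)}\| \le K$: it genuinely requires a strict narrowing $\theta_L < \theta_0$, since keeping $\theta_L = \theta_0$ and merely shifting $a_L$ to the right would allow $|\lambda|$ to grow without bound along the new sector's boundary while $d(\lambda, S_{\theta_0, a_0})$ remains bounded by $(a_L - a_0)\cos\theta_0$.
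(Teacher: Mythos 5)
Your argument is correct and follows essentially the same route as the paper: both reduce the problem to showing $\|BR_\lambda^0\|<1$ outside a sector that is widened in angle (so that $d(\lambda,S_{\theta_0,a_0})\gtrsim$ a linear function of $|\lambda|$, which controls the term $\eps|\lambda|\,\|R_\lambda^0\|$ after choosing $\eps$ small) and then shifted to the right (to kill the term $C\|R_\lambda^0\|$), the paper solving $z - BR_\lambda^0 z = y$ where you write out the Neumann series explicitly. The only quibble is your claim that $d(\lambda, S_{\theta_0,a_0}) \ge a_L - a_0$, which in general holds only up to a factor like $\cos\theta_0$ (the nearest point of the sector may be the foot of a perpendicular on a boundary ray rather than the vertex), but this harmless constant changes nothing in the argument.
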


\begin{proof}
In view of Theorem~\ref{theo:charanal} it suffices to show that there exists a sector $\CS_{\theta,a}$ containing the spectrum 
of $L$ and such that the resolvent bound $R_\lambda \le M d(\lambda, \CS_{\theta,a})^{-1}$ holds away from it.

Denote by $R_\lambda^0$ the resolvent for $L_0$ and consider the resolvent equation for $L$:
\begin{equ}
(\lambda - L_0 - B)x = y\;,\quad x\in \CD(L_0)\;.
\end{equ}
Since (at least for $\lambda$ outside of some sector) $x$ belongs to the range of $R_\lambda^0$, we can set
$x = R_\lambda^0 z$ so that this 
equation is equivalent to 
\begin{equ}
z - BR_\lambda^0 z = y\;.
\end{equ}
The claim therefore follows if we can show that there exists a sector $\CS_{\theta,a}$ and a constant $c < 1$ such that
$\|B R_\lambda^0\| \le c$ for $\lambda \not \in \CS_{\theta,a}$. This is because one then has the bound
\begin{equ}
\|R_\lambda y\|  = \|R_\lambda^0 z\| \le {\|R_\lambda^0\| \over 1-c} \|y\|\;.
\end{equ}

Using our assumption on $B$, we have the bound
\begin{equ}[e:boundB]
\|BR_\lambda^0 z\| \le \eps \|L_0 R_\lambda^0z\| + C\|R_\lambda^0 z\|\;.
\end{equ}
Furthermore, one has the identity $L_0 R_\lambda^0 = \lambda R_\lambda^0 - 1$ and, since $L_0$ is the generator
of an analytic semigroup by assumption, the resolvent bound
$\|R_\lambda^0\| \le M d(\lambda, \CS_{\alpha,b})^{-1}$ for some parameters $\alpha, b$. Inserting this into 
\eref{e:boundB}, we obtain the bound
\begin{equ}
\|BR_\lambda^0\| \le {(\eps |\lambda| + C) M \over d(\lambda, \CS_{\alpha,b})} + \eps \;.
\end{equ}
Note now that by choosing $\theta \in (0,\alpha)$, we can find some $\delta>0$ such that
$d(\lambda, \CS_{\alpha,b}) > \delta |\lambda|$ for all $\lambda \not \in \CS_{\theta,a}$ and all $a > 1 \vee (b+1)$.
We fix such a $\theta$ and we make $\eps$ sufficiently small such that one has both $\eps < 1/4$ and
$\eps \delta^{-1} < 1/4$.

We can then make $a$ large enough so that $d(\lambda, \CS_{\alpha,b}) \ge 4 CM$ for $\lambda \not \in \CS_{\theta,a}$,
so that $\|BR_\lambda^0\| \le 3/4$. for these values of $\lambda$, as requested.
\end{proof}

\begin{remark}
As one can see from the proof, one actually needs the bound $\|Bx\| \le \eps \|L_0 x\| + C\|x\|$ only for some
particular value of $\eps$ that depends on the properties of $L_0$.
\end{remark}

As a consequence, we have:
\begin{proposition}
Let $f \in L^\infty(\R)$. Then, the operator
\begin{equ}
\bigl(L g\bigr)(x) = g''(x) + f(x) g'(x)\;,
\end{equ}
on $L^2(\R)$ with domain $\CD(L) = H^2$ is the generator of an analytic semigroup.
\end{proposition}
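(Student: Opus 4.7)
The plan is to realize the operator as a perturbation of the Laplacian and apply Theorem~\ref{theo:pertanalytic}. Write $L = L_0 + B$ where $L_0 g = g''$ with $\CD(L_0) = H^2$ and $Bg = f g'$ with $\CD(B) = H^1 \supset H^2$. The boundedness of $f$ immediately gives $\|Bg\|_{L^2} \le \|f\|_{L^\infty} \|g'\|_{L^2}$, so $B$ is defined on all of $H^2$ as required.

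First I would verify that $L_0$ generates an analytic semigroup on $L^2(\R)$ via Theorem~\ref{theo:charanal}. Passing to Fourier variables, $L_0$ becomes multiplication by $-\xi^2$ on $L^2(\R)$, which is self-adjoint with spectrum $(-\infty,0]$. For $\lambda \notin (-\infty,0]$, the resolvent acts as multiplication by $(\lambda+\xi^2)^{-1}$, so
\begin{equ}
\|R_\lambda^0\| = \sup_{\xi \in \R} \frac{1}{|\lambda+\xi^2|} = \frac{1}{d(\lambda,(-\infty,0])}\;.
\end{equ}
Hence for any $\theta \in (0,\pi/2)$ the hypothesis of Theorem~\ref{theo:charanal} is satisfied with $S_{\theta,0}$, so $L_0$ generates an analytic semigroup.

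The main technical step is the interpolation bound needed to check the second hypothesis of Theorem~\ref{theo:pertanalytic}, namely that for every $\eps > 0$ there exists $C > 0$ with
\begin{equ}
\|Bg\|_{L^2} \le \eps \|L_0 g\|_{L^2} + C\|g\|_{L^2}\;,\qquad g \in H^2\;.
\end{equ}
By Plancherel, for any $\delta > 0$ and any $\xi \in \R$ one has the elementary bound $\xi^2 \le \delta \xi^4 + \frac{1}{4\delta}$, so that
\begin{equ}
\|g'\|_{L^2}^2 = \int_\R \xi^2 |\hat g(\xi)|^2\, d\xi \le \delta \|g''\|_{L^2}^2 + \frac{1}{4\delta}\|g\|_{L^2}^2\;.
\end{equ}
Taking square roots and using $\sqrt{a+b} \le \sqrt a + \sqrt b$ yields $\|g'\|_{L^2} \le \sqrt{\delta}\,\|g''\|_{L^2} + \frac{1}{2\sqrt{\delta}}\|g\|_{L^2}$. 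Choosing $\delta = (\eps/\|f\|_{L^\infty})^2$ and multiplying through by $\|f\|_{L^\infty}$ gives the desired inequality with $C = \|f\|_{L^\infty}^2/(2\eps)$.

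With these two ingredients in hand, Theorem~\ref{theo:pertanalytic} applies directly and shows that $L = L_0 + B$, with $\CD(L) = \CD(L_0) = H^2$, generates an analytic semigroup on $L^2(\R)$. I expect the only slightly subtle point to be the interpolation inequality above, but Plancherel reduces it to a one-line pointwise estimate; everything else is a direct invocation of the preceding theorems.
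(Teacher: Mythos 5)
Your proposal is correct and follows essentially the same route as the paper: decompose $L = L_0 + B$ with $L_0 g = g''$ and $Bg = fg'$, establish the relative bound $\|Bg\| \le \eps\|L_0 g\| + C\|g\|$, and invoke Theorem~\ref{theo:pertanalytic}. The only (cosmetic) difference is in the interpolation inequality for $\|g'\|$: the paper obtains it by integrating by parts, writing $\|g'\|^2 = -\scal{g,g''} \le \|g\|\,\|g''\|$ and then applying Young's inequality, whereas you pass to the Fourier side and use the pointwise bound $\xi^2 \le \delta \xi^4 + \tfrac{1}{4\delta}$ — both are one-line arguments and equally valid.
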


\begin{proof}
It is well-known that the operator $\bigl(L_0 g\bigr)(x) = g''(x)$ with domain $\CD(L) = H^2$ is self-adjoint and
negative definite, so that it is the generator of an analytic semigroup with angle $\theta = \pi/2$.

Setting $Bg = fg'$, we have for $g \in H^2$ the bound
\begin{equ}
\|Bg\|^2 = \int_\R f^2(x) \bigl(g'(x)\bigr)^2\, dx \le \|f\|_{L^\infty}^2 \scal{g',g'}
= - \|f\|_{L^\infty}^2 \scal{g,g''} \le \|f\|_{L^\infty} \|g\| \|L_0g\|\;.
\end{equ}
It now suffices to use the fact that $2|xy| \le \eps x^2 + \eps^{-1} y^2$ to conclude that the assumptions of
Theorem~\ref{theo:pertanalytic} are satisfied.
\end{proof}

Similarly, one can show:
\begin{exercise}
Show that the generator of a uniformly elliptic diffusion with smooth coefficients 
on a compact Riemannian manifold $\CM$
generates an analytic semigroup on $L^2(\CM,\rho)$, where $\rho$ is the volume measure given by the 
Riemannian structure. 
\end{exercise}

\subsection{Interpolation spaces}

\index{interpolation space}
The remainder of this section will be devoted to the study of the domains of fractional powers of the generator $L$
of an analytic semigroup $S(t)$. For simplicity, we will assume \textit{throughout this section} that there exist $M>0$ and $w > 0$ such that
$\|S(t)\| \le Me^{-wt}$, thus making sure that the resolvent set of $L$ contains all the right half of the complex plane.
The general case can be recovered easily by ``shifting the generator to the left''. For $\alpha > 0$, we \textit{define}
negative fractional powers of $L$ by
\begin{equ}[e:fracpower]
(-L)^{-\alpha} \eqdef {1\over \Gamma(\alpha)} \int_0^\infty t^{\alpha - 1} S(t)\, dt\;,
\end{equ}
which is a bounded operator by the decay assumption on $\|S(t)\|$.
Since $\Gamma(1) = 1$, note that if $\alpha = 1$
one does indeed recover the resolvent of $L$ evaluated at $0$. Furthermore, it is straightforward to check that 
one has the identity $(-L)^{-\alpha}(-L)^{-\beta} = (-L)^{-\alpha-\beta}$, which together justify the definition \eref{e:fracpower}.

Note that it follows from this identity that $(-L)^{-\alpha}$ is injective for every $\alpha > 0$.
Indeed, given some $\alpha > 0$, one can find an integer $n>0$ such that $(-L)^{-n} = (-L)^{-n+\alpha}(-L)^{-\alpha}$. A failure for
$(-L)^{-\alpha}$ to be injective would therefore result in a failure for $(-L)^n$ and therefore $(-L)^{-1}$ to be injective.
This is ruled out by the fact that $0$ belongs to the resolvent set of $L$. We can therefore define
$(-L)^\alpha$ as the unbounded operator with domain $\CD((-L)^\alpha) = \range (-L)^{-\alpha}$
given by the inverse of $(-L)^{-\alpha}$. This definition is again consistent with the usual definition of $(-L)^\alpha$ for
integer values of $\alpha$. This allows us to set:

\begin{definition}\index{interpolation|space}
For $\alpha > 0$ and given an analytic semigroup $S$ on a Banach space $\CB$, 
we define the \textit{interpolation space} $\CB_\alpha$ as the domain of $(-L)^\alpha$ endowed with the norm $\|x\|_\alpha = \|(-L)^\alpha x\|$.
We similarly define $\CB_{-\alpha}$ as the completion of $\CB$ for the norm
$\|x\|_{-\alpha} = \|(-L)^{-\alpha} x\|$.
\end{definition}

\begin{remark}
If the norm of $S(t)$ grows instead of decaying with $t$, then we use $\lambda - L$ instead of $-L$ for some $\lambda$
sufficiently large. The choice of different values of $\lambda$ leads to equivalent norms on $\CB_\alpha$. 
\end{remark}

\begin{exercise}
Show that the inclusion $\CB_\alpha \subset \CB_\beta$ for $\alpha \ge \beta$ hold, whatever the signs of $\alpha$ and $\beta$.
\end{exercise}

\begin{exercise}\label{ex:exprLalpha}
Show that for $\alpha \in (0,1)$ and $x \in \CD(L)$, one has the identity
\begin{equ}[e:exprLa]
(-L)^\alpha x = {\sin \alpha \pi \over \pi} \int_0^\infty t^{\alpha - 1} (t - L)^{-1}(-L)x\, dt\;.
\end{equ}
\textbf{Hint:} Write the resolvent appearing in \eref{e:exprLa} in terms of the semigroup and apply the resulting expression to
$(-L)^{-\alpha}x$, as defined in \eref{e:fracpower}. The aim of the game is then to perform a smart change of variables.
\end{exercise}

\begin{exercise}\label{ex:interpBound}
Use \eref{e:exprLa} to show that, for every $\alpha \in (0,1)$, there exists a constant $C$
such that the bound $\|(-L)^\alpha x\| \le C\|Lx\|^\alpha \|x\|^{1-\alpha}$ holds for every $x \in \CD(L)$. \\ \textbf{Hint:} Split the integral 
as $\int_0^\infty = \int_0^K + \int_K^\infty$ and optimise over $K$. (The optimal value for $K$ will turn out to be proportional to $\|Lx\| / \|x\|$.) In the first integral, the identity $(t - L)^{-1}(-L) = 1- t(t-L)^{-1}$ 
might come in handy.
\end{exercise}

\begin{exercise}\label{ex:pertAnal}
Let $L$ be the generator of an analytic semigroup on $\CB$ and denote by $\CB_\alpha$ the corresponding interpolation spaces.
Let $B$ be a (possibly unbounded) operator on $\CB$. Using the results from the previous exercise, show 
that if there exists $\alpha \in [0,1)$ such that $\CB_\alpha \subset \CD(B)$
so that $B$ is a bounded operator from $\CB_\alpha$ to $\CB$, then one has the bound
\begin{equ}
\|Bx\| \le C \bigl(\eps  \|Lx\| + \eps^{-\alpha/(1-\alpha)} \|x\|\bigr)\;,
\end{equ}
for some constant $C>0$ and for all $\eps \le 1$. In particular, $L+B$ is also the generator of an analytic semigroup on $\CB$. \\
\textbf{Hint:} The assumption on $B$ implies that there exists a constant $C$ such that $\|Bx\| \le C \|x\|_\alpha$.
\end{exercise}

\begin{exercise}
Let $L$ and $B$ be as in Exercise~\ref{ex:pertAnal} and denote by $S_B$ the analytic semigroup with generator $L+B$. 
Use the relation $R_\lambda - R_\lambda^0 = R_\lambda^0 B R_\lambda$ to show that one has the identity
\begin{equ}
S_B(t)x = S(t)x + \int_0^t S(t-s) B S_B(s)x \, ds\;.
\end{equ}
\textbf{Hint:} Start from the right hand side of the equation and use an argument similar to that of the proof of Theorem~\ref{theo:charanal}.
\end{exercise}

\begin{exercise}
Show that $(-L)^\alpha$ commutes with $S(t)$ for every $t > 0$ and every $\alpha \in \R$. Deduce that $S(t)$ leaves $\CB_\alpha$ invariant
for every $\alpha > 0$.
\end{exercise}

\begin{exercise}\label{ex:interpAdjoint}
It follows from Theorem~\ref{theo:charanal} that the restriction $L^\dagger$ of the adjoint $L^*$ of the generator of an analytic semigroup on $\CB$
to the ``semigroup dual'' space $\CB^\dagger$ is again the generator of an analytic semigroup on $\CB^\dagger$. 
Denote by $\CB_\alpha^\dagger$ the corresponding
interpolation spaces. Show that one has $\CB_\alpha^\dagger = \CD\bigl((-L^\dagger)^\alpha\bigr) \subset \CD\bigl(\bigl((-L)^\alpha\bigr)^*\bigr) =  \bigl(\CB_{-\alpha}\bigr)^*$ for every $\alpha \ge 0$.
\end{exercise}

We now show that an analytic semigroup $S(t)$ always maps $\CB$ into $\CB_\alpha$ for $t > 0$, so that it has
a ``smoothing effect''. Furthermore, the norm in the domains of integer powers of $L$ can be bounded by:

\begin{proposition}\label{prop:mapInteger}
For every $t>0$ and every integer $k > 0$, $S(t)$ maps $\CB$ into $\CD(L^k)$ and there exists a constant $C_k$ such that
\begin{equ}
\|L^k S(t) x\| \le {C_k\over t^k}
\end{equ}
for every $t \in (0,1]$.
\end{proposition}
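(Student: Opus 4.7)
The plan is to differentiate the contour integral representation \eref{e:defSt} from the proof of Theorem~\ref{theo:charanal},
\[
S(t) = \frac{1}{2\pi i}\int_{\gamma} e^{tz} R_z\, dz,
\]
$k$ times in $t$ under the integral sign. The standing assumption $\|S(t)\| \le Me^{-wt}$ with $w > 0$ places the spectrum of $L$ in a sector $S_{\theta,a}$ with $a < 0$, so we may take $\gamma = \gamma_{\phi,b}$ with $\phi \in (0,\theta)$ and $b \in (a,0)$, lying entirely in the open left half-plane. On this contour, the resolvent bound of Theorem~\ref{theo:charanal} yields $\|R_z\| \le M'(1+|z|)^{-1}$ uniformly in $z$.

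First I would define, for every integer $k \ge 0$, the bounded operator
\[
T_k(t) \eqdef \frac{1}{2\pi i}\int_{\gamma} z^k e^{tz} R_z\, dz.
\]
On the two infinite rays of $\gamma$ one has $\Re z = -|z|\sin\phi$, so $\|z^k e^{tz} R_z\| \le M'|z|^{k-1}e^{-t|z|\sin\phi}$, and the integral converges absolutely for every $t > 0$. Bounding the ray contribution by $\int_0^\infty s^{k-1} e^{-ts\sin\phi}\, ds = (k-1)!\,(t\sin\phi)^{-k}$ for $k \ge 1$ (the bounded piece near the vertex contributes only an $O(1)$ term, which is absorbed into $C_k/t^k$ since $t \le 1$), one obtains $\|T_k(t)\| \le C_k/t^k$ for every $t \in (0,1]$.

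Next I would identify $T_k(t)$ with $L^k S(t)$ by induction on $k$. For $k=1$, the identity $zR_z = I + LR_z$ together with $\int_\gamma e^{tz}\, dz = 0$ (valid since $e^{tz}$ is entire and decays in the left half-plane, so the contour may be closed at infinity) rewrites $T_1(t)x = \frac{1}{2\pi i}\int_\gamma e^{tz} L R_z x\, dz$. Approximating this by Riemann sums $y_n \in \CD(L)$ with $y_n \to S(t)x$ and $Ly_n \to T_1(t)x$, the closedness of $L$ (Proposition~\ref{prop:closed} together with Exercise~\ref{ex:closed}) yields $S(t)x \in \CD(L)$ and $L S(t)x = T_1(t)x$. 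The inductive step uses the identity $z^{k+1} R_z = z^k + z^k L R_z$ and the vanishing of $\int_\gamma z^k e^{tz}\, dz$ for each $k \ge 0$ (again by closing the contour) to produce $L T_k(t) = T_{k+1}(t)$, whence $L^{k+1}S(t) = T_{k+1}(t)$.

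The main obstacle is the justification at each inductive step of passing the closed operator $L$ inside the contour integral; this is precisely what the Riemann-sum-plus-closedness argument accomplishes. Once the identification $L^k S(t) = T_k(t)$ is in hand, the announced bound $\|L^k S(t)\| \le C_k/t^k$ for $t \in (0,1]$ is just the scaling estimate derived in the second paragraph.
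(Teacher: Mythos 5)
Your proof is correct and follows essentially the same route as the paper: both start from the contour representation \eref{e:defSt}, use the identity $LR_z = zR_z - 1$ together with $\int_{\gamma_{\phi,b}} z^k e^{tz}\,dz = 0$ and the closedness of $L$ (completeness of $\CD(L^k)$ under the graph norm) to identify $L^kS(t)$ with $\frac{1}{2\pi i}\int_{\gamma_{\phi,b}} z^k e^{tz}R_z\,dz$, and then bound that integral using the resolvent estimate and the exponential decay of $e^{tz}$ along the contour. Your explicit Riemann-sum justification and direct Gamma-integral computation are just more detailed versions of the steps the paper compresses into "integrating by parts $k-1$ times".
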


\begin{proof}
In order to show that $S$ maps $\CB$ into the domain of every power of $L$, we use \eref{e:defSt}, together with the 
identity $LR_\lambda = \lambda R_\lambda - 1$ which is an immediate consequence of the definition of the resolvent $R_\lambda$
of $L$. Since $ \int_{\gamma_{\phi,b}} e^{t z} dz = 0$ for every $t$ such that $|{\arg t}| < \phi$ and since the domain of $L^k$ is complete
under the graph norm, this shows that $S(t)x\in \CD(L^k)$ and
\begin{equ}
L^kS(t) = {1\over 2\pi i} \int_{\gamma_{\phi,b}} z^k e^{t z} R_z\, dz  \;.
\end{equ}
It follows that there exist positive constants $c_i$ such that
\begin{equ}
\|L^kS(t)\| \le {1\over 2\pi} \int_{\gamma_{\phi,b}} |z|^k |e^{t z}| \|R_z\|\, d|z|
\le c_1 \int_0^\infty (1+x)^k e^{-c_2t(x-c_3)} (1+x)^{-1}dx\;.
\end{equ}
Integrating by parts $k-1$ times, we obtain
\begin{equ}
\|L^kS(t)\| \le {c_4 \over t^{k-1}} \int_0^\infty e^{-c_2 t (x- c_4)}\, dx = {c_5 e^{c_6 t}\over t^k}\;,
\end{equ}
which implies the announced bound.
\end{proof}

It turns out that a similar bound also holds for interpolation spaces with non-integer indices:

\begin{proposition}
For every $t>0$ and every $\alpha > 0$, $S(t)$ maps $\CB$ into $\CB_\alpha$ and there exists a constant $C_\alpha$ such that
\begin{equ}[e:boundStinterp]
\|(-L)^\alpha S(t) x\| \le {C_\alpha \over t^\alpha}
\end{equ}
for every $t \in (0,1]$.
\end{proposition}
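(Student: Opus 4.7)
The plan is to reduce the bound to the case $\alpha \in (0,1)$ and treat that range using the interpolation inequality of Exercise~\ref{ex:interpBound}, then bootstrap to arbitrary $\alpha > 0$ by factoring through integer powers. Before estimating the norm, I would first record that $S(t)\CB \subset \CB_\alpha$: by Proposition~\ref{prop:mapInteger} one has $S(t)x \in \CD(L^k)$ for every integer $k$, and taking $k$ with $k \ge \alpha$ gives $S(t)x \in \CB_k \subset \CB_\alpha$ by Exercise~\ref{ex:exprLalpha}, so the quantity on the left of \eref{e:boundStinterp} is well defined.

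For $\alpha \in (0,1)$, I would apply the interpolation bound of Exercise~\ref{ex:interpBound} to $y = S(t)x \in \CD(L)$, obtaining
\begin{equ}
\|(-L)^\alpha S(t)x\| \le C\|LS(t)x\|^\alpha \|S(t)x\|^{1-\alpha}\;.
\end{equ}
Proposition~\ref{prop:mapInteger} with $k=1$ bounds the first factor by $(C_1/t)^\alpha \|x\|^\alpha$ for $t \in (0,1]$, while the standing assumption $\|S(t)\| \le M e^{-wt} \le M$ bounds the second by $M^{1-\alpha}\|x\|^{1-\alpha}$. Multiplying yields the desired estimate with $C_\alpha = C\,C_1^\alpha M^{1-\alpha}$.

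For general $\alpha > 0$, I would write $\alpha = n + \beta$ with $n$ a non-negative integer and $\beta \in [0,1)$, use the semigroup property $S(t) = S(t/2)S(t/2)$, and invoke the fact (proven in an earlier exercise) that $(-L)^\beta$ commutes with $S(t/2)$, in order to write
\begin{equ}
(-L)^\alpha S(t)x = (-L)^\beta S(t/2)\bigl((-L)^n S(t/2)x\bigr)\;.
\end{equ}
Combining the previous paragraph applied to $(-L)^\beta S(t/2)$ (or the trivial bound $\|S(t/2)\| \le M$ when $\beta=0$) with Proposition~\ref{prop:mapInteger} applied to $(-L)^n S(t/2)x$ then gives
\begin{equ}
\|(-L)^\alpha S(t)x\| \le C_\beta (t/2)^{-\beta} \cdot C_n (t/2)^{-n}\|x\| = C_\alpha t^{-\alpha}\|x\|\;,
\end{equ}
for $t \in (0,1]$, which is what we wanted.

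The only delicate point is the first step: a naive factorisation such as $(-L)^\alpha S(t) = (-L)^{\alpha-1}(-L)S(t)$ (for $\alpha\in(0,1)$) would only yield a $t^{-1}$ blow-up, which is strictly worse than $t^{-\alpha}$ for small $t$. The interpolation inequality of Exercise~\ref{ex:interpBound} is precisely the tool that trades off $\|LS(t)\| \lesssim t^{-1}$ against the uniform bound $\|S(t)\| \le M$ in the sharp ratio $\alpha : (1-\alpha)$, producing the fractional exponent $t^{-\alpha}$; once this is in hand, the extension to $\alpha \ge 1$ is purely mechanical.
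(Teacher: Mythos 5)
Your proof is correct, but it follows a genuinely different route from the one in the paper. The paper's argument handles all non-integer $\alpha$ in a single step: it factors $(-L)^\alpha = (-L)^{\alpha-[\alpha]-1}(-L)^{[\alpha]+1}$ and uses the integral representation \eref{e:fracpower} of the negative fractional power to write $(-L)^\alpha S(t)$ as a constant times $\int_0^\infty s^{[\alpha]-\alpha} L^{[\alpha]+1}S(t+s)\,ds$; the integer-power bound of Proposition~\ref{prop:mapInteger} is then applied under the integral and the substitution $s \mapsto ts$ produces the factor $t^{-\alpha}$ directly. You instead first establish the range $\alpha\in(0,1)$ via the moment inequality $\|(-L)^\alpha y\| \le C\|Ly\|^\alpha\|y\|^{1-\alpha}$ of Exercise~\ref{ex:interpBound} (the exponent $t^{-\alpha}$ arising from the $\alpha:(1-\alpha)$ trade-off between $\|LS(t)\|\lesssim t^{-1}$ and $\|S(t)\|\le M$), and then bootstrap via $S(t)=S(t/2)S(t/2)$ and the commutation of fractional powers with the semigroup. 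Both proofs rest on Proposition~\ref{prop:mapInteger} as the essential input; yours is more modular and arguably more transparent about where the fractional exponent comes from, at the cost of depending on an inequality the paper only states as an exercise, whereas the paper's version is self-contained given the definition \eref{e:fracpower} and avoids the case split between $\alpha<1$ and $\alpha\ge 1$. One small point worth making explicit in your final step is that for $t\in(0,1]$ one also has $t/2\in(0,1]$, so the bounds you invoke for $S(t/2)$ are indeed available; with that noted, the argument is complete.
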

\begin{proof}
The fact that $S(t)$ maps $\CB$ into $\CB_\alpha$ follows from Proposition~\ref{prop:mapInteger} since there exists $n$ such
that $\CD(L^n) \subset \CB_\alpha$. 
We assume again that the norm of $S(t)$ decays exponentially for large $t$.
The claim for integer values of $\alpha$ is known to hold by Proposition~\ref{prop:mapInteger}, so
we fix some $\alpha > 0$ which is \textit{not} an integer.
Note first that $(-L)^\alpha = (-L)^{\alpha - [\alpha]-1} (-L)^{[\alpha]+1}$, were we
denote by $[\alpha]$ the integer part of $\alpha$. We thus obtain from \eref{e:fracpower} the identity
\begin{equ}
(-L)^\alpha S(t) = {(-1)^{[\alpha]+1}\over \Gamma([\alpha]-\alpha+1)} \int_0^\infty s^{[\alpha]-\alpha} L^{[\alpha]+1} S(t+s)\, ds\;.
\end{equ}
Using the previous bound for $k=[\alpha]$, we thus get for some $C>0$ the bound
\begin{equ}
\|(-L)^\alpha S(t)\| \le C \int_0^\infty s^{[\alpha]-\alpha} {e^{-w(t+s)} \over (t+s)^{[\alpha]+1}}\, ds \le C t^{-\alpha} \int_0^\infty  {s^{[\alpha]-\alpha} \over (1+s)^{[\alpha]+1}}\, ds
\;,
\end{equ}
where we used the substitution $s \mapsto ts$. Since the last function is integrable for every $\alpha > 0$, the claim follows at once.
\end{proof}

\begin{exercise}\label{ex:ASGab}
Using the fact that $S(t)$ commutes with any power of its generator,
show that $S(t)$ maps $\CB_\alpha$ into $\CB_\beta$ for every $\alpha, \beta \in \R$ and that, for $\beta>\alpha$, there
exists a constant $C_{\alpha, \beta}$ such that $\|S(t)x\|_{\CB_\beta} \le C_{\alpha, \beta} \|x\|_{\CB_\alpha} t^{\alpha-\beta}$ for all $t \in (0,1]$.
\end{exercise}

\begin{exercise}\label{ex:resolvBound}
Using the bound from the previous exercise and the definition of the resolvent, 
show that for every $\alpha \in \R$ and every $\beta \in [\alpha,\alpha+1)$ there exists a constant $C$ such that
the bound $\|(t-L)^{-1}x\|_{\CB_\beta} \le C (1+t)^{\beta-\alpha-1}\|x\|_{\CB_\alpha}$ holds for all $t \ge 0$.
\end{exercise}

\begin{exercise}
Consider an analytic semigroup $S(t)$ on $\CB$ and denote by $\CB_\alpha$ the corresponding interpolation spaces.
Fix some $\gamma \in \R$ and denote by $\hat S(t)$ the semigroup $S$ viewed as a semigroup on $\CB_\gamma$.
Denoting by $\hat \CB_\alpha$ the interpolation spaces corresponding to $\hat S(t)$, show that
one has the identity $\hat \CB_\alpha = \CB_{\gamma + \alpha}$ for every $\alpha \in \R$.
\end{exercise}

Another question that can be answered in a satisfactory way with the help of interpolation spaces is
 the speed of convergence of $S(t)x$ to $x$ as $t\to 0$. We know that
if $x \in \CD(L)$, then $t \mapsto S(t)x$ is differentiable at $t=0$, so that $\|S(t)x - x\| = t \|Lx\| + o(t)$. Furthermore, one can
in general find elements $x \in \CB$ so that the convergence $S(t)x \to x$ is arbitrarily slow. This suggests that
if $x \in \CD((-L)^\alpha)$ for $\alpha \in (0,1)$, one has $\|S(t)x - x\| = \CO(t^\alpha)$. This is indeed the case:

\begin{proposition}\label{prop:HolderCont}
Let $S$ be an analytic semigroup with generator $L$ on a Banach space $\CB$. Then, for every $\alpha \in (0,1)$,
there exists a constant $C_\alpha$, so that the bound
\begin{equ}[e:boundStx]
\|S(t)x - x\| \le C_\alpha t^\alpha \|x\|_{\CB_\alpha}
\end{equ}
holds for every $x \in \CB_\alpha$ and every $t\in (0,1]$.
\end{proposition}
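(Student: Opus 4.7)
The plan is to establish the bound first for $x \in \CD(L)$ by using the fundamental identity $S(t)x - x = \int_0^t S(s)Lx\,ds$ from Proposition~\ref{prop:diffSt}, together with the smoothing bound \eref{e:boundStinterp}, and then to extend the resulting estimate to all of $\CB_\alpha$ by a density argument.

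First, suppose $x \in \CD(L)$. Since $\CD(L) = \CB_1 \subset \CB_\alpha$ (Exercise~\ref{ex:exprLalpha}), the element $(-L)^\alpha x$ is well-defined, and by the semigroup property of fractional powers one has $-Lx = (-L)^{1-\alpha}(-L)^\alpha x$. The exercise following Proposition~\ref{prop:mapInteger} guarantees that $S(s)$ commutes with every fractional power of $L$, so the identity from Proposition~\ref{prop:diffSt} becomes
\begin{equ}
S(t)x - x \;=\; -\int_0^t (-L)^{1-\alpha} S(s) (-L)^\alpha x\,ds\;.
\end{equ}
Now invoke \eref{e:boundStinterp} with exponent $1-\alpha$: there is a constant $C_{1-\alpha}$ with $\|(-L)^{1-\alpha}S(s)\| \le C_{1-\alpha}\, s^{\alpha-1}$ for $s \in (0,1]$. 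Since $\alpha > 0$, the function $s^{\alpha-1}$ is integrable at the origin, and I obtain
\begin{equ}
\|S(t)x - x\| \;\le\; C_{1-\alpha}\,\|(-L)^\alpha x\| \int_0^t s^{\alpha-1}\,ds \;=\; \frac{C_{1-\alpha}}{\alpha}\, t^\alpha\, \|x\|_{\CB_\alpha}\;,
\end{equ}
which is the desired estimate with $C_\alpha = C_{1-\alpha}/\alpha$.

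To pass from $\CD(L)$ to $\CB_\alpha$, I take $x \in \CB_\alpha$ and set $x_n = nR_n x$, where $R_n = (n-L)^{-1}$. Each $x_n$ lies in the range of $R_n$, hence in $\CD(L)$, and since the resolvent commutes with $(-L)^\alpha$, one has $(-L)^\alpha x_n = nR_n(-L)^\alpha x$. By the approximation property $nR_n y \to y$ in $\CB$ (established in the proof of the Hille-Yosida theorem), applied to $y = (-L)^\alpha x$, one gets $(-L)^\alpha x_n \to (-L)^\alpha x$ in $\CB$; equivalently, $x_n \to x$ in $\CB_\alpha$. In particular $x_n \to x$ and $S(t)x_n \to S(t)x$ in $\CB$. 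Applying the already-proven bound to each $x_n$ and passing to the limit yields the claim.

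The only delicate points are algebraic: the commutation of $S(s)$ with $(-L)^{1-\alpha}$ and the identity $(-L)^{1-\alpha}(-L)^\alpha = -L$ on $\CD(L)$, both of which are direct consequences of the functional calculus for $L$ developed earlier in the section. Everything else is a routine estimate; the essential analytic content is that the singularity $s^{\alpha-1}$ arising from the $(1-\alpha)$-order smoothing of $S(s)$ is \emph{just} integrable at $s = 0$ as soon as $\alpha > 0$, which is precisely the mechanism producing the H\"older exponent $\alpha$ in \eref{e:boundStx}.
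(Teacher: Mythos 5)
Your proof is correct and follows essentially the same route as the paper: the identity $S(t)x-x=\int_0^t S(s)Lx\,ds$ for $x\in\CD(L)$, factoring $-L=(-L)^{1-\alpha}(-L)^\alpha$, the smoothing bound $\|(-L)^{1-\alpha}S(s)\|\le C s^{\alpha-1}$, and integration, followed by density. The only difference is that you spell out the density step explicitly via the Yosida-type approximation $x_n = nR_n x$, which the paper leaves implicit.
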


\begin{proof}
By density, it is sufficient to show that \eref{e:boundStx} holds for every $x \in \CD(L)$. For such an $x$, one has
indeed the chain of inequalities
\begin{equs}
\|S(t)x - x\| &= \Bigl\| \int_0^t S(s)Lx\, dx\Bigr\|= \Bigl\| \int_0^t (-L)^{1-\alpha} S(s) (-L)^\alpha x\, dx\Bigr\| \\
&\le C\|x\|_{\CB_\alpha} \int_0^t \bigl\|(-L)^{1-\alpha} S(s)\bigr\|\, dx \le C \|x\|_{\CB_\alpha} \int_0^t s^{\alpha-1}\, ds
= C \|x\|_{\CB_\alpha} t^\alpha\;.
\end{equs}
Here, the constant $C$ depends only on $\alpha$ and changes from one expression to the next.
\end{proof}

We conclude this section with a discussion on the interpolation spaces arising from a perturbed analytic semigroup.
As a consequence of Exercises~\ref{ex:interpBound}, \ref{ex:pertAnal}, and \ref{ex:resolvBound}, we have the following result:

\begin{proposition}\label{prop:interpolationSpaces}
Let $L_0$ be the generator of an analytic semigroup on $\CB$ and denote by $\CB_\gamma^0$ the corresponding interpolation spaces.
Let $B$ be a bounded operator from $\CB_\alpha^0$ to $\CB$ for some $\alpha \in [0,1)$.
Let furthermore $\CB_\gamma$ be the interpolation spaces associated to $L = L_0 + B$. 
Then, one has $\CB_\gamma = \CB_\gamma^0$ for every $\gamma \in [0,1]$.
\end{proposition}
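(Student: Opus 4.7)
My plan is to prove $\CB_\gamma = \CB_\gamma^0$ (with equivalent norms) by handling the endpoints $\gamma = 0, 1$ directly and $\gamma \in (0,1)$ by a resolvent calculation based on the Balakrishnan representation of the fractional power. Throughout, by a constant shift (which only alters the interpolation norms up to equivalence, as noted after the definition of $\CB_\alpha$), I may assume that both $S_0(t)$ and $S(t)$ decay exponentially, so that negative fractional powers of $-L_0$ and $-L$ are given by \eref{e:fracpower}.

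For $\gamma = 0$ there is nothing to do. For $\gamma = 1$, Theorem~\ref{theo:pertanalytic} gives $\CB_1 = \CD(L) = \CD(L_0) = \CB_1^0$ as sets; the chain $\|Bx\| \le C\|x\|_\alpha^0 \le C'\|L_0 x\|^\alpha \|x\|^{1-\alpha}$ (using the hypothesis on $B$ and Exercise~\ref{ex:interpBound}) followed by Young's inequality gives $\|Bx\| \le \eps\|L_0 x\| + C_\eps\|x\|$, hence $\|Lx\| \le (1+\eps)\|L_0 x\| + C_\eps\|x\| \le C\|L_0 x\|$ once $\|x\|$ is absorbed by $\|L_0 x\|$ (true after the shift). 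The reverse estimate is completely symmetric.

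For $\gamma \in (0,1)$ the key tool is the Balakrishnan representation
\begin{equ}
(-L)^{-\gamma} x = \frac{\sin\gamma\pi}{\pi}\int_0^\infty t^{-\gamma}(t-L)^{-1}x\, dt\;,
\end{equ}
obtained by substituting the Dunford integral for $S(t)$ into \eref{e:fracpower} and deforming the contour around the negative real axis. Using this identity for both $L$ and $L_0$ together with the second resolvent identity $(t-L)^{-1}-(t-L_0)^{-1} = (t-L_0)^{-1}B(t-L)^{-1}$ gives
\begin{equ}
(-L_0)^\gamma(-L)^{-\gamma}x = x + \frac{\sin\gamma\pi}{\pi}\int_0^\infty t^{-\gamma}\,(-L_0)^\gamma(t-L_0)^{-1}\,B\,(t-L)^{-1}x\,dt\;,
\end{equ}
so it suffices to bound this integral in $\CB$. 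Three ingredients feed into the estimate of the integrand: (i) $\|(-L_0)^\gamma(t-L_0)^{-1}\|_{\CB\to\CB} \le C(1+t)^{\gamma-1}$ by Exercise~\ref{ex:resolvBound}; (ii) $\|By\| \le C\|y\|_{\CB_\alpha^0}$ by hypothesis; (iii) $\|(t-L)^{-1}x\|_{\CB_\alpha^0} \le C(1+t)^{\alpha-1}\|x\|$, which for large $t$ follows from a Neumann expansion of $(t-L)^{-1}$ around $(t-L_0)^{-1}$ that converges on $\CB_\alpha^0$ because $\|(t-L_0)^{-1}B\|_{\CB_\alpha^0 \to \CB_\alpha^0} \le C(1+t)^{\alpha-1}\to 0$ (again by Exercise~\ref{ex:resolvBound}), and for small $t$ follows from $(t-L)^{-1}\colon \CB \to \CD(L) \hookrightarrow \CB_\alpha^0$ being bounded uniformly on compact subsets of the resolvent set. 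The product of the three bounds is controlled by $t^{-\gamma}(1+t)^{\gamma+\alpha-2}\|x\|$, which is integrable near $0$ precisely because $\gamma < 1$ and near infinity precisely because $\alpha < 1$. This shows $(-L)^{-\gamma}\colon \CB \to \CB_\gamma^0$ is bounded, i.e., $\CB_\gamma \hookrightarrow \CB_\gamma^0$.

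The reverse inclusion $\CB_\gamma^0 \hookrightarrow \CB_\gamma$ comes from running the symmetric argument with $L_0 = L + (-B)$ viewed as a perturbation of $L$: applying the special case $\gamma = \alpha$ of what was just proved gives $\CB_\alpha \hookrightarrow \CB_\alpha^0$, and composing with the hypothesis on $B$ shows that $-B$ is bounded from $\CB_\alpha$ to $\CB$, which is exactly what is needed to apply the argument above with the roles of $L$ and $L_0$ swapped. The main obstacle in this plan is the bookkeeping in step (iii): the $\CB_\alpha^0$-bound for $(t-L)^{-1}$ is only available through a perturbative device whose convergence hinges on $\alpha < 1$, the same condition that delivers integrability of the final integral at $t = \infty$, so the argument is sharp in this parameter.
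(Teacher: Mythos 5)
Your proof is correct, and while it lives in the same circle of ideas as the one in the notes (a resolvent representation of the fractional power, the second resolvent identity, and the decay $\|R_t^0\|_{\CB\to\CB_\alpha^0}\le C(1+t)^{\alpha-1}$ from Exercise~\ref{ex:resolvBound}), it is organised differently in two respects. First, your key lemma is the $\gamma$-independent bound $\|(t-L)^{-1}\|_{\CB\to\CB_\alpha^0}\le C(1+t)^{\alpha-1}$, obtained from a Neumann series for $\bigl(1-(t-L_0)^{-1}B\bigr)^{-1}$ on $\CB_\alpha^0$; the notes instead prove the $\gamma$-dependent estimate $\|BR_tx\|\le C(1+t)^{\alpha-\gamma-1}\|x\|_{\CB_\gamma^0}$ by a one-step absorption argument from the same identity $R_t=R_t^0+R_t^0BR_t$. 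The two devices carry the same content (both hinge on $(1+t)^{\alpha-1}\to 0$, i.e.\ on $\alpha<1$), but yours isolates a statement about $R_t$ alone that is independent of $\gamma$ and hence reusable. Second, and more substantially, you work with the globally defined bounded operator $(-L)^{-\gamma}$ and show that $(-L_0)^\gamma(-L)^{-\gamma}$ is bounded on $\CB$, whereas the notes estimate $\|(-L)^\gamma x\|$ against $\|(-L_0)^\gamma x\|$ for $x\in\CD(L_0)$ via the representation \eref{e:exprLa}; this lets you dispose of the reverse inclusion by pure symmetry (bootstrap the case $\gamma=\alpha$ to see that $-B$ is bounded from $\CB_\alpha$ to $\CB$, then swap the roles of $L$ and $L_0$), which neatly replaces the notes' second, somewhat fiddlier computation involving the shifted resolvent $R_{t+K}$ and the absorption of a $CK^{\alpha-1}\|x\|_{\CB_\gamma^0}$ term for $K$ large. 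The one step you should spell out is the interchange of $(-L_0)^\gamma$ with the integral over $t$: this requires the closedness of $(-L_0)^\gamma$ together with the (easily checked) absolute convergence of the integral both before and after applying $(-L_0)^\gamma$ to the integrand.
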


\begin{proof}
The statement is clear for $\gamma = 0$ and $\gamma = 1$. For intermediate values of $\gamma$, we will show that
there exists a constant $C$ such that $C^{-1} \|(-L_0)^\gamma x\| \le \|(-L)^\gamma x\| \le C \|(-L_0)^\gamma x\|$ for every $x \in \CD(L_0)$.

Since the domain of $L$ is equal to the domain of $L_0$, we know that the operator $BR_t$ is bounded for every $t>0$, where
$R_t$ is the resolvent of $L$.
Making use of the identity 
\begin{equ}[e:idenResolv]
R_t = R_t^0 + R_t^0 B R_t\;,
\end{equ}
(where we similarly denoted by $R_t^0$ the resolvent of $L_0$) it then follows from Exercise~\ref{ex:resolvBound} and 
the assumption on $B$ that one has for every $x \in \CB_\gamma^0$ the bound
\begin{equs}
\|B R_t x\| &\le \|B R_t^0 x\| +  \|B R_t^0 B R_t x\| \le C \bigl(\|R_t^0 x\|_{\CB_\alpha^0} +  \|R_t^0 B R_t x\|_{\CB_\alpha^0}\bigr) \\
&\le C(1+t)^{(0\vee(\alpha - \gamma)) -1}\|x\|_{\CB_\gamma^0} + C(1+t)^{\alpha -1}\|B R_t x\| \;.
\end{equs}
It follows that, for $t$ sufficiently large, one has the bound
\begin{equ}[e:boundBR]
\|B R_t x\| \le C (1+t)^{(0\vee (\alpha - \gamma)) -1}\|x\|_{\CB_\gamma^0}\;.
\end{equ}
(Note that this bound is also valid for $\gamma = 0$.)
Since one furthermore has the resolvent identity $R_s = R_t + (t-s)R_s R_t$, this bound can be extended to all $t>0$
by possibly changing the value of the constant $C$.

We now show that $\|(-L)^\gamma x\|$ can be bounded by $\|(-L_0)^\gamma x\|$.
We make use of Exercise~\ref{ex:exprLalpha} to get, for $x \in \CD(L_0)$, the bound
\begin{equs}
\|x\|_{\CB_\gamma} &= C \Bigl\|\int_0^\infty t^{\gamma -1} LR_t x\, dt \Bigr\| \\
&\le C  \Bigl\|\int_0^\infty t^{\gamma -1} L_0 R_t^0 x\, dt \Bigr\| +  C\int_0^\infty t^{\gamma -1} \|(L_0R_t^0+1)BR_t x\|\, dt \\
&\le \|x\|_{\CB_\gamma^0} + C\int_0^\infty t^{\gamma -1} \|BR_t x\|\, dt \\
&\le \|x\|_{\CB_\gamma^0} + C\int_0^\infty t^{\gamma -1}(1+t)^{(0\vee(\alpha - \gamma)) -1}\, dt \|x\|_{\CB_\gamma^0}\;.
\end{equs}
Here, we used again the identity \eref{e:idenResolv} to obtain the first inequality, \eqref{e:exprLa} 
for the second one, and the bound \eref{e:boundBR} in the last step.
Since this integral converges (it decays like $t^{(\alpha \vee \gamma)-2}$ for large $t$), we have obtained the required bound.

In order to obtain the converse bound, we have similarly to before
\begin{equ}
\|x\|_{\CB_\gamma^0} \le \|x\|_{\CB_\gamma} + C\int_0^\infty t^{\gamma -1} \|BR_t x\|\, dt \;.
\end{equ}
Making use of the resolvent identity, this yields for arbitrary $K>0$ the bound
\begin{equs}
\|x\|_{\CB_\gamma^0} &\le \|x\|_{\CB_\gamma} + C\int_0^\infty t^{\gamma -1} \|BR_{t+K} x\|\, dt + C K\int_0^\infty t^{\gamma -1} \|BR_{t+K} R_t x\|\, dt \\
&\le \|x\|_{\CB_\gamma} + C\int_0^\infty t^{\gamma -1}(t+K)^{(0\vee(\alpha - \gamma)) -1}\, dt \Big(\|x\|_{\CB_\gamma^0}+ K \sup_{t\ge 0}\|R_tx\|_{\CB_\gamma^0}\Bigr) \\
&\le  \|x\|_{\CB_\gamma} + C K^{(\alpha\vee\gamma)-1}\bigl( \|x\|_{\CB_\gamma^0}+ K \|x\|\bigr)\;,
\end{equs}
where we used the fact that $$\|R_tx\|_{\CB_\gamma^0}\lesssim \|R_tx\|_{\CB_1^0} \lesssim \|L_0 R_tx\| \lesssim \|L R_tx\| \lesssim \|x\|\;,$$
uniformly over $t\ge 0$.
By making $K$ sufficiently large, the prefactor of the second term can be made smaller than ${1\over 2}$, say, so that the required bound follows
by the usual trick of moving the term proportional to $\|x\|_{\CB_\gamma^0}$ to the left hand side of the inequality.
\end{proof}

\begin{exercise}
Assume that $\CB = \CH$ is a Hilbert space and that the antisymmetric part of $L$ is ``small'' in the sense
that $\CD(L^*) = \CD(L)$ and, for every $\eps>0$ there exists a constant $C$ such that
$\|(L-L^*)x\| \le \eps \|Lx\| + C\|x\|$ for every $x \in \CD(L)$. Show that in this case the space $\CH_{-\alpha}$ 
can be identified with the dual of $\CH_\alpha$ (under the pairing given by the scalar product of $\CH$) for $\alpha \in [0,1]$.
\end{exercise}

It is interesting to note that the range $[0,1]$ appearing in the statement of Proposition~\ref{prop:interpolationSpaces}
is not just a restriction of the technique of proof employed here. There are indeed examples of perturbations of generators
of analytic semigroups of the type considered here which induce changes in the corresponding interpolation spaces $\CB_\alpha$
for $\alpha \not \in [0,1]$. 

Consider for example the case $\CB = L^2([0,1])$ and $L_0 = \Delta$, the Laplacian
with periodic boundary conditions. Denote by $\CB_\alpha^0$ the corresponding interpolation spaces.
Let now $\delta\in (0,1)$ be some arbitrary index and let $g \in \CB$ be such that $g \not \in \CB_\delta^0$.
Such an element $g$ exists since $\Delta$ is an unbounded operator.
Define $B$ as the operator with domain $\CC^1([0,1]) \subset \CB$ given by
\begin{equ}[e:opB]
\bigl(Bf\bigr)(x) = f'(1/2)g(x)\;.
\end{equ} 
It turns out that $\CB_\alpha^0 \subset \CC^1([0,1])$ for $\alpha > 3/4$ (see for example Lemma~\ref{lem:Linfty} below), so that
the assumptions of Proposition~\ref{prop:interpolationSpaces} are indeed satisfied.
Consider now the interpolation spaces of index $1+\delta$. Since we know that $\CB_\delta = \CB_\delta^0$,
we have the characterisations
\begin{equs}
\CB_{1+\delta} &= \bigl\{f \in \CD(\Delta) \,:\, \Delta f + f'(1/2)g \in \CB_\delta^0\bigr\}\;, \\
\CB_{1+\delta}^0 &= \bigl\{f \in \CD(\Delta) \,:\, \Delta f \in \CB_\delta^0\bigr\}\;.
\end{equs}
Since on the other hand $g \not \in \CB_\delta^0$ by assumption, it follows that $\CB_{1+\delta}\cap \CB_{1+\delta}^0$ consists
precisely of those functions in $\CD(\Delta)$ that have a vanishing derivative at $1/2$. In particular, $\CB_{1+\delta}\neq \CB_{1+\delta}^0$.

One can also show that $\CB_{-1/4} \neq \CB_{-1/4}^0$ in the following way. Let $\{f_n\} \subset \CD(L)$ be an arbitrary sequence of elements
that form a Cauchy sequence in $\CB_{3/4}$. Since we have already shown that $\CB_{3/4} = \CB_{3/4}^0$, this implies that $\{f_n\}$ is Cauchy in
$\CB_{3/4}^0$ as well. It then follows from the definition of the interpolation
spaces that the sequence $\{\Delta f_n\}$ is Cauchy in $\CB_{-1/4}^0$ and that the sequence $\{(\Delta + B)f_n\}$ is Cauchy in $\CB_{-1/4}$. 
Assume now by contradiction that $\CB_{-1/4} = \CB_{-1/4}^0$. 

This would entail that both $\{\Delta f_n\}$ and $\{\Delta f_n + Bf_n\}$ are Cauchy in $\CB_{-1/4}$, so that
$\{f_n'(1/2)g\}$ is Cauchy in $\CB_{-1/4}$. This in turn immediately implies that the sequence $\{f_n'(1/2)\}$ must be Cauchy in $\R$. Define now $f_n$ by
\begin{equ}
f_n(x) = \sum_{k=1}^n {\sin(4\pi kx)\over k^{2} \log k}\;.
\end{equ}
It is then straightforward to check that, since $\sum_k (k \log^2 k)^{-1}$ converges, this sequence is Cauchy in $\CB_{3/4}^0$.
On the other hand, we have $f_n'(1/2) = \sum_{k=1}^n (k \log k)^{-1}$ which diverges, thus leading to the required contradiction.

\begin{exercise}
Show, again in the same setting as above, that if $g \in \CB_\delta^0$ for some $\delta > 0$, then 
one has $\CB_\alpha  = \CB_\alpha^0$ for every $\alpha \in [0,1+\delta)$.
\end{exercise}

\begin{remark}
The operator $B$ defined in \eref{e:opB} is not a closed operator on $\CB$. In fact, it is not even closable! 
This is however of no consequence for Proposition~\ref{prop:interpolationSpaces} since the operator $L = L_0 +B$
is closed and this is all that matters.
\end{remark}

\section{Linear SPDEs / Stochastic Convolutions}

We now apply the knowledge gathered in the previous sections to discuss the solution to linear stochastic PDEs.
Most of the material from this section can also be found in one way or the other in the monographs \cite{DPZ92,DPZ96} by Da Prato and Zabczyk. 
The aim of this
section is to define what we mean by the solution to a linear stochastic PDE of the form
\begin{equ}[e:linearSPDE]
dx = Lx\, dt + Q\, dW(t)\;,\quad x(0) = x_0\;,
\end{equ}
where we want $x$ to take values in a separable Banach space $\CB$, $L$ is the generator of
a $\CC_0$ semigroup on $\CB$, $W$ is a cylindrical Wiener process on some Hilbert space $\CK$, and
$Q\colon \CK \to \CB$ is a bounded linear operator.

We do not in general expect $x$ to take values in $\CD(L)$ and we do not even in general expect $QW(t)$ to
be a $\CB$-valued Wiener process, so that the usual way of defining solutions to \eref{e:linearSPDE}
by simply integrating both sides of the identity does not work. However, if we apply some $\ell \in \CD(L^*)$ to both sides
of \eref{e:linearSPDE}, then there is much more hope that the usual definition makes sense. This motivates the following definition:

\begin{definition}
A $\CB$-valued process $x(t)$ is said to be a \index{weak solution}\textit{weak solution} to \eref{e:linearSPDE} if, for every $t>0$,
 $\int_0^t \|x(s)\|\, ds < \infty$ almost surely and the identity
\begin{equ}[e:weakSol]
\scal{\ell, x(t)} = \scal{\ell, x_0} + \int_0^t \scal{L^*\ell, x(s)}\, ds + \int_0^t \scal{Q^*\ell, dW(s)}\;,
\end{equ}
holds almost surely for every $\ell \in \CD(L^*)$.
\end{definition}

\begin{remark}\textbf{(Very important!)}
The term ``weak'' refers to the PDE notion of a weak solution and \textit{not} to the probabilistic notion
of a weak solution to a stochastic differential equation. From a probabilistic point of view, we are always going
to be dealing with strong solutions in these notes, in the sense that \eref{e:linearSPDE} can be solved pathwise
for almost every realisation of the cylindrical Wiener process $W$.

Just as in the case of stochastic ordinary differential equations, there are examples of stochastic PDEs that are sufficiently
irregular so that they can only be solved in the probabilistic weak sense. We will however not consider any such example
in these notes.
\end{remark}

\begin{remark}
The stochastic integral in \eref{e:weakSol} can be interpreted in the sense of Section~\ref{sec:Ito} since the map $Q^*\ell \colon \CK \to \R$
is bounded (and therefore Hilbert--Schmidt since $\R$ is finite-dimensional) for every $\ell \in \CB^*$.
\end{remark}

\begin{remark}
Although separability of $\CB$ was not required in the previous section on semigroup theory, it is again needed
in this section, since many of the results from the section on Gaussian measure theory would not hold otherwise.
\end{remark}

On the other hand, suppose that $f \colon \R_+ \to \CD(L)$ is a continuous function and consider the 
function $x \colon \R_+ \to \CD(L)$ given by $x(t) = S(t)x_0 + \int_0^t S(t-s)f(s)\, ds$, where $S$ is the
$\CC_0$-semigroup generated by $L$. If $x_0 \in \CD(L)$
as well, then this function is differentiable and it is easy to check, using Proposition~\ref{prop:diffSt}, that
it satisfies the differential equation $\d_t x = Lx + f$. Formally replacing $f(s)\, ds$ by $Q\, dW(s)$, this suggests
the following alternative definition of a solution to \eref{e:linearSPDE}:

\begin{definition}
A $\CB$-valued process $x(t)$ is said to be a \index{mild solution}\textit{mild solution} to \eref{e:linearSPDE} if the identity
\begin{equ}[e:mildSol]
x(t) = S(t) x_0 + \int_0^t S(t-s)Q\,dW(s)\;,
\end{equ}
holds almost surely for every $t>0$. The right hand side of \eref{e:mildSol} is also sometimes called a
\index{stochastic convolution}\textit{stochastic convolution}.
\end{definition}

\begin{remark}
By the results from Section~\ref{sec:Ito}, the right hand side of \eref{e:mildSol} makes sense in any 
Hilbert space $\CH$ containing $\CB$ and such
that $\int_0^t \tr \iota S(t-s)QQ^*S(t-s)^*\iota^*\, ds < \infty$,  where $\iota \colon \CB \to \CH$ is the inclusion
map. The statement should then be interpreted as saying that the right hand side belongs to $\CB \subset \CH$
almost surely. In the case where $\CB$ is itself a Hilbert space, \eref{e:mildSol} makes sense if and only if 
$\int_0^t \tr S(t-s)QQ^*S(t-s)^*\, ds < \infty$.
\end{remark}

It turns out that these two notions of solutions are actually equivalent: 

\begin{proposition}\label{prop:mildweak}
If the \index{mild solution}mild solution is almost surely integrable, then it is 
also a weak solution. Conversely, every weak solution is a mild solution. 
\end{proposition}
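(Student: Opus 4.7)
The plan is to prove the two implications separately, the first by a direct calculation using a stochastic Fubini argument and the second by a duality/Itô-type integration by parts.

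For the implication mild $\Rightarrow$ weak, I would fix $\ell \in \CD(L^*)$ and pair $\ell$ with both sides of \eref{e:mildSol}. The deterministic piece $\scal{\ell, S(t)x_0}$ is handled directly by Proposition~\ref{prop:diffSt}, which yields $\scal{\ell, S(t)x_0} = \scal{\ell, x_0} + \int_0^t \scal{L^*\ell, S(s)x_0}\, ds$. For the stochastic convolution, I would show
\begin{equ}
\int_0^t \scal{L^*\ell,\; \textstyle\int_0^r S(r-s)Q\, dW(s)}\, dr = \scal{\ell, \textstyle\int_0^t S(t-s)Q\, dW(s)} - \int_0^t \scal{Q^*\ell, dW(s)}\;.
\end{equ}
The left-hand side is rewritten via stochastic Fubini as $\int_0^t \scal{Q^* \int_s^t S(r-s)^* L^*\ell\, dr,\; dW(s)}$, and the inner $\CB^*$-valued Bochner integral equals $S(t-s)^*\ell - \ell$ by the weak form of Proposition~\ref{prop:diffSt} applied coordinate-wise. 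Adding the two contributions gives precisely the weak formulation \eref{e:weakSol}.

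For the converse, weak $\Rightarrow$ mild, I would introduce the process $y(t) = S(t)x_0 + \int_0^t S(t-s)Q\, dW(s)$ and, using the previous direction, note that $y$ is itself a weak solution. It suffices therefore to prove uniqueness of weak solutions with given initial data. The key technical step is an integration-by-parts identity with a time-dependent test path. Fix $\ell \in \CD(L^\dagger)$ (so that $\phi(s) = S^*(t-s)\ell$ is $\CC^1$ into $\CB^\dagger \subset \CB^*$, takes values in $\CD(L^*)$, and satisfies $\phi'(s) = -L^*\phi(s)$ by Proposition~\ref{prop:SCDual}). On a partition $0 = s_0 < \cdots < s_n = t$ I would apply \eref{e:weakSol} to the increment $x(s_{k+1}) - x(s_k)$, pair with $\phi(s_k)$, sum, and apply Abel summation, obtaining
\begin{equ}
\scal{\phi(t), x(t)} - \scal{\phi(0), x_0} = \sum_k \scal{\phi(s_{k+1}) - \phi(s_k), x(s_{k+1})} + \sum_k \scal{\phi(s_k), \Delta_k x}\;,
\end{equ}
where each $\scal{\phi(s_k), \Delta_k x}$ expands using \eref{e:weakSol}. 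Refining the partition and using the integrability hypothesis on $\|x\|$ together with the It\^o isometry \eref{e:Ito} for the stochastic integrals, the Riemann sums converge and I arrive at
\begin{equ}
\scal{\ell, x(t)} = \scal{S^*(t)\ell, x_0} + \int_0^t \scal{\phi'(s) + L^*\phi(s), x(s)}\, ds + \int_0^t \scal{Q^*\phi(s), dW(s)}\;.
\end{equ}
The middle integral vanishes and the remaining terms are precisely $\scal{\ell, y(t)}$.

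To conclude, I would extend the resulting identity $\scal{\ell, x(t)} = \scal{\ell, y(t)}$ from $\ell \in \CD(L^\dagger)$ to arbitrary $\ell \in \CB^*$. First, density of $\CD(L^\dagger)$ in $\CB^\dagger$ (Proposition~\ref{prop:SCDual}) extends it to $\CB^\dagger$, and then Proposition~\ref{prop:weakstardense} gives weak-$*$ density of $\CB^\dagger$ in $\CB^*$, allowing passage to all $\ell$. A final invocation of Proposition~\ref{prop:determinefinitedim} then yields $x(t) = y(t)$ almost surely. The main obstacle in this plan is justifying the limit in the Abel summation step: one must control the stochastic sums uniformly using that $Q^*\phi$ is bounded from $[0,t]$ into $\CL_2(\CK,\R)$ and that $\phi$ is continuous, while the deterministic term requires the assumed integrability of $\|x(s)\|$ to handle $\scal{\phi(s_{k+1}) - \phi(s_k), x(s_{k+1})}$ via the mean value estimate $\|\phi(s_{k+1}) - \phi(s_k)\| \le |s_{k+1}-s_k|\sup_s \|L^*\phi(s)\|$.
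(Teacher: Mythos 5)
Your first implication (mild $\Rightarrow$ weak) is essentially the paper's argument: apply $L^*\ell$ to the mild formula, integrate in time, exchange the order of integration by stochastic Fubini, and identify $\int_s^t S^*(r-s)L^*\ell\,dr$ with $S^*(t-s)\ell - \ell$. The one point you gloss over is that the paper first restricts to $\ell \in \CD(L^\dagger)$, where $r\mapsto S^*(r-s)L^*\ell$ is norm-continuous so the Bochner integral is unproblematic, and only then extends to all of $\CD(L^*)$ via the weak-$*$ density of $\CB^\dagger$ in $\CB^*$ (Proposition~\ref{prop:weakstardense}). Your closing step for the converse --- extending $\scal{\ell,x(t)}=\scal{\ell,y(t)}$ to all $\ell\in\CB^*$ and applying Proposition~\ref{prop:determinefinitedim} to the law of $x(t)-y(t)$ --- is a perfectly acceptable variant of the paper's separation-of-points argument.

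The genuine gap is in your Abel-summation proof of the integration-by-parts identity. A weak solution is only assumed to satisfy $\int_0^t\|x(s)\|\,ds<\infty$ almost surely; nothing in the definition gives continuity, or any pointwise control, of $s\mapsto x(s)$. Writing $\phi(s_{k+1})-\phi(s_k) = -\int_{s_k}^{s_{k+1}}L^*\phi(r)\,dr$, your first sum becomes $-\sum_k\int_{s_k}^{s_{k+1}}\scal{L^*\phi(r),x(s_{k+1})}\,dr$, a Riemann-type sum in which $x$ is evaluated at the right endpoint of each subinterval. For a merely integrable path this need not converge to $-\int_0^t\scal{L^*\phi(r),x(r)}\,dr$ as the mesh goes to zero: the values $x(s_{k+1})$ along the partition points form a Lebesgue-null set and are unconstrained by the $L^1$ hypothesis. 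The mean-value estimate you invoke only bounds the sum by $\sup_s\|L^*\phi(s)\|\sum_k|s_{k+1}-s_k|\,\|x(s_{k+1})\|$, which is itself an uncontrolled Riemann sum of $\|x(\cdot)\|$. The paper sidesteps this entirely: it approximates $f\in\CE$ by finite linear combinations of product test functions $\phi_\ell(s)=\ell\phi(s)$ with $\phi\in\CC^1([0,t],\R)$ and $\ell\in\CD(L^\dagger)$ (Exercise~\ref{ex:dense}) and, for such an $f$, applies the one-dimensional It\^o product formula to $\phi(s)\scal{\ell,x(s)}$. The point is that, by the weak formulation itself, $\scal{\ell,x(\cdot)}$ coincides with a \emph{continuous} semimartingale whose finite-variation part only sees $x$ through the Lebesgue integral $\int_0^s\scal{L^*\ell,x(r)}\,dr$, so pointwise-in-time evaluation of $x$ never enters. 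If you wish to keep a discretization argument, you would have to replace every point evaluation $x(s_{k+1})$ by its semimartingale representation from \eref{e:weakSol} before passing to the limit, which amounts to re-deriving that It\^o product rule by hand.
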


\begin{proof}
Note first that, by considering the process $x(t) - S(t)x_0$ and using Proposition~\ref{prop:diffSt}, we can assume without loss of generality
that $x_0 = 0$. 

We now assume that the process  $x(t)$ defined by \eref{e:mildSol} takes values in $\CB$ almost surely and
we show that this implies that it satisfies \eref{e:weakSol}.
Fixing an arbitrary $\ell \in \CD(L^\dagger)$, applying $L^*\ell$ to both sides
of \eref{e:mildSol}, and integrating the result between $0$ and $t$, we obtain:
\begin{equs}
\int_0^t \scal{L^*\ell, x(s)}\,ds &= \int_0^t  \int_0^s \scal{L^*\ell,S(s-r)Q\,dW(r)}\,ds = 
\int_0^t  \scal[B]{\int_r^t S^*(s-r)L^*\ell\, ds,Q\,dW(r)}\;.
\end{equs}
Using Proposition~\ref{prop:diffSt} and the fact that, by Proposition~\ref{prop:SCDual},
$S^*$ is a strongly continuous semigroup on $\CB^\dagger$, the closure of $\CD(L^*)$ in $\CB^*$, we obtain
\begin{equs}
\int_0^t \scal{L^*\ell, x(s)}\,ds &= \int_0^t  \scal{S^*(t-r)\ell, Q\, dW(r)} -  \int_0^t  \scal{\ell,Q \,dW(r)}\\
&=  \scal[B]{\ell, \int_0^t S(t-r) Q\,dW(r)}-  \int_0^t  \scal{\ell,Q\,dW(r)} \\
&= \scal{\ell, x(t)} -  \int_0^t  \scal{\ell,Q\,dW(r)}\;,
\end{equs}
thus showing that \eref{e:weakSol} holds for every $\ell \in \CD(L^\dagger)$.
To show that $x$ is indeed a weak solution to \eref{e:linearSPDE}, we have to extend this to every $\ell \in \CD(L^*)$. This however
follows immediately from the fact that $\CB^\dagger$ is weak-* dense in $\CB^*$, which was the content of Proposition~\ref{prop:weakstardense}.

To show the converse, let now $x(t)$ be any weak solution to \eref{e:linearSPDE} (again with $x_0 = 0$). Fix an arbitrary $\ell \in \CD(L^\dagger)$, 
some final time $t > 0$,
and consider the function $f(s) = S^*(t-s)\ell$. Since $\ell \in \CD(L^\dagger)$, it follows from 
Proposition~\ref{prop:diffSt} that this function belongs to $\CE \eqdef \CC([0,t],\CD(L^\dagger)) \cap \CC^1([0,t], \CB^\dagger)$.
We are going to show that one has for such functions the almost sure identity
\begin{equ}[e:identityf]
\scal{f(t), x(t)} = \int_0^t \scal{\dot f(s) + L^*f(s), x(s)}\,ds + \int_0^t \scal{f(s), Q\,dW(s)}\;.
\end{equ}
Since in our case $\dot f(s) + L^*f(s) = 0$, this implies that the identity
\begin{equ}[e:almostmild]
\scal{\ell,x(t)} = \int_0^t \scal{\ell, S(t-s)Q\,dW(s)}\;,
\end{equ}
holds almost surely for all $\ell \in \CD(L^\dagger)$. By the closed graph theorem, $\CB^\dagger$ is large enough to separate 
points in $\CB$.\footnote{Assume that, for some $x, y\in\CB$, we have $\scal{\ell,x} = \scal{\ell,y}$ for every $\ell \in \CD(L^*)$.
We can also assume without loss of generality that the range of $L$ is $\CB$, so that $x = Lx'$ and $y = Ly'$, thus yielding
$\scal{L^*\ell,x'} = \scal{L^*\ell,y'}$. Since $L$ is injective and has dense domain, the 
closed range theorem states that the range of $L^*$ is all of $\CB^*$, so that $x'=y'$ and
thus also $x=y$. } Since $\CD(L^\dagger)$ is dense in $\CB^\dagger$ and since $\CB$ is separable, it follows that countably 
many elements of $\CD(L^\dagger)$ are already sufficient to separate points in $\CB$. 
This then immediately implies from \eref{e:almostmild} that $x$ is indeed a mild solution.

It remains to show that \eref{e:identityf} holds for all $f \in \CE$.
Since linear combinations of  functions of the type $\phi_\ell(s) = \ell \phi(s)$ for $\phi \in \CC^1([0,t],\R)$ and $\ell \in \CD(L^\dagger)$
are dense in $\CE$ (see Exercise~\ref{ex:dense} below) and since $x$ is almost surely integrable, 
it suffices to show that \eref{e:identityf} holds for $f = \phi_\ell$. Since $\scal{\ell, QW(s)}$ is a standard one-dimensional
Brownian motion, we can apply It\^o's formula to $\phi(s) \scal{\ell,x(s)}$, yielding
\begin{equ}
\phi(t) \scal{\ell, x(t)} = \int_0^t \phi(s)\scal{L^*\ell, x(s)} + \int_0^t \dot \phi(s)\scal{\ell, x(s)} + \int_0^t \phi(s)\scal{\ell, Q\,dW(s)}\;,
\end{equ}
which coincides with \eref{e:identityf} as required.
\end{proof}

\begin{remark}
It is actually possible to show that if the right hand side of \eref{e:mildSol} makes sense for some $t$, then it makes sense
for all $t$ and the resulting process belongs almost surely to $L^p([0,T],\CB)$ for every $p$. Therefore, the concepts
of mild and weak solution actually always coincide. This follows from the fact that the covariance of $x(t)$ increases with
$t$ (which is a concept that can easily be made sense of in Banach spaces as well as Hilbert spaces), see for example \cite{DiesJarTon}. 
\end{remark}

\begin{exercise}\label{ex:dense}
Consider the setting of the proof of Proposition~\ref{prop:mildweak}.
Let $f \in \CE = \CC([0,1],\CD(L^\dagger)) \cap \CC^1([0,1], \CB^\dagger)$ and, for $n > 0$, define $f_n$ on the interval
$s\in [k/n, (k+1)/n]$ by cubic spline interpolation:
\begin{equs}
f_n(s) &= f(k/n) (k+1-ns)^2(1+2ns-2k) + f((k+1)/n) (ns-k)^2(3-2ns+2k) \\
&\quad + (ns-k)(k+1-ns)^2 n \bigl(f((k+\textstyle{1\over 2})/n)-f((k-\textstyle{1\over 2})/n)\bigr) \\
&\quad + (ns-k)^2(ns-k-1) n \bigl(f((k+\textstyle{3\over 2})/n)-f((k+\textstyle{1\over 2})/n)\bigr) \;.
\end{equs}
Show that the map  $f_n$ is a finite linear combination of maps of the form $t \mapsto \ell  \phi(t)$ with $\phi \in \CC^1([0,1],\R)$
and $\ell \in \CD(L^\dagger)$, and that $f_n \to f$ in $\CC([0,1],\CD(L^\dagger)) \cap \CC^1([0,1], \CB^\dagger)$.
(\textbf{Note:} for the first and last interval, the above expression involves $f(x)$ for $x \not\in [0,1]$. We use the convention
that $f(x) = f(1) + 2n(x-1)(f(1) - f(1-1/2n))$ for $x \ge 1$, i.e.\ we extrapolate linearly from the values at $1$ and $1-1/2n$, 
and similarly for $x < 0$.) 
\end{exercise}

\subsection{Time and space regularity}
\label{sec:regularity}

In this subsection, we are going to study the space and time regularity of solutions to linear stochastic PDEs.
For example, we are going to see how one can easily derive the fact that the solutions to the stochastic heat equation are
`almost' ${1\over 4}$-H\"older continuous in time and ``almost'' ${1\over 2}$-H\"older continuous in space.
Since we are often going to use the Hilbert-Schmidt norm of a linear operator, we introduce the notation
\begin{equ}
\|A\|_\HS^2 = \tr AA^*\;.
\end{equ}
For most of this section, we are going to make use of the theory of analytic semigroups. However, we start with
a very weak regularity result for the solutions to stochastic PDEs whose linear operator $L$ generates an arbitrary 
$\CC_0$-semigroup:

\begin{theorem}\label{theo:regularitySC}
Let $\CH$ and $\CK$ be separable Hilbert spaces, let $L$ be the generator of a $\CC_0$-semigroup on $\CH$,
let $Q\colon \CK \to \CH$ be a bounded operator and let $W$ be a cylindrical Wiener process on $\CK$. 
Assume furthermore that $\|S(t)Q\|_\HS < \infty$ for every $t > 0$ and that 
there exists $\alpha \in (0,\hf)$ such that $\int_0^1 t^{-2\alpha} \|S(t)Q\|_\HS^2\, dt < \infty$. Then the solution $x$ to \eref{e:linearSPDE}
has almost surely continuous sample paths in $\CH$.
\end{theorem}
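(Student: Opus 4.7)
The plan is to use the Da Prato--Kwapień--Zabczyk factorization method, which reduces sample path continuity of the stochastic convolution to a purely deterministic regularity statement combined with Gaussian moment bounds. Since $t \mapsto S(t) x_0$ is continuous by strong continuity of the semigroup, it suffices to establish continuity of
\begin{equ}
Z(t) \eqdef \int_0^t S(t-r) Q\, dW(r)\;.
\end{equ}

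First I would introduce the auxiliary ``smoothed'' stochastic integral
\begin{equ}
Y(t) \eqdef \int_0^t (t-r)^{-\alpha} S(t-r) Q\, dW(r)\;,
\end{equ}
which is a well-defined $\CH$-valued centred Gaussian random variable by the It\^o isometry: the hypothesis $\int_0^1 u^{-2\alpha}\|S(u)Q\|_\HS^2\, du < \infty$, together with the factorization $S(u)Q = S(u-1) S(1) Q$ for $u \ge 1$ (which shows that $\|S(u)Q\|_\HS$ is locally bounded away from $0$), yields integrability of $u^{-2\alpha} \|S(u)Q\|_\HS^2$ on any $[0,T]$. Because $Y(t)$ is Gaussian, Proposition~\ref{prop:universalFernique} gives moment bounds $\E \|Y(t)\|^p \le C_p \bigl(\E \|Y(t)\|^2\bigr)^{p/2}$, and the second moment is bounded uniformly in $t \in [0,T]$. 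An application of Fubini then shows that $Y$ has sample paths in $L^p([0,T], \CH)$ almost surely for every $p < \infty$.

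Next, using the semigroup identity $S(t-s)S(s-r) = S(t-r)$ and the beta integral $\int_r^t (t-s)^{\alpha-1}(s-r)^{-\alpha}\, ds = \pi / \sin(\pi\alpha)$ (valid precisely because $\alpha \in (0,1)$), a stochastic Fubini argument would give the factorization formula
\begin{equ}
{\pi \over \sin(\pi \alpha)}\, Z(t) = \int_0^t (t-s)^{\alpha-1} S(t-s)\, Y(s)\, ds\;.
\end{equ}
The right-hand side is now an ordinary pathwise Bochner integral, so continuity of $Z$ reduces to a deterministic ``factorization lemma'': the operator $G_\alpha f(t) \eqdef \int_0^t (t-s)^{\alpha-1} S(t-s) f(s)\, ds$ maps $L^p([0,T], \CH)$ boundedly into $\CC([0,T], \CH)$ as soon as $\alpha p > 1$. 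Boundedness in sup norm follows from H\"older's inequality, using local boundedness of $\|S(\cdot)\|$ on $[0,T]$ and the fact that $(t-s)^{(\alpha-1)q}$ is integrable for $q = p/(p-1)$ exactly when $\alpha p > 1$.

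The main obstacle is continuity of $t \mapsto G_\alpha f(t)$: the kernel $(t-s)^{\alpha-1} S(t-s)$ has an integrable singularity at $s=t$, and without analyticity of $S$ we only have strong continuity of the semigroup rather than operator-norm H\"older bounds. I would handle this by first approximating $f \in L^p$ by continuous $\CH$-valued functions and exploiting the $L^p$-boundedness of $G_\alpha$ to transfer continuity across the limit. For continuous $f$, after the substitution $s \mapsto t-s$ one splits $G_\alpha f(t+h) - G_\alpha f(t)$ into a ``bulk'' term $\int_0^t s^{\alpha-1} S(s) [f(t+h-s) - f(t-s)]\, ds$, controlled by uniform continuity of $f$ together with integrability of $s^{\alpha-1}\|S(s)\|$, and a ``boundary'' term $\int_t^{t+h} s^{\alpha-1} S(s) f(t+h-s)\, ds$, which vanishes as $h \to 0$ by absolute continuity of the integral of $s^{\alpha-1}$. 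Choosing $p$ large enough that $\alpha p > 1$ and applying $G_\alpha$ pathwise to the almost sure $L^p$ sample paths of $Y$ then delivers continuity of $Z$, and hence of $x$, with probability one.
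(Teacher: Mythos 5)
Your proposal is correct and follows essentially the same route as the paper: the Da Prato--Kwapie\'n--Zabczyk factorisation identity, uniform second-moment bounds on the smoothed convolution $Y$ upgraded to $L^p$ sample paths via Fernique/Gaussianity, and the deterministic lemma that $f \mapsto \int_0^t (t-s)^{\alpha-1}S(t-s)f(s)\,ds$ maps $L^p$ into $\CC([0,T],\CH)$ for $\alpha p>1$, with continuity obtained by density of continuous functions and a bulk/boundary splitting handled by dominated convergence. The only differences are cosmetic (the paper checks left- and right-continuity separately where you change variables first, and your normalising constant $\pi/\sin(\pi\alpha)$ is in fact the correct value of the beta integral).
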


\begin{proof}
Note first that $\|S(t+s)Q\|_\HS \le \|S(s)\| \|S(t)Q\|_\HS$, so that  the assumptions of the theorem imply that
$\int_0^T t^{-2\alpha} \|S(t)Q\|_\HS^2\, dt < \infty$ for every $T>0$. 
Let us fix an arbitrary terminal time $T$ from now on.
Defining the process $y$ by
\begin{equ}
y(t) = \int_0^t (t-s)^{-\alpha} S(t-s)Q\, dW(s)\;,
\end{equ}
we obtain the existence of a constant $C$ such that
\begin{equ}
\E \|y(t)\|^2 = \int_0^t (t-s)^{-2\alpha} \|S(t-s)Q\|_\HS^2\, ds = \int_0^t s^{-2\alpha} \|S(s)Q\|_\HS^2\, ds \le C\;,
\end{equ}
uniformly for $t \in [0,T]$. It therefore follows from Fernique's theorem that for every $p > 0$ there exist a constant $C_p$
such that 
\begin{equ}[e:boundyp]
\E \int_0^T \|y(t)\|^p \, dt < C_p\;.
\end{equ}
Note now that there exists a constant $c_\alpha$ (actually $c_\alpha = (\sin 2\pi \alpha)/ \pi$) such that
the identity
\begin{equ}
c_\alpha \int_s^t (t-r)^{\alpha - 1} (r-s)^{-\alpha} \, dr = 1\;,
\end{equ}
holds for every $t>s$. It follows that one has the identity
\begin{equs}
x(t) &= S(t)x_0 + c_\alpha \int_0^t \int_s^t (t-r)^{\alpha - 1} (r-s)^{-\alpha}  S(t-s) \, dr \,Q\, dW(s) \\
&= S(t) x_0 + c_\alpha \int_0^t \int_0^r (t-r)^{\alpha - 1} (r-s)^{-\alpha}  S(t-s) Q\, dW(s)\, dr \\
&= S(t) x_0 + c_\alpha \int_0^t S(t-r) \int_0^r  (r-s)^{-\alpha}  S(r-s) Q\, dW(s)\,(t-r)^{\alpha - 1}\, dr \\
&= S(t) x_0 + c_\alpha \int_0^t S(t-r) y(r)\,(t-r)^{\alpha - 1}\, dr\;. \label{e:factorisation}
\end{equs}
The claim thus follows from \eref{e:boundyp} if we can show that for every $\alpha \in (0,{1\over 2})$ there exists $p>0$ such that 
the map
\begin{equ}
y \mapsto F_y \;,\quad F_y(t) = \int_0^t (t-r)^{\alpha - 1} S(t-r) y(r)\, dr 
\end{equ}
maps $L^p([0,T],\CH)$ into $\CC([0,T],\CH)$. Since the semigroup $t \mapsto S(t)$ is uniformly bounded (in the usual operator norm) 
on any bounded time interval and since
$t \mapsto (t-r)^{\alpha-1}$ belongs to $L^q$ for $q \in [1, 1/(1-\alpha))$, we deduce from H\"older's inequality that there exists a constant $C_T$ such that one does indeed have the bound
$\sup_{t\in[0,T]} \|F_y(t)\|^p \le C_T \int_0^T \|y(t)\|^p\,dt$, provided that $p > {1\over \alpha}$. Since continuous functions are dense in $L^p$, the proof is complete if we can 
show that $F_y$ is continuous for every continuous function $y$ with $y(0) = 0$.

Fixing such a $y$, we first show that $F_y$ is right-continuous and then that it is left continuous. Fixing $t>0$, we have for $h>0$ the bound
\begin{equs}
\|F_y(t+h) - F_y(t)\| &\le \int_0^t \bigl\|\bigl((t+h-r)^{\alpha-1}S(h) - (t-r)^{\alpha-1}\bigr)S(t-r)y(r)\bigr\|\, dr \\
&\quad + \int_t^{t+h} (t+h-r)^{\alpha-1} \|S(t+h-r) y(r)\|\, dr
\end{equs}
The second term is bounded by $\CO(h^\delta)$ for some $\delta>0$ by H\"older's inequality. It follows from
the strong continuity of $S$ that the integrand of the first term converges to $0$ pointwise as $h \to 0$. 
Since on the other hand the integrand is bounded by $C (t-r)^{\alpha-1} \|y(r)\|$ for some constant $C$, 
this term also converges to $0$ by the dominated convergence theorem. This shows that $F_y$ is right continuous.

To show that $F_y$ is also left continuous, we write
\begin{equs}
\|F_y(t) - F_y(t-h)\| &\le \int_0^{t-h} \bigl\|\bigl((t-r)^{\alpha-1}S(h) - (t-h-r)^{\alpha-1}\bigr)S(t-h-r)y(r)\bigr\|\, dr \\
&\quad + \int_{t-h}^{t} (t-r)^{\alpha-1} \|S(t-r) y(r)\|\, dr\;.
\end{equs}
We bound the second term by H\"older's inequality as before. The first term can be rewritten as
\begin{equ}
 \int_0^{t} \bigl\|\bigl((t+h-r)^{\alpha-1}S(h) - (t-r)^{\alpha-1}\bigr)S(t-r)y(r-h)\bigr\|\, dr\;,
\end{equ}
with the understanding that $y(r) = 0$ for $r < 0$. Since we assumed that $y$ is continuous, we can again 
use the dominated convergence theorem to show that this term tends to $0$ as $h \to 0$.
\end{proof}

\begin{remark}
The trick employed in \eref{e:factorisation} is sometimes called the ``factorisation method'' and was introduced in the
context of stochastic convolutions  by Da Prato, Kwapie{\'n}, and Zabczyk \cite{DKZ87:1,DaPZab92:143}.
\end{remark}

This theorem is quite sharp in the sense that, without any further assumption on $Q$ and $L$, it is not possible
in general to deduce that $t\mapsto x(t)$ has more regularity than just continuity, even if we start with a very regular
initial condition, say $x_0 = 0$. We illustrate this fact with the following exercise:

\begin{exercise}
Consider the case $\CH = L^2(\R)$, $\CK= \R$, $L = \d_x$ and $Q = g$ for some compactly supported function $g$ that is smooth outside 
the origin and such that
$g \ge 0$ and $g(x) = |x|^{-\beta}$ for some $\beta \in (0,\hf)$ and all $|x| < 1$. Show that this satisfies the conditions
of Theorem~\ref{theo:regularitySC} for any $\alpha < 1$.

Since $L$ generates the translation group, the solution to
\begin{equ}
du(x,t) = \d_x u(x,t)\, dt + g(x)\,dW(t)\;,\quad u(x,0) = 0\;,
\end{equ}
is given by
\begin{equ}
u(x,t) = \int_0^t g(x+t-s)\, dW(s)\;.
\end{equ}
Convince yourself that for fixed $t$, the map $x \mapsto u(x,t)$ is in general $\gamma$-H\"older continuous for $\gamma < {1\over 2}-\beta$,
but no better. Deduce from this that the map $t \mapsto u(\cdot,t)$ is in general also $\gamma$-H\"older continuous 
for $\gamma < {1\over 2}-\beta$ (if we consider it either as an $\CH$-valued map or as a $\CC_b(\R)$-valued map), but cannot be expected to
have more regularity than that. Since $\beta$ can be chosen arbitrarily close to $\hf$, it follows that the exponent $\alpha$ appearing
in Theorem~\ref{theo:regularitySC} is in general independent of the H\"older regularity of the solution.
\end{exercise}

One of the main insights of regularity theory for parabolic PDEs (both deterministic and stochastic) 
is that space regularity is intimately linked to time regularity
in several ways. Very often, the knowledge that a solution has a certain spatial regularity for fixed time implies that it also has
a certain temporal regularity at a given spatial location. 

From a slightly different point of view, if we consider time-regularity 
of the solution to a PDE viewed as an evolution equation in some infinite-dimensional space of functions, then the amount
of regularity that one obtains depends on  the functional space under consideration. As a general rule, the smaller the space
(and therefore the more spatial regularity it imposes) the lower the regularity of the solution, viewed as a function with values
in that space.

%We start by considering the case of linear SPDEs with a self-adjoint linear part that take values in
%a Hilbert space $\CH$ and we obtain conditions for the solutions to take values in one of the interpolation spaces:
We start by giving a general result that tells us precisely in which interpolation space one can expect to find
the solution to a linear SPDE associated with an analytic semigroup. This provides us with the optimal spatial regularity
for a given SPDE:

\begin{theorem}\label{theo:spaceReg}
Consider \eref{e:linearSPDE} on a Hilbert space $\CH$, assume that $L$ generates an analytic semigroup, and denote by $\CH_\alpha$ the
corresponding interpolation spaces. If there exists $\alpha \ge 0$ such that $Q \colon \CK \to \CH_\alpha$ is bounded and $\beta \in (0,\hf +\alpha]$
such that $\|(-L)^{-\beta}\|_\HS < \infty$ then the solution $x$ takes values in $\CH_\gamma$ for every $\gamma < \gamma_0 = \hf + \alpha - \beta$. 
\end{theorem}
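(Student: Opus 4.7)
The plan is to decompose the mild solution as $x(t) = S(t)x_0 + Z(t)$, where $Z(t) = \int_0^t S(t-s)Q\,dW(s)$, and treat the two terms separately. Since $L$ generates an analytic semigroup, Exercise~\ref{ex:ASGab} tells us that $S(t)$ maps $\CH$ into $\CH_\gamma$ for every $\gamma$ and every $t > 0$, so the deterministic part lies in $\CH_\gamma$ at every positive time. All the real work is in the stochastic convolution.

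The main observation is that showing $Z(t) \in \CH_\gamma$ amounts to showing that $(-L)^\gamma Z(t)$ is a well-defined $\CH$-valued random variable. Formally,
\begin{equ}
(-L)^\gamma Z(t) = \int_0^t (-L)^\gamma S(t-s)\,Q\,dW(s)\;,
\end{equ}
and by the It\^o isometry from Section~\ref{sec:Ito}, this integral makes sense as an element of $\CH$ (and is then automatically Gaussian) provided
\begin{equ}[e:claim]
I(t) \eqdef \int_0^t \|(-L)^\gamma S(t-s) Q\|_\HS^2\,ds < \infty\;.
\end{equ}
So the whole proof reduces to establishing \eref{e:claim} for every $\gamma < \gamma_0$.

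To control the integrand, I would factor the operator in a way that isolates the only piece carrying Hilbert-Schmidt information. Since $Q$ is bounded from $\CK$ into $\CH_\alpha$, the operator $\tilde Q \eqdef (-L)^\alpha Q$ is bounded from $\CK$ into $\CH$. Using that $(-L)^\delta$ commutes with $S(s)$ and with itself (on appropriate domains), write
\begin{equ}
(-L)^\gamma S(s)\,Q = \bigl((-L)^{\gamma - \alpha + \beta} S(s)\bigr)\,(-L)^{-\beta}\,\tilde Q\;.
\end{equ}
The first factor is a bounded operator on $\CH$, with norm at most $Cs^{-(\gamma - \alpha + \beta)_+}$ by the smoothing estimate of Exercise~\ref{ex:ASGab} (uniformly in $s$ if $\gamma - \alpha + \beta \le 0$, and with the stated blow-up otherwise, at least on $s \in (0,1]$). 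The middle factor is Hilbert-Schmidt on $\CH$ by hypothesis, and the last factor is bounded. Using the standard inequality $\|ABC\|_\HS \le \|A\|\,\|B\|_\HS\,\|C\|$, we obtain
\begin{equ}
\|(-L)^\gamma S(s) Q\|_\HS \le C\,\|(-L)^{-\beta}\|_\HS\,\|\tilde Q\|\,s^{\alpha - \gamma - \beta}\;,
\end{equ}
for $s \in (0,1]$, and an exponentially bounded (say) tail for $s \in [1,t]$ coming from $\|S(s-1)\|$ times the bound at $s=1$.

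The integrability of $s^{2(\alpha - \gamma - \beta)}$ near $s = 0$ is equivalent to $2(\alpha - \gamma - \beta) > -1$, i.e.\ $\gamma < \tfrac12 + \alpha - \beta = \gamma_0$, which is exactly the hypothesis. This yields \eref{e:claim} and completes the proof. The only point requiring some care, but not a genuine obstacle, is the bookkeeping with fractional powers: the commutation identities and the smoothing bound $\|(-L)^\delta S(s)\| \le C s^{-\delta}$ for $\delta \ge 0$ are exactly the content of the interpolation theory developed in Section~\ref{sec:analSG}, and once those are invoked there are no further subtleties.
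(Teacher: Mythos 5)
Your proposal is correct and follows essentially the same route as the paper: both reduce the claim to the square-integrability of $\|(-L)^\gamma S(s)Q\|_\HS$ near $s=0$, factor out $(-L)^\alpha Q$ as a bounded operator and $(-L)^{-\beta}$ as the Hilbert--Schmidt piece, and bound the remaining factor $(-L)^{\beta+\gamma-\alpha}S(s)$ by $C(1\vee s^{\alpha-\gamma-\beta})$, which is square integrable precisely when $\gamma<\gamma_0$. The only cosmetic difference is that you treat the $S(t)x_0$ term explicitly, which the paper leaves implicit.
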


\begin{proof}
As usual, we can assume without loss of generality that $0$ belongs to the resolvent set of $L$.
It suffices to show that
\begin{equ}
I(T) \eqdef \int_0^T \|(-L)^\gamma S(t) Q\|_\HS^2\, dt < \infty\;,\qquad \forall T>0\;.
\end{equ}
Since $Q$ is assumed to be bounded from $\CK$ to $\CH_\alpha$, there exists a constant $C$ such that
\begin{equ}
I(T) \le C \int_0^T \|(-L)^\gamma S(t) (-L)^{-\alpha}\|_\HS^2\, dt  = C \int_0^T \|(-L)^{\gamma-\alpha} S(t)\|_\HS^2\, dt \;.
\end{equ}
Since $(-L)^{-\beta}$ is Hilbert-Schmidt, we have the bound
\begin{equ}
\|(-L)^{\gamma-\alpha} S(t)\|_\HS \le \|(-L)^{-\beta}\|_\HS \|(-L)^{\beta + \gamma-\alpha} S(t)\| \le C \bigl(1 \vee t^{\alpha -\gamma - \beta}\bigr)\;.
\end{equ}
For this expression to be square integrable near $t = 0$, we need $\alpha -\gamma - \beta > -\hf$, which is precisely the stated condition.
%
%trace class by assumption, we know that the eigenvalues $\lambda_n$ of $L$ are
%strictly negative, discrete, and converge to $-\infty$ as $n \to \infty$. One therefore has
%\begin{equ}
%I(T) \le C \sum_{n \ge 0} \int_0^T (-\lambda_n)^{2\gamma-2\alpha} e^{2\lambda_n t}\, dt \le {C\over 2}\sum_{n \ge 0} (-\lambda_n)^{2\gamma - 2\alpha-1}\;,
%\end{equ}
%from which the claim follows at once.
\end{proof}

\begin{exercise}
Show that if we are in the setting of Theorem~\ref{theo:spaceReg} and $L$ is selfadjoint, then the solutions to \eref{e:linearSPDE} 
actually belong to $\CH_\gamma$ for $\gamma = \gamma_0$.
\end{exercise}

\begin{exercise}\label{ex:SHE}
Show that the solution to the stochastic heat equation on $[0,1]$ with periodic boundary conditions has solutions
in the fractional \index{Sobolev!space}Sobolev space $H^s$ for every $s < 1/2$. Recall that $H^s$ is the Hilbert space with scalar product
$\scal{f,g}_s = \sum_{k} \hat f_k \hat g_k (1+k^2)^s$, where $\hat f_k$ denotes the $k$th Fourier coefficient of $f$. 
\end{exercise}

\begin{exercise}
Consider the following modified stochastic heat equation on $[0,1]^d$ with periodic boundary conditions:
\begin{equ}
dx = \Delta x\, dt + (1-\Delta)^{-\gamma}\, dW\;,
\end{equ}
where $W$ is a cylindrical Wiener process on $L^2([0,1]^d)$. For any given $s \ge 0$, how large does $\gamma$ need to be for
$x$ to take values in $H^s$? Use Corollary~\ref{cor:Kol} to show that in this case
it also takes values in $\CC^s$.
\end{exercise}

Using this knowledge about the spatial regularity of solutions, we can now turn to the time-regularity. We have:

\begin{theorem}\label{theo:timeReg}
Consider the same setting as in Theorem~\ref{theo:spaceReg} and fix $\gamma < \gamma_0$. Then, at all times $t>0$,
the process $x$ is almost surely $\delta$-H\"older continuous in 
$\CH_\gamma$ for every $\delta < \hf \wedge (\gamma_0 - \gamma)$.
\end{theorem}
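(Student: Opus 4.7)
The plan is to reduce the statement to the Gaussian version of Kolmogorov's continuity criterion (Proposition~\ref{prop:KolmogorovGauss}) by establishing, for every $\delta < \hf\wedge(\gamma_0-\gamma)$, an $L^2$-moment bound
\begin{equ}
\E\|x(t)-x(s)\|_{\CH_\gamma}^2 \le C_T\,|t-s|^{2\delta}\;,\qquad 0 < s < t \le T\;.
\end{equ}
Because $x(t)-x(s)$ is a centred Gaussian random variable in $\CH_\gamma$, Proposition~\ref{prop:universalFernique} upgrades this automatically to $\E\|x(t)-x(s)\|_{\CH_\gamma} \le C|t-s|^\delta$, at which point Proposition~\ref{prop:KolmogorovGauss} produces a modification whose sample paths are $\delta'$-Hölder in $\CH_\gamma$ for every $\delta'<\delta$; letting $\delta$ approach $\hf\wedge(\gamma_0-\gamma)$ along a countable sequence then yields the full claim.

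To set up the estimate I write $x(t) = S(t)x_0 + Z(t)$ with $Z(t) = \int_0^t S(t-r)Q\,dW(r)$. Since $S$ is analytic and $s>0$, the deterministic piece $S(s)x_0$ already lies in $\bigcap_\alpha \CH_\alpha$, and $t\mapsto S(t)x_0$ is even analytic in any interpolation norm for $t$ bounded away from $0$; this part contributes negligibly to the H\"older modulus. For the stochastic part I split
\begin{equ}
Z(t)-Z(s) = \int_s^t S(t-r)Q\,dW(r) + \int_0^s \bigl(S(t-s)-I\bigr)S(s-r)Q\,dW(r)\;,
\end{equ}
and apply the It\^o isometry in $\CH_\gamma$ (obtained by premultiplying the integrand by $(-L)^\gamma$, as in Section~\ref{sec:Ito}) to express $\E\|Z(t)-Z(s)\|_{\CH_\gamma}^2 = I_1 + I_2$ as a sum of two integrals of squared Hilbert--Schmidt norms.

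The estimates now closely follow the proof of Theorem~\ref{theo:spaceReg}. For $I_1$ I factor $(-L)^\gamma S(u)Q = (-L)^{-\beta}\,(-L)^{\gamma+\beta-\alpha}S(u)\,(-L)^\alpha Q$ and use $\|BAC\|_\HS \le \|B\|_\HS\|A\|\|C\|$, together with the analytic-semigroup bound $\|(-L)^\kappa S(u)\|\le C(1\vee u^{-\kappa})$ from Exercise~\ref{ex:ASGab}, to conclude $I_1 \le C|t-s|^{2\min(\hf,\,\gamma_0-\gamma)}$; the threshold $\hf$ appears because once $\gamma_0-\gamma>\hf$ the integrand is uniformly bounded and the length of $[s,t]$ dictates the rate. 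For $I_2$ I commute $(-L)^\gamma$ past $S(t-s)-I$ and apply Proposition~\ref{prop:HolderCont} pointwise to get
\begin{equ}
\bigl\|(-L)^\gamma(S(t-s)-I)S(s-r)Qv\bigr\| \le C(t-s)^\delta\,\bigl\|(-L)^{\gamma+\delta}S(s-r)Qv\bigr\|\;,
\end{equ}
for every $v\in \CK$. The same Hilbert--Schmidt factorisation, now with an extra power $\delta$ on $-L$, yields $I_2 \le C(t-s)^{2\delta}\int_0^s (s-r)^{2(\alpha-\beta-\gamma-\delta)}\,dr$, whose integral is finite precisely when $\delta<\gamma_0-\gamma$.

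The main technical obstacle is bookkeeping: the Hilbert--Schmidt norm must be ``spent'' exactly once (on the factor $(-L)^{-\beta}$) so that the remaining operators can be controlled in plain operator norm via analytic-semigroup smoothing and the H\"older continuity of $S$. Together the two bounds give $\E\|Z(t)-Z(s)\|_{\CH_\gamma}^2 \le C_T|t-s|^{2\delta}$ for every $\delta<\hf\wedge(\gamma_0-\gamma)$, and the conclusion follows from the Kolmogorov-Gaussian argument outlined in the first paragraph.
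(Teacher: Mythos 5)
Your argument is correct and follows essentially the same route as the paper: the same decomposition of $x(t)-x(s)$ into the noise increment over $[s,t]$ plus $(S(t-s)-\mathrm{Id})$ applied to the state at time $s$, the same Hilbert--Schmidt factorisation through $(-L)^{-\beta}$, Proposition~\ref{prop:HolderCont} for the second piece, and Proposition~\ref{prop:KolmogorovGauss} to conclude. The only cosmetic difference is that the paper bounds $\E\|(S(t-s)-\mathrm{Id})x(s)\|_\gamma^2$ by invoking $\E\|x(s)\|_{\tilde\gamma}^2<\infty$ from Theorem~\ref{theo:spaceReg}, whereas you push $(S(t-s)-\mathrm{Id})$ inside the It\^o isometry; both yield the same rate.
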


\begin{proof}
It follows from Kolmogorov's continuity criteria, Proposition~\ref{prop:KolmogorovGauss}, that it suffices to check that the bound
\begin{equ}
\E \|x(t) - x(s)\|_\gamma^2 \le C |t-s|^{1\wedge 2(\tilde \gamma - \gamma)}
\end{equ}
holds uniformly in $s,t \in [t_0, T]$ for every $t_0, T>0$ and for every $\tilde \gamma < \gamma_0$. Here and below, $C$ is an unspecified constant that changes from expression to expression. Assume that $t>s$ from now on. It follows from the semigroup property and the independence of the increments of $W$ that the identity
\begin{equ}[e:Markov]
x(t) = S(t-s)x(s) + \int_s^t S(t-r)Q\,dW(r)\;,
\end{equ}
holds almost surely, where the two terms in the sum are independent. This property is also 
called the \textit{Markov property}\index{Markov!property}. Loosely speaking, it states that the future of $x$ depends on its present, but not on its past.
This transpires in \eref{e:Markov} through the fact that the right hand side depends on $x(s)$ and on the increments of $W$ between times
$s$ and $t$, but it does not depend on $x(r)$ for any $r < s$.

Furthermore, $x(s)$ is independent of the increments of $W$ over the interval $[s,t]$, so that Proposition~\ref{prop:HolderCont}
allows us to get the bound
\begin{equs}
\E \|x(t) - x(s)\|_\gamma^2 &= \E \|S(t-s)x(s) - x(s)\|_\gamma^2 + \int_0^{t-s} \|(-L)^\gamma S(r) Q\|_\HS^2\, dr \\
&\le C |t-s|^{2(\tilde \gamma-\gamma) \wedge 2} \E \|x(s)\|_{\tilde \gamma}^2 + C \int_0^{t-s} \bigl(1 \vee r^{\alpha -\gamma - \beta}\bigr)^2\, dr\;. 
\end{equs}
Here, we obtained the bound on the second term in exactly the same way as in the proof of Theorem~\ref{theo:spaceReg}.
The claim now follows from the fact that $\alpha -\gamma - \beta = (\gamma_0 - \gamma) - \hf$.
\end{proof}

\subsection{Long-time behaviour}
\label{sec:longtimelinear}

This section is devoted to the behaviour of the solutions to \eref{e:linearSPDE}  for large times. 
Let's again start with an example that illustrates some of the possible behaviours.

\begin{example}\label{ex:longtime}
Let $x \mapsto V(x)$ be some smooth ``potential''
and let $\CH = L^2(\R, \exp(-V(x))\,dx)$. Let $S$ denote the translation semigroup (to the right) on $\CH$ and denote its
generator by $-\d_x$. Let us first discuss which conditions on $V$ ensure that $S$ is a strongly continuous semigroup on $\CH$.
It is clear that it is a semigroup and that $S(t)u \to u$ for $u$ any smooth function with compact support. It therefore
only remains to show that $\|S(t)\|$ is uniformly bounded for $t \in [0,1]$ say. We have
\begin{equ}[e:normStu]
\|S(t)u\|^2 = \int u^2(x-t) e^{-V(x)}\,dx = \int u^2(x) e^{-V(x)} e^{V(x)-V(x+t)}\,dx\;.
\end{equ}
This shows that a necessary and sufficient condition for $S$ to be a strongly continuous semigroup on $\CH$ is that,
for every $t>0$, there exists $C_t$ such that $\sup_{x \in \R} (V(x) - V(x+t)) \le C_t$ and such that $C_t$ remains bounded
as $t \to 0$. Examples of potentials leading to a 
$\CC_0$-semigroup are $x$, $\sqrt{1+x^2}$, $\log(1+x^2)$, etc or any increasing function. Note however that the potential $V(x) = x^2$ does
\textit{not} lead to a strongly continuous semigroup. One different way of interpreting this is to consider the unitary transformation
$K\colon u \mapsto \exp(\hf V) u$ from the ``flat'' space $L^2$ into $\CH$. Under this transformation, the generator
$-\d_x$ is turned into
\begin{equ}
- \bigl(K^{-1} \d_x Ku\bigr)(x) = -\d_x u(x) - \hf V'(x) u(x)\;.
\end{equ}
Considering the characterisation of generators of $\CC_0$-semigroups given by the Hille--Yosida theorem, one would 
expect this to be the generator of a strongly continuous semigroup if $V'$ is bounded from below, which is indeed
a sufficient condition.

Let now $V$ be such that $S$ is a $\CC_0$-semigroup and consider the SPDE on $\CH$ given by
\begin{equ}[e:translweight]
du(x,t) = -\d_x u(x,t)\, dt + f(x)\,dW(t)\;,
\end{equ}
where $W$ is a one-dimensional Wiener process and $f$ is some function in $\CH$. 
The solution to \eref{e:translweight} with initial condition
$u_0 = 0$ is given as before by
\begin{equ}[e:soltransl]
u(x,t) = \int_0^t f(x+s-t)\, dW(s)\;.
\end{equ}
If we fix the time $t$, we can make the change of variable $s \mapsto t-s$, so that $u(x,t)$ is equal in
distribution to $\int_0^t f(x-s)\, dW(s)$. 

We see that if $f$ happens to be also square integrable (we will assume that this is the case in the sequel and
we will also assume that $f$ is not identically zero), then \eref{e:soltransl} has a limit in distribution as $t\to \infty$
given by
\begin{equ}[e:solstat]
\tilde u(x) = \int_0^\infty f(x-s)\, dW(s)\;.
\end{equ}
It is however not clear \textit{a priori} that $\tilde u$ does belong to $\CH$. On one hand, we have the bound
\begin{equ}
\E \int_\R \tilde u(x)^2 e^{-V(x)}\, dx = \int_\R \int_0^\infty f^2(x-t) \,dt\,e^{-V(x)}\, dx \le  \int_\R f^2(t) \,dt \int_\R \,e^{-V(x)}\, dx\;,
\end{equ}
thus showing that $\tilde u$ definitely belongs to $\CH$ if $e^{-V}$ has finite mass. On the other hand, there are examples
where $\tilde u \in \CH$ even though $e^{-V}$ has infinite mass. For example, if $f(x) = 0$ for $x \le 0$, then it is necessary and
sufficient to have $\int_0^\infty e^{-V(x)}\, dx < \infty$. Denote by $\nu$ the law of $\tilde u$ for further reference.

Furthermore, if $e^{-V}$ is integrable, there are many measures on $\CH$ that are invariant under the action of the semigroup
$S$. For example, given a function $h\in \CH$ which is periodic with period $\tau$ (that is $S(\tau)h = h$), we can check that
the push-forward of the Lebesgue measure on $[0,\tau]$ under the map $t \mapsto S(t)h$ is invariant under the action of $S$.
This is simply a consequence of the invariance of Lebesgue measure under the shift map.
Given any invariant probability measure $\mu_h$ of this type, let $v$ be an $\CH$-valued random variable with law $\mu_h$ that is
independent of $W$. We can then consider the solution to \eref{e:translweight} with initial condition $v$. Since the
law of $S(t)v$ is equal to the law of $v$ by construction, it follows that the law of the solution converges to
the distribution of the random variable $\tilde u + v$, with the understanding that $\tilde u$ and $v$ are independent.

This shows that in the case $\int e^{-V(x)}\, dx < \infty$, it is possible to construct solutions $u$ to \eref{e:translweight}
such that the law of $u(\cdot\,, t)$ converges to $\mu_h \star \nu$ for any periodic function $h$.
\end{example}

\begin{exercise}\label{ex:unboundedSg}
Construct an example of a potential $V$ such that the semigroup $S$ from the previous example 
is \textit{not} strongly continuous by choosing it such that
 $\lim_{t \to 0} \|S(t)\| = +\infty$, even though
each of the operators $S(t)$ for $t>0$ is bounded! \textbf{Hint:} Choose $V$ of the form $V(x) = x^3 - \sum_{n>0} n W\big(n({x-c_n})\big)$, where 
$W$ is an isolated ``spike'' and $c_n$ are suitably chosen constants. 
\end{exercise}

This example shows that in general, the long-time behaviour of solutions to \eref{e:linearSPDE} may depend on the choice
of initial condition. It also shows that depending on the behaviour of $\CH$, $L$ and $Q$, the law of the
solutions may or may not converge to a limiting distribution in the space in which solutions are considered.

In order to formalise the concept of ``long-time behaviour of solutions'' for \eref{e:linearSPDE}, it is convenient to 
introduce the \textit{Markov semigroup}\index{Markov!semigroup} associated to \eref{e:linearSPDE}. Given a linear SPDE with solutions in $\CB$,
we can define a family $\CP_t$ of bounded linear operators on $\cB_b(\CB)$, the space of Borel measurable
bounded functions from $\CB$ to $\R$ by
\begin{equ}[e:defPt]
\bigl(\CP_t \phi\bigr)(x) = \E \phi \Bigl(S(t)x + \int_0^t S(t-s)Q\, dW(s)\Bigr)\;.
\end{equ}
The operators $\CP_t$ are \textit{Markov operators}\index{Markov!operator} in the sense that the map $A \mapsto \CP_t \one_A(x)$
is a probability measure on $\CB$ for every fixed $x$. In particular, one has $\CP_t \one = \one$
and $\CP_t \phi \ge 0$ if $\phi \ge 0$, that is the operators $\CP_t$ preserve positivity. 
It follows furthermore from \eref{e:Markov} and the independence of the increments of $W$ over disjoint time intervals that $\CP_t$ satisfies the
semigroup property $\CP_{t+s} = \CP_t \circ \CP_s$ for any two times $s,t \ge 0$.

\begin{exercise}
Show that $\CP_t$ maps the space $\CC_b(\CB)$ of continuous bounded functions from $\CB$ to $\R$ into itself, i.e.\ it has the
Feller property.
\end{exercise}

%In general, the semigroup $\{\CP_t\}_{t \ge 0}$ is \textit{not} a strongly continuous semigroup on $\cB_b(\CB)$ or on $\CC(\CB)$. 
%However, it is not too difficult to construct a slightly smaller space on which it is strongly continuous. Denote by $K(\CB)$
%the set of all function $\phi\colon \CB \to \R$ such that there exists $n\ge 0$, a compactly supported smooth function $\hat \phi\colon \R^n \to \R$,
%and a linear map $T \colon \CB \to \R^n$, such that $\phi(x) = \hat \phi(Tx)$. Finally, we define $\CC_K(\CB)$ as the closure of $K(\CB)$
%in the supremum norm. If $\CB$ is finite-dimensional, then $\CC_K(\CB)$ is simply the set of continuous functions vanishing at infinity.
%In general however, $\CC_K$ is even slightly smaller than that. \todo{Finish that bit...}

If we denote by $\CP_t(x,\cdot\,)$ the law of $S(t)x + \int_0^t S(t-s)Q\, dW(s)$, then $\CP_t$ can alternatively be represented
as
\begin{equ}
\bigl(\CP_t \phi\bigr)(x) = \int_\CB \phi(y)\,\CP_t(x,dy)\;.
\end{equ}
It follows that its dual $\CP_t^*$ acts on measures with finite total variation by
\begin{equ}
\bigl(\CP_t^* \mu\bigr)(A) = \int_\CB \CP_t(x,A)\,\mu(dx)\;.
\end{equ}
Since it preserves the mass of positive measures, $\CP_t^*$ is a continuous
map from the space $\cP_1(\CB)$ of Borel probability measures on $\CB$ (endowed with the total variation topology) 
into itself. It follows from \eref{e:defPt} and the definition of the dual that $\CP_t\mu$ is nothing but the law at time $t$ of
the solution to \eref{e:linearSPDE} with its initial condition $u_0$ distributed according to $\mu$,
independently of the increments of $W$ over $[0,t]$. With these notations in place, we define:

\begin{definition}
A Borel probability measure $\mu$ on $\CB$ is an \textit{invariant measure}\index{invariant measure} for \eref{e:linearSPDE} if
$\CP_t^*\mu = \mu$ for every $t>0$, where $\CP_t$ is the Markov semigroup associated to solutions
of \eref{e:linearSPDE} via \eref{e:defPt}.
\end{definition}

In the case $\CB = \CH$ where we consider \eref{e:linearSPDE} on a Hilbert space $\CH$, the situations
in which such an invariant measure exists are characterised in the following theorem:

\begin{theorem}\label{theo:char}
Consider \eref{e:linearSPDE} with solutions in a Hilbert space $\CH$ and define the self-adjoint operator $Q_t \colon \CH \to \CH$ by
\begin{equ}
Q_t = \int_0^t S(s)QQ^*S^*(s)\, ds\;.
\end{equ}
Then there exists an invariant measure $\mu$ for \eref{e:linearSPDE} if and only if one of the following two
equivalent conditions are satisfied:
\begin{claim}
\item[1.] There exists a positive definite trace class operator $Q_\infty \colon \CH \to \CH$ such that the identity
$2\Re \scal{Q_\infty L^*x,x} + \|Q^*x\|^2 = 0$ holds for every $x \in \CD(L^*)$.
\item[2.] One has $\sup_{t > 0} \tr Q_t < \infty$.
\end{claim}
Furthermore, any invariant measure is of the form $\nu \star \mu_\infty$, where $\nu$ is a measure on $\CH$ that is invariant
under the action of the semigroup $S$ and $\mu_ \infty$ is the centred Gaussian measure with covariance $Q_\infty$.
\end{theorem}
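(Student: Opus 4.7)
The plan is to first identify the one-step transition probabilities of the Markov semigroup. From the mild solution formula, the measure $\CP_t(x,\cdot\,)$ is Gaussian with mean $S(t)x$ and covariance $Q_t$, and the trace-class character of each $Q_t$ is implicit in the formulation since otherwise no solution in $\CH$ exists. The cornerstone observation, obtained by splitting the defining integral at $t$ and substituting $r = t + r'$, is the semigroup identity
\begin{equ}
Q_{t+s} = Q_t + S(t) Q_s S^*(t)\;,
\end{equ}
from which $t \mapsto Q_t$ is monotonically increasing as a family of positive self-adjoint operators. Consequently $\sup_{t>0} \tr Q_t < \infty$ is equivalent to the existence of a positive trace-class limit $Q_\infty = \lim_{t\to\infty} Q_t$, with convergence in trace norm; the equivalence $(1)\Leftrightarrow(2)$ and the definition of $Q_\infty$ will both be read off from this.

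For $(2)\Rightarrow(1)$, letting $s \to \infty$ in the identity above gives $Q_\infty = S(t) Q_\infty S^*(t) + Q_t$. Pairing with $x \in \CD(L^*)$ and differentiating at $t = 0$, using strong continuity of $S^*$ on $\CB^\dagger$ and $\scal{Q_\infty S^*(t) x, S^*(t) x}$ differentiability, yields on the one hand $2\Re\scal{Q_\infty L^* x, x}$, while $-{d\over dt}\scal{Q_t x, x}|_{t=0} = -\|Q^*x\|^2$, giving the Lyapunov identity. Conversely, given $(1)$, the same computation in reverse shows ${d\over dt}\scal{S(t) Q_\infty S^*(t) x, x} = -\|Q^*S^*(t)x\|^2$, so integration produces $\scal{Q_t x, x} = \scal{Q_\infty x, x} - \scal{S(t) Q_\infty S^*(t) x, x} \le \scal{Q_\infty x, x}$ on $\CD(L^*)$; by density and boundedness $Q_t \le Q_\infty$ globally, which is $(2)$.

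Existence of an invariant measure under $(2)$ is immediate from Proposition~\ref{prop:trace}: set $\mu_\infty = \CN(0, Q_\infty)$ and verify that $\CP_t^*\mu_\infty$ is centred Gaussian with covariance $S(t) Q_\infty S^*(t) + Q_t = Q_\infty$, hence equals $\mu_\infty$. For the converse, if $\mu$ is any invariant measure, the decomposition $\mu = (S(t)_*\mu) * \CN(0, Q_t)$ together with the classical symmetrization bound $\CN(0, Q_t)(\|y\| > R) \le 2\mu(\|x\| > R)$ (valid since $\CN(0, Q_t)$ is symmetric and $\mu$ is tight on the Polish space $\CH$) shows that the Gaussian family $\{\CN(0, Q_t)\}_t$ is uniformly tight in norm; a companion tail bound from $\E \|P_N^\perp Z_t\|^2 = \sum_{k>N}\scal{Q_t e_k, e_k}$ upgrades this to genuine tightness on $\CH$, and a Prokhorov subsequential limit is centred Gaussian with trace-class covariance bounding $\sup_t \tr Q_t$.

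For the structural statement, passing $t \to \infty$ in the Fourier identity $\hat\mu(\ell) = \hat\mu(S^*(t)\ell)\exp(-\hf\scal{Q_t\ell,\ell})$ suggests defining $\nu$ by $\hat\nu(\ell) = \lim_{t\to\infty} \widehat{S(t)_*\mu}(\ell) = \hat\mu(\ell)\exp(\hf\scal{Q_\infty\ell,\ell})$, so that formally $\hat\mu = \hat\nu\cdot\widehat{\mu_\infty}$. The crux, and the step I expect to be the principal technical obstacle, is verifying tightness of the family $\nu_t := S(t)_*\mu$, so that Prokhorov combined with pointwise convergence of characteristic functions produces a genuine probability measure $\nu$; the strategy is again to exploit the decomposition $\mu = \nu_t * \CN(0, Q_t)$, with uniform norm and tail control on both factors (the tail control on $\CN(0, Q_t)$ coming from $Q_t \le Q_\infty$). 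Once $\nu_t \to \nu$ weakly, passing $\nu_{t+s} = S(s)_*\nu_t$ to the limit gives $S$-invariance of $\nu$, and continuity of convolution under weak convergence yields $\mu = \nu * \mu_\infty$.
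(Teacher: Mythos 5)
Your proposal is correct in its overall architecture and reaches all the conclusions of the theorem, but it replaces the paper's argument for the key implication ``invariance $\Rightarrow \sup_{t>0}\tr Q_t < \infty$'' by a genuinely different one. The paper works on the Fourier side: from $\hat\mu(x) = \hat\mu(S^*(t)x)e^{-\hf\scal{x,Q_tx}}$ and $|\hat\mu|\le 1$ it deduces $\scal{x,Q_tx} \le -2\log|\hat\mu(x)|$, and then controls the right-hand side by the trace-class operator $A_R$ built from $\mu$ restricted to a ball, yielding $\tr Q_t \le (8\log 4)R^2$. You instead use the decomposition $\mu = (S(t)^*\mu)\star \CN(0,Q_t)$ together with the symmetrization inequality $\P(\|Y\|>R)\le \P(\|X+Y\|>R)+\P(\|X-Y\|>R) = 2\mu(\|x\|>R)$ (valid since $2Y = (X+Y)-(X-Y)$ and $X-Y\sim X+Y$ by symmetry of $Y$), which gives a uniform-in-$t$ tail bound on $\CN(0,Q_t)$. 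This is arguably more transparent and more probabilistic; the paper's route avoids any appeal to symmetrization but pays for it with the somewhat ad hoc construction of $A_R$. The remaining pieces of your proof --- the monotone semigroup identity $Q_{t+s}=Q_t+S(t)Q_sS^*(t)$, the differentiation/integration linking the Lyapunov identity to $Q_\infty = S(t)Q_\infty S^*(t)+Q_t$, the invariance of $\CN(0,Q_\infty)$, and the tightness argument for $S(t)^*\mu$ via the convolution decomposition --- follow essentially the same lines as the paper.

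One sub-step deserves tightening: you propose to pass from the uniform norm-tail bound on $\{\CN(0,Q_t)\}$ to tightness and then extract $\sup_t\tr Q_t<\infty$ from a Prokhorov limit. As written, ``uniformly tight in norm'' does not imply tightness on an infinite-dimensional $\CH$ (balls are not compact), and the companion bound $\sum_{k>N}\scal{Q_te_k,e_k}$ cannot be controlled uniformly in $t$ without already knowing what you are trying to prove --- unless you apply the symmetrization inequality blockwise to $P_N^\perp$, using that $\mu(\|P_N^\perp x\|>\delta)\to 0$ for the single tight measure $\mu$. The shortest repair bypasses Prokhorov entirely: once $\CN(0,Q_t)(\|y\|\le R)\ge 3/4$ uniformly in $t$, Fernique's theorem in its universal form (Proposition~\ref{prop:universalFernique}, whose constants depend only on the $3/4$-quantile $R$) gives $\tr Q_t = \int\|y\|^2\,\CN(0,Q_t)(dy)\le CR^2$ directly, uniformly in $t$.
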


\begin{proof}
The proof goes as follows. We first show that $\mu$ being invariant implies that 2.\ holds. Then we show that 2.\ implies 1., and we
conclude the first part by showing that 1.\ implies the existence of an invariant measure. 

Let us start by showing that if $\mu$ is an invariant measure for \eref{e:linearSPDE}, then 2.\ is satisfied.
By choosing $\phi(x) = e^{i\scal{h,x}}$ for arbitrary $h \in \CH$, it follows from \eref{e:defPt} that the Fourier transform
of $\CP_t^* \mu$ satisfies the equation
\begin{equ}[e:exprPthat]
\widehat{\CP_t^* \mu}(x) = \hat \mu(S^*(t)x) e^{-{1\over 2}\scal{x, Q_tx}}\;.
\end{equ}
Taking logarithms and using the fact that $|\hat \mu(x)| \le 1$ for  every $x \in\CH$ and every probability measure $\mu$,
 It follows that if $\mu$ is invariant, then 
\begin{equ}[e:boundQt]
\scal{x,Q_t x} \le - 2 \log |\hat \mu(x)|\;,\quad \forall x \in \CH\;,\quad \forall t>0\;.
\end{equ}
Choose now a sufficiently large value of $R>0$ so that $\mu(\|x\| > R) < 1/8$ (say) and define
a symmetric positive definite operator $A_R \colon \CH \to \CH$ by
\begin{equ}
\scal{h,A_R h} = \int_{\|x\| \le R} |\scal{x,h}|^2\, \mu(dx)\;.
\end{equ}
Since, for any orthonormal basis, one has $\|x\|^2 = \sum_n |\scal{x, e_n}|^2$, it follows that $A_R$ is trace class and that
$\tr A_R \le R^2$. Furthermore, one has the bound
\begin{equ}
|1-\hat \mu(h)| \le \int_\CH \bigl|1- e^{i\scal{h,x}}\bigr|\, \mu(dx) \le \sqrt{\scal{h,A_Rh}} + {1\over 4}\;.
\end{equ}
Combining this with \eref{e:boundQt}, it follows that $\scal{x,Q_t x}$ is bounded by $2\log 4$ for every $x\in \CH$
such that $\scal{x,A_R x} \le 1/4$ so that, by homogeneity,
\begin{equ}
\scal{x,Q_t x} \le (8\log 4) \scal{x,A_R x}\;.
\end{equ}
It follows that $\tr Q_t \le (8\log 4) R^2$, so that 2.\ is satisfied. To show that 2.\ implies 1., note that 
$\sup \tr Q_t < \infty$ implies that 
\begin{equ}
Q_\infty = \int_0^\infty S(t)QQ^*S^*(t)\, dt\;,
\end{equ}
is a well-defined positive definite trace class operator (since $t \mapsto Q_t^{1/2}$ forms a 
Cauchy sequence in the space of Hilbert-Schmidt operators). Furthermore, one has the identity
\begin{equ}
\scal{x,Q_\infty x} = \scal{S^*(t)x, Q_\infty S^*(t)x} + \int_0^t \|Q^*S^*(s) x\|^2\,ds\;.
\end{equ}
for $x \in \CD(L^*)$, both terms on the right hand side of this expression are differentiable. Taking the derivative
at $t=0$, we get
\begin{equ}
0 = 2 \Re \scal{Q_\infty L^*x,x} + \|Q^* x\|^2\;,
\end{equ}
which is precisely the identity  in 1.

Let now $Q_\infty$ be a given operator as in 1., we want to show that the centred Gaussian measure
 $\mu_\infty$ with covariance $Q_\infty$ is indeed invariant for $\CP_t$.
For $x \in \CD(L^*)$, it follows from Proposition~\ref{prop:diffSt} that
the map $F_x \colon t \mapsto \scal{Q_\infty S^*(t) x, S^*(t)x}$ is differentiable
with derivative given by $\d_t F_x(t) = 2\Re\scal{Q_\infty L^* S^*(t) x, S^*(t)x}$. It follows that
\begin{equs}
F_x(t) - F_x(0) = 2\int_0^t \Re\scal{Q_\infty L^* S^*(s) x, S^*(s)x}\,ds =  -\int_0^t \|Q^*S^*(s) x\|^2\,ds \;,
\end{equs} 
so that one has the identity
\begin{equ}
Q_\infty = S(t) Q_\infty S^*(t) + \int_0^t S(s) QQ^* S^*(s)\, ds = S(t)Q_\infty S^*(t) + Q_t\;.
\end{equ}
Inserting this into \eref{e:exprPthat}, the claim follows. Here, we used the fact that $\CD(L^*)$ is dense in
$\CH$, which is the case for every densely defined closed linear operator on a 
Hilbert space, see \cite[Thm~VII.2.3]{Yos95}.

Since it is obvious from \eref{e:exprPthat} that every measure of the type $\nu \star \mu_ \infty$ with 
$\nu$ invariant for $S$ is also invariant for $\CP_t$, it remains to show that the converse also holds.
Let $\mu$ be invariant for $\CP_t$ and define $\mu_t$ as the push-forward of $\mu$ under the map $S(t)$.
Since $\hat \mu_t(x) = \hat \mu(S^*(t)x)$, it follows from \eref{e:exprPthat} and the invariance of $\mu$ that there exists a function
$\psi \colon \CH \to \R$ such that $\hat \mu_t(x) \to \psi(x)$ uniformly on bounded sets, $\psi \circ S(t)^* = \psi$, and such that
$\hat \mu(x) = \psi(x) \exp(-\hf \scal{x,Q_\infty x})$. It therefore only remains to show that there exists a probability measure
$\nu$ on $\CH$ such that $\psi = \hat \nu$. 

In order to show this, it suffices to show that the family of measures $\{\mu_t\}$ is tight, that is for every $\eps>0$
there exists a compact set $K$ such that $\mu_t(K) \ge 1-\eps$ for every $t$. Prokhorov's theorem \cite[p.~37]{Bill68} then ensures
the existence of a sequence $t_n$ increasing to $\infty$ and a measure $\nu$ such that $\mu_{t_n} \to \nu$
weakly. In particular, $\hat \mu_{t_n}(x) \to \hat \nu(x)$ for every $x \in \CH$, thus concluding the proof.

To show tightness, denote by $\nu_t$ the centred Gaussian measure on $\CH$ with covariance $Q_t$ and
note that one can find a sequence of bounded linear operators $A_n \colon \CH \to \CH$ with the following properties:
\begin{claim}
\item[$a.$] One has $\|A_{n+1} x\| \ge \|A_n x\|$ for every $x\in \CH$ and every $n \ge 0$.
\item[$b.$] The set $B_R = \{x\,:\, \sup_n \|A_n x\| \le R\}$ is compact for every $R>0$.
\item[$c.$] One has $\sup_n \tr A_n Q_\infty A_n^* < \infty$.
\end{claim}
(By diagonalising $Q_\infty$, the construction of such a family of operators is similar to the construction, given
a positive sequence $\{\lambda_n\}$ with $\sum_n \lambda_n < \infty$, of a positive sequence $a_n$ with
$\lim_{n\to \infty} a_n = +\infty$ and $\sum_n a_n \lambda_n < \infty$.) Let now $\eps>0$ be arbitrary. It follows from
Prokhorov's theorem that there exists a compact set $\hat K \subset \CH$ such that $\mu(\CH \setminus \hat K) \le {\eps\over 2}$.
Furthermore, it follows from property $c.$ above and the fact that $Q_\infty \ge Q_t$ that there exists $R>0$ such that 
$\nu_t(\CH \setminus B_R) \le {\eps \over 2}$. Define a set $K\subset \CH$ by
\begin{equ}
K = \{z-y\,:\, z\in \hat K\;,\, y \in B_R\}\;.
\end{equ}
It is straightforward to check, using the Heine-Borel theorem, that $K$ is precompact.

If we now take $X$ and $Y$ to be independent $\CH$-valued random variables
with laws $\mu_t$ and $\nu_t$ respectively, then it follows from the definition of a mild solution and the invariance
of $\mu$ that $Z = X+Y$ has law $\mu$. Since one has the obvious implication
$\{Z \in \hat K\} \& \{Y \in B_R\} \Rightarrow \{X \in K\}$, it follows that
\begin{equ}
\mu_t(\CH \setminus K) = \P(X \not \in K) \le \P(Z \not \in \hat K) + \P(Y \not \in B_R) \le \eps\;,
\end{equ}
thus showing that the sequence $\{\mu_t\}$ is tight as requested.
\end{proof}

It is clear from Theorem~\ref{theo:char} that if \eref{e:linearSPDE} does have a solution in some Hilbert space $\CH$ and
if $\|S(t)\| \to 0$ as $t \to \infty$ in that same Hilbert space, then it also possesses a unique invariant measure on $\CH$.
It turns out that as far as the ``uniqueness'' part of this statement is concerned, it is sufficient to have
$\lim_{t \to \infty} \|S(t)x\| = 0$ for every $x \in \CH$:

\begin{proposition}\label{prop:weakconv}
If $\lim_{t \to \infty} \|S(t)x\| = 0$ for every $x \in \CH$, then \eref{e:linearSPDE} can have at most one invariant measure.
Furthermore, if an invariant measure $\mu_\infty$ exists in this situation, then one has $\CP_t^*\nu \to \mu_\infty$ 
weakly for every probability measure $\nu$ on $\CH$.
\end{proposition}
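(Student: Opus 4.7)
The plan is to prove the weak convergence statement first; uniqueness follows immediately from it, since for any invariant $\mu'$ one has $\mu' = \CP_t^*\mu'$ for every $t$, so letting $t\to\infty$ forces $\mu' = \mu_\infty$. Moreover, if no invariant measure exists then the uniqueness claim is vacuous, and if any does exist then by Theorem~\ref{theo:char} one has $\sup_t \tr Q_t < \infty$, so $\mu_\infty$ itself exists and the convergence statement applies.

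To prove $\CP_t^*\nu \to \mu_\infty$ weakly, I would pass to a probabilistic representation. Let $X_0$ be an $\CH$-valued random variable with law $\nu$, independent of the cylindrical Wiener process $W$. Then $\CP_t^*\nu$ is the law of
\begin{equ}
X_t = S(t)X_0 + Y_t\;,\qquad Y_t = \int_0^t S(t-s)Q\,dW(s)\;,
\end{equ}
where the two summands are independent and $Y_t$ is centred Gaussian with covariance $Q_t$. Since $t \mapsto Q_t$ is monotone increasing as a positive operator and dominated by the trace class operator $Q_\infty$, monotone convergence yields $\tr(Q_\infty - Q_t) \to 0$, i.e.\ convergence in trace norm. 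Combined with the pointwise convergence of characteristic functions $\exp(-\hf\scal{x,Q_tx}) \to \exp(-\hf\scal{x,Q_\infty x})$, this upgrades to weak convergence of $Y_t$ to $\mu_\infty$ on $\CH$.

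Meanwhile, $S(t)X_0 \to 0$ almost surely in $\CH$: one simply applies the hypothesis pointwise in $\omega$ to each realization $X_0(\omega) \in \CH$. A Slutsky-type argument then concludes the proof: since $S(t)X_0 \to 0$ in probability and $Y_t \to \mu_\infty$ in distribution, the sum $X_t = S(t)X_0 + Y_t$ converges in distribution to $\mu_\infty$, irrespective of any joint structure (in particular, the independence between $X_0$ and $W$ is not even needed here). This gives $\CP_t^*\nu \to \mu_\infty$ weakly.

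The main technical obstacle to treat carefully is the step going from trace-norm convergence $Q_t \to Q_\infty$ to weak convergence $\CN(0,Q_t) \to \mu_\infty$ on the infinite-dimensional space $\CH$. Pointwise convergence of characteristic functions alone is insufficient in this setting, since L\'evy's continuity theorem requires tightness. However, tightness is supplied here precisely by the uniform trace bound $Q_t \le Q_\infty$: using Proposition~\ref{prop:trace} one constructs a common sequence of operators witnessing tightness for the entire family $\{\CN(0,Q_t)\}_{t\ge 0}$, which combined with Prokhorov's theorem and the explicit Gaussian Fourier transforms identifies $\mu_\infty$ as the unique weak limit point. Notably, the argument never needs to assume strong stability of the adjoint semigroup $S^*(t)$, which is a genuinely stronger property than the stated hypothesis $\lim_{t\to\infty}\|S(t)x\|=0$ for every $x\in\CH$.
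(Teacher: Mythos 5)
Your argument is correct and follows essentially the same route as the paper's: both rest on the decomposition $\CP_t^*\nu = (S(t)^*\nu)\star \CN(0,Q_t)$, with $S(t)^*\nu \to \delta_0$ by applying the hypothesis pointwise and $\CN(0,Q_t)\to\mu_\infty$ thanks to $Q_t \uparrow Q_\infty$ in the trace class topology. The only differences are organizational — the paper proves uniqueness directly from Theorem~\ref{theo:char} (every invariant measure is $\nu\star\mu_\infty$ with $\nu$ invariant for $S$, and $\delta_0$ is the only such $\nu$) rather than deducing it from the convergence statement, and it justifies $\CN(0,Q_t)\to\mu_\infty$ via $L^2$-convergence of the stochastic integral rather than your tightness-plus-Fourier argument — but both variants are sound.
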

\begin{proof}
In view of Theorem~\ref{theo:char}, the first claim follows if we show that $\delta_0$ is the only measure that is invariant under the action
of the semigroup $S$. Let $\nu$ be an arbitrary probability measure on $\CH$ such that $S(t)^*\nu = \nu$ for every $t>0$
and let $\phi \colon \CH \to \R$ be a bounded continuous function. On then has indeed
\begin{equ}[e:Ptnu]
\int_\CH \phi(x) \nu(dx) = \lim_{t \to \infty} \int_\CH \phi(S(t)x) \nu(dx) = \phi(0)\;, 
\end{equ}
where we first used the invariance of $\nu$ and then the dominated convergence theorem.

To show that $\CP_t^*\nu \to \mu_\infty$ whenever an invariant measure exists we use the fact that in this case, by Theorem~\ref{theo:char},
one has $Q_t \uparrow Q_\infty$ in the trace class topology. Denoting by $\mu_t$ the centred Gaussian measure with covariance
$Q_t$, the fact that $L^2$ convergence implies weak convergence then implies that there exists a measure
$\hat \mu_\infty$ such that $\mu_t \to \hat \mu_\infty$ weakly.
Furthermore, the same reasoning as in \eref{e:Ptnu} shows that $S(t)^*\nu \to \delta_0$ weakly as $t \to\infty$.
The claim then follows from the fact that $\CP_t^*\nu = (S(t)^*\nu)\star \mu_t$ and from the fact that convolving two probability measures
is a continuous operation in the topology of weak convergence.
\end{proof}

Note that the condition $\lim_{t \to \infty} \|S(t)x\| = 0$ for every $x$ is \textit{not} sufficient in general to guarantee
the existence of an invariant measure for \eref{e:linearSPDE}. This can be seen again with the aid of Example~\ref{ex:longtime}.
Take an increasing function $V$ with $\lim_{x \to \infty} V(x) = \infty$, but such that $\int_0^\infty e^{-V(x)}\,dx = \infty$.
Then, since $\exp \bigl(V(x) - V(x+t)\bigr) \le 1$ and
$\lim_{t \to \infty}\exp \bigl(V(x) - V(x+t)\bigr) = 0$ for every $x \in \R$, it follows from \eref{e:normStu} and the dominated 
convergence theorem that $\lim_{t \to \infty} \|S(t)u\| = 0$ for every $u \in \CH$. However, the fact that 
$\int_0^\infty e^{-V(x)}\,dx = \infty$ prevents the random process $\tilde u$ defined in \eref{e:solstat} from belonging to $\CH$,
so that \eref{e:translweight} has no invariant measure in this particular situation.

\begin{exercise}
Show that if \eref{e:linearSPDE}  has an invariant measure $\mu_\infty$ as in Theorem~\ref{theo:char} 
but there exists $x \in \CH$ such that
$\limsup_{t \to \infty} \|S(t)x\| > 0$, then one cannot have $\CP_t^*\delta_x \to \mu_\infty$ weakly. In this sense, the statement
of Proposition~\ref{prop:weakconv} is sharp.
\end{exercise}

\begin{exercise}
Let $\CB$ be the space of continuous functions $\phi \colon \R \to \R$ with $\lim_{|x| \to \infty} \phi(x) = 0$ and
$\|\phi\| = \sup_x |\phi(x)|$. 
Let $S$ be the semigroup on $\CB$ given $(S(t)\phi)(x) = \phi(e^t x)$. Show that even though one has
$\|S(t)\phi\| = \|\phi\|$ for every $t>0$ and $\phi\in \CB$, the only probability measure $\mu$ on $\CB$ such that $S(t)^*\mu = \mu$
for all $t>0$ is given by $\mu = \delta_0$.
\end{exercise}

\subsection{Convergence in other topologies}

Proposition~\ref{prop:weakconv} shows that if \eref{e:linearSPDE} has an invariant measure $\mu_\infty$, one can in many cases 
expect to have $\CP_t^*\nu \to \mu_\infty$ weakly for every initial measure $\nu$. It is however not clear \textit{a priori} whether such
a convergence also holds in some stronger topologies on the space of probability measures. If we consider the finite-dimensional
case (that is $\CH = \R^n$ for some $n>0$), the situation is clear: the condition $\lim_{t \to \infty} \|S(t)x\| = 0$ for every $x\in \CH$
then implies that $\lim_{t \to \infty} \|S(t)\| = 0$, so that $L$ has to be a matrix whose eigenvalues all have strictly negative real parts.
One then has:

\begin{proposition}\label{prop:convstrong}
In the finite-dimensional case, assume that all eigenvalues of $L$ strictly negative real parts and that $Q_\infty$ has full rank.
Then, there exists $T>0$ such that $\CP_t^*\delta_x$ has a smooth density $p_{t,x}$ with respect to Lebesgue measure
for every $t > T$. Furthermore, $\mu_\infty$ has a smooth density $p_\infty$ with respect to Lebesgue measure and there exists $c>0$
such that, for every $\lambda>0$, one has
\begin{equ}
\lim_{t\to \infty} e^{ct} \sup_{y \in \R^n} e^{\lambda |y|} |p_\infty(y) - p_{t,x}(y)| = 0\;.
\end{equ}
In other words, $p_{t,x}$ converges to $p_\infty$ exponentially fast in any weighted norm with exponentially increasing weight.
\end{proposition}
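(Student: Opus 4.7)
In finite dimensions, the mild solution $X_t^x = S(t)x + \int_0^t S(t-s)Q\,dW(s)$ is an explicit Gaussian vector, so $\CP_t^*\delta_x = \CN(m_t, Q_t)$ with $m_t = S(t)x$ and $Q_t$ as in Theorem~\ref{theo:char}, while $\mu_\infty = \CN(0, Q_\infty)$. Since every eigenvalue of $L$ has strictly negative real part, passing to Jordan normal form gives constants $M, \omega > 0$ with $\|S(t)\| \le M e^{-\omega t}$; hence $|m_t| \le M|x| e^{-\omega t}$ and
\[
\|Q_\infty - Q_t\| = \Bigl\|\int_t^\infty S(s)QQ^*S^*(s)\,ds\Bigr\| \le C_1 e^{-2\omega t}.
\]
Moreover, $\ker Q_t = \bigcap_{s \in [0,t]} \ker(Q^* S^*(s))$; since $s \mapsto Q^* S^*(s)\xi$ is real-analytic, this intersection is independent of $t > 0$ and equals $\ker Q_\infty = \{0\}$. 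Hence $Q_t$ is invertible for \emph{every} $t > 0$, so any $T > 0$ works; and the analogous bounds $\|Q_t^{-1} - Q_\infty^{-1}\| \le C_2 e^{-2\omega t}$ and $|\det Q_t - \det Q_\infty| \le C_3 e^{-2\omega t}$ hold for $t$ large.

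The density of $\CP_t^*\delta_x$ then takes the explicit Gaussian form
\[
p_{t,x}(y) = \frac{1}{(2\pi)^{n/2}\sqrt{\det Q_t}}\exp\Bigl(-\tfrac{1}{2}\langle y-m_t, Q_t^{-1}(y-m_t)\rangle\Bigr),
\]
and analogously for $p_\infty$. I would view $p_{t,x}(y)$ as the value at $(m_t, Q_t)$ of a smooth map $\Phi(m, Q)(y)$ defined on an open neighbourhood $\mathcal{U}$ of $(0, Q_\infty)$ chosen small enough that $Q \ge \tfrac{1}{2}Q_\infty$ (as positive operators) throughout $\mathcal{U}$. This bounds $\det Q$ uniformly from below and yields a uniform lower bound $\gamma > 0$ on the smallest eigenvalue of $\tfrac{1}{2}Q^{-1}$ for all $(m,Q) \in \mathcal{U}$. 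For $t$ large enough $(m_t, Q_t) \in \mathcal{U}$, and a mean-value-theorem estimate on $\Phi$ in the $(m,Q)$ variables gives
\[
|p_{t,x}(y) - p_\infty(y)| \le \bigl(|m_t| + \|Q_t - Q_\infty\|\bigr)\, P(|y|)\, e^{-\gamma |y|^2},
\]
for some polynomial $P$ independent of $t$, coming from differentiating the Gaussian in the mean and covariance parameters. Multiplying by $e^{\lambda |y|}$ and taking the supremum in $y$ gives
\[
\sup_{y \in \R^n} e^{\lambda|y|}\,|p_{t,x}(y) - p_\infty(y)| \le C(\lambda)\, e^{-\omega t},
\]
because the Gaussian factor $e^{-\gamma|y|^2}$ absorbs both $P(|y|)$ and the linear exponential weight. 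The claim then follows for any $c \in (0, \omega)$.

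The main technical obstacle is the uniform-in-$(m,Q)$ control of $\Phi$ and its first derivatives, i.e.\ verifying that the polynomial-in-$|y|$ prefactors arising from $\partial_m \Phi$ and $\partial_Q \Phi$ are always paired with a Gaussian decay $e^{-\gamma|y|^2}$ that is genuinely uniform as $(m,Q)$ ranges over $\mathcal{U}$. This is routine once $\mathcal{U}$ is chosen with the uniform positivity $Q \ge \tfrac{1}{2}Q_\infty$, but it is the step that crucially uses the finite-dimensional setting (where a Lebesgue density exists) and the full-rank assumption on $Q_\infty$; the argument would collapse in infinite dimensions, where no such reference measure is available.
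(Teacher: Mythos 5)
Your argument is correct and follows exactly the route the paper intends: the paper leaves this proof as an exercise, remarking only that it ``follows in a straightforward way from the explicit expression for the density of a Gaussian measure'', which is precisely what you carry out. Your observations that $\ker Q_t$ is independent of $t>0$ by analyticity (so any $T>0$ works) and that the Gaussian factor uniformly absorbs both the polynomial prefactors and the weight $e^{\lambda|y|}$ are the right technical points, and they are handled correctly.
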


The proof of Proposition~\ref{prop:convstrong} is left as an exercise. It follows in a straightforward way from the explicit expression
for the density of a Gaussian measure.

In the infinite-dimensional case, the situation is much less straightforward. The reason is that there exists no natural
reference measure (the equivalent of the Lebesgue measure) with respect to which one could form densities.

In particular, even though
 one always has $\|\mu_t - \mu_\infty\|_\infty \to 0$ in the finite-dimensional case (provided that $\mu_\infty$ exists and that all
 eigenvalues of $L$ have strictly negative real part), one cannot expect this to
 be true in general. Consider for example the SPDE
\begin{equ}
dx = -x\, dt + Q\, dW(t)\;,\quad x(t)\in \CH\;,
\end{equ}
where $W$ is a cylindrical process on $\CH$ and $Q\colon \CH \to \CH$ is a Hilbert-Schmidt operator.
One then has 
\begin{equ}
Q_t = {1- e^{-2t}\over 2} Q Q^*\;, \qquad Q_\infty = {1\over 2}Q Q^*\;.
\end{equ}
Combining this with Proposition~\ref{prop:singDilat} (dilates of an infinite-dimensional Gaussian measure are mutually singular) 
shows that if $QQ^*$ has infinitely many non-zero eigenvalues, then
$\mu_t$ and $\mu_\infty$ are mutually singular in this case.

Recall from \eqref{e:TVCoupl} that the total variation
distance between two probability measures $\mu$ and $\nu$ on a separable Banach space $\CB$ is given by
\begin{equ}[e:TVCouplRecall]
\|\mu-\nu\|_{\TV} = 2\inf_{\pi \in \cC(\mu,\nu)} \pi(\{x \neq y\})\;,
\end{equ}
where the infimum runs over the set $\cC(\mu,\nu)$ of all probability measures $\pi$ on $\CB \times \CB$ with marginals $\mu$ and $\nu$. 
This yields a straightforward interpretation to the total variation convergence  $\CP_t^\nu \to \mu_\infty$: for large times, 
a sample drawn from the invariant distribution is with high probability indistinguishable from a sample drawn from the 
Markov process at time $t$. Compare this with the notion of \index{weak convergence}{weak convergence} which relies on the topology
of the underlying space and only asserts that the two samples are close with high probability in the sense determined by the
topology in question.
For example, $\|\delta_x - \delta_y\|$ is always equal to $2$ if $x \neq y$,
whereas $\delta_x \to\delta_y$ weakly if $x \to y$.

%\begin{exercise}
%Show that the two definitions of the total variation distance given above are indeed equivalent by constructing a coupling
%that realises the infimum in \eref{e:TVCoupl}. It is useful for this to consider the measure
%$\mu \wedge \nu$ which, in $\mu$ and $\nu$ have densities $\CD_\mu$ and $\CD_\nu$ with respect to some
%common reference measure $\pi$, is given by $\bigl(\CD_\mu(x)\wedge \CD_\nu(x)\bigr)\pi(dx)$.
%\end{exercise}

We also recall that an alternative characterisation of the total variation norm is as the dual norm to the supremum norm on the space
$\B_b(\CB)$ of bounded Borel measurable functions on $\CB$:
\begin{equ}[e:charTV]
\|\mu - \nu\|_\TV = \sup \Bigl\{\int \phi(x)\mu(dx) - \int \phi(x)\nu(dx)\,:\, \sup_{x \in \CB}|\phi(x)| \le 1\Bigr\}\;.
\end{equ}
The following generalisation of the Perron--Frobenius theorem (sometimes also called Doeblin's criterion)
is left as an exercise.

\begin{exercise}
Let $\CX$ be a Polish space, let $P \colon \CX \to \cP(\CX)$ be a measurable map, and let $\CP^*$ be the
operator on $\cP(\CX)$ given by
\begin{equ}
\big(\CP^* \mu\big)(A) = \int_{\CX}P(x,A)\,\mu(dx)\;.
\end{equ}
Assume that there exists $\delta > 0$ and a probability measure $\nu \in \cP(\CX)$ such that the bound
$P(x,\cdot) \ge \delta \nu$ holds uniformly over $x \in \CX$. Show that there then exists a unique
$\mu_\infty \in \cP(\CX)$ such that $\CP^*\mu_\infty = \mu_\infty$.
\end{exercise}

Doeblin's condition is extremely restrictive and tends to be satisfied mainly for Markov processes
with compact state space. A much more useful criterion is given by a ``local'' version of Doeblin's condition,
combined with a suitable Lyapunov criterion. 
For this, instead of showing directly that $\CP_t^* \nu \to \mu_\infty$ in the total variation norm, we aim for conditions
under which one has $\CP_t^* \nu \to \mu_\infty$ in a weighted total variation norm as in Exercise~\ref{ex:TVWeight}, 
which is actually slightly \textit{stronger} than the
usual total variation norm. Given a weight function $V\colon \CB \to \R_+$, we define a weighted
supremum norm on measurable functions by
\begin{equ}
\|\phi\|_V = \sup_{x \in \CB}{|\phi(x)| \over 1+V(x)}\;,
\end{equ}
as well as the dual norm on measures by
\begin{equ}[e:defTVV]
\|\mu - \nu\|_{\TV,V} = \sup \Bigl\{\int \phi(x)\mu(dx) - \int \phi(x)\nu(dx)\,:\, \|\phi\|_V \le 1\Bigr\}\;.
\end{equ}
Since we assumed that $V > 0$, it is obvious from \eqref{e:charTV} that one has the relation $\|\mu - \nu\|_\TV \le \|\mu - \nu\|_{\TV,V}$, so that
convergence in the weighted norm immediately implies convergence in the usual total variation norm.
By considering the Jordan decomposition of $\mu - \nu = \rho_+ - \rho_-$, it is clear that the supremum in \eref{e:defTVV} is attained 
at functions $\phi$ such that $\phi(x) = 1+V(x)$ for $\rho_+$-almost every $x$ and $\phi(x) = -1-V(x)$ for $\rho_-$-almost 
every $x$. In other words, an alternative expression for the weighted total variation norm is given by
\begin{equ}[e:exprTVV]
\|\mu - \nu\|_{\TV,V} = \int_\CX \bigl(1+V(x)\bigr) \,|\mu-\nu|(dx)\;,
\end{equ}
just like the total variation norm is given by $\|\mu - \nu\|_\TV = |\mu - \nu|(\CX)$.

The reason why it turns out to be easier to work in a weighted norm is the following: For a suitable choice of $V$,
we are going to see that in a large class of examples, one can construct a weight function $V$ and find constants $c < 1$ and $T>0$
such that 
\begin{equ}[e:contraction]
\|\CP_T^* \mu - \CP_T^*\nu\|_{\TV,V} \le c \|\mu - \nu\|_{\TV,V}\;,
\end{equ}
for any two probability measures $\mu$ and $\nu$. This implies that the map $\CP_T$ is a contraction on the space
of probability measures, which must therefore have exactly one fixed point, yielding both the existence of an invariant measure $\mu_\infty$
and the exponential convergence of $\CP_t^* \nu$ to $\mu_\infty$ for every initial probability measure $\nu$ 
which integrates $V$.

This argument is based on the following abstract result that works for arbitrary Markov semigroups on Polish (that is 
separable, complete, metric) spaces:\index{Harris's theorem}

\begin{theorem}[Harris]\label{theo:Harris} 
Let $\CP_t$ be a Markov semigroup on a Polish space $\CX$ such that there exists a time $T_0>0$ and a 
function $V \colon \CX \to \R_+$  such that:
\begin{claim}
\item The exist constants $\gamma < 1$ and $K > 0$ such that $\CP_{T_0} V(x) \le \gamma V(x) + K$ for every $x \in \CX$.
\item For every $K' > 0$, there exists $\delta > 0$ such that $\|\CP_{T_0}^*\delta_x - \CP_{T_0}^*\delta_y\|_\TV \le 2-\delta$
for every pair $x,y$ such that $V(x) + V(y) \le K'$.
\end{claim}
Then, there exists $T>0$ such that \eref{e:contraction} holds for some $c < 1$.
\end{theorem}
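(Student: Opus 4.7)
My plan is to work with the weighted total-variation norm $\|\cdot\|_{\TV, \beta V}$ for a small parameter $\beta > 0$ to be chosen, and to show that $\CP_{T_0}^*$ is a strict contraction in this norm; since $\beta \|\rho\|_{\TV, V} \le \|\rho\|_{\TV, \beta V} \le \|\rho\|_{\TV, V}$ for $\beta \in (0,1]$, this will yield the theorem with $T = T_0$. I would fix arbitrary probability measures $\mu, \nu$, set $\rho = \mu - \nu$, and denote $\eta = \rho_+(\CX) = \rho_-(\CX) = \frac{1}{2}\|\rho\|_{\TV}$ and $W = \int V\,d|\rho|$, so that by \eref{e:exprTVV} one has $\|\rho\|_{\TV, \beta V} = 2\eta + \beta W$. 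The task reduces to finding $c < 1$ and $\beta > 0$ with $\|\CP_{T_0}^*\rho\|_{\TV, \beta V} \le c(2\eta + \beta W)$ uniformly over $\rho$ of this form.

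The argument will rest on two complementary estimates. First, positivity of $\CP_{T_0}^*$ gives $|\CP_{T_0}^*\rho| \le \CP_{T_0}^*|\rho|$ as positive measures, so the drift hypothesis yields
\begin{equ}
\int V\,d|\CP_{T_0}^*\rho| \le \int \CP_{T_0} V\,d|\rho| \le \gamma W + 2K\eta\;.
\end{equ}
Second, writing $\bar\mu = \rho_+/\eta$ and $\bar\nu = \rho_-/\eta$, the product coupling and triangle inequality give
\begin{equ}
\|\CP_{T_0}^*\rho\|_{\TV} \le \eta \int \|\CP_{T_0}^*\delta_x - \CP_{T_0}^*\delta_y\|_{\TV}\,\bar\mu(dx)\,\bar\nu(dy)\;.
\end{equ}
I would split this integral on $\{V(x)+V(y) \le R\}$ and apply the minorization hypothesis with $K' = R$ on the small set, bounding the complement by Markov's inequality $\bar\mu\otimes\bar\nu(V(x)+V(y) > R) \le W/(\eta R)$, to obtain $\|\CP_{T_0}^*\rho\|_{\TV} \le 2\eta - \delta(R)(\eta - W/R)_+$. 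Combining the two,
\begin{equ}
\|\CP_{T_0}^*\rho\|_{\TV, \beta V} \le (2+2\beta K)\eta + \beta\gamma W - \delta(R)(\eta - W/R)_+\;.
\end{equ}

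To conclude I would split on the threshold $W = R\eta/2$. In the ``small-$V$'' regime $W \le R\eta/2$ the minorization term is at least $\delta(R)\eta/2$, and the target bound $\le c(2\eta + \beta W)$ reduces to requiring $c \ge \max(\gamma,\,1 - \delta(R)/4 + \beta K)$. In the ``large-$V$'' regime $W > R\eta/2$ the minorization term may vanish, and contraction must come from the drift alone, amounting to $R \ge (4(1-c) + 4\beta K)/(\beta(c-\gamma))$. Both constraints can be reconciled by first fixing $R$ of order $K/(1-\gamma)$ (for concreteness $R = 16K/(1-\gamma)$), extracting the associated $\delta(R) > 0$ from the hypothesis (replaced by $\min(\delta(R), 4(1-\gamma))$ if needed, which only weakens the assumption), and then setting $\beta = \delta(R)/(8K)$ and $c = 1 - \delta(R)/8$; a short computation then shows both conditions hold.

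The hard part will be precisely this balancing, because the hypothesis only guarantees that $\delta(R)$ is positive and offers no quantitative control over $R \mapsto \delta(R)$, so the argument has to be robust to $\delta(R)$ being arbitrarily small. What saves the scheme is that $\delta(R)$ enters linearly both in the minorization gain in the small-$V$ case and in the admissible range of $\beta$, so the final contraction rate $c \approx 1 - \delta(R)/8$ remains strictly below $1$. If the constants nevertheless fail to reconcile for $\CP_{T_0}$ itself, I would pass to $\CP_{nT_0}$ for large $n$: the iterated drift $\CP_{T_0}^n V \le \gamma^n V + K/(1-\gamma)$ shrinks $\gamma$ to $\gamma^n$, and the minorization survives since $\|\CP^{n*}\delta_x - \CP^{n*}\delta_y\|_{\TV} \le \|\CP^*\delta_x - \CP^*\delta_y\|_{\TV}$ is non-increasing in $n$.
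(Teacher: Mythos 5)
Your argument is correct, and it takes a genuinely different route from the one in the notes. The proof given here works entirely on the dual (test-function) side: it introduces the distance $d_\beta(x,y) = 2 + \beta V(x) + \beta V(y)$, proves in Lemma~\ref{lem:equiv} that the associated Lipschitz seminorm coincides with $\inf_c\|\phi+c\|_{\beta V}$, and then establishes the pointwise contraction $|\CP\phi(x)-\CP\phi(y)|\le \alpha\, d_\beta(x,y)\|\phi\|_{\Lipb}$ by splitting according to whether $V(x)+V(y)$ exceeds ${2K+2\over 1-\gamma}$. You instead work on the primal (measure) side: Jordan decomposition of $\mu-\nu$, the product coupling of the normalised parts, and Markov's inequality on $\{V(x)+V(y)>R\}$ to quantify how much of the mass sits in the small set. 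Your case split $W\lessgtr R\eta/2$ plays exactly the role of the paper's split on $V(x)+V(y)$, and I have checked that your concrete choices $R=16K/(1-\gamma)$, $\delta\mapsto\min(\delta,4(1-\gamma))$, $\beta=\delta/(8K)$, $c=1-\delta/8$ do satisfy both constraints (in Case 2 one gets ${4(1-c)+4\beta K\over\beta(c-\gamma)}={8K\over c-\gamma}\le{16K\over 1-\gamma}=R$). What your version buys is probabilistic transparency — the overlap gain is exhibited directly through the coupling — and it dispenses with Lemma~\ref{lem:equiv}; what the paper's version buys is that it never needs a Jordan decomposition and works pointwise in $(x,y)$, which is why it generalises more easily to settings where level sets of $V$ are not small sets (as in \cite{HMS09}) or to Wasserstein-type distances. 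One small correction: a contraction with constant $c$ in $\|\cdot\|_{\TV,\beta V}$ yields \eref{e:contraction} in $\|\cdot\|_{\TV,V}$ only up to the factor $\beta^{-1}$ coming from the norm equivalence, so you should take $T=nT_0$ with $c^n<\beta$ rather than $T=T_0$; since the theorem only asserts the existence of some $T>0$, this is harmless (and it is exactly how the paper's own proof concludes, with the factor $\alpha^n/\beta^2$).
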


In a nutshell, the argument for the proof of  Theorem~\ref{theo:Harris} is the following. There are two mechanisms that 
allow to decrease the weighted total variation distance between two probability measures:
\begin{claim}
\item[2.] The mass of the two measures moves into regions where the weight $V(x)$ becomes smaller.
\item[1.] The two measures ``spread out'' in such a way that there is an increase in the overlap between them.
\end{claim}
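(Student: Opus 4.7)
The plan is to work in the weighted total-variation norm
\begin{equ}
\|\eta\|_\beta := \int_\CX (1 + \beta V)\,d|\eta|
\end{equ}
for a small parameter $\beta > 0$ to be chosen below, and to establish with $T = T_0$ a contraction
\begin{equ}
\|\CP_{T_0}^*\mu - \CP_{T_0}^*\nu\|_\beta \le \alpha \,\|\mu - \nu\|_\beta,\qquad \alpha < 1,
\end{equ}
valid for all probability measures $\mu, \nu$. Since $1+\beta V$ and $1+V$ are pointwise comparable for every fixed $\beta > 0$, this implies the stated contraction. By Jordan decomposition I may assume $\mu \perp \nu$, so that $\|\mu-\nu\|_\beta = 2 + \beta I$ where $I := \int V\,d(\mu+\nu)$.

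The first ingredient is Lyapunov iteration: assumption 1 yields
\begin{equ}
\int V\,d\bigl(\CP_{T_0}^*\mu + \CP_{T_0}^*\nu\bigr) = \int \CP_{T_0}V\,d(\mu+\nu) \le \gamma I + 2K.
\end{equ}
The second is a coupling bound. Starting from the duality estimate
\begin{equ}
\|\mu' - \nu'\|_\beta \le \int \bigl(2 + \beta V(x) + \beta V(y)\bigr)\,\one_{x\neq y}\,d\pi(x,y),
\end{equ}
valid for any coupling $\pi$ of probability measures $\mu', \nu'$ (seen by testing against any $\ell$ with $|\ell| \le 1+\beta V$), I build a coupling of $\CP_{T_0}^*\mu$ and $\CP_{T_0}^*\nu$ by first drawing $(x,y)\sim\mu\otimes\nu$ and then, conditional on $(x,y)$, running the optimal total-variation coupling of $\CP_{T_0}^*\delta_x$ and $\CP_{T_0}^*\delta_y$. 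Assumption 2, applied with $K' = 4R$ on the set $B := \{V \le 2R\}$, gives $\pi(X = Y\mid x,y) \ge \delta/2$ whenever $(x,y) \in B\times B$. Combined with the Lyapunov bound, this yields
\begin{equ}
\|\CP_{T_0}^*\mu - \CP_{T_0}^*\nu\|_\beta \le 2\,\pi(X\neq Y) + \beta\gamma I + 2\beta K.
\end{equ}

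The final step is a dichotomy on $I$. Fix $\gamma' \in (\gamma,1)$ and set $R := 2K/(\gamma'-\gamma)$ so that $\gamma I + 2K \le \gamma' \max(I,R)$. In the heavy-tail regime $I \ge R$ the Lyapunov bound alone gives
\begin{equ}
\frac{\|\CP_{T_0}^*\eta\|_\beta}{\|\eta\|_\beta} \le \frac{2 + \beta\gamma' I}{2 + \beta I},
\end{equ}
which, since $\gamma'<1$, is decreasing in $I$ and hence at most $\alpha_A := (2+\beta\gamma'R)/(2+\beta R) < 1$ for \emph{every} $\beta > 0$. In the light-tail regime $I < R$, Markov's inequality gives $\mu(B),\nu(B) \ge 1/2$, so $\pi(X\neq Y) \le 1 - \delta/8$ and, using $\|\eta\|_\beta \ge 2$,
\begin{equ}
\frac{\|\CP_{T_0}^*\eta\|_\beta}{\|\eta\|_\beta} \le \frac{2 - \delta/4 + \beta\gamma' R}{2} = 1 - \delta/8 + \beta\gamma'R/2,
\end{equ}
which is strictly less than $1$ provided $\beta < \delta/(4\gamma'R)$. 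Any such $\beta$ then gives $\alpha := \max\{\alpha_A,\; 1 - \delta/8 + \beta\gamma'R/2\} < 1$.

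The main obstacle to anticipate is the tension between the two regimes: the light-tail estimate only closes for $\beta$ small, while naively one might fear the heavy-tail estimate degrades as $\beta \to 0$. The structural point that resolves this is that the heavy-tail ratio $(2+\beta\gamma'R)/(2+\beta R)$, although tending to $1$ as $\beta\to 0$, is strictly below $1$ for every positive $\beta$, so both constraints on $\beta$ are simultaneously satisfiable. A secondary subtlety is that assumption 2 is only a \emph{pairwise} minorization rather than the classical Doeblin condition giving a common minorising measure; this is handled by using the optimal total-variation coupling separately for each pair $(x,y)\in B\times B$, which requires only pairwise overlap of mass $\delta/2$.
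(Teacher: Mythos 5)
Your argument is correct, and it proves \theo{theo:Harris} by a genuinely different route from the one taken in these notes. The proof given here works on the \emph{function} side: it introduces the metric $d_\beta(x,y) = 2+\beta V(x)+\beta V(y)$ for $x\neq y$, shows via Lemma~\ref{lem:equiv} that the associated Lipschitz seminorm is dual to the $\beta V$-weighted supremum norm, and establishes the pointwise bound $|\CP\phi(x)-\CP\phi(y)|\le \alpha\, d_\beta(x,y)\|\phi\|_\Lipb$ by a dichotomy on $V(x)+V(y)$, splitting $\phi=\phi_1+\phi_2$ with $|\phi_1|\le 1$ and $|\phi_2|\le\beta V$ in the small-set regime. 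You instead work on the \emph{measure} side: after reducing by Jordan decomposition and homogeneity to mutually singular probability measures (so that $\|\mu-\nu\|_\beta=2+\beta I$), you make the dichotomy on the single global quantity $I=\int V\,d(\mu+\nu)$ and handle the light-tail regime by an explicit one-step coupling. Every estimate checks out: the choice $R=2K/(\gamma'-\gamma)$ gives $\gamma I+2K\le\gamma'\max(I,R)$, the Markov bound gives $\mu(B),\nu(B)\ge\hf$ when $I<R$, and the key structural observation --- that $(2+\beta\gamma'R)/(2+\beta R)<1$ for \emph{every} $\beta>0$ while the light-tail bound forces $\beta$ small --- is exactly the tension that the $\beta$-weighting is designed to resolve. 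Your version has the merit of exhibiting the two convergence mechanisms of the informal claim completely explicitly (the heavy-tail branch is mass moving to small-$V$ regions; the light-tail branch is overlap created by spreading out), whereas the functional proof buys freedom from any coupling construction and generalises more readily to the Wasserstein-type settings of \cite{HMS09}. Two small points deserve care. First, running ``the optimal total variation coupling of $\CP_{T_0}^*\delta_x$ and $\CP_{T_0}^*\delta_y$'' conditionally on $(x,y)$ requires a measurable selection of these couplings; this is standard on a Polish space, but it can be avoided entirely by replacing the coupling with the convexity bound $\|\CP_{T_0}^*\mu-\CP_{T_0}^*\nu\|_\beta\le\int\!\!\int\|\CP_{T_0}^*\delta_x-\CP_{T_0}^*\delta_y\|_\beta\,\mu(dx)\,\nu(dy)$ and estimating each term separately on $B\times B$ and off it, which yields the same inequality. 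Second, pointwise comparability of $1+\beta V$ and $1+V$ gives only $\|\cdot\|_{\TV,V}\le\beta^{-1}\|\cdot\|_\beta\le\|\cdot\|_{\TV,V}$, so the single-step contraction in $\|\cdot\|_\beta$ does not by itself give \eref{e:contraction} with $T=T_0$; you must iterate and take $T=nT_0$ with $\alpha^n<\beta$, exactly as is done at the end of the proof in the notes. Neither point affects the validity of the argument.
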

The two conditions of Theorem~\ref{theo:Harris} are tailored such as to combine these two effects in order to 
obtain an exponential convergence of $\CP_t^* \mu$ to the unique invariant measure for $\CP_t$ as $t \to \infty$.

\begin{exercise}
Let $\xi_n$ be an i.i.d.\ sequence of real-valued random variables such that $\E |\xi_n|^\alpha < \infty$ for some
$\alpha > 0$ and such that the law of $\xi_n$ has a continuous density with respect to Lebesgue measure.
Consider the Markov process $X_n$ given by $X_{n+1} = \f12 X_n + \xi_n$. Show that its transition 
probabilities do satisfy the conditions of Harris' theorem but do not satisfy Doeblin's condition.
\end{exercise}

\begin{remark}
The condition that there exists $\delta > 0$ such that $\|\CP_{T_0}^*\delta_x - \CP_{T_0}^*\delta_y\|_\TV \le 2-\delta$
for any $x,y \in A$ is sometimes referred to in the literature as the set $A$ being a \textit{small set}. 
\end{remark}

\begin{remark}
Traditional proofs of Theorem~\ref{theo:Harris} as given for example in \cite{MT93} tend to make use of \index{coupling}{coupling} arguments
and estimates of return times of the Markov process described by $\CP_t$ to level sets of $V$. 
The basic idea is to make use of \eref{e:TVCoupl} to get a bound on the total variation between $\CP_T^*\mu$ and
$\CP_T^*\nu$ by constructing an explicit coupling between two instances $x_t$ and $y_t$ 
of a Markov process with transition 
semigroup $\{\CP_t\}$. Because of the second assumption in Theorem~\ref{theo:Harris}, one can construct this coupling
in such a way that every time the process $(x_t, y_t)$ returns to some sufficiently large level set of $V$,
there is a probability $\delta$ that $x_{t'} = y_{t'}$ for $t' \ge t+T_0$. The first assumption then guarantees that
these return times have exponential tails and a renewal-type argument allows to conclude.

Such proofs are quite involved
at a technical level and are by consequent not so easy to follow, especially if one wishes to get a
\index{spectral gap}{spectral gap} bound
like \eref{e:contraction} and not ``just'' an exponential decay bound like
\begin{equ}
\|\CP_T^*\delta_x - \CP_T^*\delta_y\|_\TV \le C e^{-\gamma T}\;,
\end{equ}
with a constant $C$ depending on $x$ and $y$. Furthermore, they require more background in advanced
probability theory than what is assumed for the scope of these notes.

The elementary proof
given here is taken from \cite{HaiMat08} and is based on the arguments first exposed in \cite{HaiMat06SGW}.
It has the disadvantage of being less intuitively appealing than proofs based on coupling 
arguments, but this is more than
offset by the advantage of fitting into less than two pages without having to appeal to advanced mathematical concepts.
It also has the advantage of being generalisable to situations where level sets of the Lyapunov function
are not small sets, see \cite{HMS09}.
\end{remark}

Before we turn to the proof of Theorem~\ref{theo:Harris}, we define for every
$\beta > 0$ the distance function 
\begin{equ} 
d_\beta(x,y) = \left\{\begin{array}{cl} 0 & \text{if $x = y$} \\ 2 + \beta V(x) + \beta
V(y) & \text{if $x \neq y$.} \end{array}\right. 
\end{equ} 
One can check that the positivity of $V$ implies that this is
indeed a distance function, albeit a rather strange one. 
We define the corresponding ``Lipschitz'' seminorm on functions $\phi\colon \CX \to \R$ by 
\begin{equ} \|\phi\|_{\Lipb} = \sup_{x \neq y} {|\phi(x)
- \phi(y)| \over d_\beta(x,y)}\;. 
\end{equ} 
We are going to make use of the following lemma:

\begin{lemma}\label{lem:equiv} With the above notations, one has
$\|\phi\|_\Lipb = \inf_{c \in \R} \|\phi + c\|_{\beta V}$. 
\end{lemma}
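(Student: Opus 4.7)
The plan is to prove the two inequalities $\|\phi\|_{\Lipb} \le \inf_c \|\phi + c\|_{\beta V}$ and $\|\phi\|_{\Lipb} \ge \inf_c \|\phi + c\|_{\beta V}$ separately. Neither direction should involve any subtle analytic obstruction; the content is really just a combinatorial manipulation exploiting the particular form of $d_\beta$, namely that it splits as $(1+\beta V(x)) + (1+\beta V(y))$ whenever $x \neq y$.

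For the easy direction ($\le$), I would fix an arbitrary constant $c \in \R$ and write, for $x \neq y$,
\begin{equ}
|\phi(x) - \phi(y)| = |(\phi(x)+c) - (\phi(y)+c)| \le |\phi(x)+c| + |\phi(y)+c|\;.
\end{equ}
By definition of $\|\cdot\|_{\beta V}$ each term on the right is bounded by $\|\phi+c\|_{\beta V}(1+\beta V(\cdot))$, so summing yields the factor $2 + \beta V(x) + \beta V(y) = d_\beta(x,y)$. Dividing and taking the supremum over $x \neq y$ gives $\|\phi\|_{\Lipb} \le \|\phi+c\|_{\beta V}$, and then the infimum over $c$.

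For the harder direction ($\ge$), the key observation is that the inequality $|\phi(x)+c| \le L(1+\beta V(x))$ for all $x$, where $L = \|\phi\|_{\Lipb}$, is equivalent to the family of two-sided constraints
\begin{equ}
-L(1+\beta V(x)) - \phi(x) \le c \le L(1+\beta V(x)) - \phi(x)\;,\qquad \forall x\;.
\end{equ}
Such a $c$ exists precisely when the supremum of the lower bounds does not exceed the infimum of the upper bounds, i.e.\ when for every pair $x,y$ one has $-L(1+\beta V(x)) - \phi(x) \le L(1+\beta V(y)) - \phi(y)$, which rearranges (and, by symmetry, in absolute value) to $|\phi(y) - \phi(x)| \le L\,d_\beta(x,y)$. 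But this is exactly the definition of $L = \|\phi\|_{\Lipb}$, so the required $c$ exists and yields $\|\phi+c\|_{\beta V} \le L$, hence $\inf_c \|\phi + c\|_{\beta V} \le \|\phi\|_{\Lipb}$.

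I do not expect any significant obstacle: the only thing worth being careful about is handling the case $x = y$ (trivial, since $d_\beta$ vanishes and nothing is being compared) and making sure the $\inf$ over $c$ is actually achieved by some explicit $c^\star$, which one can take for example as $c^\star = \tfrac{1}{2}\bigl(\sup_x [-L(1+\beta V(x)) - \phi(x)] + \inf_x [L(1+\beta V(x)) - \phi(x)]\bigr)$.
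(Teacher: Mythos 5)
Your proof is correct and follows essentially the same route as the paper: the easy inequality is the same triangle-inequality computation, and your existence-of-$c$ argument (sup of lower constraints $\le$ inf of upper constraints, which reduces to the pairwise bound $|\phi(x)-\phi(y)|\le \|\phi\|_{\Lipb}\, d_\beta(x,y)$) is just a repackaging of the paper's explicit choice $c = -\sup_x\bigl(\phi(x) - \|\phi\|_{\Lipb}(1+\beta V(x))\bigr)$ together with its two verifications. No gaps.
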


\begin{proof} 
It is obvious that $\|\phi\|_\Lipb \le \|\phi + c\|_{\beta V}$ for every $c \in \R$. 
On the other hand, if $x_0$ is any fixed point in $\CX$, one has 
\begin{equ}[e:aprb] |\phi(x)| \le
|\phi(x_0)| + \|\phi\|_\Lipb \bigl(2 + \beta V(x) + \beta
V(x_0)\bigr)\;, 
\end{equ} 
for all $x \in \CX$. Set now \begin{equ} c = - \sup_{x \in \CX}
\bigl(\phi(x) - \|\phi\|_\Lipb \bigl(1+\beta V(x)\bigr)\bigr)\;.
\end{equ} It follows from \eref{e:aprb} that $c$ is finite. Furthermore,
one has \begin{equ} \phi(y) + c \le \phi(y) - \bigl(\phi(y) -
\|\phi\|_\Lipb \bigl(1+\beta V(y)\bigr)\bigr) = \|\phi\|_\Lipb
\bigl(1+\beta V(y)\bigr)\;, \end{equ} and \begin{equs} \phi(y) + c &=
\inf_{x \in \CX} \bigl(\phi(y) - \phi(x) + \|\phi\|_\Lipb \bigl(1+\beta
V(x)\bigr)\bigr) \\ &\ge \inf_{x \in \CX} \|\phi\|_\Lipb \bigl(1+\beta
V(x) - d_\beta(x,y)\bigr) = - \|\phi\|_\Lipb \bigl(1+\beta V(y)\bigr)\;,
\end{equs} which implies that $\|\phi + c\|_{\beta V} \le
\|\phi\|_\Lipb$.
%For the second bound, one has \begin{equs} \|\phi - \mu(\phi)\|_V &=
%\sup_x {\bigl|\phi(x) - \int \phi(y)\mu(dy)\bigr| \over 1+V(x)} \le
%\sup_x {\int \bigl|\phi(x) - \phi(y)\bigr|\,\mu(dy) \over 1+V(x)} \\ &
%\le \|\phi\|_\Lipb \sup_x {2 + \beta V(x) + \beta\mu(V) \over 1+V(x)}\;,
%\end{equs} from which the stated bound follows. 
\end{proof}

\begin{proof}[Proof of Theorem~\ref{theo:Harris}] 
During this proof, we use the notation $\CP \eqdef \CP_{T_0}$ for simplicity.
We are going to show that there exists a choice of
$\beta\in (0,1]$ such that there is $\alpha < 1$ satisfying the bound 
\begin{equ}[e:requested]
|\CP\phi(x) - \CP \phi(y)| \le \alpha d_\beta(x,y) \|\phi\|_\Lipb\;,
\end{equ} 
uniformly over all measurable functions $\phi\colon \CX \to \R$ and all pairs $x,y \in \CX$. 
Note that this is equivalent to the bound $\|\CP\phi\|_\Lipb \le \alpha \|\phi\|_\Lipb$.
Combining this with Lemma~\ref{lem:equiv} and \eref{e:exprTVV}, we obtain that, for $T = n T_0$, one has the bound
\begin{equs}
\|\CP_T^* \mu - \CP_T^* \nu\|_{\TV,V} &= \inf_{\|\phi\|_V \le 1} \int_\CX \bigl(\CP_T\phi\bigr)(x)\, \bigl(\mu - \nu\bigr)(dx) \\
& = \inf_{\|\phi\|_V \le 1} \inf_{c \in \R} \int_\CX \bigl(\bigl(\CP_T\phi\bigr)(x) + c\bigr)\, \bigl(\mu - \nu\bigr)(dx) \\
&\le \inf_{\|\phi\|_V \le 1} \inf_{c \in \R} \|\CP_T\phi + c\|_V \int_\CX \bigl(1+V(x)\bigr)\, \bigl|\mu - \nu\bigr|(dx) \\
& \le  \inf_{\|\phi\|_V \le 1} \beta^{-1} \inf_{c \in \R} \|\CP_T\phi + c\|_{\beta V}  \bigl\|\mu - \nu\bigr\|_{\TV,V} \\
& =  \beta^{-1} \inf_{\|\phi\|_V \le 1} \|\CP_T\phi\|_{\Lipb}  \bigl\|\mu - \nu\bigr\|_{\TV,V} \\
& \le  {\alpha^n \over \beta} \inf_{\|\phi\|_V \le 1} \|\phi\|_{\Lipb}  \bigl\|\mu - \nu\bigr\|_{\TV,V} 
\le {\alpha^n \over \beta} \bigl\|\mu - \nu\bigr\|_{\TV,V}\;.
\end{equs}
Since $\alpha < 1$, the result \eref{e:contraction}  then follows at once by choosing $n$ sufficiently large.

Let us turn now to \eref{e:requested}. If $x = y$, there is nothing to prove, so we assume that $x \neq y$. Fix
an arbitrary non-constant function $\phi$ and assume without loss of
generality that $\|\phi\|_\Lipb = 1$. It follows from Lemma~\ref{lem:equiv}
that, by adding a constant to it if necessary, we can assume that
$|\phi(x) + c| \le \bigl(1 + \beta V(x)\bigr)$.

This immediately implies the bound 
\begin{equs} 
|\CP\phi(x) - \CP\phi(y)|
&\le \bigl(2 + \beta \CP V(x) + \beta \CP V(y)\bigr) \\ &\le 2 +
2\beta K + \beta \gamma V(x) + \beta\gamma V(y)\;. 
\end{equs} 
Suppose now that $x$ and $y$ are such that $V(x) + V(y) \ge {2K + 2\over
1-\gamma}$. A straightforward calculation shows that in this case, for
every $\beta>0$ there exists $\alpha_1 < 1$ such that \eref{e:requested}
holds. One can choose for example 
\begin{equ} 
\alpha_1 = 1-{1\over 2}{\beta \over 1-\gamma + \beta K + \beta}\;. 
\end{equ}

Take now a pair $x,y$ such that $V(x) + V(y) \le {2K + 2\over
1-\gamma}$. Note that we can write $\phi = \phi_1 + \phi_2$ with
$|\phi_1(x)| \le 1$ and $|\phi_2(x)| \le \beta V(x)$. (Set $\phi_1(x) = (\phi(x) \wedge 1) \vee (-1)$.) It follows from
the assumptions on $V$ that there exists some $\delta>0$ such that
\begin{equs}
|\CP\phi(x) - \CP\phi(y)| &\le |\CP\phi_1(x) - \CP\phi_1(y)| + |\CP\phi_2(x) - \CP\phi_2(y)| \\ 
&\le \|\CP^*\delta_x - \CP^*\delta_y\|_\TV + \beta \bigl(\CP V\bigr)(x) + \beta \bigl(\CP V\bigr)(y) \\ 
&\le 2-\delta + \beta \bigl(2K + \gamma
V(x) + \gamma V(y)\bigr) \le 2-\delta + 2\beta K{1+\gamma \over
1-\gamma}\;. 
\end{equs} 
If we now choose $\beta < {\delta\over 4K} {1-\gamma \over
1+\gamma}$, \eref{e:requested} holds with $\alpha = 1- \hf \delta < 1$. Combining this estimate with the
one obtained previously shows that one
can indeed find $\alpha$ and $\beta$ such that \eref{e:requested} holds for all
$x$ and $y$ in $\CX$, thus concluding the proof of Theorem~\ref{theo:Harris}.
\end{proof}

One could argue that this theorem does not guarantee the existence of 
an invariant measure since the fact that $\CP_T^* \mu = \mu$ does not guarantee
that $\CP_t \mu = \mu$ for every $t$. However, one has:
\begin{lemma}
If there exists a probability measure such that $\CP_T^* \mu = \mu$ for some fixed time $T>0$, 
then there also exists
a probability measure $\mu_\infty$ such that $\CP_t^* \mu_\infty = \mu_\infty$ for all $t>0$.
\end{lemma}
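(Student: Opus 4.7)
The natural candidate is the time-average
\begin{equ}
\mu_\infty = \frac{1}{T}\int_0^T \CP_s^* \mu\, ds\;,
\end{equ}
interpreted as the Borel probability measure defined by $\mu_\infty(A) = \frac{1}{T}\int_0^T (\CP_s^*\mu)(A)\,ds$. The plan is to first make sense of this definition, then to verify $\CP_t^*\mu_\infty = \mu_\infty$ for every $t \ge 0$ by a straightforward change of variables combined with the $T$-periodicity of $s \mapsto \CP_s^*\mu$ that follows from the hypothesis $\CP_T^*\mu = \mu$.

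First I would address well-posedness of $\mu_\infty$. For every bounded measurable $\phi\colon\CB \to \R$, the map $(s,x) \mapsto (\CP_s\phi)(x)$ is jointly Borel measurable: for bounded continuous $\phi$ this is immediate from the explicit representation \eref{e:defPt} together with the continuity of the stochastic convolution in $s$ established in Section~\ref{sec:regularity}, and the general case follows by a monotone class argument. Applying Fubini's theorem, the prescription $\phi \mapsto \frac{1}{T}\int_0^T (\CP_s\mu)(\phi)\, ds$ is a well-defined positive normalised linear functional on $\cB_b(\CB)$, hence defines a Borel probability measure which we take as $\mu_\infty$.

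Next I would verify invariance. For any bounded measurable $\phi$, the semigroup property gives
\begin{equs}
\int_\CB \phi\, d(\CP_t^*\mu_\infty) &= \frac{1}{T}\int_0^T \int_\CB (\CP_s \CP_t\phi)\,d\mu\, ds = \frac{1}{T}\int_0^T \int_\CB (\CP_{s+t}\phi)\,d\mu\, ds \\
&= \frac{1}{T}\int_t^{T+t} \int_\CB (\CP_u\phi)\,d\mu\, du\;,
\end{equs}
where Fubini again justifies the exchange of integrals. The key observation is that the function $F(u) \eqdef \int_\CB (\CP_u\phi)\,d\mu$ is $T$-periodic: using $\CP_T^*\mu = \mu$,
\begin{equ}
F(u+T) = \int_\CB \CP_u\CP_T\phi\,d\mu = \int_\CB \CP_T\phi\,d(\CP_u^*\mu) = \int_\CB \phi\,d(\CP_u^*\CP_T^*\mu) = \int_\CB \phi\,d(\CP_u^*\mu) = F(u)\;.
\end{equ}
Therefore $\int_t^{T+t} F(u)\,du = \int_0^T F(u)\,du = T\int_\CB \phi\,d\mu_\infty$, so $\int \phi\,d(\CP_t^*\mu_\infty) = \int \phi\,d\mu_\infty$ for every bounded measurable $\phi$, which is equivalent to $\CP_t^*\mu_\infty = \mu_\infty$.

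The only genuine technical point is the joint measurability needed to invoke Fubini; everything else is a direct calculation. No additional structure beyond the semigroup property of $\CP_t$ and strong continuity of the underlying stochastic convolution is required, so the argument is essentially abstract nonsense once measurability is in place.
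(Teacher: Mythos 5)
Your proposal is correct and uses exactly the candidate measure the paper writes down, namely $\mu_\infty(A) = \frac{1}{T}\int_0^T (\CP_s^*\mu)(A)\,ds$; the paper leaves the verification as a ``straightforward exercise'', and your periodicity-plus-change-of-variables argument (with the measurability check for Fubini) is precisely the intended computation.
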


\begin{proof}
Define the measure $\mu_\infty$ by
\begin{equ}
\mu_\infty(A) = {1\over T} \int_0^T \CP_t^* \mu (A)\, dt\;.
\end{equ}
It is then a  straightforward exercise to check that it does have the requested property.
\end{proof}

We are now able to use this theorem to obtain the following result on the convergence of the solutions
to \eref{e:linearSPDE} to an invariant measure in the total variation topology:

\begin{theorem}\label{theo:linTV}
Assume that \eref{e:linearSPDE} has a solution in some Hilbert space $\CH$ and that there exists a
time $T$ such that $\|S(T)\| < 1$ and such that $S(T)$ maps $\CH$ into the range of $Q_T^{1/2}$.
Then  \eref{e:linearSPDE} admits a unique invariant measure $\mu_\infty$ and there exists $\gamma>0$ such that
\begin{equ}
\|\CP_t^* \nu - \mu_\infty\|_\TV \le C(\nu) e^{-\gamma t}\;,
\end{equ}
 for every probability measure $\nu$ on $\CH$ with finite second moment.
\end{theorem}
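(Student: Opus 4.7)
The plan is to apply Harris's theorem (Theorem~\ref{theo:Harris}) with the Lyapunov function $V(x) = \|x\|^2$ and the fixed time $T$ from the hypothesis. First I would verify the Lyapunov drift condition. Since the mild solution gives $\CP_T^*\delta_x = \CN(S(T)x,Q_T)$, one has
\begin{equ}
\bigl(\CP_T V\bigr)(x) = \|S(T)x\|^2 + \tr Q_T \le \|S(T)\|^2\, V(x) + \tr Q_T\;.
\end{equ}
Here $\tr Q_T$ is finite because existence of the mild solution forces $\int_0^T \|S(s)Q\|_\HS^2\,ds <\infty$, and by assumption $\gamma \eqdef \|S(T)\|^2 < 1$. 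This is precisely the first condition of Theorem~\ref{theo:Harris} with $K = \tr Q_T$.

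The main step, and the one I expect to be the only real obstacle, is the small set condition. Here I would use the hypothesis $S(T)\CH \subset \mop{range}(Q_T^{1/2})$ together with the closed graph theorem: denoting by $Q_T^{-1/2}$ the pseudo-inverse of $Q_T^{1/2}$ (defined on its range, with values in $(\ker Q_T^{1/2})^\perp$), the operator $Q_T^{-1/2}S(T)\colon \CH \to \CH$ is closed and everywhere defined, hence bounded by some constant $C_T$. Since $\CP_T^*\delta_x$ and $\CP_T^*\delta_y$ are centred Gaussians with common covariance $Q_T$ shifted by $S(T)x$ and $S(T)y$, Proposition~\ref{prop:identity} identifies the Cameron-Martin space of $\CN(0,Q_T)$ with $\mop{range}(Q_T^{1/2})$ and its norm with $\|Q_T^{-1/2}\,\cdot\,\|$. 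The Cameron-Martin formula \eref{e:CMformula} therefore yields an explicit Radon-Nikodym derivative $\CD_{x,y}$ for $\CP_T^*\delta_y$ with respect to $\CP_T^*\delta_x$, satisfying $\E_{\CP_T^*\delta_x}\CD_{x,y} = 1$ and whose variance is controlled by $\|Q_T^{-1/2}S(T)(x-y)\|^2 \le C_T^2 \|x-y\|^2$. Since on the sublevel set $\{V(x)+V(y)\le K'\}$ one has $\|x-y\|^2 \le 2K'$, a direct calculation with the density (or equivalently the elementary bound $\|\mu - \nu\|_\TV \le \sqrt{2(1-\exp(-\tfrac18\|m\|_{Q_T}^2))}$ for two Gaussians differing by a translation $m$ in their Cameron-Martin space, essentially the computation already carried out at the end of the proof of Theorem~\ref{theo:CM}) produces the required $\delta(K')>0$ with
$\|\CP_T^*\delta_x - \CP_T^*\delta_y\|_\TV \le 2 - \delta(K')$.

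With both hypotheses of Theorem~\ref{theo:Harris} verified, I obtain constants $c<1$ and (after relabeling) an integer $n\ge 1$ such that, writing $T' = nT$,
\begin{equ}
\|\CP_{T'}^*\mu - \CP_{T'}^*\nu\|_{\TV,V} \le c\,\|\mu - \nu\|_{\TV,V}\;,
\end{equ}
for all probability measures $\mu,\nu$ with finite second moment. Iterating, $\{(\CP_{T'}^*)^k \delta_0\}_{k\ge 0}$ is Cauchy in the weighted TV norm and its limit $\mu_\infty$ is $\CP_{T'}^*$-invariant; averaging $\CP_t^*\mu_\infty$ over $t\in[0,T']$ (or using that $\CP_s^*\mu_\infty$ is again $\CP_{T'}^*$-invariant with the same second moment, together with uniqueness of the $\CP_{T'}^*$-fixed point) upgrades this to full $\CP_t^*$-invariance. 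Finally, for $\nu$ with finite second moment the Lyapunov bound gives $\int V\,d\nu <\infty$ and hence $\|\nu - \mu_\infty\|_{\TV,V} <\infty$, so writing $t = kT' + r$ with $r\in[0,T')$ and using $\|\CP_r^*\mu - \CP_r^*\nu\|_\TV \le \|\mu - \nu\|_\TV$ together with the contraction above yields
\begin{equ}
\|\CP_t^*\nu - \mu_\infty\|_\TV \le \|\CP_t^*\nu - \mu_\infty\|_{\TV,V} \le c^k\,\|\nu - \mu_\infty\|_{\TV,V}\;,
\end{equ}
which is the claimed exponential convergence with $\gamma = -\log(c)/T'$ and $C(\nu) = c^{-1}\|\nu - \mu_\infty\|_{\TV,V}$.
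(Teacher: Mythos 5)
Your proof is correct and follows essentially the same route as the paper: both verify the two hypotheses of Harris's theorem (Theorem~\ref{theo:Harris}) using a norm-based Lyapunov function together with $\|S(T)\|<1$ for the drift condition, and the closed graph theorem plus the Cameron--Martin formula (reducing the small-set estimate to the total variation distance between two one-dimensional Gaussians) for the minorisation condition, before concluding via the Banach fixed point argument. The only cosmetic difference is your choice $V(x)=\|x\|^2$ in place of the paper's $V(x)=\|x\|$, and you spell out a couple of details (finiteness of $\tr Q_T$, upgrading $\CP_{T'}^*$-invariance to invariance for all $t$) that the paper leaves implicit or delegates to the preceding lemma.
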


\begin{proof}
Let $V(x) = \|x\|$ and denote by $\mu_t$ the centred Gaussian measure with covariance $Q_t$.
We then have
\begin{equ}
\CP_t V (x) \le \|S(t)x\| + \int_\CH \|x\| \,\mu_t(dx)\;,
\end{equ}
which shows that the first assumption of Theorem~\ref{theo:Harris} is satisfied. A simple variation of
Exercise~\ref{ex:GaussK} (use the decomposition $\CH = \tilde \CH \oplus \ker K$) shows that 
the Cameron--Martin space of $\mu_T$ is given by $\range Q_T^{1/2}$ endowed with the norm
\begin{equ}
\|h\|_T = \inf \{\|x\|\,:\, h = Q_T^{1/2} x\}\;.
\end{equ}
Since we assumed that $S(T)$ maps $\CH$ into the image of $Q_T^{1/2}$, it follows 
from the closed graph theorem that there exists a constant $C$ such that $\|S(T)x\|_T \le C \|x\|$ 
for every $x \in \CH$. It follows from the Cameron--Martin formula that the total variation
distance between $\CP_T^*\delta_x$ and $\CP_T^*\delta_y$ is equal to the total variation
distance between $\CN(0,1)$ and $\CN(\|S(T)x - S(T)y\|_T,1)$, so that the 
second assumption of Theorem~\ref{theo:Harris} is also satisfied.

Both the existence of $\mu_\infty$ and the exponential convergence of $\CP_t^* \nu$ towards
it is then a consequence of Banach's fixed point theorem in the dual to the space of measurable
functions with $\|\phi\|_V < \infty$.
\end{proof}

\begin{remark}
The proof of Theorem~\ref{theo:linTV} shows that if its assumptions are satisfied, then the
map $x \mapsto \CP_t^* \delta_x$ is continuous in the total variation distance for $t \ge T$.
\end{remark}

\begin{remark}
Since $\range  S(t)$ decreases with $t$ and $\range  Q_t^{1/2}$ increases with $t$, it follows that if 
$\range  S(t) \subset \range  Q_t^{1/2}$ for some $t$, then this also holds for any subsequent time.
This is consistent with the fact that Markov operators are contraction operators in the supremum norm,
so that if $x \mapsto \CP_t^* \delta_x$ is continuous in the total variation distance for some $t$,
the same must be true for all subsequent times.
\end{remark}

While Theorem~\ref{theo:linTV} is very general, it is sometimes not straightforward at all to verify
its assumptions for arbitrary linear SPDEs. In particular, it might be complicated \textit{a priori}
to determine the image of $Q_t^{1/2}$. The task of identifying this subspace can be made easier by
the following result:

\begin{proposition}\label{prop:ImQt}
The image of $Q_t^{1/2}$ is equal to the image of the map $A_t$ given by
\begin{equ}
A_t\colon L^2([0,t],\CK) \to \CH\;,\qquad A_t\colon h \mapsto \int_0^t S(s)Qh(s)\, ds\;.
\end{equ}
\end{proposition}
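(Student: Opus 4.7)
The natural approach is to identify $A_t$ as the ``square root'' of $Q_t$ in an appropriate sense and then invoke a general Hilbert space fact relating $\Im T$ to $\Im (TT^*)^{1/2}$. First I would compute the adjoint $A_t^* \colon \CH \to L^2([0,t], \CK)$: for $x \in \CH$ and $h \in L^2([0,t], \CK)$, one has
\begin{equ}
\scal{A_t h, x}_\CH = \int_0^t \scal{S(s)Qh(s), x}_\CH\, ds = \int_0^t \scal{h(s), Q^*S^*(s)x}_\CK\, ds\;,
\end{equ}
so that $(A_t^*x)(s) = Q^*S^*(s)x$. Plugging this into the definition of $A_t$ immediately gives
\begin{equ}
A_t A_t^* x = \int_0^t S(s)QQ^*S^*(s)x\, ds = Q_t x\;,
\end{equ}
so $A_tA_t^* = Q_t$.

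The proposition is therefore a consequence of the following general fact: if $T \colon \CK_1 \to \CH$ is any bounded linear operator between Hilbert spaces and $P = (TT^*)^{1/2}$, then $\Im T = \Im P$. To prove this I would argue as follows. The identity $\|T^*x\|^2 = \scal{TT^*x, x} = \|Px\|^2$, valid for all $x \in \CH$, shows that the map $V_0 \colon \Im P \to \Im T^*$ defined by $V_0(Px) = T^*x$ is a well-defined isometry (the kernels of $P$ and $T^*$ coincide since $P$ is self-adjoint). Extend $V_0$ by continuity to $\overline{\Im P}$ and then by zero on $(\overline{\Im P})^\perp = \ker P$, yielding a partial isometry $V \colon \CH \to \CK_1$ with $V P = T^*$, hence $T = PV^*$.

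Finally I would conclude by observing that $\Im V^* = \overline{\Im P}$ (as $V^*$ is itself a partial isometry with final space equal to the initial space of $V$) and that $P$ restricted to $\overline{\Im P} = (\ker P)^\perp$ has the same range as $P$ on all of $\CH$. Therefore
\begin{equ}
\Im T = \Im (PV^*) = P(\overline{\Im P}) = \Im P\;,
\end{equ}
which is exactly the required identity when applied to $T = A_t$. The only mildly subtle point is this last step, namely verifying that $P(\overline{\Im P}) = \Im P$ rather than merely a dense subset thereof; this is the place where self-adjointness of $P$, via the orthogonal decomposition $\CH = \overline{\Im P} \oplus \ker P$, does the actual work.
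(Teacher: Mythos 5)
Your proof is correct and follows essentially the same route as the paper: both rest on the identity $Q_t = A_tA_t^*$ followed by the polar decomposition, the only difference being that the paper cites polar decomposition from Reed--Simon and states $Q_t^{1/2} = A_tJ_t$ in one line, whereas you construct the partial isometry by hand from the identity $\|A_t^*x\| = \|Q_t^{1/2}x\|$ and carefully check both inclusions. Your explicit treatment of the final step, namely that $P(\overline{\Im P}) = \Im P$ via the decomposition $\CH = \overline{\Im P}\oplus\ker P$, is a detail the paper glosses over, so nothing is missing.
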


\begin{proof}
Since $Q_t = A_t A_t^*$, we can use polar decomposition \cite[Thm~VI.10]{ReedSimon} to find 
an isometry $J_t$ of $(\ker A_t)^\perp \subset \CH$ (which extends to $\CH$ by setting $J_t x = 0$
for $x \in \ker A_t$) such that $Q_t^{1/2} = A_t J_t$.

Alternatively, one can show that, in the situation of Theorem~\ref{theo:imageGaussian}, the Cameron--Martin space
of $\tilde \mu = A^*\mu$ is given by the image  under $A$ of the Cameron--Martin space of $\mu$.
This follows from Proposition~\ref{prop:defRmu} since, as a consequence of the definition
of the push-forward of a measure, the composition with $A$ yields an isometry between $L^2(\CB,\mu)$
and $L^2(\CB, \tilde \mu)$. 
\end{proof}

%\begin{remark}
%By Proposition~\ref{prop:ImQt}, we can interpret the condition of Theorem~\ref{theo:linTV} as a control problem:
%Given an initial condition $x$ and an arbitrary 
%\end{remark}

One case where it is straightforward to check whether $S(t)$ maps
$\CH$ into the image of $Q_t^{1/2}$ is the following:

\begin{example}\label{ex:selfadj}
Consider the case where $\CK = \CH$,
$L$ is selfadjoint, and there exists a function $f \colon \R \to \R_+$ such that $Q = f(L)$.
(This identity should be interpreted in the sense of the functional calculus already mentioned in
Theorem~\ref{theo:SD}.) 

If we assume furthermore that $f(\lambda)>0$ for every $\lambda \in \R$, then the existence of an invariant 
measure for \eqref{e:linearSPDE} is equivalent to the existence of a constant $c > 0$ such that $\scal{x,Lx} \le -c\|x\|^2$ for every
$x \in \CH$. Using functional calculus, we see that the operator $Q_T$ is then given by
\begin{equ}
Q_T = -{f^2(L)\over 2L} \bigl(1 - e^{2LT}\bigr)\;,
\end{equ}
and, for every $T>0$, the Cameron--Martin norm for $\mu_T$ is equivalent to the norm
\begin{equ}
\|x\|_f = \Bigl\|{\sqrt {-L} \over f(L)}x\Bigr\|\;.
\end{equ}
In order to obtain convergence $\CP_t^* \nu \to \mu_\infty$ in the total variation topology, it is therefore
sufficient that there exist constants $c,C>0$ such that $f(\lambda) \ge C e^{-c \lambda}$ for $\lambda \ge 0$.
\end{example}

\begin{exercise}
In the context of Example~\ref{ex:selfadj}, show that if $\CH$ is infinite-dimensional, $f$ is decreasing, 
\eqref{e:linearSPDE} admits an invariant measure
$\mu_\infty$ on $\CH$, and $\CP_t^*\mu \to \mu_\infty$ in total variation for every $\mu \in \cP(\CH)$,
then there necessarily exist constants $c,C>0$ such that $f(\lambda) \ge C e^{-c \lambda}$ for all $\lambda \ge 0$.
\textbf{Hint:} Show first that the assumptions guarantee that $L$ has a sequence of eigenvectors
with eigenvalues $\lambda_n \to -\infty$.
\end{exercise}

This shows that one cannot expect convergence in the total variation topology to take place under 
similarly weak conditions as in Proposition~\ref{prop:weakconv}. In particular, convergence in the total
variation topology requires some non-degeneracy of the driving noise which was not the case
for \index{weak convergence}{weak convergence}.

\begin{exercise}
Consider again the case $\CK = \CH$ and $L$ selfadjoint with $\scal{x,Lx} \le -c\|x\|^2$ for some $c>0$.
Assume furthermore that $Q$ is selfadjoint and that $Q$ and $L$ commute, so that there exists a space $L^2(\CM,\mu)$ isometric to $\CH$ and
such that both $Q$ and $L$ can be realised as multiplication operators (say $f$ and $g$ respectively) on that space. Show that:
\begin{claim}
\item In order for there to exist solutions in $\CH$, the set $A_Q \eqdef \{\lambda \in \CM\,:\, f(\lambda) \neq 0\}$ must be ``essentially countable'' in the
sense that it can be written as the union of a countable set and a set of $\mu$-measure $0$.
\item If there exists $T>0$ such that $\range  S(T) \subset \range  Q_T^{1/2}$, then $\mu$ is purely atomic and there exists some possibly different
time $t>0$ such that $S(t)$ is trace class.
\end{claim}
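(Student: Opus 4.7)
The plan is to diagonalise everything simultaneously. By the joint spectral theorem for the commuting self-adjoint pair $(L,Q)$, I can realise $\CH = L^2(\CM,\mu)$ with $L = M_g$ and $Q = M_f$ (taking $f \ge 0$ without loss of generality), where $g \le -c < 0$ $\mu$-a.e. Then $S(t) = M_{e^{tg}}$ is positive and self-adjoint, and a direct computation gives $Q_t = M_{h_t}$ with
\begin{equ}
h_t(\lambda) = f(\lambda)^2 \cdot \frac{1 - e^{-2t|g(\lambda)|}}{2|g(\lambda)|}\;.
\end{equ}
Since $|g|\ge c$, the second factor is strictly positive and bounded, so $\{h_t > 0\} = A_Q$ modulo $\mu$-null sets for every $t>0$.

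\textbf{Claim 1.} Existence of the mild solution requires $\tr Q_t < \infty$ for all $t>0$. The key lemma I would prove is: if $M_h$ is a positive multiplication operator on $L^2(\CM,\mu)$ with $\tr M_h < \infty$, then $\{h > 0\}$ is essentially contained in a countable union of atoms. Indeed, suppose for some $\eps > 0$ the set $\{h \ge \eps\}$ contains a subset $B$ of positive measure on which $\mu$ is non-atomic. Then $B$ can be partitioned into countably many disjoint pieces $\{B_n\}$ of positive measure, giving an orthonormal family $e_n = \One_{B_n}/\sqrt{\mu(B_n)}$ for which $\scal{e_n, M_h e_n} = \int_{B_n} h\,d\mu/\mu(B_n) \ge \eps$. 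Extending to an orthonormal basis and using positivity of $M_h$, this forces $\tr M_h \ge \sum_n \eps = \infty$, a contradiction. Applying this to $h_t$ and using $\{h_t > 0\} = A_Q$ proves Claim 1.

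\textbf{Claim 2, structure of $\mu$.} Assume $\Im S(T) \subset \Im Q_T^{1/2}$. By Douglas' factorisation lemma (which I would prove in-line via the closed graph theorem applied to $x \mapsto Q_T^{-1/2}S(T)x$ on $(\ker Q_T^{1/2})^\perp$), there exists a bounded operator $B$ on $\CH$ with $S(T) = Q_T^{1/2}B$, equivalently
\begin{equ}
S(T)^2 \,=\, S(T)S(T)^* \,\le\, \|B\|^2 \,Q_T\;.
\end{equ}
Both sides are multiplication operators in the same diagonalisation, so this is nothing but the pointwise bound $e^{-2T|g(\lambda)|} \le \|B\|^2 h_T(\lambda)$ $\mu$-a.e. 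Since the left-hand side is strictly positive $\mu$-a.e., we must have $h_T > 0$ $\mu$-a.e., i.e.\ $A_Q = \CM$ modulo null sets. Combined with Claim 1, $\CM$ is itself essentially a countable union of atoms, so $\mu$ is purely atomic.

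\textbf{Trace class bound.} Enumerate the atoms as $\{\lambda_k\}$; then $\{\One_{\{\lambda_k\}}/\sqrt{\mu(\{\lambda_k\})}\}$ is an orthonormal basis simultaneously diagonalising every multiplication operator, so $\tr M_h = \sum_k h(\lambda_k)$ for $h \ge 0$. Summing the pointwise inequality over atoms and using $\tr Q_T < \infty$ from Claim 1 gives
\begin{equ}
\tr S(2T) \,=\, \tr S(T)^2 \,=\, \sum_k e^{-2T|g(\lambda_k)|} \,\le\, \|B\|^2 \sum_k h_T(\lambda_k) \,=\, \|B\|^2 \tr Q_T \,<\, \infty\;,
\end{equ}
so $S(t)$ is trace class for $t = 2T$. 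The main obstacle is invoking (or reproving) Douglas' lemma in order to convert the abstract range inclusion into the quantitative operator inequality; once this is done, everything reduces to elementary bookkeeping on the atomic decomposition.
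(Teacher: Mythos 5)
The statement you were asked to prove is left as an exercise in the paper, so there is no proof of record to compare against; your argument is correct and uses precisely the toolkit the surrounding text sets up. The reduction of ``solutions exist in $\CH$'' to $\tr Q_t < \infty$ is Proposition~\ref{prop:trace} (together with the remark following the definition of a mild solution), and your closed-graph/Douglas factorisation step is the same device the paper itself uses in the proof of Theorem~\ref{theo:linTV}, where the range inclusion $\Im\, S(T) \subset \Im\, Q_T^{1/2}$ is converted into the bound $\|S(T)x\|_T \le C\|x\|$ --- which is exactly your operator inequality $S(T)S(T)^* \le \|B\|^2 Q_T$ in disguise. The orthonormal-family argument showing that a trace-class positive multiplication operator cannot charge a non-atomic set of positive measure is the right key lemma, and the pointwise translation of the operator inequality in the joint diagonalisation is legitimate since $M_a \le M_b$ is equivalent to $a \le b$ $\mu$-a.e.

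One small logical point to tidy up: in Claim~2 you invoke ``$\tr Q_T < \infty$ from Claim~1'', but the finiteness of $\tr Q_T$ is not the \emph{conclusion} of Claim~1 (which is the essential countability of $A_Q$); it is a consequence of the standing hypothesis that solutions exist in $\CH$, which the context of the exercise supplies but which you should state explicitly as an assumption of Claim~2 --- without it the final bound $\tr S(2T) \le \|B\|^2 \tr Q_T$ proves nothing. You may also want to record that $(\CM,\mu)$ from the spectral decomposition of a separable Hilbert space can be taken $\sigma$-finite, which is what guarantees both the atomic/diffuse decomposition and that the atomic part consists of only countably many atoms (each of which can be collapsed to a point, justifying the phrase ``countable set'' in the statement).
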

\end{exercise}

Example~\ref{ex:selfadj} suggests that there are many cases where, if $S(t)$ maps $\CH$ to $\range Q_t^{1/2}$ for some 
$t>0$, then it does so for all $t>0$. It also shows that, in the case where $L$ and $Q$ are selfadjoint and commute,
$Q$ must have an orthonormal basis of eigenvectors with all eigenvalues non-zero. Both statements are 
certainly not true in general. We see from the following example  that there can be infinite-dimensional
situations where $S(t)$ maps $\CH$ to $\range Q_t^{1/2}$ even though $Q$ is of rank one!

\begin{example}\label{ex:delay}
Consider the space $\CH = \R \oplus L^2([0,1],\R)$ and denote elements of $\CH$ by $(a,u)$ with $a \in \R$.
Consider the semigroup $S$ on $\CH$ given by
\begin{equ}
S(t) \bigl(a,u\bigr) = (a, \tilde u)\;,\quad \tilde u(x) = \left\{\begin{array}{rl} a & \text{for $x \le t$} \\ u(x-t) & \text{for $x>t$.} \end{array}\right.
\end{equ}
It is easy to check that $S$ is indeed a strongly continuous semigroup on $\CH$ and we denote its generator
by $(0,\d_x)$. We drive this equation by adding noise only on the first component of $\CH$. In other words,
we set $\CK = \R$ and $Q 1 = (1,0)$ so that, formally, we are considering the equation
\begin{equ}
da = dW(t)\;,\qquad du = \d_x u\, dt\;.
\end{equ}
Even though, at a formal level, the equations for $a$ and $u$ look decoupled, they are actually coupled
via the domain of the generator of $S$ which enforces the identity $u(0,t) = a(t)$ for all $t > 0$. 
In order to check whether $S(t)$ maps $\CH$ into $\CH_t \eqdef \range  Q_t^{1/2}$,
we make use of Proposition~\ref{prop:ImQt}. This shows that $\CH_t$ consists of elements
of the form
\begin{equ}
\int_0^t h(s) \chi_s\, ds\;,
\end{equ}
where $h \in L^2([0,t])$ and $\chi_s$ is the image of $(1,0)$ under $S(s)$, which is given by $(1,\one_{[0,s\wedge 1]})$.
On the other hand, the image of $S(t)$ consists of all elements $(a,u)$ such that $u(x) = a$ for $x \le t$.
Since one has $\chi_s(x) = 0$ for $x > s$, it is obvious that $\range  S(t) \not\subset \CH_t$ for $t < 1$.

On the other hand, for $t > 1$, given any $a>0$, we can find a function $h \in L^2([0,t])$ such that
$h(x) = 0$ for $x \le 1$ and $\int_0^t h(x)\, dx = a$. Since, for $s \ge 1$, one has $\chi_s(x) = 1$ for every $x \in [0,1]$,
it follows that one does have $\range  S(t) \subset \CH_t$ for $t < 1$.
\end{example}

\section{Semilinear SPDEs}

Now that we have a good working knowledge of the behaviour of solutions to linear stochastic PDEs, we
are prepared to turn to nonlinear SPDEs. In these notes, we will restrict ourselves to the study
of \textit{semilinear} SPDEs with \textit{additive} noise. 

In this context, a \textit{semilinear} SPDE is one such that the nonlinearity can be treated as a perturbation
of the linear part of the equation. The word \textit{additive} for the noise refers to the fact that, as in \eref{e:linearSPDE},
we will only consider noises described by a fixed operator $Q \colon \CK \to \CB$, rather than by an operator-valued
function of the solution. We will therefore consider equations of the type
\begin{equ}[e:SPDE]
dx = Lx\, dt + F(x)\, dt + Q\, dW(t)\;,\quad x(0) = x_0\in \CB\;,
\end{equ}
where $L$ is the generator of a strongly continuous semigroup $S$ on a separable Banach space $\CB$, $W$ is a cylindrical
Wiener process on some separable Hilbert space $\CK$, and $Q\colon \CK \to \CB$ is bounded. Furthermore, 
$F$ is a measurable function from some linear subspace $\CD(F) \subset \CB$ into $\CB$. We will say that
a process $t \mapsto x(t) \in \CD(F)$ is a \index{mild solution}\textit{mild solution} to \eref{e:SPDE} if the identity
\begin{equ}[e:mildsol]
x(t) = S(t)x_0 + \int_0^t S(t-s)F(x(s))\, ds + \int_0^t S(t-s)Q\,dW(s)\;.
\end{equ}
holds almost surely for every $t>0$. 
Throughout this section, we will make the standing assumption that the linearisation
to \eref{e:SPDE} (that is the corresponding equation with $F=0$) does have a continuous
solution with values in $\CB$. In order to simplify notations, we are going to write
\begin{equ}
\WL(t) \eqdef \int_0^t S(t-s)Q\,dW(s)\;.
\end{equ}
This process is sometimes called the \index{stochastic convolution}stochastic
convolution.

The simplest case (which is however not very relevant in the context of PDEs)
is that when $\CD(F) = \CB$ and $F$ is globally Lipschitz continuous on $\CB$. In this case,
we have the following result.

\begin{theorem}\label{theo:existunique}
Consider \eref{e:SPDE} on a Banach space $\CB$ and assume that $\WL$ is a continuous $\CB$-valued process.
Assume furthermore that $F\colon \CB \to \CB$ is Lipschitz continuous.
Then, there exists a unique mild solution to \eref{e:SPDE}.
\end{theorem}

\begin{proof}
Given any realisation $\WL \in \CC(\R_+, \CB)$ of the stochastic
convolution, we are going to show that there exists a unique
continuous function $x \colon \R_+\to \CB$ such that
\eref{e:mildSC} holds for every $t$. In this sense, our solution theory is completely pathwise, there is
no real need for stochastic integration.

As in the classical case of the solution theory for ordinary differential equations, the proof relies on 
the Banach fixed point theorem. Given a terminal time $T>0$ and a continuous function 
$g \colon \R_+ \to \CB$, we define the map $M_{g,T}\colon \CC([0,T],\CB)\to \CC([0,T],\CB)$ by
\begin{equ}[e:defMgT]
\bigl(M_{g,T} u\bigr)(t) = \int_0^t S(t-s)F(u(s))\, ds + g(t)\;.
\end{equ}
Note that we can assume without loss of generality that
the semigroup $S$ is bounded, since we can always subtract a constant to $L$ and add it back to $F$ (see Exercise~\ref{ex:uniqueRep} below).
Using the fact that $\|S(t)\| \le M$ for some
constant $M$, one has for any $T>0$ the bound
\begin{equ}[e:boundM1]
\sup_{t \in [0,T]} \|M_{g,T} u(t) - M_{g,T} v(t)\| \le M T \sup_{t \in [0,T]} \|F(u(t)) - F(v(t))\| \;.
\end{equ}
Writing $\Lip F$ for its Lipschitz constant, it follows that
if one chooses $T$ such that $MT\Lip F < 1$, then $M_{g,T}$ is a contraction on $\CC([0,T],\CB)$ and therefore
admits a unique fixed point there by the Banach fixed point theorem. 
If we choose $g(t) = S(t)x_0 + W_L(t)$, then this solves indeed \eqref{e:mildsol} on $[0,T]$.

In order to construct a solution for all times, we note that for $t>T$ solutions to \eqref{e:mildsol} can be written as
\begin{equs}
x(t) &= S(t-T)\Big(S(T)x_0 + \int_0^T S(T-s)F(x(s))\, ds + \int_0^T S(T-s)Q\,dW(s)\Big) \\
&\quad + \int_T^t S(t-s)F(x(s))\, ds + \int_T^t S(t-s)Q\,dW(s) \label{e:iterationSol} \\
&= S(t-T)x(T) + \int_T^t S(t-s)F(x(s))\, ds + \int_T^t S(t-s)Q\,dW(s)\;.
\end{equs}
In other words, $s \mapsto x(T+s)$ solves the exact same fixed point problem as $x$, but with $x_0$ 
replaced by $x(T)$ and the noise $W$ shifted by $T$ (which does not change its law). This allows us to
concatenate solutions in essentially the same way as for ODEs and completes the proof.
\end{proof}

\subsection{Local solutions}

In the nonlinear case, there are situations where solutions explode after a finite 
(but possibly random) time interval. In order to be able to account for such a situation, we introduce the notion of
a \index{local solution}\textit{local solution}. Recall first that, given a cylindrical Wiener process $W$ defined on some probability space $(\Omega,\P)$,
we can associate to it the natural filtration\index{filtration} $\{\F_t\}_{t \ge 0}$ generated by the increments of $W$. In other words,
for every $t>0$, $\F_t$ is the smallest $\sigma$-algebra such that the random variables $W(s) - W(r)$ for $s,r \le t$ are
all $\F_t$-measurable.

In this context, a \index{stopping time}\textit{stopping time} is a positive random variable $\tau$ such that the event 
$\{\tau \le T\}$ is $\F_T$-measurable for every $T \ge 0$. The feature of stopping times that we are going to use here is that 
if we restart a Wiener process at a stopping time, we do again get a Wiener process. Here, we always assume that Wiener processes are defined on
positive times. Indeed, we have the following result:

\begin{proposition}\label{prop:strongMarkov}
Let $W$ be a Wiener process, let $\F$ be the filtration generated by its increments, and let $\tau$ be a 
stopping time. Then, the process $W^\tau(t) = W(t+\tau)- W(\tau)$ is again a Wiener process.
\end{proposition}

\begin{proof}
We assume that $W$ is real-valued since this makes no difference. We also recall that, as a consequence of the independence of the
increments of $W$, it follows that for any $t > 0$ and any 
continuous bounded function $\Phi\colon \CC(\R_+,\R) \to \R$, one has the almost sure identity
\begin{equ}
\E \big(\Phi(W^t)\,|\,\F_t\bigr) = \E \Phi(W)\;,
\end{equ}
where we set $W^t(s) = W(t+s) - W(t)$. 

We first assume that $\tau$ only takes finitely 
many values $t_1,\ldots t_n$. In this case, we have
\begin{equs}
\E \Phi(W^\tau) &= \E \sum_i \one_{\tau = t_i} \Phi(W^\tau) = 
 \E \sum_i \one_{\tau = t_i} \E\big(\Phi(W^\tau) \,|\,\F_{t_i}\big)
 = \E \sum_i \one_{\tau = t_i} \E\Phi(W) = \E \Phi(W)\;,
\end{equs}
as claimed. Let now $\tau$ be arbitrary and let $\tau_n$ be the stopping time defined by
\begin{equ}
\tau_n = \sum_{k=1}^{n^2} \frac {k}n \one_{n\tau \in (k-1,k]} + n \one_{\tau > n}\;.
\end{equ}
By the above, we then have $\E \Phi(W^{\tau_n}) = \E \Phi(W)$. Since $\tau_n \to \tau$ almost surely and since
$W$ has continuous sample paths, it follows from Lebesgue's dominated convergence theorem that
$\E \Phi(W^{\tau}) = \E \Phi(W)$, which is precisely the claim.
\end{proof}

\begin{exercise}
Retracing the proof of Proposition~\ref{prop:strongMarkov} to show the following stronger result. 
Define $\F_\tau$ as the $\sigma$-algebra generated by all events of the form $A \cap \{\tau \ge t\}$ with $A \in \F_t$.
Then, for every $\Phi$ as above, one has the almost sure identity $\E \big(\Phi(W^\tau)\,|\,\F_\tau\bigr) = \E \Phi(W)$.
\end{exercise}

With the definition of a stopping time at hand, we can define the following notion of a local solution.

\begin{definition}
A \textit{local mild solution} to \eref{e:SPDE} is a $\CD(F)$-valued stochastic process $x$ together with a stopping time $\tau$
such that $\tau > 0$ almost surely and such that, for every $t \ge 0$, the identity
\begin{equ}[e:mildSC]
x(t) = S(t)x_0 + \int_0^{t} S(t-s)F(x(s))\, ds + \WL(t)\;,
\end{equ}
holds almost surely on the event $\{t < \tau\}$. 
\end{definition}

\begin{remark}\label{rem:FanalSG}
In some situations, it might be of advantage to allow $F$ to be a map from $\CD(F)$ to $\CB'$ for some
superspace $\CB'$ such that $\CB \subset \CB'$ densely and such that $S(t)$ extends to a continuous linear map from $\CB'$
to $\CB$. The prime example of such a space $\CB'$ is an interpolation space with negative index in the
case where the semigroup $S$ is analytic. The definition of a mild solution carries over to this situation
without any change.
\end{remark}

It is obvious from the definition that if $(x,\tau)$ is a local mild solution and $\bar \tau$ is any stopping time
with $\bar \tau \le \tau$ almost surely, then $(x,\bar \tau)$ is also a mild solution. We furthermore consider two local mild
solutions $(x,\tau)$ and $(y, \bar \tau)$ as identical if $\tau = \bar \tau$ almost surely and, for every $t \ge 0$,
$x(t) = y(t)$ holds almost surely on the event $\{t < \tau\}$. The values $x(t)$ and $y(t)$ are however allowed to disagree on $\{t \ge \tau\}$.
A local mild solution $(x,\tau)$ is called \textit{maximal} if,
for every mild solution $(\tilde x, \tilde \tau)$, one has $\tilde \tau \le \tau$ almost surely.

\begin{exercise}\label{ex:uniqueRep}
Show that local mild solutions to \eref{e:SPDE} coincide with local mild solutions to \eref{e:SPDE}
with $L$ replaced by $\tilde L = L - c$ and $F$ replaced by $\tilde F = F + c$ for any constant $c \in \R$.
(Here the notion of coinciding is as above, i.e.\ up to the stopping time.)
\end{exercise}

Our first result on the existence and uniqueness of local mild solutions to nonlinear SPDEs makes the strong assumption
that the nonlinearity $F$ is defined on the whole space $\CB$ and that it is locally Lipschitz there:

\begin{theorem}\label{theo:existunique}
Consider \eref{e:SPDE} on a Banach space $\CB$ and assume that $\WL$ is a continuous $\CB$-valued process.
Assume furthermore that $F\colon \CB \to \CB$ is such that it restriction to every bounded set is Lipschitz continuous.
Then, there exists a unique maximal mild solution $(x,\tau)$ to \eref{e:SPDE}. Furthermore, this solution has continuous sample paths
and one has $\lim_{t \uparrow \tau} \|x(t)\| = \infty$ almost surely on the set $\{\tau < \infty\}$.

If $F$ is globally Lipschitz continuous, then $\tau = \infty$ almost surely.
\end{theorem}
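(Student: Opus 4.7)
The plan is to build up the proof by first handling the globally Lipschitz case via Banach's fixed point theorem and then reducing the locally Lipschitz case to the former by a truncation argument. Throughout, fix a terminal time $T>0$ and let $M_T = \sup_{t \in [0,T]} \|S(t)\|$, which is finite by the $\CC_0$-property. Given a sample of the noise with $\WL$ continuous, define on $\CC([0,T],\CB)$ the map
\begin{equ}
\bigl(\Phi x\bigr)(t) = S(t) x_0 + \int_0^t S(t-s) F(x(s))\,ds + \WL(t)\;.
\end{equ}
If $F$ is globally Lipschitz with constant $L_F$, then $\Phi$ maps $\CC([0,T],\CB)$ into itself, and for any two paths $x,y$ one has $\|\Phi x - \Phi y\|_\infty \le M_T L_F T \|x - y\|_\infty$. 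Choosing $T$ sufficiently small that $M_T L_F T < 1$ makes $\Phi$ a strict contraction on a complete metric space, so Banach's fixed point theorem yields a unique continuous mild solution on $[0,T]$. Since this smallness condition on $T$ does not depend on the starting point $x_0$, we can iterate the construction on $[T,2T]$, $[2T,3T]$, \ldots, obtaining a unique global continuous mild solution. This gives the last claim of the theorem.

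For the locally Lipschitz case, introduce a smooth cutoff: pick $\chi \in \CC^\infty(\R_+,[0,1])$ with $\chi(r) = 1$ for $r \le 1$ and $\chi(r) = 0$ for $r \ge 2$, and set $F_n(x) = \chi(\|x\|/n) F(x)$. Each $F_n$ agrees with $F$ on the ball of radius $n$ and is globally Lipschitz (with a constant depending on the Lipschitz constant of $F$ on the ball of radius $2n$ and on $\sup_{\|x\|\le 2n} \|F(x)\|$). By the first part, there exists a unique global continuous mild solution $x_n$ to the truncated equation. Define the stopping times $\tau_n = \inf\{t > 0\,:\, \|x_n(t)\| \ge n\}$, with the convention that $\inf \emptyset = +\infty$. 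By local uniqueness (apply Banach's fixed point theorem again on the random interval $[0, \tau_n \wedge \tau_m]$ using the fact that $F_n = F_m = F$ on the ball of radius $n \wedge m$, and then iterate), we have $x_n(t) = x_m(t)$ for all $t \le \tau_n \wedge \tau_m$, hence the sequence $\tau_n$ is non-decreasing. Setting $\tau = \lim_{n \to \infty} \tau_n$ and $x(t) = x_n(t)$ for $t < \tau_n$ produces a well-defined continuous $\CB$-valued process $(x, \tau)$ which is a local mild solution.

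To see that $(x,\tau)$ is maximal, suppose $(\tilde x, \tilde \tau)$ is another local mild solution and fix $n$. On the set $\{\tilde \tau > 0\}$, the path $\tilde x$ is continuous, so the stopping time $\tilde \tau_n = \tilde \tau \wedge \inf\{t\,:\, \|\tilde x(t)\| \ge n\}$ is positive; on $[0, \tilde \tau_n)$ the process $\tilde x$ also solves the truncated equation with $F_n$, so by the already-established uniqueness for the globally Lipschitz case it must coincide with $x_n$, hence $\tilde \tau_n \le \tau_n$. Letting $n \to \infty$ gives $\tilde \tau \le \tau$ and $\tilde x = x$ on $[0,\tilde \tau)$. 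Finally, for the blow-up statement, observe that $\tau_n < \tau$ forces $\|x(\tau_n)\| \ge n$ by continuity, whereas on the event $\{\tau < \infty\}$ the monotone sequence $\tau_n$ converges to $\tau$, so either $\tau_n = \tau$ for some $n$ (in which case continuity at $\tau_n$ forces $\|x(\tau_n)\| = n$, and the argument repeats for larger $n$) or $\tau_n \uparrow \tau$ strictly, and in both cases $\|x(\tau_n)\| \ge n \to \infty$.

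The main technical subtlety will be handling the gluing argument cleanly at the level of stopping times, i.e.\ verifying that the concatenated process $x$ really is a mild solution on the random interval $[0,\tau)$ and that $\tau$ is itself a stopping time; this requires invoking the strong Markov-type independence of increments of $W$ past $\tau_n$ and checking that the stochastic convolution $\WL$ behaves well under restriction to $[0,\tau_n]$. The case of $F$ only defined on a subspace $\CD(F) \subset \CB'$ as in Remark~\ref{rem:FanalSG} is not covered by this exact argument and would require replacing the sup norm by a weighted norm incorporating the smoothing of $S(t)$, but this is not needed for the statement as written.
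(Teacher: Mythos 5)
Your argument is correct in outline but takes a genuinely different route from the one in the notes. The notes run the Banach fixed point argument directly with the locally Lipschitz $F$: for the fixed continuous path $g(t) = S(t)x_0 + \WL(t)$, the map $u \mapsto \int_0^t S(t-s)F(u(s))\,ds + g(t)$ is shown to be a contraction on a ball of radius $R$ around $g$ in $\CC([0,T],\CB)$ for $T$ small, and the construction is then iterated in time exactly as for ODEs; the globally Lipschitz case is obtained by observing that $T$ can then be chosen independently of the initial condition. You invert this logic: you first solve the globally Lipschitz case, then truncate $F$ and patch the globally defined solutions $x_n$ together along the stopping times $\tau_n$. Both are standard and both work. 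The truncation makes the maximality of $\tau$ and the consistency of the family $\{x_n\}$ quite transparent, at the cost of having to verify that $\chi(\|\cdot\|/n)F$ is globally Lipschitz --- which holds here because the norm is $1$-Lipschitz and only a Lipschitz (not smooth) cutoff is needed; this is worth stating, since smooth cutoffs need not exist on a general Banach space. Your closing worry about ``strong Markov-type independence of increments of $W$ past $\tau_n$'' is unfounded: in both approaches the entire construction is pathwise, i.e.\ a deterministic fixed-point and gluing argument applied to the continuous realisation of $\WL$, so that $x_n(t)$, $\tau_n$ and $\tau$ are measurable functions of $\WL$ restricted to $[0,t]$, hence adapted, with no probabilistic input beyond the continuity of $\WL$.

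Two loose ends. First, your blow-up argument only yields $\limsup_{t\uparrow\tau}\|x(t)\| = \infty$ on $\{\tau<\infty\}$, since it exhibits a single sequence $\tau_n\uparrow\tau$ along which $\|x(\tau_n)\| = n$. To obtain the stated $\lim_{t\uparrow\tau}\|x(t)\| = \infty$ one needs the standard supplement: for a fixed realisation of $\WL$, bounded on $[0,\tau+1]$, the local existence time starting from any state of norm at most $R$ at any time $t_0\le\tau$ is bounded below by some $\delta(R)>0$; if $\liminf_{t\uparrow\tau}\|x(t)\|<\infty$ one could therefore restart from some $t_k>\tau-\delta$ with $\|x(t_k)\|$ bounded and extend the solution past $\tau$, contradicting maximality. (The restarting uses the identity $x(t) = S(t-t_k)x(t_k) + \int_{t_k}^t S(t-s)F(x(s))\,ds + \int_{t_k}^t S(t-s)Q\,dW(s)$, which follows from the semigroup property.) Second, the consistency step as phrased is slightly circular: on $[0,\tau_n\wedge\tau_m]$ with $n<m$ you do not know a priori that $x_m$ stays in the ball of radius $n$, so you cannot directly invoke $F_n=F_m$ along both trajectories. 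The clean version is one-sided: for $n\le m$, $x_n$ has norm at most $n\le m$ on $[0,\tau_n]$, hence solves the $F_m$-equation there, so uniqueness for the globally Lipschitz $F_m$-equation gives $x_n=x_m$ on $[0,\tau_n]$ and then $\tau_m\ge\tau_n$ follows. Both points are easily repaired and do not affect the overall strategy.
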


\begin{proof}
Given any realisation $\WL \in \CC(\R_+, \CB)$ of the \index{stochastic convolution}stochastic
convolution, we are going to show that there exists a time $\tau > 0$ depending only on $\WL$ up to time $\tau$ and a unique
continuous function $x \colon [0,\tau)\to \CB$ such that
\eref{e:mildSC} holds on $\{t < \tau\}$. Furthermore, the construction will be such that either $\tau = \infty$, or 
one has $\lim_{t \uparrow \tau} \|x(t)\| = \infty$, thus showing that $(x,\tau)$ is maximal.

As before, the proof relies on the Banach fixed point theorem and we define $M_{g,T}$ as in \eqref{e:defMgT}.
We also assume again that the semigroup $S$ is bounded.
Besides \eqref{e:boundM1}, one then has for any $T>0$ the bound
\begin{equ}[e:boundM2]
\sup_{t \in [0,T]} \|M_{g,T} u(t) - g(t)\| \le M T \sup_{t \in [0,T]} \|F(u(t))\| \;.
\end{equ}
Fix now an arbitrary constant $R>0$. Since $F$ is locally Lipschitz, it follows from \eref{e:boundM1} and \eref{e:boundM2} that
there exists a maximal $T > 0$ such that $M_{g,T}$ maps the ball of radius $R$ around $g$ in $\CC([0,T],\CB)$ into itself and 
is a contraction with contraction constant $\hf$ there. 

Setting $g(t) = S(t)x_0 + \WL(t)$, the event $\{T \le t\}$ is clearly $\F_t$-measurable 
(since $g(s)$ is $\F_s$-measurable) so that $T$ is a stopping time. Furthermore,
the pair $(x,T)$, where $T$ is as just constructed and $x$ is the unique fixed point of $M_{g,T}$,
thus yields a local mild solution to \eref{e:SPDE}.

In order to construct the maximal solution, we iterate this construction in the same way as in the globally Lipschitz case,
noting that the decomposition \eqref{e:iterationSol} still holds on the event $\{t>T\}$ when $T$ is a stopping time.
Uniqueness and continuity in time also follows as in the finite-dimensional case.
\end{proof}

While this setting is very straightforward and did not really make use of any PDE theory, it nevertheless allows to construct
solutions for an important class of examples, since every composition operator of the form
$\bigl(N(u)\bigr)(\xi) = \bigl(f \circ u\bigr)(\xi)$ is locally Lipschitz on $\CC(K,\R^d)$ (for $K$ a compact subset of $\R^n$, say), 
provided that $f \colon \R^d \to \R^d$ is locally Lipschitz continuous.

A much larger class of examples can be found if we assume that $L$
generates an analytic semigroup:

\begin{theorem}\label{theo:bootstrap}
Let $L$ generate an analytic semigroup on $\CB$ (denote by $\CB_\alpha$, $\alpha \in \R$ the corresponding
interpolation spaces) and assume that $Q$ is such that the \index{stochastic convolution}stochastic convolution
$\WL$ has almost surely continuous sample paths in $\CB_\alpha$ for some $\alpha \ge 0$. Assume furthermore
that there exists $\gamma \ge 0$ and $\delta \in [0,1)$ such that,  for every $\beta \in [0,\gamma]$, the map $F$ extends to a locally Lipschitz continuous map from $\CB_\beta$ to $\CB_{\beta-\delta}$ that grows at most
polynomially.

Then, \eref{e:SPDE} has a unique maximal mild solution $(x,\tau)$ with $x$ taking values in $\CB_\beta$ for
every $\beta < \beta_\star \eqdef \alpha \wedge (\gamma + 1-\delta)$.
\end{theorem}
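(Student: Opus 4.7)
The plan is to run the Banach fixed-point argument of \theo{theo:existunique} in an interpolation space $\CB_{\beta_0}$, replacing the trivial bound $\|S(t)\|\le M$ by the analytic smoothing estimate of Exercise~\ref{ex:ASGab}: for $\alpha' > \alpha$ one has $\|S(t)\|_{\CB_\alpha \to \CB_{\alpha'}} \le C\, t^{\alpha-\alpha'}$ on $(0,1]$.  Taking $\alpha' = \beta_0$ and $\alpha = \beta_0-\delta$ gives $\|S(t-s) F(u(s))\|_{\CB_{\beta_0}} \le C(t-s)^{-\delta}\|F(u(s))\|_{\CB_{\beta_0-\delta}}$, and since $\delta < 1$ the resulting $(t-s)^{-\delta}$ singularity is integrable.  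This is precisely what compensates for the $\delta$ derivatives lost by $F$.

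Concretely, set $\beta_0 = \alpha \wedge \gamma$, which is the largest exponent for which both $\WL$ has pathwise continuous samples in $\CB_{\beta_0}$ and $F$ is locally Lipschitz from $\CB_{\beta_0}$ to $\CB_{\beta_0-\delta}$.  For each realisation of $g(t)\eqdef S(t)x_0 + \WL(t)$, define
\begin{equ}
(M_{g,T}u)(t) = g(t) + \int_0^t S(t-s)F(u(s))\,ds\;.
\end{equ}
The smoothing bound together with the local Lipschitz property of $F$ gives, for $u,v$ in a ball of radius $R$ around $g$ in $\CC([0,T],\CB_{\beta_0})$,
\begin{equ}
\|M_{g,T}u - M_{g,T}v\|_{\CC([0,T],\CB_{\beta_0})} \le C\, L_R\, \frac{T^{1-\delta}}{1-\delta}\,\|u-v\|_{\CC([0,T],\CB_{\beta_0})}\;,
\end{equ}
and similarly $\|M_{g,T}u - g\| \le C\, T^{1-\delta}$ by the polynomial growth of $F$.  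Choosing $T$ small enough turns $M_{g,T}$ into a strict contraction on a suitable ball, and its unique fixed point yields a local mild solution.  Maximality, uniqueness, and the blow-up alternative then follow by concatenation, exactly as at the end of the proof of \theo{theo:existunique}.

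Two points require extra care.  First, for $x_0 \in \CB \setminus \CB_{\beta_0}$ the map $t \mapsto S(t)x_0$ lies in $\CB_{\beta_0}$ only for $t > 0$, with a $t^{-\beta_0}$ singularity in the $\CB_{\beta_0}$-norm.  I would handle this either by working on the weighted space with norm $\sup_{0<t\le T} t^{\beta_0}\|u(t)\|_{\CB_{\beta_0}}$ and checking (via a Beta-function computation) that the contraction estimate still closes, or equivalently by solving on $[t_0,T]$ with initial datum $S(t_0)x_0 + \WL(t_0)\in\CB_{\beta_0}$ and letting $t_0\downarrow 0$ using uniqueness.  Second, to push the regularity all the way up to $\beta_\star$ (only an issue when $\alpha > \gamma$), I would bootstrap: once $x$ is known to take values in $\CB_{\beta'}$ for some $\beta' \in [\beta_0,\gamma]$, then $F(x)\in \CB_{\beta'-\delta}$ along the solution, so Exercise~\ref{ex:ASGab} applied to the convolution, together with $g\in\CB_\alpha$, shows that $x(t) \in \CB_{\beta''}$ for every $\beta'' \le \alpha$ with $\beta'' < \beta' + 1 - \delta$.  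Iterating the gain of (up to) $1-\delta$ per step, one reaches any $\beta < \beta_\star = \alpha \wedge (\gamma + 1 - \delta)$ in finitely many steps.  The main technical obstacle is maintaining consistent weighted norms through the bootstrap so that the $t=0$ singularity does not accumulate into a non-integrable one, an issue standard in the analytic semigroup theory of semilinear parabolic equations but requiring some bookkeeping.
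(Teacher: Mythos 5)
Your identification of the key smoothing estimate and the $T^{1-\delta}$ contraction bound is exactly right, and your final bootstrap with a gain of (almost) $1-\delta$ per step is essentially the argument the paper uses. The genuine gap is in where you run the fixed point. The paper performs the contraction once, in $\CC([0,T],\CB)$, using only the $\beta=0$ instance of the hypothesis ($F\colon\CB\to\CB_{-\delta}$) together with $\|S(t-s)\|_{\CB_{-\delta}\to\CB}\le C(t-s)^{-\delta}$; no weighted norms are needed there because $g(t)=S(t)x_0+\WL(t)$ is already continuous with values in $\CB$. All higher regularity is then obtained purely as an \emph{a priori} estimate on the already-constructed solution, of the form $\|x_t\|_\beta\le Ct^{-q_\beta}\bigl(1+\sup_{s\in[at,t]}\|x_s\|+\sup_{s\le t}\|\WL^a(s)\|_\beta\bigr)^{p_\beta}$ with $\WL^a(t)=\int_{at}^tS(t-r)Q\,dW(r)$. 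In that induction the exponents grow geometrically ($q_{\beta+\eps}=\eps+nq_\beta$, $p_{\beta+\eps}=np_\beta$, where $n$ is the polynomial growth degree of $F$), which is harmless precisely because no contraction has to close.

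By contrast, you want to contract directly in $\CB_{\beta_0}$ with $\beta_0=\alpha\wedge\gamma$, and since $x_0\in\CB$ need not lie in $\CB_{\beta_0}$ you propose the weighted norm $\sup_{0<t\le T}t^{\beta_0}\|u(t)\|_{\beta_0}$. This does not close in general: on a ball of radius $R$ in that space one only knows $\|u(s)\|_{\beta_0}\le Rs^{-\beta_0}$, so the polynomial growth of $F$ produces an integrand of order $(t-s)^{-\delta}s^{-n\beta_0}$ in the Duhamel term, which fails to be integrable unless $n\beta_0<1$ --- a condition the theorem does not assume. This is not bookkeeping; the weighted fixed point genuinely breaks for superlinear $F$ and large $\beta_0$. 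Your alternative fix (restarting at $t_0>0$ from the datum $S(t_0)x_0+\WL(t_0)$) uses the wrong initial condition, since it drops the Duhamel contribution over $[0,t_0]$, and identifying the $t_0\downarrow 0$ limit with the mild solution started at $x_0$ would anyway require the level-$0$ well-posedness first. The repair is exactly the paper's ordering of the two steps: contract at level $0$ first, then bootstrap from $\beta=0$ upward, letting the $t^{-q}$ singularities accumulate since they only need to be tracked, not beaten.
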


\begin{proof}
In order to show that \eref{e:SPDE} has a unique mild solution, we proceed in a way similar to the proof of Theorem~\ref{theo:existunique}
and we make use of Exercise~\ref{ex:ASGab} to bound $\|S(t-s)F(u(s))\|$ in terms of $\|F(u(s))\|_{-\delta}$. This yields instead
of \eref{e:boundM1} the bound
\begin{equ}[e:boundM1delta]
\sup_{t \in [0,T]} \|M_{g,T} u(t) - M_{g,T} v(t)\| \le M  T^{1-\delta} \sup_{t \in [0,T]} \|F(u(t)) - F(v(t))\| \;,
\end{equ}
and similarly for \eref{e:boundM2}, thus showing that \eref{e:SPDE} has a unique $\CB$-valued maximal mild solution $(x,\tau)$.
In order to show that $x(t)$ actually belongs to $\CB_\beta$ for $t < \tau$ and $\beta \le \alpha \wedge \gamma$, we make use of
a ``bootstrapping'' argument, which is essentially an induction on $\beta$.

For notational convenience, we introduce the family of processes $\WL^a(t) = \int_{at}^t
S(t-r) Q\, dW(r)$, where $a \in [0,1)$ is a parameter. Note that one has the identity
\begin{equ}
\WL^a(t) = \WL(t) - S((1-a)t)\WL(at)
\end{equ}
so that if $\WL$ is continuous with values in $\CB_\alpha$, then the same is true for $\WL^a$.
 
We are actually going to show the following stronger statement. Fix an arbitrary time $T > 0$. Then, for every $\beta
  \in [0, \beta_\star)$ there exist exponents $p_\beta \ge 1$,
  $q_ \beta \ge 0$,  and constants $a \in (0,1)$, $C > 0$ such that the bound
  \begin{equ}[e:boundugamma]
    \|x_t\|_\beta \le C t^{-q_\beta} \Bigl(1 + \sup_{s \in [at,t]}\|x_s\| + \sup_{0 \le s \le t}
    \|\WL^{a}(s)\|_\beta \Bigr)^{p_\beta}\;,
  \end{equ}
 holds almost surely for all $t \in (0,T]$.

The bound \eref{e:boundugamma} is obviously true for $\beta =
  0$ with $p_\beta = 1$ and $q_\beta = 0$.  Assume now that, for some
$\beta = \beta_0 \in
  [0,\gamma]$, the bound \eref{e:boundugamma} holds.  
  We will then argue that, for any $\eps \in (0, 1-\delta)$,
  the statement \eref{e:boundugamma} also holds for $\beta =
  \beta_0 + \eps$ (and therefore also for all intermediate values), provided that we adjust the constants appearing
  in the expression. Since it is possible to go from $\beta =
  0$ to any value of $\beta < \gamma + 1-\delta$ in a finite number of such
  steps, the claim then follows at once.

From the definition of a mild solution, we have the identity
  \begin{equ}
    x_t = S((1-a) t) x_{a t} + \int_{a t}^t
    S(t-s) F(x(s))\, ds + \WL^a(t)\;.
  \end{equ}
  Since $\beta \le \gamma$, it follows from our polynomial growth assumption on $F$ that
  there exists $n >0$ such that, for $t \in (0,T]$,
  \begin{equs}
    \|x_t\|_{\beta+\eps} &\le C t^{-\eps} \|x_{a t}\|_\beta
    + \|\WL^a(t)\|_{\beta+\eps} + C\int_{a t}^t (t-s)^{-(\eps
      + \delta)} (1+\|x_s\|_\beta)^n\, ds\\
    &\le C \bigl(t^{-\eps} + t^{1-\eps - \delta}\bigr)\sup_{{a t}
      \le s \le t}\bigl(1+\|x_s\|_\beta^n\bigr) + \|\WL^a(t)\|_{\beta+\eps} \\
    &\le C t^{-\eps}\sup_{{a t} \le s \le
      t}\bigl(1+\|x_s\|_\beta^n\bigr) + \|\WL^a(t)\|_{\beta+\eps}\;.
  \end{equs}
  Here, the constant $C$ depends on everything but $t$ and $x_0$.
  Using the induction hypothesis, this yields the bound
  \begin{equ}
    \|x_t\|_{\beta+\eps} \le C t^{-\eps - n q_\beta} \bigl(1 +
    \sup_{s \in [a^2 t,t]}\|x_s\| + \sup_{0 \le s \le
      t} \|\WL^a(s)\|_\beta \bigr)^{n p_\beta} + \|\WL^a(t)\|_{\beta+\eps} \;,
  \end{equ}
  thus showing that \eref{e:boundugamma} does indeed hold for $\beta =
  \beta_0 + \eps$, provided that we replace $a$ by $a^2$  and set $p_{\beta +
    \eps} = n p_\beta$ and $q_{\beta + \eps} = \eps +
  nq_{\beta}$. This concludes the proof of
  Theorem~\ref{theo:bootstrap}.
\end{proof}

%\begin{remark}
%We slightly cheated in this proof since it appears to rely on the fact that the quantity $\sup_{at \le s < r \le t} \|\WL(s,r)\|_\alpha$
%is almost surely finite, which does not immediately follow from the fact that $\sup_{0 \le s \le t} \|\WL(s)\|_\alpha$
%is almost surely finite. However, a closer inspection of the proof shows that one actually obtains \eref{e:boundugamma} 
%with  $\sup_{{t\over 2} \le s < r \le t} \|\WL(s,r)\|_\alpha$
%replaced by $\sup_n \sup_{t_n \le s \le t} \|\WL(t_n,s)\|_\alpha < \infty$ for a finite number of values $t_n$. 
%This supremum is finite.
%\end{remark}

\subsection{Reaction-diffusion equations}

\index{stochastic!reaction-diffusion equation}
This is a class of partial differential equations that model the evolution of reactants in a gel, described by a spatial domain $D$. 
They are of the type
\begin{equ}[e:SGL]
du = \Delta u\, dt + f \circ u \, dt + Q\, dW(t)\;,
\end{equ}
where $u(x,t) \in \R^d$, $x \in D$, describes the density of the various components of the reaction
at time $t$ and location $x$. The nonlinearity $f \colon \R^d \to \R^d$ describes the reaction itself and the 
 term $\Delta u$ describes the diffusion of the reactants in the gel. The noise term $Q\, dW$ should be interpreted
as a crude attempt to describe the fluctuations in the quantities of reactant due both to the discrete nature
of the underlying particle system and the interaction with the environment.\footnote{A more realistic description of these fluctuations
would result in a covariance $Q$ that depends on the solution $u$. Since we have not developed the tools necessary to treat this type
of equations, we restrict ourselves to the simple case of a constant covariance operator $Q$.}

Equations of the type \eref{e:SGL} also appear in the theory of amplitude equations, where they appear as a kind of
``normal form'' near a change of linear instability. In this particular case, one often has $d = 2$ and $f(u) = \kappa u - u |u|^2$
for some $\kappa \in \R$, see for example \cite{BloHaiPav}.
A natural choice for the Banach space $\CB$ in which to consider solutions to \eref{e:SGL} is the space
of bounded continuous functions $\CB = \CC(D, \R^d)$ since the composition operator $u \mapsto f \circ u$
(also sometimes called Nemitskii operator) then maps $\CB$ into itself and inherits the regularity properties of $f$.
If the domain $D$ is sufficiently regular then the semigroup generated by the Laplacian $\Delta$ is the Markov semigroup for a 
Brownian motion in $D$. The precise description of the domain of $\Delta$ is related to the behaviour of the corresponding 
Brownian motion when it hits the boundary of $D$.
In order to avoid technicalities, let us assume from now on that $D$ consists of the torus $\T^n$, so that
there is no boundary to consider. 

\begin{exercise}
Use the explicit form of the heat kernel to show that $\Delta$ generates an analytic semigroup on $\CB=\CC(\T^n, \R^d)$.
%and that for $\alpha \in \N$,
%the interpolation space $\CB_\alpha$ is given by $\CB_\alpha =\CC^{2\alpha}(\T^n, \R^d)$.
\end{exercise}

\begin{remark}
For $2\alpha \not \in \N$, it turns out that the interpolation spaces $\CB_\alpha$ associated with the
heat semigroup on $\CB=\CC(\T^n)$ coincide with $\CC^{2\alpha}(\T^n)$ (or rather the completion of
smooth functions under the corresponding Hölder norm). This however is \textit{not} true in general
as can be seen already with the case $\alpha = 1$. If we take a smooth function $f$ on the two-dimensional torus
minus the origin such that $f(x,y) = (x^2-y^2)\sqrt{|\log(x^2 + y^2)|}$ for $x^2 + y^2 < 1/2$ (say),
then it is not difficult to check that $f$ belongs to the domain of $\Delta$, but $f$ is of course not $\CC^2$.
\end{remark}
 
If $Q$ is such that the \index{stochastic convolution}stochastic convolution has continuous sample paths in $\CB$ almost surely
and $f$ is locally Lipschitz continuous,
we can directly apply Theorem~\ref{theo:existunique} to obtain the existence of a unique local solution to \eref{e:SGL}
in $\CC(\T^n,\R^d)$. We would like to obtain conditions on $f$ that ensure that this local solution is also a global solution,
that is the blow-up time $\tau$ is equal to infinity almost surely. 

If $f$ happens to be a globally Lipschitz continuous function, then the existence and uniqueness of global solutions follows
from Theorem~\ref{theo:existunique}.
Obtaining global solutions when $f$ is not globally Lipschitz continuous 
is slightly more tricky. The idea is to obtain some \textit{a priori} estimate
on some functional of the solution which dominates the supremum norm and 
ensures that it cannot blow up in finite time.

Let us first consider the deterministic part of the equation alone. The structure we are going to exploit is the fact that the
Laplacian generates a Markov semigroup. We have the following general result:

\begin{lemma}\label{lem:contrPtV}
Let $\CP_t$ be a \index{Feller semigroup}Feller\footnote{A Markov semigroup is Feller if it maps continuous functions into continuous functions.} Markov semigroup over a Polish space $\CX$. Extend it to $\CC_b(\CX,\R^d)$ by applying it to each
component independently. Let $V \colon \R^d \to \R_+$ be convex (that is $V(\alpha x + (1-\alpha)y) \le \alpha V(x) + (1-\alpha) V(y)$
for all $x,y \in \R^d$ and $\alpha \in [0,1]$) and define $\tilde V \colon \CC_b(\CX,\R^d) \to \R_+$ by
$\tilde V(u) = \sup_{x \in \CX} V(u(x))$. Then $\tilde V(\CP_t u) \le \tilde V(u)$ for every $t \ge 0$ and every $u \in \CC_b(\CX,\R^d)$.
\end{lemma}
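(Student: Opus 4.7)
The plan is to reduce the claim to a pointwise application of Jensen's inequality for the probability kernels $p_t(x, \cdot\,)$ associated with the Markov semigroup $\CP_t$, so that the supremum appearing in $\tilde V$ can simply be pulled through.

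First, I would unpack what $\CP_t$ means on $\R^d$-valued functions. Since $\CP_t$ is a Feller Markov semigroup, there is a family of Borel probability measures $\{p_t(x, \cdot\,)\}_{x \in \CX}$ on $\CX$ such that $(\CP_t \phi)(x) = \int_\CX \phi(y)\, p_t(x,dy)$ for every $\phi \in \CC_b(\CX, \R)$, and the componentwise extension to $u \in \CC_b(\CX, \R^d)$ gives $(\CP_t u)(x) = \int_\CX u(y)\, p_t(x,dy)$, where the integral is understood componentwise. The Feller property ensures that $\CP_t u$ is again in $\CC_b(\CX, \R^d)$, so $\tilde V(\CP_t u)$ is well defined.

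Next, I would apply Jensen's inequality in $\R^d$: if $V\colon\R^d \to \R_+$ is convex and $\mu$ is a Borel probability measure on $\R^d$ with $\int |y|\, \mu(dy) < \infty$, then $V\bigl(\int y\, \mu(dy)\bigr) \le \int V(y)\, \mu(dy)$. The standard argument represents $V$ as the supremum of the family of affine minorants $\{L_\alpha\}$ (which exists by convex duality; $V \ge 0$ guarantees that this family is nonempty, and convexity gives $V(z) = \sup_\alpha L_\alpha(z)$), and then
\begin{equ}
L_\alpha\Bigl(\textstyle\int y\, \mu(dy)\Bigr) = \int L_\alpha(y)\, \mu(dy) \le \int V(y)\, \mu(dy),
\end{equ}
so that taking the supremum over $\alpha$ on the left yields the claim. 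Applying this with $\mu$ the pushforward of $p_t(x,\cdot\,)$ under $u\colon \CX \to \R^d$, which is a Borel probability measure on $\R^d$ with bounded support since $u$ is bounded and continuous, gives
\begin{equ}
V\bigl((\CP_t u)(x)\bigr) = V\Bigl(\textstyle\int_\CX u(y)\, p_t(x,dy)\Bigr) \le \int_\CX V(u(y))\, p_t(x,dy) = \bigl(\CP_t (V\circ u)\bigr)(x).
\end{equ}

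Finally, bounding the right-hand side by $\tilde V(u) = \sup_{y\in\CX} V(u(y))$ and using the fact that $\CP_t$ preserves constants (and hence $\CP_t c \le c$ whenever it is applied to something pointwise bounded by the constant $c$), we get $V((\CP_t u)(x)) \le \tilde V(u)$ for every $x \in \CX$, and taking the supremum over $x$ yields $\tilde V(\CP_t u) \le \tilde V(u)$. The only substantive point is the use of Jensen in the vector-valued setting, which the affine-minorant representation handles cleanly; everything else is essentially bookkeeping about rewriting $\CP_t$ as an integral against a probability kernel.
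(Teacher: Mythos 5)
Your proof is correct and follows essentially the same route as the paper: rewrite $\CP_t$ as integration against the transition kernel, apply Jensen's inequality for the convex function $V$ to the pushforward measure $u^*\CP_t(x,\cdot\,)$, and then bound the resulting integral by $\tilde V(u)$ before taking the supremum over $x$. The only difference is in how the vector-valued Jensen inequality is justified — you use the representation of $V$ as a supremum of affine minorants, whereas the paper verifies it first for convex combinations of Dirac masses and then passes to general measures by weak approximation and the continuity of $V$; both arguments are standard and valid here.
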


\begin{proof}
Note first that if $V$ is convex, then it is continuous and, for every probability measure $\mu$ on $\R^d$, one has Jensen's
inequality \cite{Jensen}
\begin{equ}[e:convex]
V \Bigl(\int_{\R^d}x\,\mu(dx)\Bigr) \le \int_{\R^d} V(x)\, \mu(dx)\;.
\end{equ}
One can indeed check by induction that \eref{e:convex} holds if $\mu$ is a ``simple'' measure
consisting of a convex combination of finitely many Dirac measures.
The general case then follows from the continuity of $V$ and the fact that every probability measure on 
$\R^d$ can be approximated (in the topology of weak convergence) by a sequence of simple measures.

Denote now by $\CP_t(x,\cdot\,)$ the transition probabilities for $\CP_t$, so that $\CP_t u$ is given by the formula
$\bigl(\CP_t u\bigr)(x) = \int_\CX u(y)\,\CP_t(x,dy)$. One then has
\begin{equs}
\tilde V \bigl(\CP_t u\bigr) &= \sup_{x \in \CX} V\Bigl(\int_\CX u(y)\,\CP_t(x,dy)\Bigr) = \sup_{x \in \CX}  V\Bigl(\int_{\R^d} v\,\bigl(u^*\CP_t(x,\cdot\,)\bigr)(dv)\Bigr) \\
&\le \sup_{x \in \CX}  \int_{\R^d} V (v)\,\bigl(u^*\CP_t(x,\cdot\,)\bigr)(dv) = \sup_{x \in \CX}  \int_{\CX} V (u(y))\,\CP_t(x,dy) \\
&\le \sup_{y \in \CX}V (u(y)) = \tilde V(u)\;,
\end{equs}
as required.
\end{proof}

In particular, this result can be applied to the semigroup $S(t)$ generated by the Laplacian in \eref{e:SGL}, so that
$\tilde V(S(t)u) \le \tilde V(u)$ for every convex $V$ and every $u \in \CC(\T^n, \R^d)$. This is the main ingredient allowing
us to obtain a priori estimates on the solution to \eref{e:SGL}:

\begin{proposition}\label{prop:global}
Consider the setting for equation \eref{e:SGL} described above. Assume that $Q$ is such that $\WD$ has continuous sample
paths in $\CB = \CC(\T^n, \R^d)$ and that there exists a convex twice differentiable function $V \colon \R^d \to \R_+$ such that
$\lim_{|x| \to \infty} V(x) = \infty$ and such that, for every $R>0$, there exists a constant $C$ such that
$\scal{\nabla V(x), f(x+y)} \le C V(x)$ for every $x \in \R^d$ and every $y$ with $|y| \le R$. Then \eref{e:SGL} has a global solution in $\CB$.
\end{proposition}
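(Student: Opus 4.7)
The plan is to split the solution into the stochastic convolution $\WD$ and a complementary part $v$ which satisfies a random PDE with continuous coefficients, then to control $v$ by the Lyapunov function $V$ via a maximum-principle-type argument. In the setting described, $f$ is locally Lipschitz, so Theorem~\ref{theo:existunique} (or Theorem~\ref{theo:bootstrap}, see Remark~\ref{rem:FanalSG}) produces a unique maximal mild solution $(u,\tau)$ in $\CB=\CC(\T^n,\R^d)$ satisfying $\lim_{t\uparrow\tau}\|u(t)\|_\infty=\infty$ on $\{\tau<\infty\}$. First I would set $u=v+\WD$, where
\begin{equ}
v(t) = S(t)u_0 + \int_0^t S(t-s)\,f(v(s)+\WD(s))\,ds
\end{equ}
is the mild solution of $\d_t v = \Delta v + f(v+\WD)$ with $v(0)=u_0$. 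Since $\WD$ has continuous sample paths in $\CB$ almost surely, it is enough to prove that, for every deterministic $T>0$, $\sup_{t<T\wedge\tau}\|v(t)\|_\infty<\infty$ almost surely; by the blow-up criterion this forces $\tau\ge T$ and hence $\tau=\infty$.

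Fix $T>0$ and work on the full-measure event on which $\WD$ is continuous on $[0,T]$. Set $R=\sup_{s\le T}\|\WD(s)\|_\infty$ (finite on this event) and let $C=C_R$ denote the constant from the hypothesis $\scal{\nabla V(x),f(x+y)}\le C_R V(x)$, valid for all $x\in\R^d$ and all $|y|\le R$. Introduce the Lyapunov functional $\Phi(t)=\tilde V(v(t))=\sup_{x\in\T^n}V(v(t,x))$. The heart of the argument is the a priori bound
\begin{equ}[e:PhiGron]
\Phi(t)\le \Phi(0)\,e^{Ct}\qquad\text{for all }t\in[0,T\wedge\tau)\;.
\end{equ}
At a purely formal level this is a parabolic maximum principle: at a spatial maximizer $x^\star$ of $V\circ v(t,\cdot)$, convexity of $V$ together with $\Delta(V\circ v)(t,x^\star)\le 0$ yields
\begin{equ}
\scal{\nabla V(v(t,x^\star)),\Delta v(t,x^\star)} \le \Delta(V\circ v)(t,x^\star) - \tr\bigl(D^2V(v)\,\nabla v\,(\nabla v)^\top\bigr)(t,x^\star) \le 0\;,
\end{equ}
so that $\d_t V(v(t,x^\star))\le C\,V(v(t,x^\star))$; the upper right Dini derivative of $\Phi$ then satisfies $D^+\Phi(t)\le C\Phi(t)$ and Gronwall gives \eref{e:PhiGron}.

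The main obstacle is that $v(t,\cdot)$ is only continuous, so the pointwise Laplacian $\Delta v(t,x^\star)$ is not literally defined and the argument above is only formal. I would make it rigorous by arguing directly from the mild identity
\begin{equ}
v(t+h) = S(h)v(t) + \int_t^{t+h} S(t+h-s)\,f(v(s)+\WD(s))\,ds\;,
\end{equ}
using Lemma~\ref{lem:contrPtV}, which gives $\tilde V(S(h)v(t))\le \tilde V(v(t))=\Phi(t)$ for the heat semigroup on $\T^n$ and encodes precisely the `diffusive contraction' that replaces the maximum principle. Combining this with the $\CC^2$-regularity of $V$ and the continuity of $s\mapsto f(v(s)+\WD(s))$ in $\CB$ on $[0,T\wedge\tau)$, a short Taylor expansion in $h$ should give
\begin{equ}
\Phi(t+h) \le \Phi(t)(1+Ch) + o(h)\qquad (h\downarrow 0)\;,
\end{equ}
uniformly on compact sub-intervals of $[0,T\wedge\tau)$, from which \eref{e:PhiGron} follows by a discrete Gronwall argument. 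Once \eref{e:PhiGron} is established, the coercivity of $V$ converts it into the desired uniform bound $\sup_{t<T\wedge\tau}\|v(t)\|_\infty<\infty$, hence $\sup_{t<T\wedge\tau}\|u(t)\|_\infty<\infty$, contradicting blow-up before $T$ and completing the proof.
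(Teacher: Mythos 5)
Your proposal is correct and follows essentially the same route as the paper: decompose $u = v + \WD$, use Lemma~\ref{lem:contrPtV} as the substitute for the parabolic maximum principle, Taylor-expand $V$ along the mild identity to get $\limsup_{h\to 0}h^{-1}\bigl(\tilde V(v(t+h))-\tilde V(v(t))\bigr)\le C\tilde V(v(t))$ on the event $\sup_{s\le T}\|\WD(s)\|\le R$, and conclude by a Gronwall/comparison argument together with the coercivity of $V$ and the blow-up criterion of Theorem~\ref{theo:existunique}. The only difference is presentational: you include a formal pointwise maximum-principle heuristic before giving the rigorous argument, which the paper omits.
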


\begin{proof}
We denote by $u(t)$ the local mild solution to \eref{e:SGL}. Our aim is to obtain \textit{a priori} bounds on $\tilde V(u(t))$ that are sufficiently
good to show that one cannot have $\lim_{t \to \tau} \|u(t)\| = \infty$ for any finite (stopping) time $\tau$.

Setting
$v(t) = u(t) - \WD(t)$, the definition of a mild solution shows that $v$ satisfies the equation
\begin{equ}
v(t+h) = e^{\Delta h} v(t) + \int_t^{t+h} e^{\Delta(t+h-s)} \bigl(f \circ (v(s) + \WD(s))\bigr)\, ds \eqdef e^{\Delta h} v(t) + \int_0^h e^{\Delta(h-s)} F(t+s)\, ds\;.
\end{equ}
Since $t \mapsto v(t)$ is continuous by Theorem~\ref{theo:existunique} and the same holds for $\WD$ by assumption, the function
$t \mapsto F(t)$ is continuous in $\CB$. Therefore, one has
\begin{equ}
\lim_{h \to 0} {1\over h}\Bigl( \int_0^h e^{\Delta(h-s)} F(t+s)\, ds - h e^{\Delta h}F(t)\Bigr) = 0\;.
\end{equ}
We therefore obtain for $\tilde V(v)$ the bound
\begin{equs}
\limsup_{h \to 0} h^{-1} \bigl(\tilde V(v(t+h)) - \tilde V(v(t))\bigr) &= \limsup_{h \to 0} h^{-1} \bigl(\tilde V\big(e^{\Delta h}(v(t) + h F(t))\big) - \tilde V(v(t))\bigr)\\
&\le \limsup_{h \to 0} h^{-1} \bigl(\tilde V\big(v(t) + h F(t)\big) - \tilde V(v(t))\bigr)\;,
\end{equs}
where we used Lemma~\ref{lem:contrPtV} to obtain the inequality.
Since $V$ belongs to $\CC^2$ by assumption, we have
\begin{equs}
\tilde V(v(t) + hF(t)) &= \sup_{x \in \T^n} \bigl(V(v(x,t)) + h \scal{\nabla V(v(x,t)), F(x,t)}\bigr) + \CO(h^2)\\
&\le \tilde V(v(t)) + \sup_{x \in \T^n} h \scal{\nabla V(v(x,t)), F(x,t)} + \CO(h^2)\;.
\end{equs}
Using the definition of $F$ and the assumptions on $V$, it follows that for every $R>0$ there exists a constant $C$ such that, 
provided that $\|\WD(t)\| \le R$, one has
\begin{equ}
\limsup_{h \to 0} h^{-1} \bigl(\tilde V(v(t+h)) - \tilde V(v(t))\bigr) \le C \tilde V(v(t))\;.
\end{equ}
By a standard comparison argument for ODEs, this implies that $\tilde V(v(t)) \le e^{Ct}\tilde V(v(0))$,
thus showing that $v$ (and a fortiori $u$) does not blow up since $\|\WD(t)\|$ is bounded on
bounded time intervals, thus concluding the proof. 
\end{proof}

\begin{exercise}
In the case $d=1$, show that the assumptions of Proposition~\ref{prop:global} are satisfied for $V(u) = u^2$ if $f$ is any polynomial
of odd degree with negative leading coefficient.
Show that in this case solutions are not only global, but such that $\sup_{t > 0}\E \sup_x |u(t,x)|^2$
is finite. \textbf{Hint:} Define $v = u-W_{\Delta - a}$ for suitable $a>0$ and derive a bound
on $\E \tilde V(v(t))$ by following the argument of Proposition~\ref{prop:global}.
\end{exercise}

\begin{exercise}
Show that in the case $d = 3$, \eref{e:SGL} has a unique global solution when we take for $f$ the right-hand side of the Lorentz
attractor:
\begin{equs}
f(u) = \begin{pmatrix} \sigma(u_2 - u_1) \cr u_1 (\rho - u_3) - u_2 \cr u_1 u_2 - \beta u_3 \end{pmatrix}\;,
\end{equs}
where $\rho$, $\sigma$ and $\beta$ are three arbitrary positive constants.
\end{exercise}

\subsection{Interpolation inequalities and Sobolev embeddings}

\index{Sobolev!embedding}The kind of bootstrapping arguments used in the proof of Theorem~\ref{theo:bootstrap} above
 are extremely useful to obtain regularity properties of the solutions
to semilinear parabolic stochastic PDEs. However, they rely on obtaining bounds on
the regularity of $F$ from one interpolation space into another. In many important situations, the interpolation spaces
turn out to be given by \index{Sobolev!space (fractional)}fractional Sobolev spaces. For the purpose of these notes, we are going to restrict ourselves
to the analytically easier situation where the space variable of the stochastic PDE under consideration takes
values in the $d$-dimensional torus $\T^d$. In other words, we restrict ourselves to situations where the operator describing the linearised
evolution is endowed with periodic boundary conditions.

This will make the proof of the embedding theorems presented in these notes technically more straightforward.
For the corresponding embeddings with more general boundary conditions or even on more general manifolds
or unbounded domains, we refer for example the comprehensive series of monographs \cite{Tri83,Tri92,Tri06}.
%\TODO[Make sure that it contains the stuff]

Recall that, given a distribution
$u \in L^2(\T^d)$, we can decompose it as a Fourier series:
\begin{equ}
u(x) = \sum_{k \in \Z^d} u_k e^{i\scal{k,x}}\;,
\end{equ}
where the identity holds for (Lebesgue) almost every $x \in \T^d$. Furthermore, the $L^2$ norm of $u$ is given by Parseval's 
identity $\|u\|^2 = \sum |u_k|^2$. We have

\begin{definition}
The fractional Sobolev space $H^s(\T^d)$ for $s \ge 0$  is given by the subspace of functions $u \in L^2(\T^d)$
such that
\begin{equ}[e:defHs]
\|u\|_{H^s}^2 \eqdef \sum_{k \in \Z^d} (1+|k|^2)^s |u_k|^2 < \infty\;.
\end{equ}
Note that this is a separable Hilbert space and that $H^0 = L^2$. For $s < 0$, we define $H^s$ as the closure of $L^2$
under the norm \eref{e:defHs}.
\end{definition}

\begin{remark}
By the spectral decomposition theorem, $H^s$ for $s>0$ is the domain of $(1-\Delta)^{s/2}$
and we have $\|u\|_{H^s} = \|(1-\Delta)^{s/2}u\|_{L^2}$.
\end{remark}

A very important situation is
the case where $L$ is a differential operator with constant coefficients (formally $L = P(\d_x)$ for some polynomial
$P\colon \R^d \to \R$)
and $\CH$ is either an $L^2$ space or some Sobolev
space. In this case, one has

\begin{lemma}
Assume that $P\colon \R^d \to \R$ is a polynomial of degree $2m$ such that there exist positive constants $c,C$ such that the bound
\begin{equ}
(-1)^{m+1} c |k|^{2m} \le P(k) \le (-1)^{m+1} C |k|^{2m}\;,
\end{equ}
holds for all $k$ outside of some compact set. Then, the operator $P(\d_x)$ generates an analytic semigroup on $\CH = H^s$ for every $s \in \R$
and the corresponding interpolation spaces are given by $\CH_\alpha = H^{s+2m\alpha}$.
\end{lemma}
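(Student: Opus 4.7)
The strategy is to realize $P(\d_x)$ as a Fourier multiplier on $H^s$ and reduce everything to pointwise estimates on its symbol. In the basis of exponentials $e_k(x) = e^{i\scal{k,x}}$, the operator acts diagonally as $P(\d_x) e_k = P(ik)\,e_k$, so the natural domain is $H^{s+2m}$ (on which the symbol $P(ik) = O((1+|k|)^{2m})$ gives a bounded map into $H^s$). Every object of interest -- the resolvent, the semigroup, the fractional powers $(b-L)^\alpha$ -- will then also be a Fourier multiplier, and its $H^s$-operator norm will simply equal $\sup_{k \in \Z^d}$ of the absolute value of the corresponding symbol.

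The first step I would carry out is to locate the symbol values $\{P(ik)\}_{k \in \Z^d}$ in a sector of the complex plane. Write $P = P_{2m} + Q$, with $P_{2m}$ the homogeneous principal part and $\deg Q \le 2m-1$. Since $P_{2m}$ is homogeneous of degree $2m$, one has $P_{2m}(ik) = (-1)^m P_{2m}(k)$, and because the hypothesis $(-1)^{m+1} c|k|^{2m}\le P(k)\le (-1)^{m+1}C|k|^{2m}$ (holding outside a compact set) transfers to $P_{2m}$ by a homogeneity argument, one obtains $-C|k|^{2m} \le P_{2m}(ik) \le -c|k|^{2m}$ in that regime. The remainder $Q(ik)$ is only $O(|k|^{2m-1})$, so for $|k|$ large $\Re P(ik) \le -\tfrac{c}{2}|k|^{2m}$ and $|\Im P(ik)| \le C'|k|^{2m-1}$. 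In particular $|\Im P(ik)|/|\Re P(ik)| \to 0$, so one can choose $\theta \in (0,\pi/2)$ and $a \ge 0$ large enough that $\{P(ik)\}$ is contained in the sector $S_{\theta,a}$ of Theorem~\ref{theo:charanal}, together with a bounded perturbation absorbed by taking $a$ larger.

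The second step is the resolvent bound. For $\lambda \notin S_{\theta,a}$, the operator $R_\lambda = (\lambda - P(\d_x))^{-1}$ is the Fourier multiplier with symbol $(\lambda - P(ik))^{-1}$; since the multiplier acts diagonally in an orthonormal eigenbasis, its $H^s$-operator norm is
\begin{equ}
\|R_\lambda\|_{H^s\to H^s} = \sup_{k \in \Z^d} \frac{1}{|\lambda - P(ik)|} \le \frac{1}{d(\lambda, S_{\theta,a})}\;,
\end{equ}
which is exactly the hypothesis of Theorem~\ref{theo:charanal}. That $P(\d_x)$ is closed and densely defined on $H^{s+2m}$ is immediate from the characterization via Fourier series. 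Hence $P(\d_x)$ generates an analytic semigroup on $H^s$.

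The final step is to identify the interpolation spaces. Shift by $b > 0$ large enough that $b - P(ik)$ stays in a fixed sector around the positive real axis bounded away from $0$ (this is possible by the previous analysis). Then $(b - P(\d_x))^\alpha$ can be defined by analytic functional calculus on the sector as the Fourier multiplier with symbol $(b - P(ik))^\alpha$; by uniqueness of the analytic extension this coincides with the fractional powers built from the shifted semigroup in Section on interpolation. From the symbol bounds one has
\begin{equ}
c_1 (1+|k|^2)^{m\alpha} \le |b - P(ik)|^\alpha \le c_2 (1+|k|^2)^{m\alpha}\;,
\end{equ}
uniformly in $k$, so the graph norm $\|(b - P(\d_x))^\alpha u\|_{H^s}$ is equivalent to $\|u\|_{H^{s+2m\alpha}}$, giving $\CH_\alpha = H^{s+2m\alpha}$ (with equivalent norms); the case of negative $\alpha$ follows by duality/density from the same symbol estimate. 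The main technical obstacle is the first step: extracting the sector condition on $\{P(ik)\}$ from a hypothesis phrased on the real points $P(k)$, which really rests on the observation that $(-1)^m P_{2m}(k) = P_{2m}(ik)$ for the homogeneous principal part, with the lower-order terms then being negligible perturbations.
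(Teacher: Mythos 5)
Your proof is correct and follows exactly the route the paper intends: the paper's own proof is the one-line remark that $P(\d_x)$ is conjugate via Fourier decomposition to multiplication by $P(ik)$, and everything then follows ``by inspection''. You have simply written out the inspection in full --- locating $\{P(ik)\}$ in a sector via the principal part $P_{2m}(ik)=(-1)^mP_{2m}(k)$, reading off the resolvent bound as a supremum of the symbol, and identifying $\CH_\alpha$ from the equivalence $|b-P(ik)|^\alpha\asymp(1+|k|^2)^{m\alpha}$ --- all of which is sound.
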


\begin{proof}
By inspection, noting that $P(\d_x)$ is conjugate to the multiplication operator by $P(ik)$ via the Fourier decomposition.
\end{proof}

Note first that for any two positive real numbers $a$ and $b$ and any pair of positive conjugate exponents $p$ and $q$, 
one has Young's inequality\index{Young inequality}
\begin{equ}[e:Young]
ab \le {a^p \over p} + {b^q \over q}\;,\qquad {1\over p} + {1 \over q} = 1\;.
\end{equ}
As a corollary of this elementary bound, we obtain H\"older's inequality\index{H\"older inequality}, which can be viewed as a generalisation
of the Cauchy-Schwartz inequality:

\index{H\"older inequality}
\begin{proposition}[H\"older's inequality]
Let $(\CM, \mu)$ be a measure space and let $p$ and $q$ be a pair of positive conjugate exponents. Then, for any pair
of measurable functions $u,v \colon \CM \to \R$, one has
\begin{equ}
\int_{\CM} |u(x) v(x)|\,\mu(dx) \le \|u\|_p \, \|v\|_q\;,
\end{equ}
for any pair $(p,q)$ of conjugate exponents.
\end{proposition}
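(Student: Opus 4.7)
The plan is to reduce to the normalized case and then apply Young's inequality pointwise, so that the integrated version of \eref{e:Young} gives exactly the desired bound.

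First I would dispose of the degenerate situations. If $\|u\|_p = 0$, then $u = 0$ $\mu$-almost everywhere, hence $uv = 0$ $\mu$-almost everywhere and both sides of the claimed inequality vanish; the symmetric case $\|v\|_q = 0$ is identical. If either $\|u\|_p = \infty$ or $\|v\|_q = \infty$, the right hand side is $+\infty$ and there is nothing to prove (with the usual convention $0\cdot\infty = 0$ if necessary). So I may assume that $\|u\|_p, \|v\|_q \in (0,\infty)$.

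Next I would normalise: set $\tilde u = u/\|u\|_p$ and $\tilde v = v/\|v\|_q$, so that $\|\tilde u\|_p = \|\tilde v\|_q = 1$. Since the inequality to be proved is homogeneous of degree one in each of $u$ and $v$, it suffices to show that
\begin{equ}
\int_\CM |\tilde u(x)\tilde v(x)|\,\mu(dx) \le 1\;.
\end{equ}
This is where Young's inequality enters: applying \eref{e:Young} with $a = |\tilde u(x)|$ and $b = |\tilde v(x)|$ pointwise yields
\begin{equ}
|\tilde u(x)\tilde v(x)| \le {|\tilde u(x)|^p \over p} + {|\tilde v(x)|^q \over q}\;,
\end{equ}
for every $x \in \CM$. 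This is the main ingredient; the only conceptual obstacle is checking measurability of the compositions, which is immediate since $t \mapsto |t|^p$ is continuous.

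Finally I would integrate this pointwise bound against $\mu$. By monotonicity and linearity of the integral (noting that all integrands are nonnegative), and using that $\|\tilde u\|_p^p = \|\tilde v\|_q^q = 1$, one obtains
\begin{equ}
\int_\CM |\tilde u(x)\tilde v(x)|\,\mu(dx) \le {1\over p} \int_\CM |\tilde u(x)|^p\,\mu(dx) + {1\over q} \int_\CM |\tilde v(x)|^q\,\mu(dx) = {1\over p} + {1\over q} = 1\;,
\end{equ}
using the defining relation between the conjugate exponents. Multiplying through by $\|u\|_p \|v\|_q$ then recovers the stated inequality. There is no real obstacle here beyond the initial case analysis; the whole argument is essentially the observation that Young's inequality is the pointwise shadow of Hölder's inequality.
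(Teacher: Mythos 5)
Your proof is correct and uses the same key ingredient as the paper, namely the pointwise application of Young's inequality \eref{e:Young} followed by integration. The only cosmetic difference is that you normalise $u$ and $v$ to unit norm before applying Young's inequality, whereas the paper applies it to $\eps |u(x)|$ and $|v(x)|/\eps$ and then optimises over $\eps$ at the end; the two devices are interchangeable, and your handling of the degenerate cases is a welcome addition that the paper leaves implicit.
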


\begin{proof}
It follows from \eref{e:Young} that, for every $\eps>0$, one has the bound
\begin{equ}
\int_{\CM} |u(x) v(x)|\,\mu(dx) \le {\eps^p \|u\|_p^p \over p} +  {\|v\|_q^q \over q \eps^q}\;,
\end{equ}
Setting $\eps = \|v\|_q^{1\over p} \|u\|_p^{{1\over p}-1}$ concludes the proof.
\end{proof}

One interesting consequence of H\"older's inequality is the following interpolation inequality for powers of selfadjoint operators:
\begin{proposition}\label{prop:Holderselfadj}
Let $A$ be a positive definite selfadjoint operator on a separable Hilbert space $\CH$ and let $\alpha \in [0,1]$. Then, the bound
$\|A^\alpha u\| \le \|Au\|^\alpha \|u\|^{1-\alpha}$ holds for every $u \in \CD(A^\alpha) \subset \CH$.
\end{proposition}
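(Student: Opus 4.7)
The plan is to deduce this from Hölder's inequality combined with the spectral decomposition theorem (Theorem~\ref{theo:SD}), which reduces any positive selfadjoint operator to a multiplication operator on some $L^2$ space. This converts the operator-theoretic inequality into a concrete pointwise estimate, which is where Hölder will do all the work.

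First, I would reduce to the case $u \in \CD(A)$, since otherwise $\|Au\| = +\infty$ and the inequality is vacuous. Then, invoking Theorem~\ref{theo:SD}, I would fix a measure space $(\CM, \mu)$, an isometric isomorphism $K \colon \CH \to L^2(\CM,\mu)$, and a measurable function $f_A \colon \CM \to \R$ such that $A$ is conjugate via $K$ to multiplication by $f_A$. Positive definiteness of $A$ ensures that $f_A \ge 0$ almost everywhere, so by functional calculus $A^\alpha$ is conjugate to multiplication by $f_A^\alpha$. Writing $v = Ku \in L^2(\CM,\mu)$ and using that $K$ is an isometry, the claim reduces to showing
\begin{equ}
\int_\CM f_A^{2\alpha}(x) |v(x)|^2 \,\mu(dx) \le \Bigl(\int_\CM f_A^2(x) |v(x)|^2 \,\mu(dx)\Bigr)^\alpha \Bigl(\int_\CM |v(x)|^2 \,\mu(dx)\Bigr)^{1-\alpha}\;.
\end{equ}

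The cases $\alpha = 0$ and $\alpha = 1$ are trivial, so assume $\alpha \in (0,1)$ and apply Hölder's inequality with conjugate exponents $p = 1/\alpha$ and $q = 1/(1-\alpha)$ to the factorisation
\begin{equ}
f_A^{2\alpha} |v|^2 = \bigl(f_A^2 |v|^2\bigr)^\alpha \cdot \bigl(|v|^2\bigr)^{1-\alpha}\;,
\end{equ}
which yields exactly the required bound. Taking square roots gives $\|A^\alpha u\| \le \|Au\|^\alpha \|u\|^{1-\alpha}$.

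There is essentially no obstacle here: the only mildly subtle point is ensuring that the integrals are finite so that Hölder's inequality applies in its standard form. This is guaranteed by $u \in \CD(A)$ (which gives $f_A v \in L^2$) together with $v \in L^2$, and these two conditions automatically imply $f_A^\alpha v \in L^2$ via the very inequality we are proving --- so the argument is internally consistent without any circularity, since Hölder is being applied to nonnegative integrands and produces a finite bound directly from the two finite quantities on the right-hand side.
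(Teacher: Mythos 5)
Your proof is correct and follows essentially the same route as the paper: reduce to a multiplication operator via the spectral theorem, then apply H\"older's inequality with exponents $p = 1/\alpha$, $q = 1/(1-\alpha)$ to the factorisation $f^{2\alpha}|v|^2 = (f^2|v|^2)^\alpha(|v|^2)^{1-\alpha}$. The remarks on the finiteness of the integrals are a harmless addition that the paper leaves implicit.
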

\begin{proof}
The extreme cases $\alpha \in \{0,1\}$ are obvious, so we assume $\alpha \in (0,1)$. By the spectral theorem, we
can assume that $\CH = L^2(\CM,\mu)$ and that $A$ is  the multiplication operator by some positive function $f$.
Applying H\"older's inequality with $p = 1/\alpha$ and $q = {1/(1-\alpha)}$, one then has
\begin{equs}
\|A^\alpha u\|^2 &= \int f^{2\alpha}(x) u^2(x)\,\mu(dx) = \int |fu|^{2\alpha}(x) \, |u|^{2-2\alpha}(x)\,\mu(dx) \\
&\le \Bigl(\int f^2(x) u^2(x) \,\mu(dx)\Bigr)^{\alpha}\,\Bigl(\int u^2(x) \,\mu(dx)\Bigr)^{1-\alpha}\;,
\end{equs}
which is exactly the bound we wanted to show.
\end{proof}

An immediate corollary is:
\begin{corollary}
For any $t>s$ and any $r \in [s,t]$,  the bound
\begin{equ}[e:interp]
\|u\|_{H^r}^{t-s} \le \|u\|_{H^t}^{r-s} \|u\|_{H^s}^{t-r}
\end{equ}
is valid for every $u \in H^t$.
\end{corollary}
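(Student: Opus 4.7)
The cleanest route is to invoke Proposition~\ref{prop:Holderselfadj} directly, since the preceding corollary is exactly the sort of statement the proposition was set up to imply. My plan is to reduce the inequality to an application of the proposition with a single well-chosen self-adjoint operator, and to identify the correct exponent $\alpha \in [0,1]$.

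First I would observe that, by the spectral characterisation mentioned just before the statement, the operator $A = (1-\Delta)^{(t-s)/2}$ is positive definite and self-adjoint on $L^2(\T^d)$ (it is conjugate to multiplication by $(1+|k|^2)^{(t-s)/2}$ on $\ell^2(\Z^d)$ via the Fourier transform). For $\alpha \in [0,1]$, functional calculus then gives $A^\alpha = (1-\Delta)^{\alpha (t-s)/2}$, so the norms $\|A^\alpha v\|$, $\|Av\|$, $\|v\|$ become fractional Sobolev norms of suitable orders.

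The key step is to choose the right substitution. Given $u \in H^t$ with $s \le r \le t$ and $s<t$, I would set
\begin{equ}
v = (1-\Delta)^{s/2} u \in L^2\;,\qquad \alpha = {r-s \over t-s} \in [0,1]\;,
\end{equ}
so that $\|v\| = \|u\|_{H^s}$, $\|Av\| = \|(1-\Delta)^{t/2} u\| = \|u\|_{H^t}$, and $\|A^\alpha v\| = \|(1-\Delta)^{r/2}u\| = \|u\|_{H^r}$. Proposition~\ref{prop:Holderselfadj} then yields
\begin{equ}
\|u\|_{H^r} \le \|u\|_{H^t}^{(r-s)/(t-s)} \|u\|_{H^s}^{(t-r)/(t-s)}\;,
\end{equ}
and raising both sides to the power $t-s$ gives exactly \eref{e:interp}. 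The degenerate cases $r = s$ or $r = t$ are trivial, and the case $s<0$ (where $H^s$ is defined by completion) follows by density since the inequality already holds on the dense subset $L^2 \subset H^s$.

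There is no real obstacle here; the only thing to check carefully is the bookkeeping of exponents and the consistency of the identification $\CH_\alpha = H^{s + 2m\alpha}$ applied to the operator $1-\Delta$ (so $m=1$). As an alternative, one could bypass Proposition~\ref{prop:Holderselfadj} entirely and prove \eref{e:interp} from scratch by applying H\"older's inequality with conjugate exponents $p = (t-s)/(r-s)$ and $q = (t-s)/(t-r)$ directly to the Fourier series expression $\|u\|_{H^r}^2 = \sum_k \bigl[(1+|k|^2)^t |u_k|^2\bigr]^{1/p} \bigl[(1+|k|^2)^s |u_k|^2\bigr]^{1/q}$, which uses the algebraic identity $r(t-s) = t(r-s)+s(t-r)$; this is essentially the proof of Proposition~\ref{prop:Holderselfadj} unpacked.
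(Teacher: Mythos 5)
Your proof is correct and is essentially the paper's own argument: the paper applies Proposition~\ref{prop:Holderselfadj} with $\CH = H^s$, $A = (1-\Delta)^{(t-s)/2}$ and $\alpha = (r-s)/(t-s)$, which is exactly your substitution $v = (1-\Delta)^{s/2}u$ viewed directly in the Hilbert space $H^s$ instead of being pulled back to $L^2$. The exponent bookkeeping and the closing remark about the Fourier-series Hölder argument are both accurate.
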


\begin{proof}
Apply Proposition~\ref{prop:Holderselfadj} with $\CH = H^s$, $A = (1-\Delta)^{t-s\over 2}$, and $\alpha = (r-s)/(t-s)$.
\end{proof}

\begin{exercise}\label{ex:multipleHolder}
As a consequence of H\"older's inequality, show that for any collection of $n$ measurable functions
and any exponents $p_i > 1$ such that $\sum_{i=1}^n p_i^{-1} = 1$, one has the bound
\begin{equ}
\int_{\CM} |u_1(x)\cdots u_n(x)|\,\mu(dx) \le \|u_1\|_{p_1}\cdots \|u_n\|_{p_n}\;.
\end{equ}
\end{exercise}

Following our earlier discussion regarding \index{Sobolev!space (fractional)}fractional Sobolev spaces, it would be convenient to
be able to bound the $L^p$ norm of a function in terms of one of the fractional Sobolev norms.
It turns out that bounding the $L^\infty$ norm is rather straightforward:

\begin{lemma}\label{lem:Linfty}
For every $s > {d \over 2}$, the space $H^s(\T^d)$ is contained in the space of continuous functions
and there exists a constant $C$ such that $\|u\|_{L^\infty} \le C \|u\|_{H^s}$.
\end{lemma}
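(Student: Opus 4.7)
The plan is to exploit the Fourier characterisation of $H^s(\T^d)$ together with a Cauchy--Schwarz argument on the Fourier coefficients. The key observation is that if one can show absolute summability of the Fourier coefficients of $u$, then the partial sums of its Fourier series form a uniformly Cauchy sequence of continuous (in fact trigonometric polynomial) functions, so that their limit is automatically continuous and the $L^\infty$ bound follows immediately.

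First I would write, for $u \in H^s(\T^d)$ with Fourier coefficients $\{u_k\}_{k\in\Z^d}$, the trivial pointwise bound
\begin{equ}
|u(x)| \le \sum_{k\in\Z^d} |u_k|\;,
\end{equ}
and then insert the weight $(1+|k|^2)^{s/2}$ and apply Cauchy--Schwarz:
\begin{equ}
\sum_{k\in\Z^d} |u_k| = \sum_{k\in\Z^d} (1+|k|^2)^{s/2}|u_k|\,(1+|k|^2)^{-s/2}
\le \|u\|_{H^s}\Bigl(\sum_{k\in\Z^d}(1+|k|^2)^{-s}\Bigr)^{1/2}\;.
\end{equ}
The remaining step is to verify that the deterministic series $\sum_{k\in\Z^d}(1+|k|^2)^{-s}$ converges precisely when $2s > d$, which one does by comparison with the integral $\int_{\R^d}(1+|\xi|^2)^{-s}\,d\xi$ (finite iff $2s > d$ by a polar-coordinates computation). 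This gives the desired constant $C = \bigl(\sum_{k}(1+|k|^2)^{-s}\bigr)^{1/2}$.

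For continuity, I would define the partial sums $u_N(x) = \sum_{|k|\le N} u_k e^{i\scal{k,x}}$, which are trigonometric polynomials and hence continuous. Applying the same Cauchy--Schwarz argument to the tail $u - u_N$ yields
\begin{equ}
\|u - u_N\|_{L^\infty} \le \|u\|_{H^s}\Bigl(\sum_{|k|>N}(1+|k|^2)^{-s}\Bigr)^{1/2}\;,
\end{equ}
which tends to $0$ as $N\to\infty$ since the full series converges. Thus $u_N \to u$ uniformly, so $u$ (or rather its representative defined by the uniformly convergent Fourier series) is continuous. There is no real obstacle here; the only subtle point is the standard one that $u \in L^2$ is a priori only an equivalence class, and what the statement really asserts is that this class contains a (necessarily unique) continuous representative, namely the uniform limit of the $u_N$.
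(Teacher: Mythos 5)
Your argument is correct and is essentially the same as the paper's proof, which also bounds $\sum_k |u_k|$ by Cauchy--Schwarz against the weight $(1+|k|^2)^{-s}$ and notes that the weight series converges precisely for $s > d/2$. Your additional remarks on the uniform convergence of the partial sums and the choice of continuous representative merely make explicit what the paper leaves implicit.
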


\begin{proof}
It follows from Cauchy-Schwarz that
\begin{equ}
\sum_{k\in \Z^d} |u_k| \le \Bigl(\sum_{k\in \Z^d} (1+|k|^2)^s |u_k|^2\Bigr)^{1/2} \Bigl(\sum_{k\in \Z^d} (1+|k|^2)^{-s}\Bigr)^{1/2}\;.
\end{equ}
Since the sum in the second factor converges if and only if $s > {d\over 2}$, the claim follows.
\end{proof}

\begin{exercise}
In dimension $d = 2$, find an example of an unbounded function $u$ such that $\|u\|_{H^1} < \infty$.
\end{exercise}

\begin{exercise}
Show that for $s > {d \over 2}$, $H^s$ is contained in the space $\CC^\alpha(\T^d)$ for every $\alpha < s - {d\over 2}$.
\end{exercise}

As a consequence of Lemma~\ref{lem:Linfty}, we are able to obtain a more general \index{Sobolev!embedding}Sobolev embedding  for all $L^p$ spaces:

\begin{theorem}[Sobolev embeddings]
Let $p \in [2,\infty]$.
Then, for every $s > {d \over 2} - {d\over p}$, the space $H^s(\T^d)$ is contained in the space $L^p(\T^d)$
and there exists a constant $C$ such that $\|u\|_{L^p} \le C \|u\|_{H^s}$.
\end{theorem}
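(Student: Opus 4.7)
The endpoint cases $p=2$ and $p=\infty$ are already in hand: $p=2$ is tautological from $\|u\|_{L^2}^2 = \sum_k |u_k|^2 \le \|u\|_{H^s}^2$ for any $s\ge 0$, and $p=\infty$ is Lemma~\ref{lem:Linfty}. The plan is to handle intermediate $p\in(2,\infty)$ by a Littlewood--Paley style dyadic decomposition, combining Parseval with the simple $L^2$--$L^\infty$ interpolation already derived from H\"older's inequality.

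Concretely, I would write $u = \sum_{j\ge 0} u_j$, where $u_0 = \sum_{|k|\le 1} u_k e^{i\scal{k,x}}$ and, for $j\ge 1$, $u_j = \sum_{2^{j-1}<|k|\le 2^j} u_k e^{i\scal{k,x}}$. The two ingredients are a Bernstein-type estimate and a Sobolev-weight estimate for each dyadic block. For Bernstein, I first note that, exactly as in the proof of Lemma~\ref{lem:Linfty} but restricted to the dyadic shell, Cauchy--Schwarz gives $\|u_j\|_{L^\infty} \le \sum_{|k|\sim 2^j} |u_k| \le (\#\{k:|k|\sim 2^j\})^{1/2}\|u_j\|_{L^2} \le C 2^{jd/2}\|u_j\|_{L^2}$, since the number of lattice points in the shell is $\CO(2^{jd})$. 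Combining this with the elementary log-convexity bound $\|u_j\|_{L^p}\le \|u_j\|_{L^2}^{2/p}\|u_j\|_{L^\infty}^{1-2/p}$ (which follows from writing $|u_j|^p=|u_j|^2|u_j|^{p-2}$) yields the Bernstein inequality
\begin{equ}
\|u_j\|_{L^p} \le C\, 2^{jd(1/2-1/p)}\|u_j\|_{L^2}\;.
\end{equ}
For the weight estimate, the definition of the $H^s$ norm gives, trivially, $\|u_j\|_{L^2}\le C\, 2^{-js}\|u_j\|_{H^s}$, since every Fourier mode in $u_j$ satisfies $(1+|k|^2)^s\ge c\,2^{2js}$.

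Putting these together and then applying Cauchy--Schwarz in the summation over $j$,
\begin{equs}
\|u\|_{L^p} &\le \sum_{j\ge 0}\|u_j\|_{L^p} \le C\sum_{j\ge 0} 2^{j(d/2-d/p-s)}\|u_j\|_{H^s} \\
&\le C\Bigl(\sum_{j\ge 0} 2^{2j(d/2-d/p-s)}\Bigr)^{1/2}\Bigl(\sum_{j\ge 0}\|u_j\|_{H^s}^2\Bigr)^{1/2}\;.
\end{equs}
The first factor is finite precisely when $d/2 - d/p - s < 0$, i.e.\ under the stated hypothesis $s > d/2 - d/p$, while the second factor equals $\|u\|_{H^s}$ because the dyadic blocks are mutually orthogonal in $H^s$ (their Fourier supports are disjoint). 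This gives the claimed bound $\|u\|_{L^p}\le C\|u\|_{H^s}$.

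The only slightly delicate point is the lattice-point count $\#\{k\in\Z^d : 2^{j-1}<|k|\le 2^j\}\le C 2^{jd}$, which is routine (compare to the volume of the shell), but it is the place where the dimension $d$ enters quantitatively and drives the Bernstein exponent. The rest of the argument is bookkeeping: the two uses of Cauchy--Schwarz (once on a dyadic shell to get $L^\infty$, once on the sum over $j$) are what convert the threshold $s>d/2$ from the $p=\infty$ case into the sharp threshold $s>d/2-d/p$ for general $p\in[2,\infty)$.
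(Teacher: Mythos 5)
Your proof is correct and follows essentially the same route as the paper: a dyadic Fourier decomposition, the block-wise interpolation $\|u_j\|_{L^p}\le\|u_j\|_{L^2}^{2/p}\|u_j\|_{L^\infty}^{1-2/p}$, and a weight estimate $\|u_j\|_{L^2}\le C2^{-js}\|u_j\|_{H^s}$, followed by summation over scales. The only (harmless) differences are that you obtain the $L^\infty$ bound on each block by counting lattice points in the shell rather than by invoking Lemma~\ref{lem:Linfty} at regularity $d/2+\eps$, and that you close the sum via Cauchy--Schwarz together with orthogonality of the blocks in $H^s$ rather than by bounding each $\|u_j\|_{H^s}$ by $\|u\|_{H^s}$ and summing a geometric series.
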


\begin{proof}
The case $p=2$ is obvious and the case $p=\infty$ has already been shown, so it remains to show the claim for $p \in (2,\infty)$.
The idea is to divide Fourier space into ``blocks'' corresponding to different length scales and to estimate separately the
$L^p$ norm of every block. More precisely, we define a sequence of functions $u^{(n)}$ by
\begin{equ}
u^{-1}(x) = u_0\;,\quad u^{(n)}(x) = \sum_{2^n \le |k| < 2^{n+1}} u_k e^{i\scal{k,x}}\;,
\end{equ}
so that one has $u = \sum_{n \ge -1} u^{(n)}$. For $n \ge 0$, one has
\begin{equs}[e:boundLp]
\|u^{(n)}\|_{L^p}^p \le \|u^{(n)}\|_{L^2}^2 \|u^{(n)}\|_{L^\infty}^{p-2}\;.
\end{equs}
Choose now $s' = {d\over 2} + \eps$ and note that the construction of $u^{(n)}$, together with Lemma~\ref{lem:Linfty}, guarantees that one has the bounds
\begin{equ}
\|u^{(n)}\|_{L^2} \le 2^{-ns} \|u^{(n)}\|_{H^s}\;,\quad \|u^{(n)}\|_{L^\infty} \le C\|u^{(n)}\|_{H^{s'}} \le C 2^{n(s'-s)} \|u^{(n)}\|_{H^s}\;.
\end{equ}
Inserting this into \eref{e:boundLp}, we obtain for $c = 1-2/p$,
\begin{equ}
\|u^{(n)}\|_{L^p} \le C\|u^{(n)}\|_{H^s} 2^{n \bigl((s'-s){p-2\over p} - {2s\over p}\bigr)} = C\|u^{(n)}\|_{H^s} 2^{n \bigl({d\over 2} - {d\over p} + c\eps -s\bigr)}
\le C\|u\|_{H^s} 2^{n \bigl({d\over 2} - {d\over p}+c\eps-s\bigr)}\;.
\end{equ}
It follows that $\|u\|_{L^p} \le |u_0| + \sum_{n \ge 0}\|u^{(n)}\|_{L^p} \le C\|u\|_{H^s}$, provided that the exponent appearing in this
expression is negative, which is precisely the case whenever $s > {d \over 2} - {d\over p}$, provided that $\eps>0$ is
chosen sufficiently small.
\end{proof}

\begin{remark}
For $p \neq \infty$, one actually has $H^s(\T^d) \subset L^p(\T^d)$ with $s = {d \over 2} - {d\over p}$, but this borderline
case is more difficult to obtain.
\end{remark}

Combining the Sobolev embedding theorem and H\"older's inequality, it is eventually possible to estimate in a similar way 
the fractional Sobolev norm of a product of two functions:

\begin{theorem}\label{theo:multSob}
Let $r$, $s$ and $t$ be positive exponents such that $s \wedge r > t$ and $s+r > t + {d\over 2}$. Then, if $u \in H^r$ and $v \in H^s$, the product $w = uv$ belongs
to $H^t$.
\end{theorem}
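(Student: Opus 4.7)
The plan is to work in Fourier coordinates and combine a Peetre-type inequality with H\"older's inequality and the Sobolev embeddings just proved. Writing $(uv)_k = \sum_\ell u_\ell v_{k-\ell}$, the goal is to bound $\sum_k (1+|k|^2)^t|(uv)_k|^2$. Since $|k|\le|\ell|+|k-\ell|$ and $t\ge 0$, one has a constant $C_t$ such that
\[
(1+|k|^2)^{t/2}\le C_t\bigl[(1+|\ell|^2)^{t/2}+(1+|k-\ell|^2)^{t/2}\bigr],
\]
and inserting this inside the convolution defining $(uv)_k$ gives
\[
(1+|k|^2)^{t/2}|(uv)_k|\le C_t\sum_\ell (1+|\ell|^2)^{t/2}|u_\ell|\,|v_{k-\ell}| + C_t\sum_\ell |u_\ell|\,(1+|k-\ell|^2)^{t/2}|v_{k-\ell}|.
\]

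Next, for $\sigma\ge 0$ I would introduce the non-negative auxiliary functions $U_\sigma$ and $V_\sigma$ whose Fourier coefficients are $(1+|\ell|^2)^{\sigma/2}|u_\ell|$ and $(1+|\ell|^2)^{\sigma/2}|v_\ell|$ respectively. By construction one has $\|U_\sigma\|_{H^{a-\sigma}}=\|u\|_{H^a}$ for every $a\ge\sigma$, and similarly for $V_\sigma$. With this notation, the two convolutions appearing above are precisely the $k$th Fourier coefficients of the non-negative functions $U_tV_0$ and $U_0V_t$, so Parseval yields
\[
\|uv\|_{H^t}\le C_t\bigl(\|U_tV_0\|_{L^2}+\|U_0V_t\|_{L^2}\bigr).
\]

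The final step is to estimate each $L^2$-norm using H\"older's inequality together with the Sobolev embeddings $H^\sigma\hookrightarrow L^p$ proved above (valid for $\sigma>d/2-d/p$, including the case $p=\infty$ covered by Lemma~\ref{lem:Linfty}). For the first term, I would select $p_1,p_2\in[2,\infty]$ with $1/p_1+1/p_2=1/2$ and write
\[
\|U_tV_0\|_{L^2}\le\|U_t\|_{L^{p_1}}\|V_0\|_{L^{p_2}}\le C\|U_t\|_{H^{r-t}}\|V_0\|_{H^{s}}=C\|u\|_{H^r}\|v\|_{H^s},
\]
invoking the embeddings $H^{r-t}\hookrightarrow L^{p_1}$ and $H^{s}\hookrightarrow L^{p_2}$. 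The symmetric choice of exponents with the roles of $u$ and $v$ interchanged then handles $\|U_0V_t\|_{L^2}$ in exactly the same way.

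The main obstacle, and the reason for the particular form of the hypothesis, lies in this final step: the two Sobolev conditions read $d/p_1>d/2-(r-t)$ and $d/p_2>d/2-s$, and adding them yields $d/p_1+d/p_2>d-r-s+t$, i.e., $r+s-t>d/2$, which is precisely the assumption. Conversely, under $r+s>t+d/2$ together with $r\wedge s>t$, the interval of admissible values for $d/p_1$ can be checked to have non-empty intersection with $[0,d/2]$, so valid Hölder exponents $(p_1,p_2)$ always exist; a little care is required at the boundary cases $p_i=\infty$, since the Sobolev embedding into $L^\infty$ is conditioned on the strict inequality $\sigma>d/2$.
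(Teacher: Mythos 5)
Your proof is correct, but it takes a genuinely different route from the one in the notes. You use the classical Peetre-inequality argument: distribute the weight $(1+|k|^2)^{t/2}$ over the two factors of the convolution, absorb it into auxiliary functions $U_t, V_0$ (resp.\ $U_0, V_t$) whose Fourier coefficients carry the weight together with the modulus of the original coefficients, and then estimate the resulting products by H\"older and the Sobolev embedding. The notes instead decompose $u$ and $v$ into dyadic Fourier blocks $u^{(m)}$, $v^{(n)}$, use the fact that $u^{(m)}v^{(n)}$ has Fourier support of size $O(2^{m\vee n})$ to trade the $H^t$ norm for $2^{t(m\vee n)}$ times an $L^2$ norm, bound each block by H\"older plus Sobolev embedding, and sum the resulting geometric series. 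Both arguments reduce to the same core input (H\"older with $1/p_1+1/p_2 = 1/2$ together with $H^\sigma\hookrightarrow L^p$), and your bookkeeping showing that admissible exponents exist precisely under the stated hypotheses is right: $r>t$ and $s>t$ ensure that the admissible interval for $d/p_1$ meets $[0,{d\over 2}]$, while $r+s>t+{d\over 2}$ makes it non-empty, so one can always stay strictly away from the delicate endpoint $p_i=\infty$. What your approach buys is a shorter, more self-contained derivation of the clean bilinear bound $\|uv\|_{H^t}\le C\|u\|_{H^r}\|v\|_{H^s}$ with no double sum over scales; what the dyadic approach buys is that it recycles the block machinery already set up for the Sobolev embedding theorem and adapts more readily to refinements (products with negative regularity, paraproducts, Besov scales) where a pointwise inequality on the weights such as Peetre's is unavailable since it needs $t\ge 0$. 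One cosmetic remark: the functions $U_\sigma$, $V_\sigma$ are real and even but need not be pointwise non-negative; this is harmless, since all you use is that their Fourier coefficients are non-negative, so that the termwise domination and Parseval's identity go through unchanged.
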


\begin{proof}
Define $u^{(n)}$ and $v^{(m)}$ as in the proof of the Sobolev embedding theorem and set $w^{(m,n)} = u^{(m)} v^{(n)}$. Note that one
has $w^{(m,n)}_k = 0$ if $|k| > 2^{3+(m\vee n)}$. It then follows from H\"older's inequality that if $p,q \ge 2$ are such that
$p^{-1} + q^{-1} = \hf$, one has the bound
\begin{equs}
\|w^{(m,n)}\|_{H^t}  \le C 2^{t (m\vee n)} \|w^{(m,n)}\|_{L^2}
 \le C 2^{t (m\vee n)} \|u^{(m)}\|_{L^p}\|v^{(n)}\|_{L^q}\;.
\end{equs}
Assume now that $m>n$.
The conditions on $r$, $s$ and $t$ are such that there exists a pair $(p,q)$ as above with
\begin{equ}
r > t + {d\over 2} - {d\over p}\;,\qquad
s > {d\over 2} - {d\over q}\;.
\end{equ}
In particular, we can find some $\eps>0$ such that 
\begin{equ}
\|u^{(m)}\|_{L^p} \le C \|u^{(m)}\|_{H^{r-t-\eps}} \le C 2^{-m(t+\eps)}\|u\|_{H^r}\;,\quad
\|v^{(n)}\|_{L^p} \le C \|v^{(n)}\|_{H^{s-\eps}} \le C 2^{-m\eps}\|v\|_{H^s}\;.
\end{equ}
Inserting this into the previous expression, we find that
\begin{equ}
\|w^{(m,n)}\|_{H^t} \le C 2^{-m\eps -n\eps} \|u\|_{H^r}\|u\|_{H^s}\;.
\end{equ}
Since our assumptions are symmetric in $u$ and $v$, we obtain a similar bound for the case $m \le n$, so that
\begin{equ}
\|w\|_{H^t} \le \sum_{m, n>0} \|w^{(m,n)}\|_{H^t} \le C\|u\|_{H^r}\|u\|_{H^s}\sum_{m,n>0} 2^{-m\eps -n\eps} \le C\|u\|_{H^r}\|u\|_{H^s}\;,
\end{equ}
as requested.
\end{proof}

\begin{exercise}
Show that the conclusion of Theorem~\ref{theo:multSob} still holds if $s = t = r$ is a positive integer, provided that $s > {d\over 2}$.
\end{exercise}

\begin{exercise}
Similarly to Exercise~\ref{ex:multipleHolder}, show that one can iterate this bound so that if $s_i > s \ge 0$ are exponents such that
$\sum_i s_i > s + {(n-1)d\over 2}$, then one has the bound
\begin{equ}
\|u_1\cdots u_n\|_s \le C \|u_1\|_{s_1}\cdots \|u_n\|_{s_n}\;.
\end{equ}
\textbf{Hint:} The case $s \ge {d\over 2}$ is simple, so it suffices to consider the case $s < {d\over 2}$.
\end{exercise}

The functional inequalities from the previous section allow to check that the assumptions of Theorems~\ref{theo:existunique}
and \ref{theo:bootstrap} are verified
by a number of interesting equations.

\subsection{The stochastic Navier--Stokes equations}
\index{stochastic!Navier--Stokes equations}

The Navier--Stokes equations govern the motion of an idealised incompressible fluid and are one
of the most studied models in the theory of partial differential equations, as well as in theoretical 
and mathematical physics. We are going to use the symbol $u(x,t)$ to denote the instantaneous
velocity of the fluid at position $x \in \R^d$ and time $t$, so that $u(x,t) \in \R^d$. With these notations, the deterministic Navier--Stokes
equations are given by
\begin{equ}[e:NSD]
\d_t u = \nu \Delta u - (u\cdot \nabla)u - \nabla p \;, \qquad \div u = 0\;,
\end{equ}
where the (scalar) pressure $p$ is determined implicitly by the incompressibility condition $\div u = 0$ and $\nu>0$ denotes
the kinematic viscosity of the fluid. In principle, these equations make sense for any value of the dimension $d$. However,
even the deterministic equations \eref{e:NSD} are known to have global smooth solutions
for  arbitrary smooth initial data only in dimension $d = 2$. We are therefore going to restrict ourselves to the two-dimensional 
case in the sequel. As we saw already in the introduction, solutions to \eref{e:NSD} tend to $0$ as times goes to $\infty$,
so that an external forcing is required in order to obtain an interesting stationary regime.

One natural way of adding an external forcing is given by a stochastic force that is white in time and admits a translation
invariant correlation function in space. In this way, it is possible to maintain the translation invariance of the equations
(in a statistical sense), even though the forcing is not constant in space. We are furthermore going to restrict ourselves
to solutions that are periodic in space in order to avoid the difficulties arising from partial differential equations in
unbounded domains.
The incompressible stochastic Navier--Stokes equations on the torus $\R^2$ are given by
\begin{equ}[e:SNS]
du = \nu \Delta u\, dt - (u\cdot \nabla)u \, dt - \nabla p\, dt + Q\, dW(t)\;, \qquad \div u = 0\;,
\end{equ}
where $p$ and $\nu>0$ are as above. In order to 
put these equations into the more familiar form \eref{e:SPDE}, we denote by $\Pi$ the orthogonal
projection onto the space of divergence-free vector fields. In Fourier components, $\Pi$ is given by
\begin{equ}[e:defPi]
(\Pi u)_k =  u_k - {k \scal{k,u_k}\over |k|^2}\;.
\end{equ}
(Note here that the Fourier coefficients of a vector field are themselves vectors.) With this notation,
one has 
\begin{equ}
du = \nu \Delta u\, dt - \Pi (u\cdot \nabla)u \, dt + Q\, dW(t) \eqdef \Delta u\, dt + F(u) \, dt + Q\, dW(t) \;.
\end{equ}
It is clear from \eref{e:defPi} that $\Pi$ is a contraction in any fractional Sobolev space. For $t \ge 0$, it 
therefore follows that
\begin{equ}[e:boundF]
\|F(u)\|_{H^t} \le \|u\|_{H^s} \|\nabla u\|_{H^r} \le C\|u\|_{H^s}^2\;,
\end{equ}
provided that $s \ge 1$ and $s + (s-1) > 2t \vee \bigl(t + {d\over 2}\bigr)$, which in dimension $2$
yields the constraint $s > (t+ \f 12) \vee \bigl({t\over 2} + 1\bigr)$.
In particular, this bound holds for $s = t+1$, provided that $t>0$.

Furthermore, in this setting, since $L$ is just the Laplacian, if we choose $\CH = H^s$, then the
interpolation spaces $\CH_\alpha$ are given by $\CH_\alpha = H^{s + 2\alpha}$.
This allows us to apply Theorem~\ref{theo:bootstrap} to show that the stochastic Navier--Stokes equations
admit local solutions for any initial condition in $H^s$, provided that $s > 1$, and that the stochastic convolution takes
values in that space. Furthermore, these solutions
will immediately lie in any higher order Sobolev space, all the way up to the space in which the
stochastic convolution lies.

This line of reasoning does however not yield any \textit{a priori} bounds on the solution, so that it may blow
up in finite time. The Navier--Stokes nonlinearity satisfies $\scal{u,F(u)} = 0$ (the scalar product is the $L^2$ 
scalar product), so one may expect bounds in $L^2$, but we do not know at this stage whether initial conditions
in $L^2$ do indeed lead to local solutions. We would therefore like to obtain bounds on $F(u)$ in
negative Sobolev spaces. In order to do this, we exploit the fact that $H^{-s}$ can naturally be identified with the
dual of $H^s$, so that 
\begin{equ}
\|F(u)\|_{H^{-s}} = \sup \Bigl\{\int F(u)(x)\, v(x)\, dx\;,\quad v \in \CC^\infty \;,\quad \|v\|_{H^s} \le 1\Bigr\}\;.
\end{equ}
Making use of the fact that we are working with divergence-free vector fields, one has
(using Einstein's convention of summation over repeated indices):
\begin{equ}
\int F(u)\, v\, dx = -\int v_j u_i \d_i u_j\, dx  \le \|v\|_{L^p} \|\nabla u\|_{L^2} \|u\|_{L^q} \;,
\end{equ}
provided that $p,q > 2$ and ${1\over p} + {1\over q} = {1\over 2}$. We now make use of the fact that
$\|u\|_{L^q} \le C_q \|\nabla u\|_2$ for every $q \in [2,\infty)$ (but $q = \infty$ is excluded) to conclude that
for every $s > 0$ there exists a constant $C$ such that
\begin{equ}[e:boundFs]
\|F(u)\|_{-s} \le C \|\nabla u\|_{L^2}^2\;.
\end{equ}

In order to get \textit{a priori} bounds for the solution to the 2D stochastic Navier--Stokes equations, 
 one can then make use of the following trick: introduce the vorticity
$w = \nabla \wedge u = \d_1 u_2 - \d_2 u_1$. Then, provided that $\int u\, dx = 0$ (which, provided that the range of $Q$
consists of vector fields with mean $0$, is a condition that is preserved under the solutions to \eref{e:SNS}),
the vorticity is  sufficient to describe $u$ completely by making use of the incompressibility assumption $\div u = 0$.
Actually, the map $w \mapsto u$ can be described explicitly by
\begin{equ}
u_k = \bigl(Kw\bigr)_k = {k^\perp w_k\over |k|^2}\;,\qquad (k_1, k_2)^\perp = (-k_2, k_1)\;.
\end{equ}
This shows in particular that $K$ is actually a bounded operator from $H^s$ into $H^{s+1}$ for every $s$.
It follows that one can rewrite \eref{e:SNS}  as
\begin{equ}[e:SNSvort]
dw = \nu \Delta w\, dt + (Kw\cdot \nabla)w \, dt + \tilde Q\, dW(t) \eqdef \Delta w\, dt + \tilde F(w) \, dt + \tilde Q\, dW(t) \;.
\end{equ}
Since $\tilde F(w) = \nabla \wedge F(Kw)$, it follows from \eref{e:boundFs} that one has the bounds
\begin{equ}
\|\tilde F(w)\|_{-1-s} \le C \|w\|_{L^2}^2\;,
\end{equ}
so that $\tilde F$ is a locally Lipschitz continuous map from $L^2$ into $H^{s}$ for every $s<-1$.
This shows that \eref{e:SNSvort} has unique local solutions for every initial condition in $L^2$ and that these
solutions immediately become as regular as the corresponding stochastic convolution.

Denote now by $\tilde \WL$ the stochastic convolution
\begin{equ}
\tilde \WL(t) = \int_0^t e^{\Delta(t-s)} \tilde Q\,dW(s)\;,
\end{equ}
and define the process $v(t) = w(t) - \WL(t)$. With this notation, $v$ is the unique solution to the random PDE
\begin{equ}
\d_t v = \nu \Delta v + \tilde F(v + \tilde \WL)\;.
\end{equ}
It follows from \eref{e:boundF} that $\|\tilde F(w)\|_{H^{-s}} \le C \|w\|_{H^s}^2$, provided that $s > 1/3$.
For the sake of simplicity, assume from now on that $\tilde \WL$ takes values in $H^{1/2}$ almost surely.
Using the fact that $\scal{v, \tilde F(v)} = 0$ (which is only the case in dimension $2$!), 
we then obtain for the $L^2$-norm of $v$ the following
\textit{a priori} bound:
\begin{equs}
\d_t \|v\|^2 &= -2 \nu \|\nabla v\|^2 - 2\scal{\tilde \WL, \tilde F(v+\tilde \WL)}\\
&\le -2 \nu \|\nabla v\|^2 + 2 \|\tilde \WL\|_{H^{1/2}} \|v+\tilde \WL\|_{H^{1/2}}^2 \\
&\le -2 \nu \|\nabla v\|^2 + 4 \|\tilde \WL\|_{H^{1/2}} \bigl(\|v\|_{H^{1/2}}^2 + \|\tilde \WL\|_{H^{1/2}}^2\bigr) \\
&\le -2 \nu \|\nabla v\|^2 + 4 \|\tilde \WL\|_{H^{1/2}} \bigl(\|v\| \|\nabla v\| + \|\tilde \WL\|_{H^{1/2}}^2\bigr) \\
&\le {8 \over \nu} \|\tilde \WL\|_{H^{1/2}}^2 \|v\| ^2  + 2\|\tilde \WL\|_{H^{1/2}}^3\;, \label{e:aprioriNS}
\end{equs}
so that global existence of solutions then follows from Gronwall's inequality.

This calculation is only formal, since it is not known in general whether the $L^2$-norm of $v$ is differentiable
as a function of time. The bound that one obtains from \eref{e:aprioriNS} can however be made rigorous in a very similar way as
for the example of the stochastic reaction-diffusion equation, so that we will not reproduce this argument
here.
\newpage

\bibliographystyle{Martin}
\bibliography{./refs}

\printindex

\end{document}